\definecolor{darkgreen}{rgb}{0.09, 0.45, 0.27}
\definecolor{darkred}{rgb}{0.55, 0.0, 0.0}
\newcommand{\ip}[1]{\left\langle #1 \right\rangle}
\renewcommand{\epsilon}{\varepsilon}
\renewcommand{\phi}{\varphi}
\newcommand{\C}{\mathbb{C}}
\newcommand{\lnorm}{\lvert \!\vert}
\newcommand{\rnorm}{\vert \! \rvert}
\DeclareMathOperator{\Tr}{Tr}
\DeclareMathOperator{\Id}{Id}
\newcommand{\E}{\mathbb{E}}
\renewcommand{\P}{\mathbb{P}}
\newtheorem{theorem}{Theorem}[section]
\newtheorem{definition}[theorem]{Definition}
\newtheorem{proposition}[theorem]{Proposition}
\newtheorem{corollary}[theorem]{Corollary}
\newtheorem{lemma}[theorem]{Lemma}
\newtheorem{remark}[theorem]{Remark}
\newcommand{\R}{\mathbb{R}}
\newcommand{\N}{\mathbb{N}}
\newcommand{\abs}[1]{\left\lvert #1 \right\rvert}
\newcommand{\diff}{\mathop{}\!\mathrm{d}}
\DeclareMathOperator{\Cov}{Cov}
\newcommand{\defeq}{\vcentcolon=}
\DeclareMathOperator{\Crt}{Crt}
\newcommand{\mc}[1]{\mathcal{#1}}
\newcommand{\ms}[1]{\mathscr{#1}}
\newcommand{\ii}{\mathrm{i}}
\DeclareMathOperator{\OO}{O}
\DeclareMathOperator{\oo}{o}
\DeclareMathOperator{\Corr}{Corr}
\newcommand{\injnorm}[1]{\lnorm #1 \rnorm_{\mathrm{inj}}}
\begin{document}

\title[Injective norm of real and complex random tensors]{Injective norm of real and complex random tensors I: \\
from spin glasses to geometric entanglement 
}

\author{Stephane Dartois}
\address{Université Paris-Saclay, CEA, List, Palaiseau, F-91120, France}
\email{stephane.dartois@cea.fr}
\author{Benjamin M\textsuperscript{c}Kenna}
\address{Harvard University, Center of Mathematical Sciences, Cambridge, MA, United States}
\email{bmckenna@fas.harvard.edu}

\begin{abstract}
    The injective norm is a natural generalization to tensors of the operator norm of a matrix. In quantum information, the injective norm is one important measure of \emph{genuine multipartite} entanglement of quantum states, where it is known as the geometric entanglement. In this paper, we give a high-probability upper bound on the injective norm of real and complex Gaussian random tensors, corresponding to a lower bound on the geometric entanglement of random quantum states, and to a bound on the ground-state energy of a particular multispecies spherical spin glass model. For some cases of our model, previous work used $\epsilon$-net techniques to identify the correct order of magnitude; in the present work, we use the Kac--Rice formula to give a one-sided bound on the constant which we believe to be tight.
\end{abstract}

\maketitle

\noindent \emph{Date:} April 4, 2024 

\vspace*{0.05in}

\noindent \hangindent=0.2in \emph{Keywords and phrases:} Random Tensors, Injective Norm, Spectral Norm, Spin glasses, Quantum Information, Quantum Resources, Geometric Entanglement, Kac--Rice formula, Random Matrices

\vspace*{0.05in}

\noindent \emph{2020 Mathematics Subject Classification:} 81P45, 81P42, 82D30, 60B20, 15B52

\vspace*{5mm}

\tableofcontents


\section{Introduction}


\subsection{Set-up and main results}
In this paper, we give a one-sided bound on the leading-order behavior of a quantity with several interpretations: First, as the injective norm of a random tensor, a natural generalization of the operator norm of a matrix; second, as the geometric entanglement of the random quantum state corresponding to this tensor; third, as the ground state (maximum/minimum energy) of a pure spherical spin glass whose couplings are stored in the tensor. When the tensor is real and symmetric, the problem is essentially completely solved (in one of the two scaling limits we consider), coming from the spin-glass side. But when the tensor is complex and/or nonsymmetric, previous results only identified the correct order of magnitude, whereas identifying the correct constant is of interest in all of these interpretations (see Remarks \ref{rem:subag} and \ref{rem:entanglement-no-sym-vs-sym}). In this work, we give a one-sided bound on the correct constant, for both real nonsymmetric and complex nonsymmetric tensors, which we believe to be tight (see Remarks \ref{rem:fln22} and \ref{rem:subag}).

In notation, we will consider a tensor of order $p$, written as $T \in (K^d)^{\otimes p}$, where the underlying field $K$ is either $\R$ or $\C$, and $K^d$ is endowed with the standard $2$-norm $\lnorm \cdot \rnorm_2 \defeq \sqrt{\langle \cdot, \cdot \rangle}$ induced by the standard Hermitian product. Writing the $(d-1)$-dimensional unit sphere in $K^{d}$ as $\mathbb{S}_K^{d-1} \defeq \{x \in K^d : \lnorm x \rnorm_2 = 1\}$, we define the \textbf{injective norm}\footnote{Injective norms encompass a broader family of norms defined on tensor products of vector spaces and Banach spaces, as detailed in \cite{Grothendieck1952RsumDR,lim2021tensors}. This family is really a tensorial generalization of the family of subordinated/operator norms of matrices. The specific injective norm addressed in this context is prevalent in models of spin glasses as the ``ground state energy", in tensor PCA as a MLE, and in quantum information theory as ``geometric entanglement."} of $T$ to be the quantity 
\begin{align}
    \injnorm{T}\defeq\max_{\substack{x^{(1)},\ldots,x^{(p)}\in K^d\\ \lnorm x^{(i)}\rnorm_2=1}}\lvert\langle T, x^{(1)}\otimes \ldots \otimes x^{(p)} \rangle\rvert=\max_{x^{(1)},\ldots,x^{(p)} \in \mathbb{S}_K^{d-1}} \abs{ \sum_{i_1,\ldots,i_p = 1}^d T_{i_1,\ldots,i_p} x^{(1)}_{i_1} \cdots x^{(p)}_{i_p} },
\end{align}
which is the magnitude of the overlap between $T$ and its best rank one approximation given that $\langle T, x^{(1)}\otimes \ldots \otimes x^{(p)} \rangle$ is induced by the Hermitian product on $K^d$. Notice that when $p = 2$, this reduces to the operator norm of the matrix $T$.

 Very closely related is the \emph{geometric entanglement} of quantum states. Consider a $p$-partite quantum state $\lvert \psi \rangle \in (\C^d)^{\otimes p}$ (that is, a normalized tensor); then its geometric entanglement (see, \cite{shimony1995degree,barnum2001monotones,wei2003geometric} and, e.g., \cite{aubrun2017alice} for a textbook treatment) is defined as
\begin{align}
    \mathrm{GME}(\lvert \psi \rangle)\defeq - \log(\injnorm{\lvert \psi\rangle}^2).
\end{align}
There are variations to this definition; in particular the natural $\log$ is often replaced by $\log_2$ or $\log_d$.

In this paper, we will be interested in various scaling limits: Both the case of $p$ fixed and $d \to \infty$, which is natural in spin glasses, as well as the case of $d$ fixed and $p \to \infty$, which is natural in quantum information (see Proposition \ref{prop:aubrun-gross} below).\\

To set up our main result, we let $T$ be a random standard Gaussian real or complex tensor, meaning the components of $T$ are distributed according to $T_{i_1,\ldots, i_p}\sim\mc{N}_K(0,1)$ for all $i_m\in [d]$ (in the complex case, we normalize the real and imaginary parts to be independent with variance $1/2$). With $\|T\|^2_{\textup{HS}} = \sum_{i_1,\ldots,i_p=1}^d \abs{T_{i_1,\ldots,i_p}}^2$ the Hilbert-Schmidt norm, we let $\lvert \psi \rangle=\frac{T}{\lnorm T \rnorm_{\textup{HS}}}$ be its normalized counterpart, so that $\lvert \psi\rangle \sim \mathrm{Unif}(\mathbb{S}^{pd-1}_K)$. We show
\begin{theorem}[Main theorem]\label{thm:main_upper_bound}
Let $T\in (K^d)^{\otimes p}$ and $\lvert \psi\rangle$ be random as above. For each integer $p\ge3$, let $\alpha(p)=\sqrt{p}E_0(p)$, where $E_0(p)$ is given in Lemma \ref{lem:sigma_p}. Let $\gamma_d^K(p)$ be as in Lemma \ref{lem:real_gamma_d_existence} (for $K = \R$) or Lemma \ref{lem:complex_gamma_d_existence} (for $K = \C$). Then we have the following:
\begin{itemize}
\item \textbf{$p$ fixed, $d \to \infty$:} For all $p \geq 2$ and all $\epsilon > 0$,
\begin{align}
\limsup_{d\to \infty}\frac1{p(d-1)}\log \P\left(\frac1{\sqrt{d}}\injnorm{T}>\alpha(p)+\epsilon\right)&<0, \label{eqn:main-thm-tensor-injective-norm-d-to-infty} \\
\limsup_{d\to \infty}\frac1{p(d-1)}\log \P\left(\injnorm{\lvert \psi \rangle}>\frac1{d^{\frac{p-1}{2}}}(\alpha(p)+\epsilon)\right)&<0, \label{eqn:main-thm-state-injective-norm-d-to-infty}
\end{align}
\item \textbf{$d$ fixed, $p \to \infty$:} For all $d \geq 2$ and all $\epsilon > 0$,
\begin{align}
   \limsup_{p\to \infty}\frac1{p}\log \P\left(\injnorm{T}>\sqrt{p(d-1)}\biggl(\gamma^K_d(p)+\frac{\epsilon}{\sqrt{\log p}}\biggr)\right)&<0, \label{eqn:main-thm-tensor-injective-norm-p-to-infty} \\
    \limsup_{p\to \infty}\frac1{p}\log \P\left(\injnorm{\lvert \psi \rangle}>\sqrt{\frac{p(d-1)}{d^p}}\left(\gamma^K_d(p)+\frac{\epsilon}{\sqrt{\log p}}\right)\right)<0. \label{eqn:main-thm-state-injective-norm-p-to-infty}
\end{align}
Furthermore, one can compute $\gamma_d^K(p)=\sqrt{\log p}+\frac{\log \log p}{2\sqrt{\log p}}+\frac{\beta_K(d)}{\sqrt{\log p}}+\oo\left( \frac1{\sqrt{\log p}}\right)$, where $\beta_K(d)$ is an explicit function of $d$ given in \eqref{eqn:beta_d_real} (for $K = \R$) or \eqref{eqn:beta_d_complex} (for $K = \C$).
\end{itemize}
\end{theorem}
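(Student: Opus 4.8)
The plan is to realize $\injnorm{T}$ as the maximum of a smooth centered Gaussian field over a compact manifold and to bound that maximum by the first moment of the number of critical points of the field lying above the target level, computed with the Kac--Rice formula — an annealed ``complexity'' argument, in the spirit of the Kac--Rice analysis of the ground states of the spherical $p$-spin, adapted to the product (multispecies) geometry and to complex couplings. For a real tensor, set $f(\sigma)=\langle T,\sigma^{(1)}\otimes\cdots\otimes\sigma^{(p)}\rangle$ on $M=(\mathbb S_\R^{d-1})^{p}$; since $f\stackrel{d}{=}-f$ one has $\P(\injnorm{T}>u)\le 2\,\P(\max_M f>u)$, the law of $f$ is invariant under the transitive action of $\mathrm{O}(d)^{p}$ on $M$, and its symmetry group $\{\pm1\}^{p}$ being finite, $f|_M$ is a.s.\ Morse and $\Crt_u$ is a.s.\ finite. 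For a complex tensor, phase invariance of the modulus gives $\injnorm{T}=\max\re\langle T,\sigma^{(1)}\otimes\cdots\otimes\sigma^{(p)}\rangle$ over $(\mathbb S_\C^{d-1})^{p}$; here the residual $\mathrm{U}(1)$ gauge makes the critical set non-isolated, forcing one to work on a quotient (a product of projective spaces), which is one of the places where extra care is needed and where the relevant random-matrix ensemble changes to its Hermitian analogue.

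On $M$ compact and closed, $\max_M f$ is attained at a critical point of $f|_M$, so with $\Crt_u$ the number of critical points of value $>u$ one has $\P(\max_M f>u)\le\E[\Crt_u]$ by Markov (no index restriction is needed for an upper bound). Kac--Rice gives
\[
\E[\Crt_u]=\int_M \E\Big[\lvert\det\nabla^2 f(\sigma)\rvert\,\mathbbm{1}\{f(\sigma)>u\}\ \Big|\ \nabla f(\sigma)=0\Big]\,p_{\nabla f(\sigma)}(0)\,\diff\sigma,
\]
and transitivity makes the integrand constant, so $\E[\Crt_u]=\Vol(M)\cdot(\text{density at a fixed }\sigma_0)$. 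Since $f$ is multilinear in the $p$ blocks, its within-block Euclidean second derivatives vanish and Euler's identity turns the spherical second fundamental form into $-f(\sigma_0)\,\Id$; moreover value, gradient and Hessian of $f$ at $\sigma_0$ are mutually independent (the cross-covariances vanish by isotropy). Hence, conditioned on $\nabla f(\sigma_0)=0$ and $f(\sigma_0)=x$, the Riemannian Hessian is exactly $-x\,\Id_{\dim M}+G$, where $G$ is a centered Gaussian symmetric matrix in $p\times p$ block form with vanishing diagonal blocks and independent off-diagonal Gaussian blocks (a ``hollow'' block matrix), and $f(\sigma_0)$ has the Gaussian density $e^{-x^2/(2s^2)}/\sqrt{2\pi s^2}$ with explicit variance $s^2$. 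The problem is thereby reduced to the asymptotics of $\E[\lvert\det(G-x\,\Id)\rvert]$ weighted by this Gaussian.

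For $p$ fixed and $d\to\infty$: the rescaled empirical spectral measure of $G$ converges to an explicit free-convolution law $\mu_p$ encoding the block structure, and $\tfrac1{\dim M}\log\E[\lvert\det(G-x)\rvert]\to\int\log\lvert x-\lambda\rvert\,\diff\mu_p(\lambda)$ uniformly enough to combine with the Gaussian exponent and $\tfrac1{\dim M}\log\Vol(M)$; the resulting rate function of $x/\sqrt d$ has a unique zero, which is by construction $\alpha(p)=\sqrt{p}\,E_0(p)$ (the variational constant of Lemma~\ref{lem:sigma_p}), so the rate is strictly negative for $x>(\alpha(p)+\epsilon)\sqrt d$, giving \eqref{eqn:main-thm-tensor-injective-norm-d-to-infty}. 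For $d$ fixed and $p\to\infty$: here $x$ is of order $\sqrt{p(d-1)\log p}\gg\lVert G\rVert$, so $\E[\lvert\det(G-x)\rvert]=x^{\dim M}(1+\oo(1))$ and $\log(\text{integrand})=-x^2/(2s^2)+(\dim M)\log x+\OO(p)$; writing $x=\sqrt{p(d-1)}\,v$, the threshold solves $v^2/(2s^2)=\tfrac12\log p+\log v+\OO(1)$, and expanding this solution yields $\gamma_d^K(p)=\sqrt{\log p}+\tfrac{\log\log p}{2\sqrt{\log p}}+\tfrac{\beta_K(d)}{\sqrt{\log p}}+\oo(1/\sqrt{\log p})$, the constant $\beta_K(d)$ collecting the $\OO(1)$ contributions of $s$, $\Vol(M)$, the gradient density, and (for $K=\C$) the projective quotient. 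Since the $v$-derivative of the rate at $v=\gamma_d^K(p)$ has order $\sqrt{\log p}$, a fixed negative exponent requires $v-\gamma_d^K(p)\gtrsim\epsilon/\sqrt{\log p}$, giving \eqref{eqn:main-thm-tensor-injective-norm-p-to-infty}. Finally, $\lVert T\rVert_{\textup{HS}}^2$ is a generalized $\chi^2$ variable with exponentially strong concentration around $d^p$, so dividing by $\lVert T\rVert_{\textup{HS}}$ and a union bound carry both tensor statements to the state statements \eqref{eqn:main-thm-state-injective-norm-d-to-infty} and \eqref{eqn:main-thm-state-injective-norm-p-to-infty}.

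The main obstacle is the random-matrix input: identifying $\mu_p$ for the block matrix $G$ (a free convolution reflecting the block structure) and establishing $\tfrac1n\log\E[\lvert\det(G-x)\rvert]\to\int\log\lvert x-\lambda\rvert\,\diff\mu_p(\lambda)$ with the uniformity in $x$ needed to integrate against the Gaussian factor — the usual technical heart of a Kac--Rice complexity computation — together with, in the $p\to\infty$ regime, controlling every $\OO(1)$ and $\oo(1)$ correction precisely enough to pin down $\beta_K(d)$ while keeping all errors uniform as $p\to\infty$; the gauge reduction in the complex case adds a further layer to all of this.
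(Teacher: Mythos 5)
Your proposal follows essentially the same route as the paper: a first-moment Kac--Rice bound on the number of critical points of the isotropic Gaussian field over the (gauge-fixed, in the complex case) product of spheres, with the conditional Hessian identified as a hollow block Gaussian matrix shifted by $-u\,\Id$, the $d\to\infty$ regime handled via the limiting rescaled-semicircle spectral law and its log-potential (yielding the zero $E_0(p)$ of $\Sigma_p$), the $p\to\infty$ regime via a Laplace analysis with the level $u\to\infty$, and the normalized statements deduced from $\chi^2$ concentration of $\lVert T\rVert_{\textup{HS}}^2$. The only substantive points you gloss over are that the zero diagonal blocks make the joint law of field, gradient and Hessian degenerate, so the textbook Kac--Rice formula does not apply verbatim and the paper proves a one-sided (upper-bound) version in its appendix, and that the complex gauge-fixing is carried out on $\mathbb{S}_\C^{d-1}\times(\mathbb{U}^{d-1})^{p-1}$ rather than on a product of projective spaces, on which $\mathfrak{R}\langle T,\cdot\rangle$ would not be well-defined.
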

Of course, the case $p = 2$ and $d \to \infty$ of this result, which is about operator norms of random matrices, was already known; see Remark \ref{rem:nonsym-vs-sym}.

Shortly before the posting of this paper, we learned of simultaneous independent physics work of Sasakura \cite{Sas2024}, which considers the $p=3$ complex version of this problem.

In order to prove this theorem, we study the number of critical points of two Gaussian processes defined from the real and complex random tensors using the Kac--Rice formula. While this approach is common in the spin-glass literature, we have not found it so much in the random-tensors literature, so we briefly describe it now: The injective norm is the maximum of some real-valued function on a manifold (see \eqref{eqn:intro-hamiltonian} below). In general, if $f$ is a nice function on some nice manifold $M$, we have $\max_{x \in M} f(x) \geq t$ if and only if there exists a critical point $x$ of $f$ at which $f(x) \geq t$. The Kac--Rice formula is a way to count these critical points when $f$ is random, and a first-moment computation gives a one-sided bound: If, for some $t_0$, the expected number of such critical points above level $t_0$ is exponentially small, then with high probability none exist (by Markov's inequality), which means that with high probability $\max_{x \in M} f(x) \leq t_0$. The smallest $t_0$ with this property is thus an upper bound for the maximum of $f$, here meaning the injective norm. (If the second moment of these counts of critical points approximately matches the first moment squared, then one can obtain a matching lower bound; we do not consider second moments in this paper, since the first moment already presented a number of technical difficulties.)


\subsection{Motivations}
The motivations underlying our work stem from various sources. The injective norm manifests across diverse research domains under different guises and names. Among these domains are statistical physics, quantum information and computing, graph and hypergraph theory \cite{friedman1989second}, game theory \cite{aubrun2020universal}, theory of Banach spaces \cite{aubrun2017alice,Grothendieck1952RsumDR}, and data analysis.  However, as for other pivotal tensorial problems, computing the injective norm is a NP-hard problem\footnote{For symmetric tensors, the situation is more nuanced. In fact, computing the injective norm of symmetric tensors becomes hard for large $d$. However, it remains a polynomial problem for fixed $d$ and large $p$, albeit with a precision relative to the Hilbert-Schmidt norm of the tensor, as discussed in \cite{friedland2020spectral}. This last point allows for polynomial computation of the injective norm of bosonic quantum states.}  \cite{hillar2013most}. Even the rough evaluation of the injective norm is  NP-hard \cite{harrow2013testing}. This is a manifestation of the usual curse of dimensionality for non-convex optimisation. Despite this difficulty, there is an attempt at studying the injective norm of large size tensors numerically and comparing different optimization methods in the paper \cite{fitter2022estimating} (see Remark \ref{rem:fln22}).  \\

We now describe some of the motivations driving our work in greater detail.
\\

\noindent{\bf Quantum Information: } In the quantum information context, the injective norm has a very natural interpretation in term of the distance of a $p$-partite state $\lvert \psi \rangle$ to its closest separable state $\lvert x\rangle=\lvert x^{(1)}\rangle \ldots \lvert x^{(p)}\rangle$. In fact, denoting $\text{SEP}$ the set of (real or complex) product pure states, one has
\begin{equation}\label{eq:relation_to_distance}
\text{dist}_{\text{SEP}}(\lvert \psi \rangle)=\min_{\lvert x \rangle \in \text{SEP}}\lnorm \lvert x\rangle -\lvert \psi \rangle\rnorm_2=\sqrt{2-2\max_{\lvert x \rangle \in \text{SEP}} \mathfrak{R}\langle \psi\vert x\rangle}=\sqrt{2-2\injnorm{\lvert \psi\rangle}},
\end{equation}
where $\mathfrak{R}\langle \psi\vert x\rangle$ denotes the real part of the Hermitian product. For the bipartite case $p=2$, the injective norm corresponds to the largest Schmidt coefficient. When considering the components of the quantum state $\lvert \psi \rangle$ in a basis as the elements of a matrix, this coefficient is simply the largest singular value. Furthermore, in this bipartite scenario, geometric entanglement simplifies to the $\infty$-Renyi entropy.
This motivates the introduction of the injective norm as a measure of multipartite (i.e., $p \geq 3$) entanglement, generalizing the bipartite (i.e., $p = 2$) measure described above. The multipartite situation is much less understood than its bipartite counterpart.  \\

One seemingly natural question in quantum information and computing theory is the minimal value of the injective norm over the set of quantum states. In fact, such states are in a sense ``maximal quantum resources'' states \cite{steinberg2022maximizing}. Writing ``$K$-quantum state'' for a quantum state in a vector space over $K$, and recalling that a qudit is an element of $\mathbb{S}_K^{d-1}$, one can easily prove the following (refer to, \textit{e.g.}, the textbook treatment \cite[Lemma 8.26]{aubrun2017alice}).
\begin{proposition}
If $\lvert \psi\rangle \in (K^d)^{\otimes p}$ is a $p$-partite $K$-quantum state of qudits, then $$\injnorm{\lvert \psi\rangle}\ge \frac1{d^{\frac{p-1}{2}}}.$$
\end{proposition}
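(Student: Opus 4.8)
The plan is to induct on $p$ via a \emph{peeling} argument: regard $\lvert\psi\rangle$ as a bipartite state across the cut isolating the first tensor factor, extract from a singular value decomposition a good unit vector for that factor, and recurse on the resulting normalized $(p-1)$-partite state. The base case $p=1$ is immediate: then $\lvert\psi\rangle\in K^d$ has $\lnorm\psi\rnorm_2=1$, so $\injnorm{\lvert\psi\rangle}=\max_{x\in\mathbb{S}_K^{d-1}}\abs{\langle\psi,x\rangle}=\lnorm\psi\rnorm_2=1=d^{-(p-1)/2}$ (the maximizing $x$ may be taken proportional to $\overline{\psi}$; recall the pairing $\langle\cdot,\cdot\rangle$ is bilinear, not sesquilinear).

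For the inductive step, fix $p\ge 2$ and assume the bound for $p-1$. Flatten the components $\psi_{i_1,\ldots,i_p}$ into a matrix $M\in K^{d\times d^{p-1}}$ with rows indexed by $i_1$ and columns by $(i_2,\ldots,i_p)$, so that $\lnorm M\rnorm_{\textup{HS}}=\lnorm\psi\rnorm_{\textup{HS}}=1$ and $\mathrm{rank}(M)\le d$; hence its largest singular value $\sigma_1(M)$ obeys $\sigma_1(M)^2\ge\lnorm M\rnorm_{\textup{HS}}^2/\mathrm{rank}(M)\ge 1/d$. I would then choose a unit vector $x^{(1)}\in\mathbb{S}_K^{d-1}$ attaining $\max_{\lnorm x\rnorm_2=1}\lnorm M^{\top}x\rnorm_2=\sigma_1(M)$ and define the partial contraction $\phi\in(K^d)^{\otimes(p-1)}$ by $\phi_{i_2,\ldots,i_p}\defeq\sum_{i_1}\psi_{i_1,\ldots,i_p}x^{(1)}_{i_1}$, so $\phi=M^{\top}x^{(1)}$ as a vector and $\lnorm\phi\rnorm_2=\sigma_1(M)\ge 1/\sqrt d>0$. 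Now $\phi/\lnorm\phi\rnorm_2$ is a $(p-1)$-partite $K$-quantum state, so the inductive hypothesis produces $x^{(2)},\ldots,x^{(p)}\in\mathbb{S}_K^{d-1}$ with $\abs{\langle\phi/\lnorm\phi\rnorm_2,\,x^{(2)}\otimes\cdots\otimes x^{(p)}\rangle}\ge d^{-(p-2)/2}$. Since $\langle\psi,x^{(1)}\otimes\cdots\otimes x^{(p)}\rangle=\langle\phi,x^{(2)}\otimes\cdots\otimes x^{(p)}\rangle$ by construction, this yields
\[
\injnorm{\lvert\psi\rangle}\ \ge\ \abs{\langle\psi,x^{(1)}\otimes\cdots\otimes x^{(p)}\rangle}\ =\ \lnorm\phi\rnorm_2\cdot\abs{\Big\langle\tfrac{\phi}{\lnorm\phi\rnorm_2},\,x^{(2)}\otimes\cdots\otimes x^{(p)}\Big\rangle}\ \ge\ \tfrac1{\sqrt d}\cdot d^{-(p-2)/2}\ =\ d^{-(p-1)/2},
\]
which closes the induction.

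This argument has essentially no hard step; the only points needing care — the closest thing to an obstacle — are (i) that in the complex case the pairing is bilinear, so one should check that $\max_{\lnorm x\rnorm_2=1}\lnorm M^{\top}x\rnorm_2=\sigma_1(M)$ still holds, which it does because $\overline{M}M^{\top}$ has the same nonzero spectrum $\{\sigma_k(M)^2\}$ as $MM^{*}$; and (ii) the elementary inequality $\sigma_1(M)^2\ge\lnorm M\rnorm_{\textup{HS}}^2/\mathrm{rank}(M)$, which is exactly where the single-cut rank constraint $\mathrm{rank}(M)\le d$ is used. It is worth recording that the softer route of averaging $\abs{\langle\psi,x^{(1)}\otimes\cdots\otimes x^{(p)}\rangle}^2$ over independent Haar-uniform $x^{(i)}\in\mathbb{S}_K^{d-1}$ only gives $\injnorm{\lvert\psi\rangle}\ge d^{-p/2}$; it is the iterated exploitation of the rank-$d$ structure across a single cut that sharpens this to $d^{-(p-1)/2}$.
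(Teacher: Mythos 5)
Your proof is correct, but it takes a genuinely different route from the paper's. The paper gives a one-step argument: it optimizes exactly over $x^{(1)}$ (which turns $\injnorm{\lvert\psi\rangle}^2$ into the maximum over $x^{(2)},\ldots,x^{(p)}$ of the squared $2$-norm of the partial contraction), and then lower-bounds that maximum by the average over independent Haar-uniform $x^{(2)},\ldots,x^{(p)}$, which computes exactly to $d^{-(p-1)}\lnorm\lvert\psi\rangle\rnorm_2^2$. You instead peel one leg at a time: flatten across the first cut, use the rank bound $\sigma_1(M)^2\ge\lnorm M\rnorm_{\textup{HS}}^2/d$ to extract a unit vector losing at most a factor $d^{-1/2}$ of the mass, and recurse. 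Both proofs hinge on the same essential point — exact optimization over one tensor leg recovers the full $2$-norm of the contraction, which is where the improvement from $d^{-p/2}$ to $d^{-(p-1)/2}$ comes from, as your closing remark correctly identifies (the paper's averaging step is precisely the "average over the other $p-1$ legs" that your remark alludes to). What each buys: the paper's version is shorter and handles all legs at once; yours is deterministic and constructive (it exhibits the product state greedily, one factor at a time) and makes explicit that only the rank constraint across a single cut is ever used. Your handling of the bilinear-versus-sesquilinear subtlety in the complex case is also correct, and is moot anyway since the unit sphere is closed under conjugation and phase rotation.
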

\begin{proof}
Note that 
\begin{align*}
\injnorm{\lvert \psi \rangle}^2&= \max_{x^{(i)}\in \mathbb{S}_K^{d-1}, \forall i\neq 1}\sum_{i_1=1}^d\left\lvert\sum_{i_2,\ldots, i_p=1}^d\psi_{i_1,i_2,\ldots, i_p}x^{(2)}_{i_2}\ldots x^{(p)}_{i_p} \right\rvert^2 \\
&\ge \sum_{i_1=1}^d\E_{x^{(i\ge 2)}\sim \text{Unif}(\mathbb{S}_K^{d-1})}\left(\left\lvert\sum_{i_2,\ldots, i_p=1}^d\psi_{i_1,i_2,\ldots, i_p}x^{(2)}_{i_2}\ldots x^{(p)}_{i_p} \right\rvert^2\right)=\frac1{d^{p-1}}\lnorm\lvert \psi \rangle\rnorm_2^2,
\end{align*}
proving the claim.
\end{proof}
This lower bound has been proven many times in the literature at different levels of generality. For the bipartite case $p=2$, it is straightforward to observe that the Bell state $\lvert \text{B}\rangle=\frac1{\sqrt{d}}\sum_{i=1}^d \lvert i\rangle \lvert i\rangle$ saturates the bound. Interestingly, when $K=\R$ and $d=2,4,8$ (the real dimensions of the real normed division algebras $\C, \mathbb{H}, \mathbb{O}$, respectively), the minimal value of the injective norm also saturates the aforementioned lower bound (see \textit{e.g.} \cite[Proposition 8.29 \& Exercise 8.10]{aubrun2017alice}). However, in the general $p$ case of qubits (so $d=2$, $K=\C$), one shows that the lower bound cannot be saturated \cite{jung2008reduced}. One can interpret this as a manifestation of monogamy of entanglement.\\
One important fact is that the square of the minimal value of the injective norm gives the worst approximation ratio of a QMA-hard problem that can be achieved by solving a NP-hard optimization problem \cite[Theorem 2]{gharibian2012approximation}, \cite{MontanaroTalk}. 
For the case $d = 2$ and $p \to \infty$, the following two-sided bound was known previously.
\begin{proposition}
\label{prop:aubrun-gross}
\cite{aubrun2017alice,gross2009most}
    If $\lvert \psi \rangle\sim \mathrm{Unif}(\mathbb{S}_\C^{2p-1})$ -- that is, $\lvert \psi \rangle \in (\C^2)^{\otimes p}$ is a uniform $p$ qubits quantum state -- then there exist absolute constants $c,C>0,$ such that
    $$\P\left(c\frac{\sqrt{p\log p}}{2^{\frac{p}{2}}}\le \injnorm{\lvert \psi\rangle}\le C\frac{\sqrt{p\log p}}{2^{\frac{p}{2}}}\right)\underset{p\rightarrow \infty}{\longrightarrow}1.$$
\end{proposition}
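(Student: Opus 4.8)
The plan is to pass to the Gaussian tensor $T \in (\C^2)^{\otimes p}$ via $\lvert\psi\rangle = T/\|T\|_{\mathrm{HS}}$, to observe that $\|T\|_{\mathrm{HS}}^2$ is a sum of $2^p$ i.i.d.\ $\mathrm{Exponential}(1)$ random variables and hence equals $2^p(1+o(1))$ with probability $1 - e^{-\Omega(2^{p/2})}$, and then to establish the two-sided bound $\injnorm{T} = \Theta(\sqrt{p\log p})$ with high probability; dividing by $\|T\|_{\mathrm{HS}}$ gives the claim.

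For the upper bound I would use the classical $\epsilon$-net argument. Choose a net $\mathcal{N}$ of $(\mathbb{S}_\C^1)^p$ whose projection onto each factor is a $\delta$-net with $\delta = 1/(2p)$, so that $\lvert\mathcal{N}\rvert \le (Cp)^{3p} = e^{O(p\log p)}$. Telescoping one coordinate at a time, and using that $\lvert\langle T, y^{(1)}\otimes\cdots\otimes u\otimes\cdots\otimes x^{(p)}\rangle\rvert \le \injnorm{T}$ for every unit product tensor, yields $\injnorm{T} \le (1-p\delta)^{-1}\max_{y\in\mathcal{N}}\lvert\langle T, y^{(1)}\otimes\cdots\otimes y^{(p)}\rangle\rvert = 2\max_{y\in\mathcal{N}}\lvert\langle T, y^{(1)}\otimes\cdots\otimes y^{(p)}\rangle\rvert$. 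Since each $\langle T, y^{(1)}\otimes\cdots\otimes y^{(p)}\rangle$ is a standard complex Gaussian, $\P(\lvert\,\cdot\,\rvert > t) = e^{-t^2}$, and a union bound over $\mathcal{N}$ gives $\P(\injnorm{T} > 2t) \le e^{O(p\log p) - t^2}$, which vanishes once $t = C'\sqrt{p\log p}$ for $C'$ large; no separate concentration step is needed for this half.

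For the lower bound I would combine Sudakov minoration with Gaussian concentration. Write $\injnorm{T} = \max_{(x)}\re\langle T, x^{(1)}\otimes\cdots\otimes x^{(p)}\rangle$, absorbing the optimal global phase into $x^{(1)}$; this exhibits $\injnorm{T}$ as the supremum of the centered Gaussian process $X_{(x)} = \re\langle T, x^{(1)}\otimes\cdots\otimes x^{(p)}\rangle$ on $(\mathbb{S}_\C^1)^p$, whose canonical metric satisfies $d_X((x),(y))^2 = \tfrac12\|x^{(1)}\otimes\cdots\otimes x^{(p)} - y^{(1)}\otimes\cdots\otimes y^{(p)}\|_{\mathrm{HS}}^2 = 1 - \re\prod_i\langle x^{(i)},y^{(i)}\rangle$. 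The key point is that near the diagonal — in particular on the submanifold where each $x^{(i)}$ ranges over a fixed small arc of real unit vectors, on which $d_X$ is a genuine metric with all inner products positive — one has $d_X((x),(y))^2 = 1 - \prod_i\cos\alpha_i$, with $\alpha_i$ the angle in the $i$th coordinate, and this is comparable to the $\ell^2$-combination $\sum_i\alpha_i^2$ of the per-coordinate angular distances rather than the $\ell^\infty$-combination; equivalently, small $d_X$-balls are contained in $\ell^2$-balls of comparable radius. Since an $\ell^2$-ball of radius $r$ in $\R^p$ has volume $\pi^{p/2}r^p/\Gamma(p/2+1)$, a Lebesgue-volume comparison on a fixed box $[0,c]^p$ gives, for a fixed small constant $\epsilon_0$, that $\log N(d_X,\epsilon_0) \ge \log\Gamma(p/2+1) - O_{\epsilon_0}(p) = \Omega(p\log p)$. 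Sudakov minoration then yields $\E\injnorm{T} = \E\sup_{(x)}X_{(x)} \ge c\,\epsilon_0\sqrt{\log N(d_X,\epsilon_0)} \ge c'\sqrt{p\log p}$. Finally $\injnorm{\,\cdot\,} \le \|\cdot\|_{\mathrm{HS}}$, so $\injnorm{\,\cdot\,}$ is $1$-Lipschitz on $(\C^2)^{\otimes p}$ for the Euclidean structure, and the Borell--TIS inequality gives $\P(\injnorm{T} < \tfrac12\E\injnorm{T}) \le 2\,e^{-\Omega(p\log p)}$.

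The main obstacle is the lower bound, specifically producing the extra $\sqrt{\log p}$ over the naive $\sqrt{p}$. A Sudakov estimate that only sees the $\ell^\infty$ (per-coordinate) geometry of the product of spheres gives covering numbers $e^{\Theta(p)}$ and hence only $\E\injnorm{T} \gtrsim \sqrt{p}$; one has to exploit that the relevant canonical metric is an $\ell^2$-product metric, whose balls in $p$ dimensions are super-exponentially smaller than the corresponding $\ell^\infty$-boxes (the factor $1/\Gamma(p/2+1)$), and this is exactly what boosts the covering numbers to $e^{\Theta(p\log p)}$. The same asymmetry is what forces the net in the upper-bound argument to have mesh $\sim 1/p$ rather than a constant, and is therefore the origin of the matching $\log p$ there as well. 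The references \cite{gross2009most} and \cite[Ch.~8]{aubrun2017alice} carry out arguments of this type.
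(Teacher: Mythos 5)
The paper does not prove this proposition; it is quoted from \cite{aubrun2017alice,gross2009most} with the remark that it ``was obtained using $\epsilon$-net techniques,'' so there is no internal proof to compare against. Your argument is correct and is essentially the standard one from those references: the upper bound is exactly the multilinear $\epsilon$-net/telescoping argument with mesh $\sim 1/p$ and a union bound over $e^{O(p\log p)}$ net points, and the lower bound via Sudakov minoration is sound --- the key containment of a $d_X$-ball of radius $\epsilon_0$ in a Euclidean ball of constant radius follows from $-\log\cos\alpha\ge\alpha^2/2$ applied coordinatewise, so the volume comparison really does give $\log N(d_X,\epsilon_0)\ge\log\Gamma(p/2+1)-O(p)=\Omega(p\log p)$, and Borell--TIS plus the $\chi^2$-concentration of $\|T\|_{\mathrm{HS}}^2$ finish the job. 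The only slip is cosmetic: $\mathbb{S}^1_{\C}\subset\C^2\cong\R^4$, so the per-factor net size is $(C/\delta)^{4}$ (or $(C/\delta)^3$ intrinsically) rather than the exponent you wrote, which changes nothing since the total is still $e^{O(p\log p)}$. Your closing diagnosis --- that the $\sqrt{\log p}$ gain over the naive $\sqrt p$ comes precisely from the $\ell^2$ (rather than $\ell^\infty$) structure of the canonical metric, i.e.\ the $1/\Gamma(p/2+1)$ volume factor --- is exactly the right way to see why both halves carry the same $\log p$.
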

This result was obtained using $\epsilon$-nets techniques. The $d = 2$ case of our main result, \eqref{eqn:main-thm-state-injective-norm-p-to-infty}, improves upon the upper bound here by showing that one can take $C = 1+\epsilon$ for any $\epsilon$, analyzing lower-order corrections in $p$, and giving an extension to qudits (\textit{i.e.} $d \geq 2$); we also give a complementary result of $p$ fixed and $d \to \infty$ in eq. \eqref{eqn:main-thm-state-injective-norm-d-to-infty}.
As a result, we also provide a better upper bound on the minimal value of the injective norm and, therefore, on the worst approximation ratio, by product states, of QMA-hard problems.\\

In the opposite scaling limit of $p$ fixed and $d \to \infty$, high-probability bounds of the form $\injnorm{T} = \OO(\sqrt{dp\log p})$ were known previously for real tensors, also from $\epsilon$-net techniques \cite{TomSuz2014, NguDriTra2015}. Informally, our theorem shows that $\injnorm{T} \leq \alpha(p)\sqrt{d} = \sqrt{dp}E_0(p)$, where $E_0(p)$ is the solution of an explicit equation and in fact \eqref{eqn:e0_scaling} below shows that $\lim_{p \to \infty} \frac{E_0(p)}{\sqrt{\log p}} = 1$. \\ 

\noindent{\bf Statistical physics:} We now explain how the injective norm of a real symmetric tensor is related to the ground state of a pure spherical spin glass. The latter is often written as $\max_x \abs{\ip{J, x \otimes \cdots \otimes x}}$ for some nonsymmetric tensor $J$, but if $T$ is the symmetrization of such a tensor, 
\[
    T_{i_1,\ldots,i_p} = \frac{1}{p!} \sum_{\pi \in S_p} J_{i_{\pi(1)},\ldots,i_{\pi(p)}},
\]
then one can easily compute 
\[
    \sum_{i_1,\ldots,i_p = 1}^d T_{i_1,\ldots,i_p} x_{i_1} \cdots x_{i_p} = \frac{1}{p!} \sum_{\pi \in S_p} \sum_{i_1,\ldots,i_p=1}^d J_{i_{\pi(1)},\ldots,i_{\pi(p)}} x_{i_1} \cdots x_{i_p} = \sum_{i_1,\ldots,i_p=1}^d J_{i_1,\ldots,i_p} x_{i_1} \cdots x_{i_p}.
\]
On the other hand, if $T$ is a symmetric tensor, then one can show deterministically that
\[
    \injnorm{T} = \max_{x^{(1)},\ldots,x^{(p)}} \abs{\ip{T,x^{(1)} \otimes \cdots \otimes x^{(p)}}} = \max_{x} \abs{\ip{T,x \otimes \cdots \otimes x}}.
\]
(This fact seems to have been rediscovered many times in the literature; while we have not searched extensively for the earliest reference, a 1990 paper of Waterhouse \cite{Wat1990} suggests that the fairest way to attribute it is to split it between a 1928 paper of Kellogg \cite{Kel1928} and a 1935 paper of van der Corput and Schaake \cite{vanSch1935}.) Thus the injective norm of a real symmetric tensor is exactly the ground state of a pure $p$-spin spherical spin glass, and in the regime $p \geq 3$ fixed and $d \to \infty$ this is completely understood from the Kac--Rice side \cite{auffinger2013random, subag2017}.\\

When $T$ is a nonsymmetric Gaussian tensor, its injective norm corresponds to the negative of the ground state energy of a certain $p$-partite spherical spin glass model. This model's Hamiltonian reads: 
\begin{equation}
\label{eqn:intro-hamiltonian}
    H(x^{(1)},x^{(2)},\ldots,x^{(p)})=\sum_{i_1,\ldots,i_p=1}^dT_{i_1,\ldots,i_p}x^{(1)}_{i_1}\ldots x^{(p)}_{i_p}.
\end{equation}
While substantial progress has been made in understanding the ground state energy and landscape complexity of some spin glass models, general multipartite (also called \emph{multispecies}) spin glasses remain less understood. The bipartite spherical $p,q$-spin glass is studied in \cite{mckenna2024complexity, kivimae2023ground}. Multipartite spin glasses are studied in, for instance, \cite{bates2022crisanti,Sub2023}. However, the latter papers require some convexity property of a function $\zeta$ (see \textit{e.g.} \cite[(H3)]{bates2022crisanti}) which is used to construct the covariance of the relevant Gaussian process. Notably, our case involves a non-convex function $\zeta(q_1,\ldots, q_p)\defeq\prod_{i=1}^p q_i$ (see Remark \ref{rem:subag}). Moreover, our investigation also deals with a rarely seen regime in the statistical physics literature on spherical spin glasses: the regime of a large number of species.\\

Even though the cases of complex $T_{i_1,\ldots,i_p}$ are perhaps more interesting from the perspective of quantum information, the mathematical literature on spin glasses is mostly restricted to the case of $T_{i_1,\ldots,i_p}$ real; indeed, we are not aware of previous mathematical work on what one might call ``complex spin glasses.'' However, the statistical-physics work \cite{kent2021complex} explores the following complex $p$-spin Hamiltonian
\begin{equation}\label{eq:intro_C_p_spin}
    H_\C^{\text{p-spin}}(z)=\frac1{p!}\sum_{i_1,\ldots, i_p=1}^NJ_{i_1,\ldots,i_p}z_{i_1}\ldots z_{i_p},
\end{equation}
where $J_{i_1,\ldots,i_p}$ are complex Gaussian random variables and $z\in \C^N$ is a complex unit vector. This scenario of a complex random background becomes relevant where phases play an important role, such as for the physics of lasers in random background. They also argue that their Hamiltonian bears similarity with quiver Hamiltonian used in physics to model black hole horizons in the low temperature regime \cite{anninos2016disordered}. As \cite{anninos2016disordered} considers a multipartite-type Hamiltonian, our Hamiltonian \eqref{eqn:intro-hamiltonian} seems to provide additional connections to these quiver-type problems.\\

Therefore, our result on the injective norm of random tensors provides new results on the ground-state energy of multipartite real and complex spherical spin glasses, and suggests a natural motivation for complex spin glasses. \\

\noindent{\bf Landscape complexity:} As previously mentioned, our techniques come from studying critical points of random functions. This general program, sometimes called ``landscape complexity,'' is based fundamentally on the Kac--Rice formula, a kind of change-of-variables formula which translates these sorts of random-geometry questions into random-matrix questions. (For textbook treatments, we direct readers to \cite{adler2007random} and \cite{AzaWsc2009}.) Specifically, the Kac--Rice formula requires one to understand the determinant of a large random matrix (coming from the Hessian at a critical point). Historically, the bottleneck in applying Kac--Rice is whether or not this random matrix is tractable. The earliest results, like \cite{Fyo2004} and \cite{auffinger2013random}, considered random functions whose Kac--Rice-corresponding random matrix was closely related to the Gaussian Orthogonal Ensemble (GOE), which is the most classical random matrix. Previous work of the second author, joint with Ben Arous and Bourgade, gave a general recipe for computing such determinants when the random matrix is far from the GOE \cite{arous2022exponential} and explained how to use these more general results in Kac--Rice applications \cite{arous2024landscape}. While our proof is grounded in the techniques of these two papers, another motivation for our work is to make these techniques even more robust, and this work features several technical complications that we are not aware of in prior work. For example, the Hamiltonian \eqref{eqn:intro-hamiltonian} is linear in each $x^{(i)}$, and thus the corresponding Hessian has large diagonal zero blocks (coming from two derivatives in the same $x^{(i)}$). These zero blocks mean that the joint distribution of the field, gradient, and Hessian is degenerate, so textbook versions of Kac--Rice do not apply, and we need to derive a (one-sided) version of it; these zero blocks also require adaptation of the random-matrix arguments of \cite{arous2022exponential}, which had some assumptions like ``enough randomness on the diagonal'' which do not hold here. Additionally, for the complex case, the Kac--Rice random matrix has the block form $(\begin{smallmatrix} B & C \\ C & -B \end{smallmatrix})$ (informally coming from the Cauchy--Riemann equations), plus one additional row and column, i.e., it has quite substantial correlations among the entries. Escaping these difficulties relies partly on the recent work \cite{bandeira2023}; this provides bounds on the operator norm of very general Gaussian matrices, which are basically tight in our case, and which are obtained by interpolating between free semi-circular random variables and classical Gaussian random variables.  \\

\noindent{\bf Data analysis:}
Computing the injective norm of random tensors is also an important problem in data analysis, in the context of tensor PCA and independent components analysis (\cite{NguDriTra2015} and references therein). Here, random tensors, acting as noise, are perturbed by finite rank signals \cite{richard2014statistical, arous2019landscape,ros2019complex, piccolo2023topological}. One is interested in both detection and recovery of the signal. For $p=2$, detection and recovery depend on the signal-to-noise ratio, as seen in the BBP phase transition \cite{baik2005phase,benaych2011eigenvalues}. In one phase, detection and recovery are impossible, while in the other phase both are possible. The order parameter of this phase transition is the injective norm of the tensor (here, reducing to the operator norm of the matrix). Notably, this transition happens for a $\OO(1)$ signal-to-noise ratio. In the case $p\ge3$, of symmetric real tensors, a similar statement can be made. Weak recovery of the signal is information theoretically possible, when the injective norm departs from its value for purely random tensors, as has been shown in \cite{richard2014statistical}. However, due to the NP-hardness of computing the injective norm at $p\ge 3$, efficient recovery algorithms below a certain large threshold remain elusive, as explored in prior research \cite{richard2014statistical,ben2023long}.\\

Those results concern symmetric tensors. Indeed, symmetric tensors are easier to deal with technically. However, data tensors and the corresponding signals are not always symmetric, and so one might generalize results and problematics to the nonsymmetric case. Our work contributes in this direction by showing preliminary results on the injective norm of pure noise. Another work (\cite{draisma2016average}) considers the same real nonsymmetric tensor as we do. Their motivation was coming from both algebraic geometry and algebraic statistics.  They derive a version of what we call the Kac--Rice formula to count the number of critical points of the function $(x^{(1)},\ldots,x^{(p)}) \mapsto \sum_{i_1,\ldots,i_p=1}^d (T_{i_1,\ldots,i_p} - x^{(1)}_{i_1} \cdots x^{(p)}_{i_p})^2$. They analyze it exactly for $p = d = 2$, and give some numerical results for other values of $p$ and $d$. They also remark that the theory of large random matrices could be useful to compute asymptotics, which is what we do in this paper.


\subsection{Organization of the paper}
The paper is organized as follows. In Section \ref{sec:RMTandTools}, we introduce random-matrix-theory elements that are necessary to determine the asymptotics of the Kac--Rice formula. We also recall the definitions of concepts useful in this context. In Section \ref{sec:real_case}, we study the case of real Gaussian and uniform normalized tensors. To this aim, we use the Kac--Rice formula to bound the excursion probability of a Gaussian process whose supremum is the injective norm. In Section \ref{sec:complex_case}, we study the case of complex Gaussian and uniform normalized tensors (that is, uniform quantum states). The rest of the paper consists of an Appendix, whose goal is to prove a weak version of Kac--Rice formula applying to our specific situation which is slightly degenerate.


\subsection{Acknowledgements}
We wish to thank Paul Bourgade, Guillaume Dubach, Camille Male, Ion Nechita, Valentina Ros, Jens Siewert and Gilles Z\'emor for helpful discussions, references and their interest for this work. We would also like to thank Luca Lionni for informing us about the upcoming work \cite{Sas2024}. BM was partially supported by NSF grant DMS-1760471. SD was partially supported by the ARC grant DE210101323. Moreover, both BM and SD would like to thank CEA List, the Institut de Math\'ematiques de Bordeaux, and the Institut de Math\'ematiques de Toulouse for their hospitality during parts of this work.


\section{Random matrices and common tools}\label{sec:RMTandTools}


\subsection{Random-matrix definitions and first properties}

We will need two random-matrix models that we have not previously found in the literature, so although we are not sure whether they have interest beyond these problems, we will give them names so that we can refer to them succintly.

We call the first one the ``Block Hollow Gaussian Orthogonal Ensemble'' (BHGOE), because matrices are sometimes called ``hollow'' if their diagonal entries vanish (see, e.g. \cite{burkhardt2018random}), but in our definition what vanishes is actually large diagonal blocks. This ensemble depends on $d, p \in \mathbb{N}^*$ and $\sigma^2 > 0$, and we denote it as $\text{BHGOE}(d,p,\sigma^2)$.\\

Note that in the course of this section as well as the following ones, we use $N$ as a shorthand to denote the size of the random matrix under consideration. In particular, $N$ means different things in the real case versus the complex case. We recall the specific value of $N$ in terms of relevant parameters $d,p$ when needed, that is mostly at the beginning of Sections \ref{sec:KR-formula-real} and \ref{sec:KR-formula-complex}, to avoid confusion about its value.

\begin{definition}\label{def:BHGOE}
A $\textup{BHGOE}(d,p,\sigma^2)$ random matrix is a $pd \times pd$ real-symmetric random matrix, thought of as being split into $p^2$ blocks each of size $d \times d$, such that the diagonal blocks vanish, and the remaining entries are independent up to symmetry, each distributed according to a normal distribution $\mathcal{N}(0,\sigma^2)$. In other words, such a matrix is a GOE matrix with the diagonal blocks zeroed out.
\end{definition}

We will mostly use the special case $\sigma^2 = \frac{1}{pd}$, which we abbreviate as $\textup{BHGOE}(d,p)$. The BHGOE appears for the case of real tensors; for complex tensors, we will need the following second model, which we call ``twisted BHGOE'' and write as $\textup{tBHGOE}(d,p)$.

\begin{definition}\label{def:tBHGOE}
A $\textup{tBHGOE}(d,p)$ random matrix has the distribution of
 \begin{equation}
 \label{eqn:tBHGOE}
   W = \begin{pNiceMatrix}
      B &  C &   \Block{2-1}{\theta^T}\\
      C & -B \\
      \hline
      \Block{1-2}{\theta} & &   \Block{1-1}{0} \\
      
      \CodeAfter
      \tikz \draw (1-|3) -- (4-|3) ;
    \end{pNiceMatrix},
\end{equation}
where $B$ and $C$ are independent copies of $\textup{BHGOE}(d,p,\frac{1}{2dp})$, and where $\theta \in R^{2dp}$ is a centered Gaussian (row) vector, independent of $B$ and of $C$, with the covariance
\[
    \E[\theta_a\theta_b] = \frac{1}{2dp}\delta_{ab} \mathbf{1}\{a \not\in \{1, \ldots, d\} \sqcup \{dp+1,\ldots,(p+1)d\}\},
\]
i.e. $\theta$ has i.i.d. $\mc{N}(0,\frac{1}{2dp})$ entries except that the first $d$ of each half are replaced with zeros. Notice that the $tBHGOE(d,p)$ is an $N \times N$ real-symmetric random matrix with $N = 2dp+1$. 
\end{definition}

In the following results, we give exponentially-high-probability bounds on the operator norm of these matrices, using heavily results of Bandeira, Boedihardjo, and van Handel \cite{bandeira2023}.

\begin{lemma}
\label{lem:hgoe_bandeira_combined}
There exists a universal constant $C$ such that, if $W \sim \textup{BHGOE}(d,p,\sigma^2)$, then for any finite $p$ and $d$ we have
\begin{align}
\label{eqn:hgoe_bandeira_combined}
    &\E[ \|W\|_{\textup{op}} ] \leq 2\sqrt{(p-1)d\sigma^2} + C \sigma (dp)^{\frac{1}{4}} (\log(dp))^{\frac{3}{4}}, \\
    &\P\left( \|W\|_{\textup{op}} > 2\sqrt{(p-1)d\sigma^2} +  C\sigma(dp)^{\frac{1}{4}}(\log(dp))^{\frac{3}{4}} + C\sigma t\right) \leq e^{-t^2}.
\end{align}
\end{lemma}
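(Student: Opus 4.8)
The strategy is to invoke the main results of Bandeira, Boedihardjo, and van Handel \cite{bandeira2023}, which control the operator norm of a Gaussian matrix $W = \sum_a g_a A_a$ (with $g_a$ i.i.d.\ standard Gaussians and $A_a$ deterministic symmetric matrices) in terms of two parameters: the ``matrix standard deviation'' $\sigma(W) = \|\E[W^2]\|_{\textup{op}}^{1/2}$, and a second parameter measuring the noncommutativity or non-freeness of the model, typically of the form $v(W) = \|\Cov(W)\|_{\textup{op}}^{1/4}$ in the notation of that paper (the norm of the $pd \times pd$-indexed covariance matrix of the entries, viewed as a matrix). The BHGOE is exactly a Gaussian series of this type, so both parameters are explicitly computable. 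First I would compute $\sigma(W)$: since $W$ is a GOE matrix of size $pd$ with scale $\sigma^2$ but with the $p$ diagonal $d\times d$ blocks deleted, each row has $(p-1)d$ nonzero entries of variance $\sigma^2$, so $\E[W^2]$ is $(p-1)d\sigma^2$ times the identity plus a small off-diagonal correction, giving $\sigma(W) = \sqrt{(p-1)d\sigma^2}$ (up to lower-order terms); this produces the leading term $2\sqrt{(p-1)d\sigma^2}$. Next I would bound the free/non-free correction parameter: because the entries are independent up to symmetry and each has variance at most $\sigma^2$, the relevant covariance-matrix norm is $O(\sigma^2)$, so its fourth root contributes $O(\sigma (dp)^{1/4})$, and the logarithmic factors $(\log(dp))^{3/4}$ come from the explicit form of the BBvH bound (which carries a $(\log n)^{3/4}$ multiplying the smaller parameter, $n = pd$ here).

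Concretely, I would state the BBvH expectation bound as $\E\|W\|_{\textup{op}} \le 2\sigma(W) + C v(W)(\log pd)^{3/4}$ for a universal $C$, plug in $\sigma(W) = \sqrt{(p-1)d\sigma^2} + O(\sigma)$ (absorbing the subleading correction into the error term, which is legitimate since $\sigma \le \sigma(dp)^{1/4}(\log dp)^{3/4}$ for $dp \ge 2$) and $v(W) = O(\sigma(dp)^{1/4})$, which yields the first displayed inequality. For the tail bound, I would use Gaussian concentration of measure: $W \mapsto \|W\|_{\textup{op}}$ is a $1$-Lipschitz function of the entries up to the scaling, more precisely it is $\sigma$-Lipschitz with respect to the Euclidean norm on the $\sim p^2 d^2$ independent Gaussian coordinates (each of standard deviation $\le \sigma$ after rescaling), since changing $W$ by $\Delta$ changes $\|W\|_{\textup{op}}$ by at most $\|\Delta\|_{\textup{op}} \le \|\Delta\|_{\textup{HS}}$. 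Hence $\P(\|W\|_{\textup{op}} > \E\|W\|_{\textup{op}} + \sigma t) \le e^{-t^2/2}$ (or $e^{-t^2}$ after adjusting $C$), and combining with the expectation bound gives the second inequality.

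The main obstacle is getting the two BBvH parameters into precisely the claimed shape with a \emph{universal} constant, in particular making sure the subleading term in $\sigma(W)$ (the off-diagonal block contribution to $\E[W^2]$) is genuinely lower-order and can be swept into the error term uniformly in $d,p$; this requires a short but careful estimate of $\|\E[W^2] - (p-1)d\sigma^2 \Id\|_{\textup{op}}$. A secondary point is bookkeeping the exact exponent on the logarithm and the factor of $2$ versus $2+o(1)$ in front of $\sigma(W)$ --- here one wants the clean ``$2\sqrt{(p-1)d\sigma^2}$'' with all slack pushed into the $C\sigma(dp)^{1/4}(\log(dp))^{3/4}$ term, which works because that error term dominates both $\sigma$ and any $\sqrt{dp}\cdot o(1)$ discrepancy. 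Everything else is a direct substitution into the cited theorem plus standard Gaussian concentration, so no genuinely new argument is needed.
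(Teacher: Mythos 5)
Your overall strategy is the paper's: compute the two Bandeira--Boedihardjo--van Handel parameters for the BHGOE and substitute into their intrinsic-freeness bound. Two of your side remarks are fine or even simpler than necessary: since the entries of $W$ are centered and independent up to symmetry, $\E[W^2]$ is \emph{exactly} $(p-1)d\sigma^2\Id$ (for $i\neq j$ every term $\E[W_{ik}W_{kj}]$ vanishes), so the ``careful estimate of the off-diagonal correction'' you flag as the main obstacle is vacuous; and your Lipschitz/Gaussian-concentration route to the tail bound is a legitimate substitute for the paper's use of \cite[Corollary 2.2]{bandeira2023} together with $\sigma_\ast(X)\le v(X)$, since the Lipschitz constant of $\|W\|_{\textup{op}}$ in the underlying standard Gaussians is $O(\sigma)$.

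The genuine problem is in how you produce the factor $(dp)^{1/4}$. You state the BBvH bound as $\E\|W\|_{\textup{op}}\le 2\sigma(W)+Cv(W)(\log dp)^{3/4}$ with $v(W)=\|\Cov(W)\|_{\textup{op}}^{1/4}$, compute that the covariance-matrix norm is $O(\sigma^2)$, and then assert its fourth root is $O(\sigma(dp)^{1/4})$ --- but $(O(\sigma^2))^{1/4}=O(\sigma^{1/2})$, which carries no factor of $(dp)^{1/4}$ at all, so your claimed error term does not follow from your own computation. The correct form of the inequality (the one the paper uses, from \cite[Theorem 1.2 and eq. (1.10)]{bandeira2023}) has the \emph{cross term} $Cv(W)^{1/2}\sigma(W)^{1/2}(\log dp)^{3/4}$ with $v(W)=\|\Cov(W)\|_{\textup{op}}^{1/2}\le(\|\Cov(W)\|_1\|\Cov(W)\|_\infty)^{1/4}=\sqrt{2\sigma^2}$ (each row/column of the sparse covariance matrix has at most two nonzero entries of size $\sigma^2$), and the $(dp)^{1/4}$ comes from $\sigma(W)^{1/2}=((p-1)d\sigma^2)^{1/4}\le\sigma^{1/2}(dp)^{1/4}$. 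As written, your argument either proves a bound that is too strong (error $O(\sigma^{1/2}(\log dp)^{3/4})$, which BBvH does not give) or relies on an unsupported identification $v(W)=O(\sigma(dp)^{1/4})$; you need to quote the theorem with the $v^{1/2}\sigma^{1/2}$ cross term and carry out the Hölder estimate on $\Cov(W)$ to close this step.
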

\begin{proof}
This is essentially an immediate consequence of the matrix concentration inequalities of Bandeira, Boedihardjo, and van Handel. Indeed, since $W$ is a $dp \times dp$ self-adjoint matrix with jointly Gaussian entries, combining Theorem 1.2 of \cite{bandeira2023} along with the bound from eq. (1.10) there (for the first estimate), or Corollary 2.2, eq. (1.10), and the estimate notated as $\sigma_\ast(X) \leq v(X)$ on \cite[p. 10]{bandeira2023} (for the second estimate), gives a universal $C$ such that
\begin{equation}
\label{eqn:hgoe_bandeira_general_combined}
\begin{split}
    \E[\|W_N\|_{\textup{op}}] \leq 2\sigma(W_N) + Cv(W_N)^{\frac{1}{2}} \sigma(W_N)^{\frac{1}{2}} (\log(dp))^{\frac{3}{4}}, \\
    \P(\|W\|_{\textup{op}} > 2\sigma(W) + Cv(W)^{\frac{1}{2}} \sigma(W)^{\frac{1}{2}} (\log(dp))^{\frac{3}{4}} + Cv(W)t) \leq e^{-t^2},
\end{split}
\end{equation}
where $\sigma(W)$ and $v(W)$ are certain quantities we now compute. (Note that $\sigma(W)$, a matrix statistic whose notation we borrow from \cite{bandeira2023}, is different from $\sigma^2$, the variance of the nonzero entries of $W$.) Since $W$ has independent entries up to symmetry, $\E[W^2]$ is a diagonal matrix, in fact equal to $(p-1)d\sigma^2 \Id$. Thus
\[
    \sigma(W) \defeq \| \E[W^2] \|_{\textup{op}}^{1/2} = \sqrt{(p-1)d\sigma^2}.
\]
Now we consider $\Cov(W)$, which is a real-symmetric $(dp)^2 \times (dp)^2$ matrix whose rows and columns are indexed by entries of $W$, and whose entries store the corresponding covariances:
\[
    \Cov(W)_{ij,k\ell} = \Cov(W_{ij},W_{k\ell}).
\]
This matrix is quite sparse: If $W_{ij} = 0$, then the corresponding row and column of $\Cov(W)$ vanish; otherwise, the corresponding row and column each have exactly two nonzero entries, in the $(ij)$th position and the $(ji)$th position, each of which have the value $\sigma^2$. Thus the maximum absolute row sum and the maximum absolute column sum of $\Cov(W)$ are both $2\sigma^2$, i.e. $\|\Cov(W)\|_1 = \|\Cov(W)\|_\infty = 2\sigma^2$, where $\|\cdot\|_p$ is the matrix operator norm induced by the Euclidean $p$-norm for vectors. Thus H\"{o}lder's inequality gives
\[
    v(W) \defeq \|\Cov(W)\|_{\textup{op}}^{1/2} \leq (\|\Cov(W)\|_1 \|\Cov(W)\|_\infty)^{1/4} = \sqrt{2\sigma^2}.
\]
Plugging these back into \eqref{eqn:hgoe_bandeira_general_combined}, and applying some trivial upper bounds, finishes the proof.
\end{proof}

The following corollary is immediate by taking $\sigma^2 = \frac{1}{dp}$ in Lemma \ref{lem:hgoe_bandeira_combined}.
\begin{corollary}\label{cor:hgoe_bandeira}
There exists a universal constant $C$ such that, if $W_N \sim \textup{BHGOE}(d,p)$, then for any finite $p$ and $d$ we have 
\[
    \E[ \|W_N\|_{\textup{op}} ] \leq 2\sqrt{\frac{p-1}{p}} + \frac{C(\log(dp))^{\frac{3}{4}}}{(dp)^{\frac{1}{4}}}. 
\]
If $p$ is fixed and $d \to \infty$, then for any fixed $\epsilon, \delta > 0$ there exists $d_0(\epsilon,\delta)$ such that for all $d \geq d_0(\epsilon,\delta)$ we have 
\[
    \P\left(\|W_N\|_{\textup{op}} > 2\sqrt{\frac{p-1}{p}} + \epsilon\right) \leq e^{-N^{1-\delta}}.
\]
If $p \to \infty$ (and either $d$ is fixed or $d \to \infty$), then for any fixed $\epsilon, \delta > 0$ there exists $p_0(\epsilon,\delta)$ such that for all $p \geq p_0(\epsilon,\delta)$ we have 
\[
    \P\left(\|W_N\|_{\textup{op}} > 2 + \epsilon\right) \leq e^{-N^{1-\delta}}.
\]
\end{corollary}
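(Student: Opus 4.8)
The plan is to obtain this corollary as a direct specialization of Lemma \ref{lem:hgoe_bandeira_combined}, taking $\sigma^2 = \frac{1}{dp}$ (so $\sigma = (dp)^{-1/2}$) and recalling that a $\textup{BHGOE}(d,p)$ matrix is $N \times N$ with $N = dp$. With this choice, $2\sqrt{(p-1)d\sigma^2} = 2\sqrt{\tfrac{p-1}{p}}$ and $C\sigma(dp)^{1/4}(\log(dp))^{3/4} = C(dp)^{-1/4}(\log(dp))^{3/4} = CN^{-1/4}(\log N)^{3/4}$. Substituting into the first estimate of Lemma \ref{lem:hgoe_bandeira_combined} immediately yields the claimed bound on $\E[\|W_N\|_{\textup{op}}]$, with the same universal constant $C$.

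For the tail bounds, the second estimate of Lemma \ref{lem:hgoe_bandeira_combined} becomes, after the same substitution,
\[
    \P\left(\|W_N\|_{\textup{op}} > 2\sqrt{\tfrac{p-1}{p}} + CN^{-1/4}(\log N)^{3/4} + CN^{-1/2}\,t\right) \leq e^{-t^2}
\]
for every $t > 0$. I will absorb the deterministic error term and then tune $t$ to convert the threshold into the desired $2\sqrt{(p-1)/p} + \epsilon$ (resp. $2 + \epsilon$). Concretely: since $N^{-1/4}(\log N)^{3/4} \to 0$ as $N \to \infty$, one can fix a threshold for $N$ beyond which $CN^{-1/4}(\log N)^{3/4} \le \epsilon/2$; taking $t = \frac{\epsilon}{2C}N^{1/2}$ then makes the last two error terms sum to at most $\epsilon$, so the event in the display is contained in $\{\|W_N\|_{\textup{op}} > 2\sqrt{(p-1)/p} + \epsilon\}$, and the right-hand side reads $e^{-\epsilon^2 N/(4C^2)}$.

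It remains to compare the genuinely exponential rate $e^{-\epsilon^2 N/(4C^2)}$ against the stretched-exponential target $e^{-N^{1-\delta}}$. Since $\delta > 0$ is fixed, $N^{1-\delta} = \oo(N)$, so there is $N_0(\epsilon,\delta)$ with $\frac{\epsilon^2}{4C^2}N \ge N^{1-\delta}$ for all $N \ge N_0$, hence $e^{-\epsilon^2 N/(4C^2)} \le e^{-N^{1-\delta}}$. In the regime $p$ fixed, $d \to \infty$ we have $N = dp \to \infty$, so this gives the stated $d_0(\epsilon,\delta)$; in the regime $p \to \infty$ (with $d$ fixed or $d \to \infty$) we also have $N = dp \ge p \to \infty$, and moreover $2\sqrt{(p-1)/p} \le 2$, so running the same argument with the larger threshold $2$ in place of $2\sqrt{(p-1)/p}$ yields the stated $p_0(\epsilon,\delta)$.

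There is essentially no obstacle here beyond bookkeeping: all the analytic content, namely the matrix concentration inequalities of \cite{bandeira2023} and the computation of the relevant matrix parameters $\sigma(W)$ and $v(W)$, has already been carried out in Lemma \ref{lem:hgoe_bandeira_combined}. The only mildly delicate point is recognizing that the extra slack encoded by $\delta$ is exactly what allows the exponential tail from the lemma to dominate the weaker bound $e^{-N^{1-\delta}}$ claimed here, which is why the thresholds $d_0$, $p_0$ must be allowed to depend on $\delta$.
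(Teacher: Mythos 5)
Your proposal is correct and matches the paper's route exactly: the paper derives this corollary simply by setting $\sigma^2 = \frac{1}{dp}$ in Lemma \ref{lem:hgoe_bandeira_combined}, and the remaining steps you spell out (choosing $t$ proportional to $\sqrt{N}$ and comparing the resulting exponential rate to $N^{1-\delta}$) are the intended bookkeeping. No gaps.
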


\begin{lemma}\label{lem:est_op_norm_tBHGOE}
There exists a universal constant $C$ such that, if $W_N \sim \textup{tBHGOE}(d,p)$, then for any finite $p$ and $d$ we have
\[
    \E[\|W_N\|_{\textup{op}}] \leq 2\sqrt{\frac{p-1}{p}} + \frac{C(\log N)^{\frac{3}{4}}}{N^{\frac{1}{4}}}.
\]
Now if we take the limit $N \to \infty$ (either from $p \to \infty$ with $d$ fixed, or $d \to \infty$ with $p$ fixed, or $d, p \to \infty$ simultaneously): For every $\epsilon, \delta > 0$, there exists $N_0(\epsilon,\delta)$ such that for all $N \geq N_0(\epsilon,\delta)$ we have 
\[
    \P\left( \|W_N\|_{\textup{op}} > 2\sqrt{\frac{p-1}{p}} + \epsilon\right) \leq e^{-N^{1-\delta}}.
\]
\end{lemma}
\begin{proof}
This is another application of \cite{bandeira2023}. We begin again with 
\begin{equation}
\label{eqn:hgoe_bandeira_probability_complex}
\begin{split}
    \E[\|W_N\|_{\textup{op}}] \leq 2\sigma(W_N) + Cv(W_N)^{\frac{1}{2}} \sigma(W_N)^{\frac{1}{2}} (\log N)^{\frac{3}{4}}, \\
    \P(\|W_N\|_{\textup{op}} > 2\sigma(W_N) + Cv(W_N)^{\frac{1}{2}} \sigma(W_N)^{\frac{1}{2}} (\log N)^{\frac{3}{4}} + Cv(W_N)t) \leq e^{-t^2}.
\end{split}
\end{equation}
The matrix $\E[(W_N)^2]$ is still diagonal, but it is no longer a constant times identity, because of the zero entries of $\theta$; its entries are either $\frac{d(p-1)}{2dp} + \frac{d(p-1)}{2dp} = \frac{p-1}{p}$ or $\frac{d(p-1)}{2dp} + \frac{d(p-1)}{2dp} + \frac{1}{2dp} = \frac{p-1}{p} + \frac{1}{2dp}$. Thus 
\[
    \sigma(W_N) = \sqrt{\frac{p-1}{p} + \frac{1}{2dp}} = \sqrt{\frac{p-1}{p} + \frac{1}{N-1}} \leq \sqrt{\frac{p-1}{p}} + \frac{1}{\sqrt{N-1}}.
\]
Each row (resp., each column) of the matrix $\Cov(W_N)$ has either zero, two, or four nonzero entries, depending on whether it comes from the zero matrix entries, the nonzero entries of $\theta$, or the nonzero entries of $B$ or $C$. Each of these entries is at most $\frac{1}{2dp}$ in absolute value, so
\[
    v(W) = \|\Cov(W)\|_{\textup{op}}^{\frac{1}{2}} \leq (\|\Cov(W)\|_1 \|\Cov(W)\|_\infty)^{\frac{1}{4}} \leq \left(\frac{2}{dp}\right)^{\frac{1}{2}} = \frac{2}{\sqrt{N-1}}.
\]
Plugging these into \eqref{eqn:hgoe_bandeira_probability_complex} completes the proof. 
\end{proof}

We now come to a few additional definitions. We define the log potential of a measure $\mu$.

\begin{definition}
    The log-potential of a measure $\mu$ is defined by 
    \begin{equation}
        \Omega_{\mu}(u)=\int_{\mathbb{R}}\log\lvert u-\lambda\rvert\diff \mu(\lambda). 
    \end{equation}
    Let $\mu(x)$ be the semicircle law, whose density with respect to the Lebesgue measure is given by 
    $$\mu(x)=\frac1{2\pi}\sqrt{(4-x^2)_+}.$$
    We denote its log-potential simply by $\Omega(u)$; this has the explicit expression (cf. \cite[eq. (2.4)]{subag2017})
\[
    \Omega(x) = \begin{cases} \frac{x^2}{4} - \frac{1}{2} & \text{if } \abs{x} \leq 2, \\ \frac{x^2}{4} - \frac{1}{2} - \left[ \frac{\abs{x}}{4} \sqrt{x^2-4} - \log\left( \sqrt{\frac{x^2}{4}-1} + \frac{\abs{x}}{2}\right) \right] & \text{if } \abs{x} \geq 2. \end{cases} 
\]
\end{definition} 
The function $\Sigma_p$ defined in the lemma below is used later to determine the asymptotics of the injective norm of random tensors. 
\begin{lemma}
\label{lem:sigma_p}
Set
\[
    \Sigma_p(u) = \frac{1+\log(p-1)}{2} + \Omega\left(u\sqrt{\frac{p}{p-1}}\right) - \frac{u^2}{2}.
\]
This function is even in $u$, concave, and for $p > 2$ there exists a unique positive solution $E_0(p)$ to 
\[
    \Sigma_p(E_0(p)) = 0.
\]
(For $p = 2$ this function vanishes identically on $[-\sqrt{2},\sqrt{2}]$; by convention we set $E_0(2) = \sqrt{2}$.) 
This number scales for large $p$ as
\begin{equation}
\label{eqn:e0_scaling}
    \lim_{p \to \infty} \frac{E_0(p)}{\sqrt{\log p}} = 1,
\end{equation}
and for every $p \geq 2$ we have
\begin{equation}
\label{eqn:e0_lower_bound}
    E_0(p) \geq 2\sqrt{\frac{p-1}{p}},
\end{equation}
with sharp inequality for $p \geq 3$.
\end{lemma}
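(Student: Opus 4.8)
The plan is to reduce everything to elementary one-variable calculus, using the explicit piecewise formula for $\Omega$ and the observation that $\Sigma_p(0) = \tfrac{1+\log(p-1)}{2} + \Omega(0) = \tfrac12\log(p-1)$. First I would differentiate the explicit formula for $\Omega$: one gets $\Omega'(x) = x/2$ on $[-2,2]$ and $\Omega'(x) = \tfrac12\bigl(x-\operatorname{sgn}(x)\sqrt{x^2-4}\bigr)$ on $\{|x|\ge 2\}$ — this is just the real part of the Stieltjes transform of the semicircle law — so $\Omega \in C^1(\R)$, with $\Omega'' = 1/2 > 0$ on $(-2,2)$ and $\Omega''(x) = \tfrac12\bigl(1 - |x|/\sqrt{x^2-4}\bigr) < 0$ on $\{|x|>2\}$. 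Evenness of $\Sigma_p$ is then immediate since $\Omega$ and $u\mapsto u^2$ are even. For concavity I would split at $u_p \defeq 2\sqrt{(p-1)/p}$, the value of $|u|$ at which the argument $u\sqrt{p/(p-1)}$ of $\Omega$ crosses $\pm 2$: a direct substitution shows $\Sigma_p(u) = \tfrac12\log(p-1) - \tfrac{p-2}{4(p-1)}u^2$ for $|u|\le u_p$, so $\Sigma_p'' = \tfrac{2-p}{2(p-1)} \le 0$ there, whereas $\Sigma_p''(u) = \tfrac{p}{p-1}\Omega''\!\bigl(u\sqrt{p/(p-1)}\bigr) - 1 < -1$ for $|u|>u_p$. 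Since $\Sigma_p$ is $C^1$ and its derivative is non-increasing (continuous, with nonpositive derivative away from the two points $\pm u_p$), $\Sigma_p$ is concave. The same inner computation handles $p=2$: there the quadratic term vanishes, so $\Sigma_2\equiv 0$ on $[-u_2,u_2]=[-\sqrt2,\sqrt2]$, and outside this interval $\Sigma_2(u)$ equals minus the (strictly positive) bracketed quantity in the formula for $\Omega$, hence $\Sigma_2 < 0$ there.

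Next, for $p>2$: we have $\Sigma_p(0) = \tfrac12\log(p-1) > 0$, $\Sigma_p'(0) = 0$ by evenness, and $\Sigma_p'' < 0$ on all of $(0,\infty)$, so $\Sigma_p$ is strictly decreasing on $[0,\infty)$; combined with $\Sigma_p(u) \le \tfrac{1+\log p}{2} + \log u - \tfrac{u^2}{2} \to -\infty$ — using $\Omega(x)\le\log|x|$ for $|x|\ge 2$, which follows from Jensen's inequality and $\int\lambda\,d\mu(\lambda)=0$ — this yields a unique zero $E_0(p)\in(0,\infty)$, and by evenness the zero set of $\Sigma_p$ is $\{\pm E_0(p)\}$. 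Because $\Sigma_p$ decreases on $[0,\infty)$ with its zero at $E_0(p)$, the inequality $E_0(p)\ge u_p$ of \eqref{eqn:e0_lower_bound} is equivalent to $\Sigma_p(u_p)\ge 0$; evaluating gives $\Sigma_p(u_p) = \tfrac12\log(p-1) - \tfrac{p-2}{p}$, which vanishes at $p=2$ and whose $p$-derivative is $\tfrac{1}{2(p-1)} - \tfrac{2}{p^2} = \tfrac{(p-2)^2}{2p^2(p-1)}\ge 0$; hence $\Sigma_p(u_p) > 0$ for $p>2$, which is the sharp inequality for $p\ge 3$, while $p=2$ is exactly the equality case $E_0(2) = \sqrt2 = u_2$ under the stated convention.

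For the scaling \eqref{eqn:e0_scaling}, rearranging $\Sigma_p(E_0(p)) = 0$ gives $E_0(p)^2 = 1 + \log(p-1) + 2\Omega\!\bigl(E_0(p)\sqrt{p/(p-1)}\bigr)$. The argument of $\Omega$ is $\ge 2$, and $\Omega$ is increasing on $[0,\infty)$ with $\Omega(2) = 1/2$, so $E_0(p)^2 \ge 2 + \log(p-1)$ and thus $\liminf_p E_0(p)/\sqrt{\log p}\ge 1$ (and in particular $E_0(p)\to\infty$). In the other direction, $\Omega(x)\le\log|x|$ gives $E_0(p)^2 - 2\log E_0(p) \le 1 + \log p$, from which a short bootstrap — first deduce $E_0(p) = O(\sqrt{\log p})$, then feed this back to get $E_0(p)^2 \le \log p + \log\log p + O(1)$ — yields $\limsup_p E_0(p)/\sqrt{\log p}\le 1$, completing the proof of \eqref{eqn:e0_scaling}. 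The argument is essentially routine; the only points needing care are bookkeeping ones — tracking the non-smooth point $|x|=2$ of $\Omega$ consistently through the concavity step, and verifying the sign of $\tfrac12\log(p-1) - \tfrac{p-2}{p}$, whose positive margin at $p=3$ is quite small.
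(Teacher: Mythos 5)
Your proof is correct, and all the computations check out: the derivative formulas for $\Omega$, the inner-region identity $\Sigma_p(u)=\tfrac12\log(p-1)-\tfrac{(p-2)u^2}{4(p-1)}$, the evaluation $\Sigma_p(u_p)=\tfrac12\log(p-1)-\tfrac{p-2}{p}$ (which agrees with the paper's $-1+\tfrac{\log(p-1)}{2}+\tfrac2p$), and the Jensen bound $\Omega(x)\le\log|x|$ for $|x|\ge2$ (valid because $x-\lambda$ has constant sign on $\mathrm{supp}\,\mu$ and $\int\lambda\,\diff\mu=0$). Where you differ from the paper is in self-containedness: the paper verifies concavity by the one-line observation $\Sigma_p''(u)\le\Sigma_p''(0)=\tfrac{p}{2(p-1)}-1\le0$ and then \emph{cites} prior literature for existence, uniqueness, and the scaling \eqref{eqn:e0_scaling} — namely that $\Sigma_p$ coincides (up to reparametrization) with the complexity function $\Theta_p$ of Auffinger--Ben Arous--\v{C}ern\'y, whose asymptotics appear in \cite{mckenna2024complexity} — before doing the same $\Sigma_p(u_p)$ evaluation you do for the lower bound. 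Your route replaces those citations with elementary calculus: monotonicity of $\Sigma_p$ on $[0,\infty)$ plus $\Sigma_p\to-\infty$ gives existence and uniqueness, and the two-sided squeeze $2+\log(p-1)\le E_0(p)^2\le 1+\log p+2\log E_0(p)$ with a short bootstrap gives the scaling. This buys a proof readable without consulting the spin-glass literature, at the cost of a page of calculus the paper avoids; both arguments are sound, and your explicit monotonicity-in-$p$ check of $\tfrac12\log(p-1)-\tfrac{p-2}{p}$ via the derivative $\tfrac{(p-2)^2}{2p^2(p-1)}$ is in fact slightly more careful than the paper's bare assertion that this quantity is increasing.
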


\begin{proof}
It is easy to check that $\Sigma''_p(u) \leq \Sigma''_p(0) = \frac{p}{2(p-1)} - 1 \leq 0$. For the claims about $E_0(p)$, we note that $\Sigma_p$ and $E_0(p)$ have already appeared multiple times in the literature under different names: The function $\Theta_p$ defined in (2.15) of \cite{auffinger2013random} has $\Theta_p(u) = \Sigma_p(\min(u,0))$; the function called $\Sigma_{p+q}$ in \cite{mckenna2024complexity} is the same as $\Theta_{p+q}$ from \cite{auffinger2013random}, so the asymptotics \eqref{eqn:e0_scaling} appear as Corollary 2.8 in \cite{mckenna2024complexity}. Since
\[
    \Sigma_p\left(2\sqrt{\frac{p-1}{p}}\right) = \frac{1+\log(p-1)}{2} + \Omega(2) - 2\left(\frac{p-1}{p}\right) = -1 + \frac{\log(p-1)}{2} + \frac{2}{p}
\]
vanishes for $p = 2$ and is strictly increasing in $p$, and $\Sigma_p$ vanishes at $E_0(p)$, the claim \eqref{eqn:e0_lower_bound} follows.
\end{proof}
We also recall the definition of the Wasserstein$-1$ distance on measures. If $\mu, \nu $ are two measures on $\R$, then the Wasserstein$-1$ distance between them, denoted $W_1(\mu, \nu)$, is defined as
\begin{equation}
    W_1(\mu, \nu)\defeq\sup\left\{\int_\R f(x)\diff(\mu-\nu)(x): f:\R\rightarrow \R \text{ continuous and }\lvert f \rvert_{\mc{L}}\le 1 \right\},
\end{equation}
where $\lvert f \rvert_{\mc{L}}$ is the Lipschitz constant of $f$. If $M$ is a matrix, we write $\hat{\mu}_M = \sum_{i=1}^N \delta_{\lambda_i(M)}$ for its empirical measure of eigenvalues.


\section{Real case}\label{sec:real_case}

In this section we fix the underlying field $K=\R$. So we let $T\in (\R^d)^{\otimes p}$ be a real Gaussian random tensor of order $p$ and size $d$, that is $T$ is an array $(T_{i_1,i_2,\ldots,i_p})_{i_1,i_2,\ldots,i_p=1}^d$ where $T_{i_1,i_2,\ldots,i_p}$ are independent real standard normal random variables, as stated in the introduction. 
We are interested in the asymptotics ``$d\rightarrow \infty$, $p$ fixed'' and ``$d$ fixed, $p\rightarrow \infty$'' of the injective norm 
\[
    n_T(p,d)\defeq\injnorm{T}=\max_{x^{(i)}\in \mathbb{S}^{d-1}_\R}\lvert\langle T,x^{(1)}\otimes\ldots\otimes x^{(p)} \rangle \rvert = \max_{x^{(i)} \in \mathbb{S}^{d-1}_\R} \langle T,x^{(1)} \otimes \cdots \otimes x^{(p)} \rangle,
\]
where the last equality holds because, if $(x^{(1)},\ldots,x^{(p)})$ is a maximizer of $\lvert\langle T,x^{(1)}\otimes\ldots\otimes x^{(p)} \rangle \rvert$, then so is, e.g., $(\pm x^{(1)},\ldots,x^{(p)})$, and one of these sign choices makes $\langle T,x^{(1)} \otimes \cdots \otimes x^{(p)} \rangle$ positive. 
For the normalized real case, recalling the Hilbert--Schmidt norm $\|T\|_{\textup{HS}}^2 = \sum_{i_1, \ldots, i_p=1}^d \abs{T_{i_1,\ldots,i_p}}^2$, we introduce the normalised tensor $\widetilde{T}$ with components 
\[
    \widetilde{T}_{i_1,\ldots,i_p} = \frac{1}{\|T\|_{\textup{HS}}}T_{i_1,\ldots,i_p},
\]
with corresponding injective norm
\[
    n_{\widetilde{T}}(p,d) = \frac{1}{\|T\|_{\textup{HS}}}n_T(p,d).
\]
This turns $\widetilde{T}$ into a $\R$-quantum state. \\

To access the properties of the injective norm, we study the critical points of the random function $f_T:(\mathbb{S}^{d-1}_{\R})^{p}\rightarrow \mathbb{R}$ defined by 
\begin{equation}
   f_T(x^{(1)},x^{(2)},\ldots, x^{(p)})=(p(d-1))^{-1/2} \sum_{i_1,i_2,\ldots,i_p=1}^d T_{i_1,i_2,\ldots,i_p} x^{(1)}_{i_1}x^{(2)}_{i_2}\ldots x^{(p)}_{i_p},
\end{equation}
where the normalization $(p(d-1))^{-1/2}$ is used for later convenience.
Equivalently, $f_T$ is the centered Gaussian process on this product of unit spheres with covariance
\[
    \E[f_T(x^{(1)},\ldots,x^{(p)}) f_T(y^{(1)},\ldots,y^{(p)})] = \frac{1}{p(d-1)} \prod_{i=1}^p \langle x^{(i)},y^{(i)} \rangle.
\]


\subsection{Kac--Rice formula}\label{sec:KR-formula-real}
We use the Kac--Rice formula to count the number of critical points of the process $f_T$. We define, for all Borel sets $D\subseteq \R$,
$$\Crt_{f_T}(D)\defeq\{x\in (\mathbb{S}_\R^{d-1})^{p}:\nabla f_T(x)=0, f_T(x)\in D\}.$$
An important note is that this process is isotropic, in the sense that the covariance is invariant under the action of $O(d)^{p}$, where $O(d)$ is the orthogonal group. This is important because the Kac--Rice formula is initially an integral over the high-dimensional product of spheres $(\mathbb{S}_\R^{d-1})^{p}$, but this isotropy property, along with the transitivity of the action of $O(d)$ on $\mathbb{S}_\R^{d-1}$, ensures that the integrand is constant on this product of spheres, so we can pull it out and pick up only a volume factor. Additionally, our process is degenerate: Since $\ip{T,x^{(1)} \otimes \cdots \otimes x^{(p)}}$ is linear in each $x^{(i)}$, the Hessian has zero blocks on the diagonal, and thus standard versions of Kac--Rice do not apply. In Appendix \ref{app:KR-with-degeneracies} we prove a version which is valid for our process, but only as an upper bound, since this is both much simpler than the corresponding lower bound and also sufficient for our purposes. The result is the following simplified formula.

\begin{lemma}\label{lem:exact_kac_rice}
(Proved in Appendix \ref{app:KR-with-degeneracies})
Fix $p$ and $d$, denote $N = p(d-1)$, and define $F(p,d) = \sqrt{\frac{N}{2\pi}}^{N+1} \left( \frac{2\pi^{d/2}}{\Gamma(d/2)} \right)^p$. Then, whenever $D \subset \R$ is a finite union of intervals, we have
\[
    \E[\Crt_{f_T}(D)] \leq F(p,d) \int_D e^{-N\frac{u^2}{2}} \E_{\textup{BHGOE}}[\abs{\det(W_N-u)}] \diff u
\]
where the expectation is taken over $W_N \sim \textup{BHGOE}(d-1,p)$.
\end{lemma}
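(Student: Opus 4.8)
The plan is to derive this version of the Kac--Rice formula by carefully tracking the degeneracy coming from the zero diagonal blocks of the Hessian. The starting point is the general (upper-bound) Kac--Rice principle: for a smooth centered Gaussian field $g$ on a manifold $M$, one has $\E[\Crt_g(D)] \le \int_M \E[\,\abs{\det \nabla^2 g(x)} \,\mathbbm{1}\{g(x)\in D\}\mid \nabla g(x)=0\,]\, p_{\nabla g(x)}(0)\, \diff \mathrm{vol}(x)$, and in fact equality holds under nondegeneracy hypotheses that fail here because the joint law of $(g(x),\nabla g(x),\nabla^2 g(x))$ is supported on a proper subspace. So first I would invoke the weak/one-sided Kac--Rice statement proved in Appendix~\ref{app:KR-with-degeneracies}, which is tailored to exactly this situation; the present lemma is then the result of explicitly computing all the ingredients for the field $f_T$ on $M = (\mathbb{S}_\R^{d-1})^p$.

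The key computational steps, in order, are: (i) Choose, at a fixed reference point $x_0 = (e,\ldots,e)$ with $e$ a unit vector, an orthonormal frame for each tangent space $T_e\mathbb{S}_\R^{d-1} \cong \R^{d-1}$, so that $\nabla f_T(x_0)$ is a Gaussian vector in $\R^N$ with $N = p(d-1)$; compute its covariance from the covariance kernel $\frac{1}{p(d-1)}\prod_i \langle x^{(i)},y^{(i)}\rangle$, obtaining (after the chosen normalization) the density value $p_{\nabla f_T(x_0)}(0) = \sqrt{\frac{N}{2\pi}}^{\,N}$ up to constants --- this is where the factor $\sqrt{N/2\pi}^{\,N+1}$ partly comes from, with one extra power absorbed from the $\mathbbm{1}\{f_T(x_0)\in D\}$ contribution being replaced by its conditional density at the relevant point. (ii) Identify the conditional law of the Hessian $\nabla^2 f_T(x_0)$ given $\nabla f_T(x_0)=0$ (and given the field value); because $f_T$ is multilinear, two derivatives in the same block vanish identically, and a direct covariance computation on the off-diagonal blocks shows the conditional Hessian is distributed as $W_N - u\, c\,\Id$ for the appropriate constant absorbed into the BHGOE normalization $\sigma^2 = \frac{1}{p(d-1)}$, i.e. $\textup{BHGOE}(d-1,p)$ evaluated at shift $u$ --- here the $-u$ shift arises from the Hessian of the spherical constraint (the Lagrange-multiplier term) exactly as in the classical spherical $p$-spin computation. (iii) Use isotropy: the $O(d)^p$-invariance of the covariance plus transitivity of $O(d)$ on $\mathbb{S}_\R^{d-1}$ makes the integrand constant over $M$, so the integral is just $\Vol(M) = \left(\frac{2\pi^{d/2}}{\Gamma(d/2)}\right)^p$ times the value at $x_0$, which supplies the remaining factor in $F(p,d)$. (iv) Assemble: the Gaussian density of the field value at $x_0$ contributes the $e^{-N u^2/2}$ weight, and collecting everything gives the claimed bound with the expectation $\E_{\textup{BHGOE}}[\abs{\det(W_N - u)}]$.

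I expect the main obstacle to be step (ii) together with justifying that the one-sided Kac--Rice bound is legitimate despite the degeneracy --- that is, making precise that zeroing out the diagonal Hessian blocks does not invalidate the inequality and correctly accounting for which conditional densities survive. The bookkeeping of constants (powers of $2\pi$, the $N$ versus $N+1$ exponent, the interplay between the field-value density and the gradient density, and the spherical volume factors) is delicate but routine once the structure is fixed; the genuinely substantive point is that the joint Gaussian vector $(f_T(x_0),\nabla f_T(x_0),\nabla^2 f_T(x_0))$ is degenerate, so one must restrict to the non-degenerate directions, check that $f_T(x_0)$ and $\nabla f_T(x_0)$ remain jointly non-degenerate there, and verify that the resulting conditional Hessian law is exactly a shifted $\textup{BHGOE}(d-1,p)$; this is precisely what is deferred to Appendix~\ref{app:KR-with-degeneracies}, and I would structure the proof as ``plug the covariance computations of (i)--(iii) into the appendix's formula.''
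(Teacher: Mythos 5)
Your proposal is correct and follows essentially the same route as the paper: compute the joint Gaussian law of field, Riemannian gradient, and Riemannian Hessian at a reference point (yielding the conditional Hessian law $\textup{BHGOE}(d-1,p)-u\Id$), use isotropy and independence of the gradient from the (field, Hessian) pair to factor the integrand and pull out $\Vol((\mathbb{S}^{d-1})^p)$, and assemble the constants exactly as you describe. The one step you defer — why the one-sided bound survives the degeneracy — is handled in the paper by mollifying the delta function on the gradient only (which is non-degenerate) and applying Fatou's lemma, which is precisely where the inequality rather than equality arises; the accompanying almost-sure checks include showing $\det(\textup{BHGOE}(d-1,p)-u)\neq 0$ a.s.\ by exhibiting a single matrix in the support where the determinant polynomial is nonzero.
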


\begin{remark}
As previously mentioned, a (two-sided) analogue of this result for $D=\R$ was shown in previous work of Draisma and Horobe\c{t} \cite{draisma2016average}. However, since we also need more general sets $D$, and the complex version which they do not consider, we have to re-derive both the real and complex versions of this result (which have similar proofs) in Appendix \ref{app:KR-with-degeneracies}.
\end{remark}

\begin{remark}\label{rem:p=2-relation-to-singular-values}
Interestingly, for $p=2$ we have 
$$W_N=\begin{pmatrix} 0 & A^t\\A & 0\end{pmatrix}$$
so that $$\lvert\det(W_N-u)\rvert=\lvert \det(A^tA-u^2)\rvert$$ where now $S=A^tA$ is a $(d-1)\times (d-1)$ Wishart random matrix, so that the density of eigenvalues for $S$ reads
\begin{equation}
    \rho(\lambda_1,\ldots,\lambda_{d-1})=C_{d-1}\prod_{i<j}\lvert s_i-s_j \rvert \prod_{i=1}^{d-1}s_i^{-1/2}e^{-\frac{N}{2}s_i},
\end{equation}
$C_{d-1}$ being a normalization constant that one can compute explicitly. As one is interested in the average
$$\sqrt{\frac{2\pi}{N}}\E[\lvert\det(W_N-u)\rvert]=\int_D \diff u e^{-\frac{N}{2}u^2}\E_{BHGOE}[\lvert\det(W_N-u)\rvert],$$
in light of what is said above and performing the change of variables $u^2\rightarrow t$, one has
\begin{multline*}
\int_D \diff u \, e^{-\frac{N}{2}u^2}\E_{BHGOE}[\lvert\det(W_N-u)\rvert]\\=\widetilde{C}_{d-1}\int_{\tilde D} \diff t \int_0^\infty \prod_{i=1}^{d-1}ds_i \, t^{-1/2} e^{-\frac{N}{2}t} \prod_{i=1}^{d-1}\lvert s_i-t \rvert \prod_{i<j}\lvert s_i-s_j \rvert \prod_{i=1}^{d-1}s_i^{-1/2}e^{-\frac{N}{2}s_i},
\end{multline*}
 where $\tilde{D}\defeq\{x \in \R_+: \pm \sqrt{x}\in D\}$. Setting $s_{d}\defeq t$, one recognizes the integral of the density of eigenvalues of a real Wishart $d\times d$ random matrix, such that the last eigenvalue is in $\tilde D$, so that
 $$\E[\Crt_{f_T}(D)]\propto\int_{\tilde D}ds_{d}\rho_1(s_d)$$
 where $\rho_1(s_d)$ is the one point eigenvalue density. This is not surprising, as for $p=2$, $x^{(1)},x^{(2)}\in \mathbb{S}_\R^{d-1}$, we have $\lvert \langle T,x^{(1)}\otimes x^{(2)}\rangle \rvert=\lvert \langle x^{(1)},T x^{(2)}\rangle\rvert$ when considering $T$ as a matrix. The critical points of $\lvert \langle x^{(1)},T x^{(2)}\rangle \rvert$ are the singular vectors of $T$ (up to a choice of sign), by the Courant-Fisher variational characterization of the singular values of a matrix, and the values of $\lvert \langle x^{(1)},T x^{(2)}\rangle\rvert$ at those critical points are the singular values of $T$. The distribution of those singular values is indeed given by the Wishart integral above.
  This remark is therefore a good check of our Kac--Rice computation. Eventhough Lemma \ref{lem:exact_kac_rice} is only an upper bound, we believe it to be true as an equality\footnote{In fact, we do not prove the equality in this paper because it is not useful for our result and avoids additional technical difficulties. However, we do not think there are any obstructions to proving the equality.}, and our calculation shows that this equality is verified at $p=2$. \\ 
  For higher values $p\ge 3$, such a simple identification is not possible anymore.
\end{remark}


\subsection{Real case: \texorpdfstring{$p$}{p} fixed, \texorpdfstring{$d \to \infty$}{d to infinity}}\label{subsec:R_case_d_to_infinity}
In this section we study the case of a real tensor in the regime of large local dimension $d$ while maintaining $p$ fixed. This regime is the typical regime studied in the spin glass and data analysis literature; readers familiar with this literature should think of this as a (nonsymmetric) pure $p$-spin multispecies spherical spin glass in high dimensions. We study both the case of un-normalized and normalized tensors. We show the following result.
\begin{theorem}
\label{thm:d_to_infinity_first_moment}
\textbf{(Real, nonsymmetric, $p$ fixed, $d \to +\infty$, upper bound.)}
Fix $p \geq 2$, and let
\[
    \alpha(p) = E_0(p)\sqrt{p}.
\]
Then for every $\epsilon > 0$ we have
\begin{align}
    \limsup_{d \to \infty} \frac{1}{d} \log \P\left(\frac{n_T(p,d)}{\sqrt{d}} > \alpha(p) + \epsilon\right) < 0, \label{eqn:real_nonsym_unnorm} \\
    \limsup_{d \to \infty} \frac{1}{d} \log \P\left( \frac{n_{\widetilde{T}}(p,d)}{\sqrt{d}} d^{p/2} > \alpha(p) + \epsilon\right) < 0. \label{eqn:real_nonsym_norm}
\end{align}
\end{theorem}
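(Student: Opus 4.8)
The plan is to run a first-moment (Kac--Rice) argument. First I would rewrite the event in terms of the Gaussian field $f_T$: since $\langle T, x^{(1)}\otimes\cdots\otimes x^{(p)}\rangle = \sqrt{p(d-1)}\,f_T(x)$, one has $n_T(p,d) = \sqrt{p(d-1)}\,\max_x f_T(x)$, so writing $N \defeq p(d-1)$, the event $\{n_T(p,d)/\sqrt d > \alpha(p)+\epsilon\}$ is exactly $\{\max_x f_T(x) > u_0(d)\}$ with $u_0(d) = (E_0(p)+\epsilon/\sqrt p)\sqrt{d/(d-1)} \ge E_0(p)+\epsilon/\sqrt p =: u_0^\ast > E_0(p)$; in particular it suffices to bound $\P(\max_x f_T > u_0^\ast)$. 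Because $(\mathbb{S}^{d-1}_\R)^p$ is a compact manifold without boundary and $f_T$ is smooth, the maximum is attained at a critical point, so $\{\max_x f_T > u_0^\ast\} = \{\Crt_{f_T}((u_0^\ast,\infty)) \ge 1\}$ and Markov's inequality plus monotone convergence give $\P(\max_x f_T > u_0^\ast) \le \E[\Crt_{f_T}((u_0^\ast,\infty))] = \lim_{R\to\infty}\E[\Crt_{f_T}((u_0^\ast,R))]$.

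Next I would feed this into Lemma~\ref{lem:exact_kac_rice}: with $W_N \sim \textup{BHGOE}(d-1,p)$, monotone convergence gives $\E[\Crt_{f_T}((u_0^\ast,\infty))] \le F(p,d)\int_{u_0^\ast}^\infty e^{-Nu^2/2}\,\E_{\textup{BHGOE}}[\abs{\det(W_N-u)}]\,du$, and I would extract the exponential rate via three inputs. (i) Stirling's formula, using $d-1 = N/p$, gives $\tfrac1N\log F(p,d) \to \tfrac{1+\log p}{2}$ (all logarithmic terms cancel). (ii) The limiting spectral distribution $\mu_W$ of $W_N$ is the semicircle law of variance $\tfrac{p-1}{p}$ — its edge $2\sqrt{(p-1)/p}$ matches Corollary~\ref{cor:hgoe_bandeira} — and, crucially, \eqref{eqn:e0_lower_bound} places the integration range $u \ge u_0^\ast$ strictly to the right of the spectrum; there $\det(W_N-u)$ has all factors of one sign, so that (by weak convergence of $\hat\mu_{W_N}$, concentration of the log-determinant, and the operator-norm tail of Lemma~\ref{lem:hgoe_bandeira_combined}) $\tfrac1N\log\E_{\textup{BHGOE}}[\abs{\det(W_N-u)}] \to \Omega_{\mu_W}(u) = \tfrac12\log\tfrac{p-1}{p} + \Omega\!\left(u\sqrt{\tfrac{p}{p-1}}\right)$, uniformly on compacts of $(2\sqrt{(p-1)/p},\infty)$. (iii) For $u$ beyond a large fixed $R$ the crude bound $\abs{\det(W_N-u)} \le (u+\|W_N\|)^N$ makes the tail of the integral superexponentially small. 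Putting (i)--(iii) together with the Laplace principle,
\[
  \limsup_{d\to\infty}\frac1N\log\E[\Crt_{f_T}((u_0^\ast,\infty))] \;\le\; \frac{1+\log p}{2} + \sup_{u\ge u_0^\ast}\Bigl[\Omega_{\mu_W}(u)-\frac{u^2}{2}\Bigr] \;=\; \sup_{u\ge u_0^\ast}\Sigma_p(u),
\]
the final equality being the (routine) identity $\tfrac{1+\log p}{2}+\Omega_{\mu_W}(u)-\tfrac{u^2}{2} = \Sigma_p(u)$. Since $\Sigma_p$ is even and concave (Lemma~\ref{lem:sigma_p}), hence non-increasing on $[0,\infty)$, and vanishes at its unique positive zero $E_0(p) < u_0^\ast$, the supremum equals $\Sigma_p(u_0^\ast) < 0$. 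Thus $\E[\Crt_{f_T}((u_0^\ast,\infty))] \le e^{-cN}$ with $c = c(p,\epsilon) > 0$ for all large $d$, and since $N = p(d-1)$ this is \eqref{eqn:real_nonsym_unnorm}.

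For the normalized statement \eqref{eqn:real_nonsym_norm}, I would write $\tfrac{n_{\widetilde T}(p,d)}{\sqrt d}d^{p/2} = \tfrac{n_T(p,d)}{\sqrt d}\cdot\tfrac{d^{p/2}}{\|T\|_{\textup{HS}}}$ and use that $\|T\|_{\textup{HS}}^2/d^p$ is an average of $d^p$ i.i.d.\ $\chi^2_1$ variables, so $\P(\|T\|_{\textup{HS}}^2 < (1-\delta)^2 d^p) \le e^{-c'd^p}$ by standard $\chi^2$ concentration. Choosing $\delta$ small enough that $(1-\delta)(\alpha(p)+\epsilon) \ge \alpha(p)+\epsilon/2$, the event in \eqref{eqn:real_nonsym_norm} is contained in $\{n_T(p,d)/\sqrt d > \alpha(p)+\epsilon/2\} \cup \{\|T\|_{\textup{HS}}^2 < (1-\delta)^2 d^p\}$, and combining \eqref{eqn:real_nonsym_unnorm} (with $\epsilon/2$) with the $\chi^2$ bound finishes the proof, since $e^{-c'd^p}$ decays faster than $e^{-cp(d-1)}$.

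The main obstacle is input (ii): pinning down the exponential-order behavior of $\E_{\textup{BHGOE}}[\abs{\det(W_N-u)}]$ with the exact rate $\Omega_{\mu_W}(u)$ and identifying $\mu_W$. Even though $u$ is strictly to the right of the bulk (so $\abs{\det(W_N-u)} = \exp(N\int\log(u-\lambda)\,d\hat\mu_{W_N}(\lambda))$ with no sign cancellations, and one needs only weak convergence of $\hat\mu_{W_N}$ plus concentration of this Lipschitz functional), one still has to rule out that a few atypically large eigenvalues inflate the log-determinant, which is precisely where the genuinely-exponential (not stretched-exponential) operator-norm deviation bound of Lemma~\ref{lem:hgoe_bandeira_combined} enters. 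Everything else reduces to Stirling, the Laplace principle, and the properties of $\Sigma_p$ already recorded in Lemma~\ref{lem:sigma_p}.
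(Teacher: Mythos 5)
Your proposal is correct in outline and follows the same skeleton as the paper's proof: reduce the event to the existence of a critical point of $f_T$ above level $E_0(p)+\epsilon/\sqrt{p}$, apply Markov's inequality to $\E[\Crt_{f_T}(\cdot)]$, feed in the Kac--Rice upper bound of Lemma \ref{lem:exact_kac_rice}, extract the rate $\frac{1+\log p}{2}$ from $F(p,d)$ by Stirling, evaluate the determinant integral by Laplace, and identify the resulting rate function with $\Sigma_p$; the normalized case via $\chi^2$ concentration of $\|T\|_{\textup{HS}}^2$ is exactly Lemma \ref{lem:LauMas2000} as used in the paper. Where you diverge is step (ii). The paper does not argue directly that $\frac1N\log\E[\abs{\det(W_N-u)}]\to\Omega_{\mu_W}(u)$; it invokes the general determinant-asymptotics machinery of \cite[Theorem 4.1]{arous2022exponential}, whose hypotheses are verified through a quantitative ${\rm W}_1$ bound on $\E[\hat\mu_{H_N(u)}]$ obtained from the Kronecker local law of \cite{alt2019location} (Lemma \ref{lem:thm_1.2}), a Wegner-type no-small-eigenvalue estimate (Lemma \ref{lem:wegner}), log-Sobolev concentration of Lipschitz linear statistics (Lemma \ref{lem:real-lsi}), and a determinant growth bound (Lemma \ref{lem:cont_decay_in_u}). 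Your more hands-on route --- exploiting that $u$ is gapped outside the bulk so the determinant has a definite sign, treating the log-determinant as a Lipschitz linear statistic on the good event, and killing the bad event with the operator-norm tail of Lemma \ref{lem:hgoe_bandeira_combined} plus the crude bound $\abs{\det(W_N-u)}\le(u+\|W_N\|)^N$ --- is sound and is in fact essentially the strategy the paper deploys in the opposite regime ($d$ fixed, $p\to\infty$; see Lemma \ref{lem:laplace_d_fixed_determinants}). What it buys is avoiding the black-box citation; what it costs is that the upper-bound direction still hinges on one genuinely nontrivial input you only assert: that the empirical spectral distribution of the $\textup{BHGOE}(d-1,p)$ converges to the semicircle law of variance $\frac{p-1}{p}$. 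This does not follow from Corollary \ref{cor:hgoe_bandeira}, which controls only the spectral edge (knowing merely that the spectrum lies in $[-2\sqrt{(p-1)/p}-\epsilon_0,\,2\sqrt{(p-1)/p}+\epsilon_0]$ gives a log-potential upper bound of $\log(u+2\sqrt{(p-1)/p}+\epsilon_0)$, which is strictly worse than $\Omega_{\mu_W}(u)$ and would not close the argument). Identifying the limiting ESD of this zero-block variance profile is precisely where the paper spends its local-law effort, so your plan is complete modulo supplying that one convergence (by a moment method, or by the matrix Dyson equation computation the paper carries out).
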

The normalized case \eqref{eqn:real_nonsym_norm} is a simple corollary of the unnormalized one \eqref{eqn:real_nonsym_unnorm}, relying essentially on the fact that $\|T\|_{\textup{HS}}^2$ concentrates very quickly about $d^p$. This is classical since, in our Gaussian context, $\|T\|_{\textup{HS}}^2 = \sum_{i_1,\ldots,i_p=1}^d \abs{T_{i_1,\ldots,i_p}}^2$ is actually a $\chi^2$ random variable with $d^p$ degrees of freedom (in the real case, or one-half of a $\chi^2$ variable with $2d^p$ degrees of freedom in the complex case, coming from the independent real and imaginary parts, each of variance $1/2$). The specific estimate we will use is given in the lemma below.

\begin{lemma}
\label{lem:LauMas2000}
\cite[Lemma 1]{LauMas2000}
For any $x > 0$, we have
\begin{align*}
    \text{Real case:} \qquad &\P(\|T\|_{\textup{HS}}^2 \leq d^p - 2d^{p/2}x) \leq \exp(-x^2), \\
    \text{Complex case:} \qquad &\P(\|T\|_{\textup{HS}}^2 \leq d^p - \sqrt{2}d^{p/2}x) \leq \exp(-x^2).
\end{align*}
\end{lemma}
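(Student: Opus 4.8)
The plan is to reduce both bounds to the standard one-sided concentration inequality for chi-square random variables, which is exactly \cite[Lemma 1]{LauMas2000}: if $Y$ has a chi-square distribution with $D$ degrees of freedom, then $\P(Y \leq D - 2\sqrt{Dt}) \leq e^{-t}$ for every $t > 0$. First I would record the relevant distributional identities for $\|T\|_{\textup{HS}}^2 = \sum_{i_1,\ldots,i_p=1}^d \abs{T_{i_1,\ldots,i_p}}^2$. In the real case the $d^p$ summands are i.i.d.\ squares of $\mc{N}(0,1)$ variables, so $\|T\|_{\textup{HS}}^2$ is chi-square with $D = d^p$ degrees of freedom. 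In the complex case each $\abs{T_{i_1,\ldots,i_p}}^2$ is the sum of the squares of two independent $\mc{N}(0,1/2)$ variables, i.e.\ one-half of a sum of two squared standard normals, so $2\|T\|_{\textup{HS}}^2$ is chi-square with $D = 2d^p$ degrees of freedom.

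Next I would substitute into the chi-square bound with the choice $t = x^2$. In the real case, $D = d^p$ gives $2\sqrt{Dt} = 2d^{p/2}x$, which is precisely $\P(\|T\|_{\textup{HS}}^2 \leq d^p - 2d^{p/2}x) \leq e^{-x^2}$. In the complex case, applying the bound to $Y = 2\|T\|_{\textup{HS}}^2$ with $D = 2d^p$ gives $2\sqrt{Dt} = 2\sqrt{2d^p}\,x = 2\sqrt{2}\,d^{p/2}x$, i.e.\ $\P(2\|T\|_{\textup{HS}}^2 \leq 2d^p - 2\sqrt{2}\,d^{p/2}x) \leq e^{-x^2}$, and dividing the event by $2$ yields the claimed $\P(\|T\|_{\textup{HS}}^2 \leq d^p - \sqrt{2}\,d^{p/2}x) \leq e^{-x^2}$.

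If one preferred not to cite \cite{LauMas2000} as a black box, the chi-square bound is itself a short Chernoff estimate that I would include: for $\lambda > 0$ one has $\E[e^{-\lambda Y}] = (1+2\lambda)^{-D/2}$, so Markov's inequality gives $\P(Y \leq D-s) \leq \exp\bigl(\lambda(D-s) - \tfrac{D}{2}\log(1+2\lambda)\bigr)$; the elementary bound $\log(1+u) \geq u - \tfrac{u^2}{2}$ together with the optimal choice $\lambda = s/(2D)$ collapses this to $\P(Y \leq D - s) \leq e^{-s^2/(4D)}$, which with $s = 2\sqrt{Dt}$ is $e^{-t}$. I do not expect any genuine obstacle here; the only points that require care are keeping track of the number of degrees of freedom — in particular the factor of $2$ in the complex case coming from the variance-$1/2$ real and imaginary parts — and applying the reparametrization $t = x^2$ consistently to both sides of each event.
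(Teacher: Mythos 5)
Your proposal is correct and matches the paper's intent exactly: the paper simply cites \cite[Lemma 1]{LauMas2000} after observing that $\|T\|_{\textup{HS}}^2$ is $\chi^2$ with $d^p$ degrees of freedom in the real case and one-half of a $\chi^2$ with $2d^p$ degrees of freedom in the complex case, which is precisely your reduction with $t = x^2$. Your optional self-contained Chernoff argument is also sound, though the paper does not include one.
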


Before proving this result, we give some coherency checks with respect to existing results in the following remarks.

\begin{remark}\label{rem:nonsym-vs-sym}
Informally speaking (and ignoring errors), our theorem says that, for $p$ fixed and $d \to \infty$, 
\[
    \max_{x^{(1)},\ldots,x^{(p)} \in \mathbb{S}^{d-1}} \abs{ \sum_{i_1,\ldots,i_p = 1}^d T_{i_1,\ldots,i_p} x^{(1)}_{i_1} \cdots x^{(p)}_{i_p} } \leq \sqrt{dp} E_0(p).
\]
For comparison, from spherical $p$-spin in $d$ dimensions \cite{auffinger2013random, subag2017} (recall our remark earlier that $T$ is projected onto its symmetric part by the Hamiltonian $H = \ip{T,x \otimes \cdots \otimes x}$), we know that
\[
    \max_{x \in \mathbb{S}^{d-1}} \abs{ \sum_{i_1,\ldots,i_p = 1}^d T_{i_1,\ldots,i_p} x_{i_1} \cdots x_{i_p} } \approx \sqrt{d} E_0(p),
\]
which of course is coherent because the former maximizes over a bigger set than the latter. Interestingly, if we assume that the upper bound is tight, the difference between the two is simply a factor of $\sqrt{p}$.
\end{remark}

\begin{remark}
\label{rem:p=2-rmt}
The $p = 2$, $d \to \infty$, unnormalized case of our real and complex results are both known from random matrices, where they describe the operator norms of $d \times d$ real/complex Ginibre matrices. These are known to scale like $2\sqrt{d}$ in our normalizations, matching our bound of $\alpha(2)\sqrt{d} = 2\sqrt{d}$.
\end{remark}

\begin{remark}
\label{rem:fln22}
The work \cite{fitter2022estimating} treats algorithms to compute the injective norm, and gives some numerical lower bounds for several different ensembles of random and deterministic tensors. These ensembles include, as a special case, the real and complex cases of our model, in the scaling limit $d \to \infty$ and $p = 2, 3$ fixed. The $p = 2$ matrix case was previously understood, but they use it to benchmark their algorithms before handling the new $p = 3$ case. Their numerical lower bounds closely match our analytic upper bounds (to match conventions, we note that their $n$ is our $p$, but we use $d$ in the same way):
\begin{itemize}
\item \textbf{Real case:} Their tensor entries are $\mc{N}(0,2/d)$-distributed in the real case, so their injective norm corresponds to our $\sqrt{2/d} \cdot n_T(p,d)$, so the upper bound we give translates into their normalization as $\sqrt{2}\alpha(p) = \sqrt{2p}E_0(p)$. When $p = 2$, this is $2\sqrt{2} \approx 2.828$, as indeed appears in Table 1 of \cite{fitter2022estimating} (and their numerical lower bound for this quantity is $\approx 2.795$). When $p = 3$, their numerical lower bound is $\approx 3.950$, which is quite close to our analytical upper bound of $\sqrt{6}E_0(3) \approx 4.054$.
\item \textbf{Complex case:} Their tensor entries have independent real and imaginary parts, each distributed as $\mc{N}(0,1/d)$, so their injective norm corresponds to our $\sqrt{2/d} \cdot n_T(p,d)$ and our analytic upper bound (from \eqref{eqn:complex_nonsym_unnorm} below) again translates to $\sqrt{2}\alpha(p) = \sqrt{2p}E_0(p)$. When $p = 3$, their numerical lower bound is $\approx 4.143$, whereas our analytical upper bound is $\sqrt{6}E_0(3) \approx 4.054$. We imagine that the mis-ordering of these quantities is a finite-size effect, and note that they are still quite close. (We also note that Fitter, Lancien, and Nechita considered the difference between $\approx 3.950$ and $\approx 4.143$ to be immaterial, and conjectured that the true analytic constants were the same in both cases; our main theorem gives an upper bound consistent with this conjecture.)
\end{itemize}
\end{remark}

\begin{remark}
\label{rem:subag}
Our model can be viewed as a pure multispecies spin glass, and recent work of Subag on such models \cite{Sub2023} takes the form ``Assuming the free energies of certain mixtures converge at all temperatures, the limiting ground-state energy of a pure multispecies spin glasses is given by the implicit solution of some equation.'' As Subag remarks, this convergence is expected in general, but currently only known \cite{BatSoh2022} when the original mixture function is convex, which is not the case for pure models like ours (our mixture function is $\xi(x_1,\ldots,x_p) = \prod_{i=1}^p x_i$). Therefore our result is not a consequence of \cite{Sub2023}. However, his conditional two-sided result seems to match our unconditional one-sided bound: Comparing our normalizations (his $N$ is our $dp$, his $\mathscr{S}$ is our $\{1,\ldots,p\}$, and so on), we see that the results match and our bound is tight if
\begin{equation}
\label{eqn:subag_east}
    E_0(p) = E_\ast(p),
\end{equation}
where $E_\ast(p)$ is found by letting $q_c = q_c(p) \in (0,1)$ be the unique solution to the implicit equation
\[
    \frac{q_c^2}{p(1-q_c)} = \frac{-\log(1-q_c)}{1+p(1-q_c)\frac{1}{q_c}},
\]
and then setting
\[
    E_\ast(p) = \sqrt{\left( -\log (1-q_c) \right)\left( 1+p(1-q_c)\frac{1}{q_c}\right)}.
\]
We do not currently know how to show \eqref{eqn:subag_east} analytically, but one can compute both sides numerically for various $p$ values, and they seem to agree (for example, one can numerically estimate $q_c(p=3) \approx 0.645$ and $q_c(p=4) \approx 0.805$, which give $E_\ast(3) \approx 1.657 \approx E_0(3)$ and $E_\ast(4)\approx 1.794 \approx E_0(4)$). 
\end{remark}

\begin{remark}\label{rem:entanglement-no-sym-vs-sym}
   We note that our result is consistent with the folklore notion (see, e.g., the comments above Theorem 1.1 in \cite{friedland2018most}) that nonsymmetric states are generally more entangled than their symmetric counterparts. Indeed, consider symmetric real standard Gaussian tensors $S=P_S(T)$, where $P_S$ is the projector onto the symmetric subspace $\text{Sym}_p(\R^d)$, that is $P_S(T)_{i_1,\ldots, i_p}=\frac{1}{p!} \sum_{\pi \in S_p} T_{i_{\pi(1)},\ldots,i_{\pi(p)}}$.  According to \cite{Kel1928,vanSch1935,friedland2013best}, the injective norm of a symmetric tensor is realized by a symmetric rank one tensor. Therefore, the injective norm of a real symmetric tensor can be obtained from the ground state of spherical spin glasses deduced in \cite{auffinger2013random}. Indeed, from \cite{auffinger2013random, subag2017}, we obtain, as $d\rightarrow \infty$, $\injnorm{S}\overset{\P}{\rightarrow}E_0(p)$ (where $\overset{\P}{\rightarrow}$ denotes convergence in probability). Additionally, $\sqrt{\frac{p!}{d^{p-1}}}\lnorm S\rnorm_2\overset{\P}{\rightarrow} 1$. By Slutsky's theorem, we have that $\sqrt{\frac{d^{p-1}}{p!}}\frac{\injnorm{S}}{\lnorm S \rnorm_2}\rightarrow E_0(p)$ in distribution (and so in probability since $E_0(p)$ is a constant), and so $\frac{\injnorm{S}}{\lnorm S \rnorm_2}\sim_{d\rightarrow \infty}\frac{\sqrt{p!}E_0(p)}{d^{\frac{p-1}{2}}}.$ 
    Denoting $\lvert \psi^{\text{sym}}\rangle=\frac{S}{\lnorm S\rnorm_2}$ the normalized version of $S$ to get a $\R$-quantum state, and using our main theorem \ref{thm:main_upper_bound} for $K=\R$, shows roughly that $\frac{\injnorm{\ket{\psi}}}{\injnorm{\lvert \psi^{\text{sym}}\rangle}}\le \sqrt{\frac{1}{(p-1)!}}$, and thus $\textup{GME}(\ket{\psi}) - \textup{GME}(\ket{\psi^{\textup{sym}}}) \geq \log((p-1)!) > 0$, in the limit of large $d$.
\end{remark}

Theorem \ref{thm:d_to_infinity_first_moment} will follow rapidly from the following lemma, which we prove afterwards.

\begin{lemma}
\label{lem:d_to_infinity_first_moment}
For every $D \subset \R$ with positive Lebesgue measure that is the closure of its interior, and that is contained in
\begin{equation}
\label{eqn:def_a}
    \mc{A} = (-\infty,-E_0(p)) \cup (E_0(p),+\infty),
\end{equation}
we have
\[
    \lim_{d \to \infty} \frac{1}{p(d-1)} \log \E[\Crt_{f_T}(D)] \leq \sup_{u \in D} \Sigma_p(u).
\]
\end{lemma}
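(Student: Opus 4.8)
The plan is to take the Kac--Rice upper bound from Lemma~\ref{lem:exact_kac_rice} and estimate the integral on its right-hand side by Laplace's method in the large-$d$ limit, so the main work is to identify the exponential growth rates of the three factors: the prefactor $F(p,d)$, the Gaussian weight $e^{-N u^2/2}$, and the random-determinant expectation $\E_{\textup{BHGOE}}[\abs{\det(W_N-u)}]$. First I would rewrite $F(p,d)$ using Stirling's formula: with $N = p(d-1)$, the factor $\sqrt{N/2\pi}^{\,N+1}$ contributes $\frac{1}{2}\log(N/2\pi) \cdot (N+1)$ to $\log F$, and the surface-area factor $\bigl(2\pi^{d/2}/\Gamma(d/2)\bigr)^p$ is the volume of $(\mathbb{S}_\R^{d-1})^p$; after dividing by $p(d-1)$ and sending $d\to\infty$, these combine into an explicit constant. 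I would then observe that the Gaussian weight contributes $-\frac{u^2}{2}$ per unit of $\frac{1}{N}\log$, uniformly in $u$ on the compact pieces of $D$ that matter.

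The heart of the argument is the asymptotics of $\frac{1}{N}\log \E_{\textup{BHGOE}(d-1,p)}[\abs{\det(W_N-u)}]$ as $d\to\infty$. By Corollary~\ref{cor:hgoe_bandeira}, the operator norm of $W_N$ concentrates near $2\sqrt{(p-1)/p}$, so the spectrum of $W_N$ fills out (a rescaling of) the semicircle; more precisely $\hat\mu_{W_N}$ should converge to the semicircle law dilated to have edges at $\pm 2\sqrt{(p-1)/p}$, i.e. to the law of $\sqrt{(p-1)/p}$ times a standard semicircular variable. Granting this (which follows from the moment method or from the free-probability description of the BHGOE as a compression of GOE, together with the operator-norm control to rule out outliers), one has
\[
    \frac{1}{N}\log\abs{\det(W_N-u)} = \int \log\abs{u-\lambda}\,\diff\hat\mu_{W_N}(\lambda) \longrightarrow \Omega\!\left(u\sqrt{\tfrac{p}{p-1}}\right),
\]
using the definition of $\Omega$ as the log-potential of the standard semicircle and the change of variables that accounts for the dilation. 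The logarithmic singularity of $\log\abs{u-\lambda}$ at $\lambda=u$ is integrable and, since we restrict $u$ to $\mc{A}$ where $\abs{u} > E_0(p) \ge 2\sqrt{(p-1)/p}$ by \eqref{eqn:e0_lower_bound}, $u$ actually stays outside the bulk, so there is no singularity issue at all and the convergence is clean; one also needs a uniform-integrability / concentration statement to pass the limit inside the expectation $\E_{\textup{BHGOE}}[\,\cdot\,]$, which is where the subexponential tail bound on $\|W_N\|_{\textup{op}}$ and standard determinant concentration (e.g. a variance bound on $\log\abs{\det}$, or truncation at the operator-norm scale) enter.

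Collecting the three rates and using Varadhan's lemma / Laplace's principle, $\frac{1}{N}\log \E[\Crt_{f_T}(D)]$ converges to $\sup_{u\in D}\bigl[\,c(p) + \Omega(u\sqrt{p/(p-1)}) - \frac{u^2}{2}\,\bigr]$ for the explicit constant $c(p)$ coming from $F(p,d)$; the point is that this constant is exactly $\frac{1+\log(p-1)}{2}$, so the bracket is precisely $\Sigma_p(u)$ as defined in Lemma~\ref{lem:sigma_p}, giving the claimed bound $\sup_{u\in D}\Sigma_p(u)$. I expect the main obstacle to be the rigorous interchange of limit and the expectation over $W_N$ in the determinant term: one must control both the contribution of atypically large eigenvalues (handled by the operator-norm tail from Corollary~\ref{cor:hgoe_bandeira}) and the fluctuations of $\log\abs{\det(W_N-u)}$ around its mean, and because the BHGOE has large zero diagonal blocks it is not literally a GOE, so one cannot quote GOE determinant asymptotics verbatim --- though the free-probability / moment-method identification of the limiting spectrum, plus the fact that $u$ lies strictly outside the support, keeps this manageable. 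A secondary bookkeeping point is verifying that the hypothesis ``$D$ is the closure of its interior and has positive Lebesgue measure'' is exactly what is needed so that the $\diff u$ integral does not lose the leading exponential order (a null set could shift the supremum), which is why the statement is phrased that way.
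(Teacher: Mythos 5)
Your proposal is correct and follows essentially the same route as the paper: Lemma \ref{lem:exact_kac_rice} plus Stirling asymptotics for $F(p,d)$, then Laplace's method combined with the asymptotics $\frac{1}{N}\log\E_{\textup{BHGOE}}[\abs{\det(W_N-u)}]\to \Omega(u\sqrt{p/(p-1)})+\log\sqrt{(p-1)/p}$, which the paper packages as an application of \cite[Theorem 4.1]{arous2022exponential} after verifying its hypotheses (Lipschitz concentration via log-Sobolev/Herbst, a Wegner estimate from the operator-norm bounds of \cite{bandeira2023}, and convergence of $\E[\hat\mu_{W_N}]$ to the rescaled semicircle). The only refinement worth noting is that the spectral convergence must be quantitative --- a Wasserstein rate ${\rm W}_1(\E[\hat\mu_{W_N}],\mu_p)\leq N^{-\kappa}$ obtained from the local law of \cite{alt2019location}, not merely weak convergence from the moment method --- because the mollification parameter $\eta$ in $\log_\eta$ must tend to zero while its Lipschitz constant $1/(2\eta)$ multiplies the Wasserstein error.
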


\begin{remark}
Since $D$ is closed and $\mc{A}$ is open, notice that $D$ must be gapped away from $\pm E_0(p)$. The requirement $D \subset \mc{A}$ is only used for the Wegner estimate, Lemma \ref{lem:wegner}.
\end{remark}

\begin{proof}[Proof of Theorem \ref{thm:d_to_infinity_first_moment}]
First we prove \eqref{eqn:real_nonsym_unnorm}. With $\epsilon' = \frac{\epsilon}{\sqrt{p}}$, we find
\begin{align*}
    \P\left(\frac{n_T(p,d)}{\sqrt{d}} > \alpha(p) + \epsilon\right) &= \P\left(\frac{n_T(p,d)}{\sqrt{p(d-1)}} > \frac{\alpha(p) + \epsilon}{\sqrt{p}} \cdot \sqrt{\frac{d}{d-1}} \right) \\
    &\leq \P\left(\frac{n_T(p,d)}{\sqrt{p(d-1)}} \geq E_0(p) + \epsilon'\right) \\
    &\leq \P(\Crt_{f_T}([E_0(p) + \epsilon',+\infty)) \geq 1) + \P(\Crt_{f_T}((-\infty,-E_0(p) - \epsilon']) \geq 1) \\
    &= 2\P(\Crt_{f_T}((-\infty,-E_0(p) - \epsilon']) \geq 1),
\end{align*}
since $f_T$ and $-f_T$ have the same distribution. Markov's inequality then gives 
\[
    \P\left(\frac{n_T(p,d)}{\sqrt{d}} > \alpha(p) + \epsilon\right) \leq 2\E[\Crt_{f_T}((-\infty,-E_0(p)-\epsilon'))].
\]
Let $D = (-\infty,-E_0(p) - \epsilon']$; then Lemma \ref{lem:sigma_p} gives $\sup_{u \in D} \Sigma_p(u) = \Sigma_p(-E_0(p) - \epsilon') < 0$, and thus Lemma \ref{lem:d_to_infinity_first_moment} finishes the proof of \eqref{eqn:real_nonsym_unnorm}.

The normalized version \eqref{eqn:real_nonsym_norm} is a short corollary of the unnormalized version \eqref{eqn:real_nonsym_unnorm}: For every $\epsilon' > 0$, we have
\begin{align*}
    &\P\left( \frac{n_{\widetilde{T}}(p,d)}{\sqrt{d}} \sqrt{d^p} > \alpha(p) + \epsilon\right) \\
    &= \P\left( \frac{n_T(p,d)}{\sqrt{d}} \sqrt{\frac{d^p}{\|T\|_{\textup{HS}}^2}} > \alpha(p) + \epsilon\right) \\
    &\leq \P\left( \frac{n_T(p,d)}{\sqrt{d}} \sqrt{\frac{d^p}{\|T\|_{\textup{HS}}^2}} > \alpha(p) + \epsilon, \frac{\|T\|_{\textup{HS}}^2}{d^p} > 1-\epsilon'\right) + \P\left(\frac{\|T\|_{\textup{HS}}^2}{d^p} \leq 1-\epsilon'\right) \\
    &\leq \P\left( \frac{n_T(p,d)}{\sqrt{d}} > (\alpha(p) + \epsilon)\sqrt{1-\epsilon'}\right) + \P\left(\frac{\|T\|_{\textup{HS}}^2}{d^p} \leq 1-\epsilon'\right).
\end{align*}
For $\epsilon'$ small enough depending on $p$ and $\epsilon$, we have $(\alpha(p)+\epsilon)\sqrt{1-\epsilon'} > \alpha(p) + \frac{\epsilon}{2}$, say; then Theorem \ref{thm:d_to_infinity_first_moment} takes care of the first probability, and Lemma \ref{lem:LauMas2000} gives the final estimate 
\[
    \limsup_{d \to \infty} \frac{1}{d} \log \P\left(\frac{\|T\|_{\textup{HS}}^2}{d^p} \leq 1-\epsilon'\right) < 0.
\]
\end{proof}

The proof of Lemma \ref{lem:d_to_infinity_first_moment} is a consequence of Lemma \ref{lem:exact_kac_rice} and Lemma \ref{lem:d_to_infinity_first_moment_laplace}.

\begin{lemma}
\label{lem:d_to_infinity_first_moment_laplace}
We have
\begin{equation}
\label{eqn:kdp_large_p}
    \lim_{d \to \infty} \frac{1}{p(d-1)} \log F(p,d) = \frac{1+\log(p)}{2},
\end{equation}
and, for every $D \subset \mc{A}$ with positive Lebesgue measure that is the closure of its interior, 
\begin{equation}
\label{eqn:p_to_infinity_laplace}
\begin{split}
    &\lim_{d \to \infty} \frac{1}{p(d-1)} \log \left( \int_D \ e^{-\frac{N}{2}u^2}\E_{\textrm{BHGOE}}\left[\lvert \det(W_N-u)\vert \right] \diff u \right) \\
    &= \frac{\log(p-1)-\log(p)}{2} + \sup_{u \in D} \left\{ \Omega\left(u \sqrt{\frac{p}{p-1}}\right) - \frac{u^2}{2}\right\}.
\end{split}
\end{equation}
\end{lemma}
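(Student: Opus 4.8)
The plan is to treat the two displays separately. The first, \eqref{eqn:kdp_large_p}, is just Stirling's formula: writing $N=p(d-1)$ one has
\[
    \frac1N\log F(p,d) = \frac{N+1}{2N}\log\frac{N}{2\pi} + \frac1{d-1}\log\frac{2\pi^{d/2}}{\Gamma(d/2)},
\]
and expanding $\log\Gamma(d/2)=\frac d2\log\frac d2-\frac d2+O(\log d)$ shows that the two $\tfrac12\log d$ contributions cancel, leaving $\tfrac12(\log p+1)$ in the limit. This is routine.

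The content is in \eqref{eqn:p_to_infinity_laplace}. The first step is the pointwise exponential asymptotics of the expected absolute characteristic polynomial: for each fixed $u\in\mc A$, with $W_N\sim\textup{BHGOE}(d-1,p)$ and $N=p(d-1)$,
\[
    \lim_{d\to\infty}\frac1N\log\E_{\textrm{BHGOE}}\bigl[\lvert\det(W_N-u)\rvert\bigr]=\Omega_{\mu_p}(u)=\tfrac12\log\tfrac{p-1}{p}+\Omega\Bigl(u\sqrt{\tfrac{p}{p-1}}\Bigr),
\]
where $\mu_p$ is the semicircle law rescaled to $[-2\sqrt{(p-1)/p},2\sqrt{(p-1)/p}]$ (the stated identity for $\Omega_{\mu_p}$ comes from the change of variables $\lambda=\sqrt{(p-1)/p}\,\lambda'$). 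To get this I would first show that the normalized empirical spectral measure $\tfrac1N\hat\mu_{W_N}$ converges in probability to $\mu_p$: either by the moment method — the vanishing diagonal blocks force every closed walk to change block at each step, and since the collapsed $p\times p$ variance profile $\tfrac1p(J_p-I_p)$ has constant row sums $\tfrac{p-1}{p}$, all moments converge to those of $\mu_p$ — or by noting that this profile is $\tfrac{p-1}{p}$ times a doubly stochastic matrix, which forces the self-consistent (Dyson) equation to collapse to the rescaled semicircle equation. Next, since $u\in\mc A$ and $E_0(p)\ge 2\sqrt{(p-1)/p}$ by \eqref{eqn:e0_lower_bound}, the point $u$ lies strictly outside $\supp\mu_p$, so $\lambda\mapsto\log\lvert u-\lambda\rvert$ is continuous and bounded on a neighbourhood of the support; combined with the operator-norm concentration of Lemma \ref{lem:hgoe_bandeira_combined}/Corollary \ref{cor:hgoe_bandeira} (which confines all eigenvalues near $\supp\mu_p$ with probability $1-e^{-cN}$) and the Wegner estimate (Lemma \ref{lem:wegner}, which rules out an eigenvalue approaching $u$), this yields $\E[\tfrac1N\log\lvert\det(W_N-u)\rvert]\to\Omega_{\mu_p}(u)$. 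The lower bound $\tfrac1N\log\E[\lvert\det\rvert]\ge\E[\tfrac1N\log\lvert\det\rvert]$ is then Jensen; for the matching upper bound one uses Gaussian concentration of $\tfrac1N\log\lvert\det(W_N-u)\rvert$ about its mean — via the resolvent bound $\lVert(W_N-u)^{-1}\rVert\le\dist(u,\spec W_N)^{-1}$ on the confining event — to obtain $\tfrac1N\log\lvert\det(W_N-u)\rvert\le\Omega_{\mu_p}(u)+\epsilon$ with probability $1-e^{-cN}$, and then controls the contribution of the complementary event to the expectation by Cauchy--Schwarz together with the crude polynomial-in-Gaussians bound $\E[\det(W_N-u)^2]\le e^{O(N)}$.

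Finally, to pass from the pointwise asymptotics to \eqref{eqn:p_to_infinity_laplace} I would run the standard Laplace/Varadhan argument: for the upper bound, cover $D$ by finitely many short intervals and use continuity of $u\mapsto\Omega_{\mu_p}(u)-u^2/2$ on $D$ (which, being gapped away from $\pm E_0(p)$ and hence from $\supp\mu_p$, avoids the singularities); for the lower bound, restrict the integral to a short interval around a near-maximizer of $\Omega_{\mu_p}(u)-u^2/2$ on $D$. If $D$ is unbounded, the Gaussian factor $e^{-Nu^2/2}$ against the bound $\E[\lvert\det(W_N-u)\rvert]\le(\lvert u\rvert+C)^N$ makes the tail $\{\lvert u\rvert>M\}$ negligible once $M$ is large. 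I expect the main obstacle to be the upper bound in the pointwise determinant asymptotics — controlling the \emph{expectation} (not merely the typical behaviour) over the rare events where an eigenvalue escapes the bulk or approaches $u$, which is exactly where the Wegner estimate and the matrix-concentration inputs enter; identifying the limiting spectral law of the BHGOE is the other genuinely new ingredient, but it is comparatively routine.
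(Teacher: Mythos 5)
Your overall architecture is sound and is essentially a from-scratch version of what the paper obtains by citing \cite[Theorem 4.1]{arous2022exponential}: the paper's proof consists of verifying that theorem's hypotheses (Lipschitz concentration of linear spectral statistics via log-Sobolev, a quantitative $W_1$ bound $\E[\hat\mu_{H_N(u)}]\to\mu_p(u)$ obtained from the Kronecker local law of \cite{alt2019location}, the Wegner estimate, and continuity/decay in $u$) and then reading off \eqref{eqn:p_to_infinity_laplace}, with the same final change of variables relating $\Omega_{\mu_p}$ to $\Omega(u\sqrt{p/(p-1)})$. Your identification of $\mu_p$ via the moment method or the collapsed doubly stochastic variance profile is more elementary than the paper's route and is correct. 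The first display is indeed routine Stirling, as the paper also asserts.

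However, there is a genuine gap in your upper bound on $\E[\lvert\det(W_N-u)\rvert]$. You propose to show $\tfrac1N\log\lvert\det(W_N-u)\rvert\le\Omega_{\mu_p}(u)+\epsilon$ on an event of probability $1-e^{-cN}$ and to control the complement by Cauchy--Schwarz against $\E[\det(W_N-u)^2]^{1/2}=e^{\OO(N)}$. This does not close: the exceptional event you are excising includes the escape of eigenvalues from the $\epsilon$-neighbourhood of $\supp\mu_p$, whose probability is only $e^{-c(\epsilon)N}$ with $c(\epsilon)\to0$ as $\epsilon\to0$, and $e^{\OO(N)}\cdot e^{-c(\epsilon)N/2}$ is exponentially large. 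The repair --- which is exactly the mechanism in \cite{arous2022exponential} and in the paper's own proof of Lemma \ref{lem:laplace_d_fixed_determinants} --- is to bound $\lvert\det\rvert\le e^{N\int\log_\eta(\lambda)\,\hat\mu_{H_N(u)}(\diff\lambda)}$ with the mollified logarithm $\log_\eta(x)=\log\lvert x+\ii\eta\rvert$, $\eta=N^{-\kappa/2}$, so that the only exceptional event one must control is the deviation of a single Lipschitz linear statistic, whose probability is $e^{-cN^2\eta^2\delta^2}$; this super-exponential decay is what beats $e^{\OO(N)}$, and no confinement of the spectrum is needed for the upper bound. Passing from $\int\log_\eta\,\diff\E[\hat\mu_{H_N(u)}]$ to $\Omega_{\mu_p}(u)$ then uses the quantitative $W_1$ rate (the $\tfrac1{2\eta}W_1$ term), which is why the paper needs the local law rather than mere weak convergence. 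A smaller soft spot of the same flavour: for the lower bound, Jensen requires $\E[\tfrac1N\log\lvert\det\rvert]\to\Omega_{\mu_p}(u)$, and the near-singular events could a priori drag this expectation to $-\infty$; the Wegner estimate only bounds their probability, not the conditional size of $\log\lvert\det\rvert$ there. The standard fix is to write $\E[\lvert\det\rvert]\ge\E[\lvert\det\rvert\mathds{1}_{\mc{G}}]$ for a good event $\mc{G}$ of probability tending to one on which $\tfrac1N\log\lvert\det\rvert\ge\Omega_{\mu_p}(u)-\epsilon$, rather than applying Jensen to the unrestricted expectation.
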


\begin{remark}
Because Lemma \ref{lem:exact_kac_rice} is only an upper bound, we have only stated Lemma \ref{lem:d_to_infinity_first_moment} as an upper bound. However, in Lemma \ref{lem:d_to_infinity_first_moment_laplace} we prove both upper and lower bounds, in case they are useful for future work. An analogous remark holds for, e.g., Lemma \ref{lem:laplace_d_fixed} below.
\end{remark}

To prove Lemma \ref{lem:d_to_infinity_first_moment_laplace}, we need to prove several good properties of the matrices $W_N-u$, which we now call 
\[
    H_N(u) \defeq W_N - u.
\]
The reason is that our proof of Lemma \ref{lem:d_to_infinity_first_moment_laplace} is essentially of the form ``Apply Theorem 4.1 of \cite{arous2022exponential},'' and the next several lemmas serve to check the assumptions of this theorem. In the proofs we will need the measures $\mu_p(u)$, parameterized by $u \in \R$, which are certain rescalings and shifts of the semicircle law, precisely those with density $\mu_p(u,\cdot)$ with respect to Lebesgue measure given by
\begin{equation}
\label{eqn:def_mu_infty}
    \mu_p(u, x) = \frac{1}{2\pi} \frac{p}{p-1} \sqrt{\left(4\left(\frac{p-1}{p}\right) - (x+u)^2\right)_+}.
\end{equation}

\begin{lemma}
\label{lem:real-lsi}
There exist $C, c > 0$ such that, for every Lipschitz $f : \R \to \R$,
\begin{equation}
\label{eqn:lsi}
    \sup_{u \in \R} \P\left(\abs{\frac{1}{N} \Tr(f(H_N(u))) - \frac{1}{N} \E[\Tr(f(H_N(u)))]} > \delta \right) \leq C\exp\left(-\frac{cN^2\delta^2}{\|f\|_{\textup{Lip}}^2}\right).
\end{equation}
\end{lemma}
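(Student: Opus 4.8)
The plan is to establish the concentration inequality \eqref{eqn:lsi} via the standard route for linear statistics of random matrices: a log-Sobolev inequality (LSI) for the law of the matrix entries, combined with the Lipschitz dependence of $\frac{1}{N}\Tr(f(H_N(u)))$ on those entries. The key point is that the bound must be uniform in $u$, which is easy because the shift by $u$ is deterministic: $H_N(u) = W_N - u\,\mathrm{Id}$, so the map from the entries of $W_N$ to $\frac{1}{N}\Tr(f(H_N(u)))$ has a Lipschitz constant that does not depend on $u$ at all.

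First I would recall that $W_N \sim \textup{BHGOE}(d-1,p)$ is a Gaussian vector: its independent entries (up to symmetry) are i.i.d.\ $\mathcal{N}(0,\frac{1}{pd})$ (with the diagonal blocks identically zero, hence not random at all). A Gaussian measure on $\mathbb{R}^m$ with covariance $\Sigma$ satisfies a log-Sobolev inequality with constant $\|\Sigma\|_{\textup{op}}$; here $\|\Sigma\|_{\textup{op}} = \frac{1}{pd} = O(1/N)$. Next I would invoke the Herbst argument (or directly the Gaussian concentration of Lipschitz functions): if $g : \mathbb{R}^m \to \mathbb{R}$ is $L$-Lipschitz with respect to the Euclidean norm on the entries and the law satisfies LSI with constant $c_{\textup{LSI}}$, then $\P(|g - \mathbb{E} g| > \delta) \leq 2\exp(-\delta^2/(2 c_{\textup{LSI}} L^2))$. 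So the whole proof reduces to computing the Lipschitz constant $L$ of $W_N \mapsto \frac{1}{N}\Tr(f(W_N - u))$.

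The main (and essentially only) technical step is this Lipschitz estimate. By the standard argument, for a $1$-Lipschitz scalar function $f$ the map $M \mapsto \Tr(f(M))$ is $1$-Lipschitz from the space of $N \times N$ real symmetric matrices with the Hilbert–Schmidt norm to $\mathbb{R}$ (this follows from $|\Tr f(M) - \Tr f(M')| \le \sum_i |f(\lambda_i(M)) - f(\lambda_i(M'))| \le \sum_i |\lambda_i(M)-\lambda_i(M')| \le \sqrt{N}\,\|M-M'\|_{\textup{HS}}$ by Lidskii, or more sharply the Hoffman–Wielandt bound giving $1$-Lipschitzness directly). Since passing from the vector of independent entries of $W_N$ to the matrix $W_N$ in Hilbert–Schmidt norm is $\sqrt{2}$-Lipschitz (each off-diagonal entry is counted twice), and the shift by $u\,\mathrm{Id}$ is an isometry, the composite map $(\text{entries}) \mapsto \frac{1}{N}\Tr(f(H_N(u)))$ is $\frac{\sqrt{2}}{N} \|f\|_{\textup{Lip}}$-Lipschitz — uniformly in $u$. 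Plugging $c_{\textup{LSI}} = \frac{1}{pd}$ and $L = \frac{\sqrt{2}}{N}\|f\|_{\textup{Lip}}$ into the Herbst bound yields
\[
    \P\left( \abs{ \tfrac{1}{N}\Tr(f(H_N(u))) - \tfrac{1}{N}\mathbb{E}[\Tr(f(H_N(u)))] } > \delta \right) \leq 2\exp\left( - \frac{pd \, N^2 \delta^2}{4 \|f\|_{\textup{Lip}}^2} \right),
\]
and since $pd \geq 1$ this gives \eqref{eqn:lsi} with $C = 2$ and $c = \frac14$ (the supremum over $u$ is harmless, as no constant depends on $u$). I do not anticipate any real obstacle here; the only thing to be careful about is bookkeeping the factors of $\sqrt{2}$ and $N$ versus $\sqrt{N}$ in the Lipschitz constant, and noting that the vanishing diagonal blocks only make the Gaussian vector lower-dimensional, which can only help the LSI constant.
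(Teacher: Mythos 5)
Your route is the same as the paper's: the paper also reduces to $u=0$ by absorbing the deterministic shift into $f$, invokes the log-Sobolev inequality for the Gaussian vector of independent entries (constant of order $1/N$), and concludes by the Herbst argument, citing \cite[Lemma 2.3.3 \& Theorem 2.3.5]{anderson2010introduction}. So the strategy is sound and matches the source.

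However, there is a quantitative slip in your Lipschitz estimate that makes one of your displayed intermediate bounds false (though the final conclusion survives). The map $M \mapsto \Tr(f(M))$ is \emph{not} $1$-Lipschitz from the Hilbert--Schmidt norm to $\R$ for $1$-Lipschitz $f$: taking $f(x)=x$ and $M-M'=\epsilon\,\mathrm{Id}$ gives $|\Tr M - \Tr M'| = N\epsilon = \sqrt{N}\,\|M-M'\|_{\textup{HS}}$, so the sharp constant is $\sqrt{N}$, exactly what your own chain $\sum_i|\lambda_i(M)-\lambda_i(M')| \le \sqrt{N}\,\|M-M'\|_{\textup{HS}}$ (Hoffman--Wielandt plus Cauchy--Schwarz) delivers; the parenthetical ``more sharply \dots $1$-Lipschitzness directly'' should be deleted. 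Consequently the composite map from the entries to $\frac{1}{N}\Tr(f(H_N(u)))$ is $\frac{\sqrt{2}}{\sqrt{N}}\|f\|_{\textup{Lip}}$-Lipschitz, not $\frac{\sqrt{2}}{N}\|f\|_{\textup{Lip}}$-Lipschitz, and your displayed tail bound with exponent $-\frac{pd\,N^2\delta^2}{4\|f\|_{\textup{Lip}}^2}$ is too strong by a factor of order $N$ in the exponent (it would put the fluctuations of the normalized linear statistic at scale $N^{-3/2}$, contradicting the CLT for linear eigenvalue statistics). With the corrected constant $L = \frac{\sqrt{2}}{\sqrt{N}}\|f\|_{\textup{Lip}}$ and LSI constant $\frac{1}{p(d-1)} = \frac{1}{N}$ (note the entry variance is $\frac{1}{p(d-1)}$, not $\frac{1}{pd}$), the Herbst bound gives $2\exp\bigl(-\frac{N^2\delta^2}{4\|f\|_{\textup{Lip}}^2}\bigr)$, which is precisely \eqref{eqn:lsi} with $C=2$, $c=\frac14$.
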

\begin{proof}
Notice that $\hat{\mu}_{H_N(u)}$ and $\mu_p(u)$ are the pushforwards of $\hat{\mu}_{H_N(0)}$ and $\mu_p(0)$, respectively, by the map ``translate by $u$.'' We can shift this translation into the function $f$, which does not effect the Lipschitz norm; thus it suffices to check \eqref{eqn:lsi} without the supremum over $u$. Since $W_N$ has independent Gaussian entries (some degenerate), all of variance order at most $\frac{1}{N}$, its upper triangle, considered as a vector, satisfies the log-Sobolev inequality with a constant of order $\frac{1}{N}$. Then the $u=0$ version of \eqref{eqn:lsi} is a corollary of the Herbst argument (see \cite[Lemma 2.3.3 \& Theorem 2.3.5]{anderson2010introduction} for a very similar argument).
\end{proof}

\begin{lemma}
\label{lem:thm_1.2}
There exists $\kappa > 0$ such that
\begin{equation}
\label{eqn:w1}
    \sup_{u \in \R} {\rm W}_1 (\E[\hat{\mu}_{H_N(u)}],\mu_p(u)) \leq N^{-\kappa}.
\end{equation}
\end{lemma}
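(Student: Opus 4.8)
The plan is to control the Wasserstein-1 distance between the expected empirical spectral distribution of $H_N(u) = W_N - u$ and the shifted-rescaled semicircle law $\mu_p(u)$ by first reducing to the case $u = 0$ and then invoking a quantitative concentration / local-law estimate for the BHGOE. First I would note, exactly as in the proof of Lemma \ref{lem:real-lsi}, that both $\hat\mu_{H_N(u)}$ and $\mu_p(u)$ are obtained from $\hat\mu_{W_N}$ and $\mu_p(0)$ by translation by $-u$; since $W_1$ is translation-invariant (a shift of both arguments by the same constant leaves the optimal coupling unchanged), we have $W_1(\E[\hat\mu_{H_N(u)}], \mu_p(u)) = W_1(\E[\hat\mu_{W_N}], \mu_p(0))$ for every $u$, so the supremum over $u$ is superfluous and it suffices to bound the single quantity $W_1(\E[\hat\mu_{W_N}], \mu_p(0)) \leq N^{-\kappa}$.

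Next I would identify the limiting law: $W_N \sim \textup{BHGOE}(d-1,p)$ is a $pd' \times pd'$ (with $d' = d-1$, so $N = pd'$) GOE-type matrix with the $p$ diagonal $d' \times d'$ blocks zeroed out and off-diagonal entries of variance $\frac{1}{pd'} = \frac{1}{N}$. Its entrywise variance profile is the constant $\frac{1}{N}$ on the off-diagonal blocks and $0$ on the diagonal blocks; the associated self-consistent (matrix Dyson) equation for such a structured Gaussian matrix has as its solution a measure whose Stieltjes transform one can compute explicitly, and it is precisely the rescaled semicircle of radius $2\sqrt{(p-1)/p}$, i.e. $\mu_p(0)$ as defined in \eqref{eqn:def_mu_infty} with $u = 0$. (This is consistent with the first-moment bound $\sigma(W_N) = \sqrt{(p-1)d'\sigma^2} = \sqrt{(p-1)/p}$ appearing in Corollary \ref{cor:hgoe_bandeira}, which is half the edge of $\mu_p(0)$.) The cleanest route to a polynomial rate is: (i) use the matrix concentration bound of Corollary \ref{cor:hgoe_bandeira}, together with the corresponding lower-edge bound and an analogous estimate for all linear eigenvalue statistics via Lemma \ref{lem:real-lsi}, to show that for Lipschitz test functions $f$, $\frac1N\E[\Tr f(W_N)] = \int f \diff \mu_N + O(\|f\|_{\textup{Lip}} N^{-\kappa})$ where $\mu_N$ is the deterministic equivalent; and (ii) compare $\mu_N$ to $\mu_p(0)$. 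Alternatively, and perhaps more directly in keeping with the paper's reliance on \cite{bandeira2023}, one can bound $W_1(\E[\hat\mu_{W_N}],\mu_p(0))$ by $W_1(\E[\hat\mu_{W_N}], \mu_{\textup{GOE},N})$ plus $W_1(\mu_{\textup{GOE},N}, \mu_{\textup{sc}})$, where $\mu_{\textup{GOE},N}$ is the ESD of a genuine scaled GOE: the second term is the classical $O(N^{-1}\log N)$ Wasserstein rate for the GOE, while the first is controlled by an interpolation / Lindeberg swap between the GOE and the BHGOE (turning on the diagonal blocks), whose effect on smooth linear statistics is $O(N^{-1/2})$ by a second-order Taylor expansion and the fact that only $O(N \cdot d')$ entries are changed each of size $O(N^{-1/2})$ — but one must then reconcile the different limiting edges, which forces the rescaling $x \mapsto \sqrt{(p-1)/p}\,x$ and shows the honest limit is $\mu_p(0)$, not the standard semicircle.

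The main obstacle is step (ii): establishing that the deterministic equivalent $\mu_N$ really is (a polynomial-rate approximation of) $\mu_p(0)$ with the precise density \eqref{eqn:def_mu_infty}, rather than merely having the right support. This requires solving the vector/matrix Dyson equation for the block-hollow variance profile — which here is block-constant, so the solution is scalar-reducible: the Stieltjes transform $m(z)$ satisfies a quadratic of the form $\frac{p-1}{p} m(z)^2 + z\, m(z) + 1 = 0$ (the "hollow" structure effectively removes a $\frac1p$ fraction of the usual self-energy), whose solution is exactly the Stieltjes transform of $\mu_p(0)$. Pinning down this algebraic identity, and then converting the stability of the Dyson equation into a $W_1$ bound with explicit exponent $\kappa$ (using, e.g., that $W_1$ between measures with bounded densities is controlled by the sup-distance of Stieltjes transforms on a contour a distance $N^{-\kappa'}$ off the real axis, together with a local law for $W_N$), is where the real work lies; the Lipschitz-to-$W_1$ duality and the translation reduction are routine. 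I would expect $\kappa$ can be taken close to $1/2$ (limited by the swap estimate), or better if one argues via the local semicircle law for structured Gaussian matrices directly, but any fixed $\kappa > 0$ suffices for the downstream application in Lemma \ref{lem:d_to_infinity_first_moment_laplace}.
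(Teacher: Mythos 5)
Your primary route --- reduce to $u=0$ by translation invariance of ${\rm W}_1$, identify $\mu_p(0)$ as the solution of the scalar-reducible matrix Dyson equation for the block-hollow variance profile (your quadratic $\tfrac{p-1}{p}m(z)^2 + z\,m(z) + 1 = 0$ is exactly \eqref{eqn:kronecker_mde_scalar}), and convert a local law together with an operator-norm tail bound into a polynomial ${\rm W}_1$ rate --- is precisely the paper's proof, which implements these steps by checking conditions (3.1)--(3.3) of \cite[Prop.~3.1]{arous2022exponential} via the Kronecker local law of \cite{alt2019location} and the norm bound of Corollary \ref{cor:hgoe_bandeira}. Your secondary Lindeberg-swap sketch is not what the paper does (and, as you yourself note, the swap produces an $O(1/p)$ discrepancy that must be absorbed by rescaling to $\mu_p(0)$), but since your main plan matches the paper's, this is fine.
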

\begin{proof}
 Since ${\rm W}_1$ is invariant under translations, ${\rm W}_1(\E[\hat{\mu}_{H_N(u)}], \mu_p(u))$ is actually independent of $u$. Thus for \eqref{eqn:w1} it suffices to consider $u = 0$, and we recall that $H_N(0) = W_N$. 

When $u = 0$, the proof of \eqref{eqn:w1} is similar to that found in the proof of Corollary 1.10 of \cite{arous2022exponential}. However, the latter result had some assumption essentially saying that the diagonal entries had enough randomness; that assumption is violated here, since the diagonal entries of $W_N$ are deterministically zero. Nevertheless, a close reading of that proof shows that most parts go through. Changes are only necessary to check the assumption called (W) there, via Proposition 3.1 there, which has three conditions to check, with equation numbers (3.1), (3.2), and (3.3). These changes are essentially of the form ``at a few precise points in the proof of \cite{arous2022exponential}, one needed to plug in a local law; for our model, we use a different local law and/or the operator norm estimates coming from \cite{bandeira2023}''; we sketch these changes in the following list, then give them in detail afterwards.
\begin{itemize}
\item Equation (3.1) of \cite{arous2022exponential} is some condition saying the averaged Stieltjes transform of $\hat{\mu}_{W_N}$ is close to that of $\mu_p(0)$ when evaluated at $z = E+\ii N^{-\gamma}$, for some small $\gamma > 0$ and $E$ in some big compact set. This follows from a local law; we give the relevant local law for our model in \eqref{eqn:kronecker_local_law} below, then for completeness copy the very brief argument leading from there to (3.1).
\item As already noticed in \cite{arous2022exponential}, matrices with independent Gaussian entries like ours satisfy significantly stronger estimates than needed to check (3.2) and (3.3) of \cite{arous2022exponential}. Here, both of those estimates follow from the following claim: There exist constants $c_1, c_2, c_3$ such that, for all $x > 0$,
\begin{equation}
\label{eqn:kronecker_mde_tail}
    \P(\|W_N\| \geq x) \leq c_1 e^{-c_2N(x-c_3)}.
\end{equation}
Indeed, applying the log-Sobolev inequality and the Herbst argument as above, we find
\begin{equation}
    \P(\|W_N\| \geq x) \leq e^{-Nx} \E[e^{N\|W_N\|}] \leq e^{N(\sup_N\E\|W_N\| + 4c - x)}.
\end{equation}
Thus we only need the very weak estimate $\sup_N \E\|W_N\| < \infty$. In \cite[Lemma 3.9]{arous2022exponential} this followed from some local law; here we instead use Corollary \ref{cor:hgoe_bandeira}.

\end{itemize}

Thus it remains only to import the appropriate local law, which in our case is \cite{alt2019location}. In the notation (2.1) there, we take $L = p$, $N = d-1$, $\ell = \frac{p(p-1)}{2}$, all the $\widetilde{\alpha}_i$'s and $\widetilde{a}_i'$ to be zero, all the $Y_\nu$'s to have i.i.d. $\mc{N}(0,\frac{1}{p(d-1)})$ entries, all the $\widetilde{\gamma}_\nu = (\widetilde{\beta}_\nu)^T$, and actually index the $(\widetilde{\beta}_\nu)_{\nu=1}^{\frac{p(p-1)}{2}}$ by the positions of the strict-upper-triangular entries in a $p \times p$ matrix: $(\widetilde{\beta}_{(i,j)})_{1 \leq i < j \leq p}$. We take $\widetilde{\beta}_{(i,j)} = E_{ij}$, the $p \times p$ matrix with a one in position $(i,j)$ and zeros everywhere else. Then $t_{k\ell}^{(i,j)}$ defined in (2.3) there is $t_{k\ell}^{(i,j)} = \frac{1}{p(d-1)}$ for all $i, j, k, \ell$. The operators $\ms{S}_i$ defined in (2.4)\footnote{
Remark 2.5(v) in \cite{alt2019location} explains that the Hermitization is superfluous for Hermitian matrices like ours; thus we treat the un-Hermitized versions of these operators. 
}
actually do not depend on $i$ for us: for $i = 1, \ldots, d-1$, we have $\ms{S}_i : (\C^{p \times p})^{(d-1)} \to \C^{p \times p}$ defined on $(d-1)$-tuples of $p \times p$ matrices $\mathbf{r} = (r_1, \ldots, r_{d-1})$ as
\[
    \ms{S}_i[\mathbf{r}] = \frac{1}{p(d-1)} \sum_{k=1}^{d-1} \sum_{1 \leq m < n \leq p} (E_{mn} r_k E_{nm} + E_{nm} r_k E_{mn}).
\]
Notice that if each $r_i$ is the same constant $c$ times the identity matrix, then since $E_{mn}E_{nm} = E_{mm}$ we have
\begin{equation}
\label{eqn:kronecker_mde_operators}
    \ms{S}_i[(c\Id, \ldots, c\Id)] = \frac{c}{p} \sum_{1 \leq m < n \leq p} (E_{mm} + E_{nn}) = c \left(\frac{p-1}{p}\right)\Id.
\end{equation}
Then we seek, for each $z \in \mathbb{H}$, the unique solution $\mathbf{m}(z) = (m_1(z),\ldots,m_{d-1}(z)) \in (\C^{p \times p})^{(d-1)}$ to the constrained problem
\begin{equation}
\label{eqn:kronecker_mde}
\begin{split}
    &\Id_{p \times p} + (z\Id_{p \times p} + \ms{S}_i[\mathbf{m}(z)])m_i(z) = 0, \quad \text{for each } i = 1, \ldots, d-1, \\
    &\text{subject to } \Im m_i(z) > 0 \text{ as a quadratic form.}
\end{split}
\end{equation}
Then we consider the probability measure $\mu_d = \mu_{p,d}$ on $\R$ whose Stieltjes transform at the point $z$ is $\frac{1}{p(d-1)} \sum_{i=1}^{d-1} \Tr(m_i(z))$. The results of \cite{alt2019location} deal with this measure, which we now study.

We claim that, for some scalar $\widetilde{m}_p(z)$ which is crucially independent of $d$, the solution has the particularly simple form
\begin{equation}
\label{eqn:kronecker_mde_factors}
    m_i(z) = \widetilde{m}_p(z) \Id_{p \times p} \quad \text{for each } i = 1, \ldots, d-1.
\end{equation}
Indeed, consider the constrained problem
\begin{equation}
\label{eqn:kronecker_mde_scalar}
\begin{split}
    &1 + \left[z+\left(\frac{p-1}{p}\right)\widetilde{m}_p(z)\right]\widetilde{m}_p(z) = 0, \\
    &\text{subject to } \Im \widetilde{m}_p(z) > 0.
\end{split}
\end{equation}
On the one hand, if $\widetilde{m}_p(z)$ solves \eqref{eqn:kronecker_mde_scalar}, then clearly the choice \eqref{eqn:kronecker_mde_factors} exhibits a solution to \eqref{eqn:kronecker_mde}, by the computation \eqref{eqn:kronecker_mde_operators}. On the other hand, \eqref{eqn:kronecker_mde_scalar} is explicitly solvable, since it is quadratic in $\widetilde{m}_p(z)$, and clearly has the unique solution
\begin{equation}
\label{eqn:widetilde_m_d}
    \widetilde{m}_p(z) = \frac{-z+\sqrt{z^2-4\left(\frac{p-1}{p}\right)}}{2\left(\frac{p-1}{p}\right)}.
\end{equation}
Thus the measure $\mu_d$ actually has $\widetilde{m}_p(z)$ as its Stieltjes transform; in particular, it is (a) actually independent of $d$, and (b) equal to the shifted, rescaled semicircle law $\mu_p(0)$.  One consequence is that Theorem 2.4 of \cite{alt2019location} says that, with high probability, the spectrum of $W_N$ lies in any $\epsilon$-neighborhood of its limiting support $[-2\sqrt{\frac{p-1}{p}},2\sqrt{\frac{p-1}{p}}]$ (we already saw another proof of this, with a better probability bound, in Corollary \ref{cor:hgoe_bandeira}). However, we use the local law \cite[(B.5)]{alt2019location} from the MDE for Kronecker random matrices to explain how \cite[(3.1)]{arous2022exponential} is satisfied.
 Write $s_N$ for the Stieltjes transform of $\hat{\mu}_{W_N}$. The local law \cite[(B.5)]{alt2019location} thus implies that there exist universal constants $\delta > 0$ and $P \in \N$ such that, for every $0 < \gamma < \delta$ and $\epsilon > 0$, there exists $C_{\epsilon,\gamma}$ with
\begin{equation}
\label{eqn:kronecker_local_law}
    \sup_{E \in \R} \P\left(\abs{s_N(E+\ii N^{-\gamma}) - \widetilde{m}_p(E+\ii N^{-\gamma})} \geq \frac{N^{\epsilon+\gamma P}}{N} \right) \leq C_{\epsilon,\gamma} N^{-100}. 
\end{equation}
From this, we check
\begin{align*}
    \lvert \E[s_N(z)]-\tilde m_{p}(z)\rvert&\le \E\left[\lvert s_N(z)-\tilde m_p(z)\rvert\right]\\
    &=\E\left[\lvert s_N(z)-\tilde m_p(z)\rvert\mathbbm{1}_{\lvert s_N(z)-\tilde m_p(z)\rvert< \frac{N^{\epsilon + \gamma P}}{N}}\right]\\&\hspace{40mm}+\E\left[\lvert s_N(z)-\tilde m_p(z)\rvert\mathbbm{1}_{\lvert s_N(z)-\tilde m_p(z)\rvert\ge \frac{N^{\epsilon + \gamma P}}{N}}\right]\\
    &\le N^{\epsilon+\gamma P-1}+\E\left[\lvert s_N(z)-\tilde m_p(z)\rvert^2\right]\P\left(\lvert s_N(z)-\tilde m_p(z)\rvert\ge \frac{N^{\epsilon + \gamma P}}{N}\right)\\
    &\le N^{\epsilon+\gamma P -1}+2C_{\epsilon, \gamma}N^{-100+2\gamma}\lesssim N^{\epsilon+\gamma P -1}.
\end{align*}
which suffices for \cite[(3.1)]{arous2022exponential}.
\end{proof}

\begin{lemma}
\label{lem:wegner}
For every $\epsilon > 0$ and every closed $K \subset \mc{A}$, 
\[
    \lim_{N \to \infty} \inf_{u \in K} \P(H_N(u) \text{ has no eigenvalues in } [-e^{-N^\epsilon}, e^{-N^\epsilon}]) = 1.
\]
\end{lemma}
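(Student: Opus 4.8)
\emph{Proof plan for Lemma~\ref{lem:wegner}.} The plan is to exploit the fact that the hypothesis $K \subset \mc{A}$ forces $K$ to be separated not merely from $\pm E_0(p)$ but from the \emph{entire} limiting spectrum of $W_N$; once this separation is quantified, the statement follows from the operator-norm concentration of Corollary~\ref{cor:hgoe_bandeira}, and no genuinely local (Wegner-type) eigenvalue input is needed. Recall that here $W_N \sim \textup{BHGOE}(d-1,p)$ with $N = p(d-1)$, and throughout this section $p$ is fixed while $d \to \infty$.

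\emph{Step 1 (geometry).} First I would record that $\eta \defeq \mathrm{dist}(K, [-E_0(p), E_0(p)]) > 0$. Indeed $K$ is closed, $\mc{A}$ is open, and $\R \setminus \mc{A} = [-E_0(p), E_0(p)]$ is compact, and two disjoint closed sets one of which is compact lie at strictly positive distance. Combining this with the bound $E_0(p) \geq 2\sqrt{\tfrac{p-1}{p}}$ from Lemma~\ref{lem:sigma_p}, every $u \in K$ satisfies $\abs{u} \geq E_0(p) + \eta \geq 2\sqrt{\tfrac{p-1}{p}} + \eta$; note $\eta$ depends only on $K$ and $p$, not on $N$.

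\emph{Step 2 (operator-norm bound and conclusion).} Next I would apply Corollary~\ref{cor:hgoe_bandeira} (with $d-1$ in place of $d$) with $\epsilon = \eta/2$: there is $\delta \in (0,1)$ and, for all $d$ large enough, a good event $G_N$ with $\P(G_N^c) = \P\big(\|W_N\|_{\textup{op}} > 2\sqrt{\tfrac{p-1}{p}} + \tfrac{\eta}{2}\big) \leq e^{-N^{1-\delta}}$. On $G_N$ every eigenvalue $\lambda$ of the real-symmetric matrix $W_N$ obeys $\abs{\lambda} \leq \|W_N\|_{\textup{op}} \leq 2\sqrt{\tfrac{p-1}{p}} + \tfrac{\eta}{2}$, so every eigenvalue of $H_N(u) = W_N - u$ has modulus at least $\abs{u} - \|W_N\|_{\textup{op}} \geq \eta/2$, \emph{simultaneously for all} $u \in K$. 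Since $e^{-N^\epsilon} \to 0$, for $N$ large enough that $e^{-N^\epsilon} < \eta/2$ the event $G_N$ is contained in $\{H_N(u) \text{ has no eigenvalue in } [-e^{-N^\epsilon},e^{-N^\epsilon}]\}$ for every $u \in K$; hence $\inf_{u \in K} \P(H_N(u) \text{ has no eigenvalue in } [-e^{-N^\epsilon},e^{-N^\epsilon}]) \geq \P(G_N) \geq 1 - e^{-N^{1-\delta}} \to 1$.

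\emph{Main obstacle.} There is essentially no substantive technical obstacle here: the whole content is the observation in Step~1 that $\mc{A}$ places $u$ macroscopically outside the spectrum of $W_N$, so that the crude operator-norm bound already suffices and the exponentially small window $[-e^{-N^\epsilon}, e^{-N^\epsilon}]$ is handled trivially. A genuine local estimate on eigenvalue spacings — which would be the hard part of a true Wegner estimate — would only be required if $u$ were allowed inside the bulk $[-2\sqrt{(p-1)/p}, 2\sqrt{(p-1)/p}]$, and the hypothesis $D \subset \mc{A}$ is precisely what excludes this. (The identical argument also covers the $d$-fixed, $p \to \infty$ regime, using instead the ``$p \to \infty$'' clause of Corollary~\ref{cor:hgoe_bandeira} together with $E_0(p) \to \infty$ from \eqref{eqn:e0_scaling}.)
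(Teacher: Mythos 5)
Your proposal is correct and follows essentially the same route as the paper: both arguments quantify the gap $\delta_K$ (your $\eta$) between the closed set $K$ and $[-E_0(p),E_0(p)]$, invoke the lower bound $E_0(p) \geq 2\sqrt{(p-1)/p}$ from Lemma \ref{lem:sigma_p}, and conclude on the high-probability event $\{\|W_N\|_{\textup{op}} \leq 2\sqrt{(p-1)/p} + \eta/2\}$ furnished by Corollary \ref{cor:hgoe_bandeira} that the spectrum of $H_N(u)$ is uniformly bounded away from zero, so the exponentially small window is handled trivially. The only cosmetic difference is that the paper treats $K \cap \mc{A}_+$ and $K \cap \mc{A}_-$ separately via $\lambda_{\max}$ and $\lambda_{\min}$, whereas you bound $|\lambda_i(W_N)-u| \geq |u| - \|W_N\|_{\textup{op}}$ directly.
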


\begin{proof}
Decompose $\mc{A}$ as 
\[
    \mc{A} = \mc{A}_- \sqcup \mc{A}_+ \defeq (-\infty,-E_0(p)) \sqcup (E_0(p),+\infty).
\]
Since $K \subset \mc{A}$ is closed, it is gapped away from $\pm E_0(p)$, i.e., we actually have
\[
    K \subset (-\infty,-E_0(p) - \delta_K] \cup [E_0(p) + \delta_K, +\infty)
\]
for some $\delta_K > 0$. Consider the good event
\[
    \mc{E} = \mc{E}_K = \left\{\|W_N\| \leq 2\sqrt{\frac{p-1}{p}} + \frac{\delta_K}{2}\right\}.
\]
On this event, for $u \in K \cap \mc{A}_+$, we use \eqref{eqn:e0_lower_bound} to obtain
\[
    \lambda_{\textup{max}}(H_N(u)) = \lambda_{\textup{max}}(W_N) - u \leq 2\sqrt{\frac{p-1}{p}} + \frac{\delta_K}{2} - E_0(p) - \delta_K \leq -\frac{\delta_K}{2},
\]
and similarly for $u \in K \cap \mc{A}_-$ we have
\[
    \lambda_{\textup{min}}(H_N(u)) = \lambda_{\textup{min}}(W_N) - u \geq -2\sqrt{\frac{p-1}{p}} - \frac{\delta_K}{2} + E_0(p) + \delta_K \geq \frac{\delta_K}{2}.
\]
That is, on the event $\mathcal{E}$, $\mathrm{Spec}\, H_N(u)$ is gapped away from zero. Since $\delta$ depends only on $K$, which is fixed, this gives
\[
    \inf_{u \in K} \P(H_N(u) \text{ has no eigenvalues in } [-e^{-N^\epsilon}, e^{-N^\epsilon}]) \geq \P(\mc{E})
\]
for $N \geq N_0(p,\epsilon,K)$. As a consequence of Corollary \ref{cor:hgoe_bandeira}, the probability of the right hand side tends to one as $N\to \infty$.
\end{proof}

\begin{lemma}
\label{lem:cont_decay_in_u}
For each $N$, the map $u \mapsto H_N(u)$ is entrywise continuous, and there exists $C > 0$ with
\[
    \E[\abs{\det(H_N(u))}] \leq (C \max(\|u\|,1))^N.
\]
\end{lemma}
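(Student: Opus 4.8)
The continuity statement is immediate and I would dispatch it first: $H_N(u) = W_N - u\,\Id$ is an affine function of the scalar $u$, so every entry of $H_N(u)$ is continuous (indeed affine) in $u$. For the determinant bound, the plan is to reduce everything to a crude moment bound on the operator norm of $W_N$. Writing $\lambda_1,\dots,\lambda_N$ for the eigenvalues of $W_N$, one has the deterministic estimate
\[
    \abs{\det(H_N(u))} = \prod_{i=1}^N \abs{\lambda_i - u} \le \bigl(\|W_N\|_{\textup{op}} + \abs{u}\bigr)^N \le 2^{N-1}\bigl(\|W_N\|_{\textup{op}}^N + \abs{u}^N\bigr),
\]
using $\abs{a+b}^N \le 2^{N-1}(\abs{a}^N + \abs{b}^N)$. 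Taking expectations, it therefore suffices to prove $\E[\|W_N\|_{\textup{op}}^N] \le C_0^N$ for some \emph{universal} constant $C_0$; indeed, then $\E[\abs{\det H_N(u)}] \le 2^{N-1}(C_0^N + \abs{u}^N) \le \bigl(2\max(C_0,1)\max(\abs{u},1)\bigr)^N$, which is the asserted inequality with $C = 2\max(C_0,1)$ (and $\|u\| = \abs{u}$ here, $u$ being a scalar).

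To obtain $\E[\|W_N\|_{\textup{op}}^N] \le C_0^N$ I would reuse the log-Sobolev/Herbst machinery already invoked in the proof of Lemma \ref{lem:thm_1.2}. Since $W_N \sim \textup{BHGOE}(d-1,p)$ has jointly Gaussian entries of variance $\le \frac1N$, its vector of independent entries satisfies a log-Sobolev inequality with constant $\OO(\frac1N)$; because $M \mapsto \|M\|_{\textup{op}}$ is $\OO(1)$-Lipschitz in the Euclidean norm on entries, the Herbst argument gives a universal $C_1$ with $\E[e^{N\|W_N\|_{\textup{op}}}] \le e^{C_1 N}$ --- here one also uses $\E[\|W_N\|_{\textup{op}}] \le \mu_0$ for a universal $\mu_0$, which follows from Corollary \ref{cor:hgoe_bandeira} applied with $d \to d-1$. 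Combining this with the elementary inequality $t^N \le e^{-N}e^{Nt}$ for $t \ge 0$ (obtained by maximizing $t \mapsto t^N e^{-Nt}$ at $t = 1$) yields
\[
    \E[\|W_N\|_{\textup{op}}^N] \le e^{-N}\,\E\bigl[e^{N\|W_N\|_{\textup{op}}}\bigr] \le e^{(C_1-1)N},
\]
so one may take $C_0 = e^{C_1-1}$, completing the reduction.

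I do not expect a genuine obstacle here; this is a routine technical lemma. The only point deserving care is uniformity of the constants in $d$ and $p$, which holds because the log-Sobolev constant scales exactly as $\frac1N$ and $\E[\|W_N\|_{\textup{op}}]$ is bounded uniformly in $d,p$ by the Bandeira--Boedihardjo--van Handel estimate of Lemma \ref{lem:hgoe_bandeira_combined}. As an alternative route that avoids the log-Sobolev inequality altogether, one could instead write $\E[\|W_N\|_{\textup{op}}^N] = \int_0^\infty N s^{N-1}\,\P(\|W_N\|_{\textup{op}} > s)\,\diff s$, split the integral at $s = 2\mu_0$ with $\mu_0$ a uniform bound on the deterministic part of the tail estimate in Lemma \ref{lem:hgoe_bandeira_combined}, and on $\{s > 2\mu_0\}$ use $\P(\|W_N\|_{\textup{op}} > s) \le e^{-cN(s/2)^2}$ together with $\Gamma(N/2) \le (N/2)^{N/2}$ (valid for $N \ge 2$) to see that the whole integral is at most $C_0^N$ for a universal $C_0$.
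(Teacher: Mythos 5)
Your proposal is correct and follows essentially the same route as the paper: the deterministic bound $\abs{\det(H_N(u))} \leq \|H_N(u)\|^N \leq (\|W_N\|+\abs{u})^N$, split into the two terms, followed by the moment estimate $\E[\|W_N\|^N] \leq C_0^N$ obtained from the log-Sobolev/Herbst exponential moment bound together with $\sup_N \E\|W_N\| < \infty$ from Corollary \ref{cor:hgoe_bandeira}. The paper routes the moment bound through the tail estimate \eqref{eqn:kronecker_mde_tail} rather than the inequality $t^N \leq e^{-N}e^{Nt}$, but this is the same argument in substance.
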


\begin{proof}
The proof mimics that of \cite[Lemma 4.4]{arous2024landscape}, but since it is quite short we reproduce it: First one estimates
\[
    \abs{\det(H_N(u))} \leq \|H_N(u)\|^N \leq (\|W_N\| + \|u\|)^N \leq (2\|W_N\|)^N + (2\|u\|)^N,
\]
and then \eqref{eqn:kronecker_mde_tail} gives an estimate of the form $\E[\|W_N\|^N] \leq C^N$.
\end{proof}

\begin{proof}[Proof of Lemma \ref{lem:d_to_infinity_first_moment_laplace}]
The asymptotics \eqref{eqn:kdp_large_p} are routine. To verify \eqref{eqn:p_to_infinity_laplace}, we use Theorem 4.1 of \cite{arous2022exponential}, with the choices $m = 1$, $\mathfrak{D} = D$, $\alpha = \frac{1}{2}$, $p = 0$, and what \cite{arous2022exponential} calls $\mu_\infty(u)$ being the measure $\mu_p(u)$ given in \eqref{eqn:def_mu_infty} (later, we will use $\mu_\infty(u)$ for the $p \to \infty$ limit of $\mu_p(u)$). We checked the condition ``assumptions locally uniform in $u$'' (in the sense of Theorem 1.2 there) in Lemmas \ref{lem:thm_1.2} and \ref{lem:wegner}, using the constant sequence $(\mu_N(u))_{N=1}^\infty = \mu_p(u)$; this makes the condition ``limit measures'' there vacuous. Finally, we checked the condition ``continuity and decay in $u$'' in Lemma \ref{lem:cont_decay_in_u}. Then Theorem 4.1 gives us
\begin{align*}
    &\lim_{d \to \infty} \frac{1}{p(d-1)} \log \left( \int_D \ e^{-\frac{N}{2}u^2}\E_{\textrm{BHGOE}}\left[\lvert \det(W_N-u)\vert \right] \diff u \right) = \sup_{u \in D} \left\{ \int_\R \log\abs{\lambda} \mu_p(u, \lambda) \diff \lambda - \frac{u^2}{2}\right\}.
\end{align*}
A routine change of variables gives
\[
    \int_\R \log\abs{\lambda} \mu_p(u,\lambda) = \log\sqrt{\frac{p-1}{p}} + \Omega\left(u \sqrt{\frac{p}{p-1}}\right), 
\]
which finishes the proof. 
\end{proof}


\subsection{Real case: \texorpdfstring{$p \to \infty$}{p to infinity}, \texorpdfstring{$d$}{d} fixed}\label{subsec:Rcase-p-infinity}
In this section we study the case $K=\R$, $d$ fixed, and $p\to\infty$. This asymptotic regime introduces technical difficulties that are not present in the preceding section. In particular, some arguments of \cite{arous2022exponential} could be used off-the-shelf in the previous part, but require adaptation in this section. This is reflected in the form of the proof of Lemmas \ref{lem:laplace_d_fixed_determinants} and \ref{lem:laplace_d_fixed_laplace}. 

\begin{definition}
\label{definition:beta_d}
For $d \geq 2$, we compute
\begin{equation}
\label{eqn:beta_d_real}
\begin{split}
    \beta_\R(d) &\defeq \lim_{p \to \infty} \left[ \frac{1}{p(d-1)} \log F(p,d) - \frac{\log p}{2} \right] 
    = \frac{1}{2} \log\left(\frac{d-1}{2\pi}\right) + \frac{1}{d-1} \log\left(\frac{2\pi^{d/2}}{\Gamma(d/2)}\right).
\end{split}
\end{equation}
\end{definition}
Although we will not need it, we remark that for large $d$ one can compute
\[
    \beta_\R(d) = \frac{1}{2} + \left(\frac{\log 2}{2}\right) \frac{1}{d} + \OO_{d \to \infty}\left(\frac{1}{d^2}\right),
\]
so that this definition is consistent with \eqref{eqn:kdp_large_p}.

\begin{lemma}\label{lem:real_gamma_d_existence}
Fix $d \geq 2$. For every $p$, there exists a unique $\gamma_d^\R(p) > 0$ satisfying
\[
    \frac{\log p}{2} + \beta_\R(d) + \Omega\left( \gamma_d^\R(p) \right) - \frac{\gamma_d^\R(p)^2}{2} = 0.
\]
For fixed $d$, this quantity scales for large $p$ like
\begin{equation}
\label{eqn:gamma-scaling}
     \gamma_d^\R(p) = \sqrt{\log p}+\frac{\log \log p}{2\sqrt{\log p}}+\frac{\beta_\R(d)}{\sqrt{\log p}}+\oo_{p \to \infty} \left(\frac{1}{\sqrt{\log p}}\right).
\end{equation}

\end{lemma}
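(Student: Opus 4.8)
The plan is to reduce the whole statement to elementary one-variable analysis of the function $h(u)\defeq\frac{u^2}{2}-\Omega(u)$ on $[0,\infty)$, whose behavior can be read off from the explicit piecewise formula for $\Omega$ recalled earlier. First I would record that differentiating that formula gives $\Omega'(u)=u/2$ for $u\in[0,2]$ and $\Omega'(u)=\frac{u-\sqrt{u^2-4}}{2}$ for $u\ge2$, so that $0\le\Omega'(u)\le u/2$ and hence $h'(u)=u-\Omega'(u)\ge u/2>0$ for every $u>0$. Combined with $h(0)=-\Omega(0)=\frac12$ and $h(u)\to\infty$ (using $\Omega(u)=\log u+\OO(u^{-2})$ as $u\to\infty$, again from the explicit formula), this makes $h$ a continuous, strictly increasing bijection from $[0,\infty)$ onto $[\frac12,\infty)$. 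Since the equation defining $\gamma_d^\R(p)$ is exactly $h(\gamma_d^\R(p))=\frac{\log p}{2}+\beta_\R(d)$, existence and uniqueness of a positive solution then follow as soon as $\frac{\log p}{2}+\beta_\R(d)>\frac12$.

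The only extra input needed for existence is therefore the elementary inequality $\beta_\R(d)>\frac12$ for all $d\ge2$, which (with $\log p\ge0$) covers every $p$. I would get this by inserting Stirling's expansion with explicit remainder for $\log\Gamma(d/2)$ into \eqref{eqn:beta_d_real}; after simplification this should take the shape $\beta_\R(d)-\frac12=\frac12\log\bigl(1-\tfrac1d\bigr)+\frac{1}{d-1}\bigl(\tfrac12+\tfrac{\log2}{2}-\theta_d\bigr)$ with $0<\theta_d<\frac{1}{6d}$, after which the bound $\log(1-\tfrac1d)\ge-\tfrac{1}{d-1}$ makes the right-hand side manifestly positive. (Consistency with the stated expansion $\beta_\R(d)=\frac12+\frac{\log2}{2d}+\OO(d^{-2})$ is a useful sanity check along the way.)

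For the asymptotics, write $\gamma=\gamma_d^\R(p)$. Since $h$ is continuous, hence bounded on compacts, while $h(\gamma)=\frac{\log p}{2}+\beta_\R(d)\to\infty$, I first conclude $\gamma\to\infty$ as $p\to\infty$. Then $\Omega(\gamma)=\log\gamma+\OO(\gamma^{-2})$ turns the defining equation into $\gamma^2=\log p+2\beta_\R(d)+2\log\gamma+\OO(\gamma^{-2})$. A first pass (a standard bootstrap using $t-\log t$ increasing) gives $\gamma^2=\log p+\OO(\log\log p)$, hence $\gamma\sim\sqrt{\log p}$ and $2\log\gamma=\log\log p+2\log(\gamma/\sqrt{\log p})=\log\log p+\oo(1)$; feeding this back in yields $\gamma^2=\log p+\log\log p+2\beta_\R(d)+\oo(1)$, and taking a square root followed by the expansion of $\sqrt{1+x}$ with $x=\bigl(\log\log p+2\beta_\R(d)+\oo(1)\bigr)/\log p$ produces exactly \eqref{eqn:gamma-scaling}.

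I expect the main obstacle — though it is routine rather than deep — to be the error bookkeeping in this last bootstrap: one has to verify that $2\log(\gamma/\sqrt{\log p})$, the $\OO(\gamma^{-2})$ term, and the quadratic term $\OO\bigl((\log\log p)^2/(\log p)^2\bigr)$ from the square-root expansion are each $\oo(1)$ at the level of $\gamma^2$, hence $\oo(1/\sqrt{\log p})$ at the level of $\gamma$, which is precisely the remainder claimed in \eqref{eqn:gamma-scaling}. Everything else is soft: existence/uniqueness is monotonicity plus the intermediate value theorem, and the positivity of $\beta_\R(d)-\tfrac12$ is a single application of Stirling.
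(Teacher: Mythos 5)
Your argument is correct, and while the existence/uniqueness half coincides with the paper's (monotonicity of $u \mapsto \frac{u^2}{2}-\Omega(u)$ on $[0,\infty)$ with minimum value $\frac12$ at $0$, plus the fact that $\beta_\R(d)>\frac12$ — which the paper simply asserts and you propose to check via Stirling; both treatments are at the same level of rigor), the asymptotic half takes a genuinely different route. The paper sandwiches $\gamma_d^\R(p)$ between the largest positive roots of three modified equations obtained by replacing $\Omega$ with $0$, with $\log u$, and with $\log u - \frac{1}{2u}$ respectively; the middle comparison quantity $\gamma_d^\R(p)^\ast$ is then computed \emph{exactly} in terms of the Lambert function $\mc{W}_{-1}$, whose known small-argument expansion is expanded to the required order, and the squeeze is closed by showing the squared gaps between the comparison quantities vanish. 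You instead run a direct fixed-point bootstrap on the defining equation $\gamma^2 = \log p + 2\beta_\R(d) + 2\log\gamma + \OO(\gamma^{-2})$: a crude first pass using the monotonicity of $t-\log t$ pins $\gamma^2=\log p + \OO(\log\log p)$, a second pass upgrades this to $\gamma^2=\log p+\log\log p+2\beta_\R(d)+\oo(1)$, and the square-root expansion finishes. Your version is shorter and avoids importing the $\mc{W}_{-1}$ asymptotics, at the cost of the error bookkeeping you flag (which does work out: an $\oo(1)$ error in $\gamma^2$ becomes $\oo(1/\sqrt{\log p})$ in $\gamma$ upon dividing by $\gamma+\sqrt{\log p+\cdots}\sim 2\sqrt{\log p}$, and the $\OO\bigl((\log\log p)^2/(\log p)^{3/2}\bigr)$ term from $-x^2/8$ is absorbed into the remainder). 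The paper's sandwich, being built on an exact closed form, would more readily yield further terms in the expansion, but for the statement as claimed your approach suffices.
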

\begin{proof} Existence and uniqueness follow from arguments of the proofs of \cite[Corollary 2.8]{mckenna2024complexity} which we repeat here: The function $x \mapsto \frac{x^2}{2}-\Omega(x)$ is even, strictly convex, and globally minimized at $x = 0$, where it takes the value $1/2$, so $\frac{x^2}{2}-\Omega(x) = t$ has a unique positive solution $x = x_t$ for each $t > 1/2$; plus we have $\frac{\log p}{2} + \beta_\R(d) \geq \beta_\R(d) > 1/2$ for each $p$ and $d$.

For the aymptotics, one introduces additional constants $\gamma_d^\R(p)_-$, $\gamma_d^\R(p)^\dagger$, and $\gamma_d^\R(p)^\ast$ as the largest positive solutions of
\begin{align*}
    (\gamma_d^\R(p)_-)^2&=\log p + 2 \beta_\R(d),\\
    (\gamma_d^\R(p)^\dagger)^2&=\log p +2\beta_\R(d)+\log((\gamma_d^\R(p)^\dagger)^2)-\frac1{\gamma_d^\R(p)^\dagger},\\
    (\gamma_d^\R(p)^*)^2&=\log p +2\beta_\R(d)+\log((\gamma_d^\R(p)^*)^2),
\end{align*}
respectively. In fact, considering the functions $f_1(x) = \frac{x^2}{2}$, $f_2(x) = \frac{1}{2}(x^2 - \log(x^2) + \frac{1}{x})$, $f_3(x) = \frac{x^2}{2} - \Omega(x)$, and $f_4(x) = \frac{1}{2}(x^2 - \log(x^2))$, one has $f_1(x) \geq f_2(x) \geq f_3(x) \geq f_4(x)$ for $x$ large enough ($x\ge 2$, say), which implies the ordering
\begin{equation}
\label{eqn:gamma-ordering}
    \gamma_d^\R(p)_{-} \leq \gamma_d^\R(p)^\dagger \leq \gamma_d^\R(p) \leq \gamma_d^\R(p)^\ast.
\end{equation}
We first claim
\begin{equation}
\label{eqn:gamma_d_R_*_asymptotics}
    \gamma_d^\R(p)^*=\sqrt{\log p}+\frac{\log \log p}{2\sqrt{\log p}}+\frac{\beta_\R(d)}{\sqrt{\log p}}+\oo_{p \to \infty} \left(\frac1{\sqrt{\log p}}\right).
\end{equation}
These asymptotics of $\gamma_d^\R(p)^*$ are obtained by noticing that
$$-(\gamma_d^\R(p)^*)^2e^{-(\gamma_d^\R(p)^*)^2}=-\frac1{p}e^{-2\beta_\R(d)}.$$
Since $-e^{-1}\le -\frac1{p}e^{-2\beta_\R(d)}<0$, the solution is given in terms of the second branch of the Lambert $\mc{W}$ function, which is negative and is traditionally denoted $\mc{W}_{-1}(x)$, so that 
\begin{equation}
\label{eqn:lambert-w}
    \gamma_{d}^\R(p)^*=\sqrt{-\mc{W}_{-1}\left(-\frac1{p}e^{-2\beta_\R(d)}\right)}.
\end{equation}
The asymptotics of $\mc{W}_{-1}$ for small negative arguments are known from, e.g., \cite[pp. 349-350]{corless1996lambert}:
$$\mc{W}_{-1}(x)=\log(-x)-\log(-\log(-x))+\frac{\log(-\log(-x))}{\log(-x)}+\OO_{x\to 0^-}\left(\left(\frac{\log(-\log(-x))}{\log(-x)} \right)^2 \right).$$
Plugging in $x = -\frac{1}{p} e^{-2\beta_\R(d)}$ and rearranging, we find 
\begin{align*}
    -\mc{W}_{-1}\left(-\frac{1}{p}e^{-2\beta_\R(d)}\right) &= \log(p) + 2\beta_\R(d) + \log(\log(p) + 2\beta_\R(d)) + \frac{\log(\log(p) + 2\beta_\R(d))}{\log(p) + 2\beta_\R(d)} \\
    &+ \OO_{p \to \infty} \left( \left( \frac{\log(\log(p) + 2\beta_\R(d))}{\log(p) + 2\beta_\R(d)} \right)^2 \right).
\end{align*}
For fixed $c > 0$, one can check $\log(\log(p)+c) = \log\log p + \frac{c}{\log p} + \oo(\frac{1}{\log(p)})$, and $\frac{\log p}{\log p + c} = 1 - \frac{c}{\log p} + \oo(\frac{1}{\log(p)})$, so that $\frac{\log(\log p + c)}{\log p + c} = \frac{\log \log p}{\log p} + \oo(\frac{1}{\log p})$. This gives
\[
    -\mc{W}_{-1}\left(-\frac{1}{p}e^{-2\beta_\R(d)}\right) = \log p + \log\log p + 2\beta_\R(d) + \frac{\log \log p}{\log p} + \frac{2\beta_\R(d)}{\log p} + \oo\left( \frac{1}{\log p}\right).
\]
We now factor out $\sqrt{\log p}$ and use the small-$x$ asymptotics $\sqrt{1+x} = 1+\frac{x}{2}-\frac{x^2}{8}(1+\oo_{x \to 0}(1))$ to find
\begin{align*}
\sqrt{-\mc{W}_{-1}\left(-\frac{1}{p}e^{-2\beta_\R(d)}\right)}&=\sqrt{\log p + \log\log p + 2\beta_\R(d) + \frac{\log \log p}{\log p} + \frac{2\beta_\R(d)}{\log p} + \oo\left( \frac{1}{\log p}\right)}\\
&=\sqrt{\log p}\sqrt{1+\frac{\log \log p}{\log p}+\frac{2\beta_\R(d)}{\log p}+\frac{\log \log p}{(\log p)^2}+\frac{2\beta_\R(d)}{(\log p)^2}+\oo\left(\frac1{(\log p)^2}\right)}\\
&=\sqrt{\log p}\Biggl[1+ \frac{\log \log p}{2\log p}+\frac{\beta_\R(d)}{\log p}+\frac{\log \log p}{2(\log p)^2}+\frac{\beta_\R(d)}{(\log p)^2}+\oo\left(\frac1{(\log p)^2}\right)\\
&-\frac1{8}\left( \frac{\log \log p}{\log p}\right)^2\left(1+\oo\left(1\right)\right)\Biggr]\\
&=\sqrt{\log p}+\frac{\log \log p}{2\sqrt{\log p}}+\frac{\beta_\R(d)}{\sqrt{\log p}}+\frac{\log \log p}{2(\log p)^{\frac32}}+\frac{\beta_\R(d)}{(\log p)^{\frac32}}+\oo\left(\frac1{(\log p)^{\frac32}}\right)\\
&-\frac18\frac{(\log \log p)^2}{(\log p)^{\frac32}}(1+\oo(1)).
\end{align*}
Combining this with \eqref{eqn:lambert-w} proves \eqref{eqn:gamma_d_R_*_asymptotics}. Combined with the simple estimate $\gamma_d^\R(p)_- = \sqrt{\log p}(1+\oo_{p \to \infty}(1))$ and \eqref{eqn:gamma-ordering}, this implies $\gamma_d^\R(p)^\dagger = \sqrt{\log p}(1+\oo_{p \to \infty}(1))$. Consequently, 
\begin{align*}
    \lvert \gamma_d^\R(p)^2-(\gamma_d^\R(p)^*)^2\rvert&\le \lvert(\gamma_d^\R(p)^\dagger)^2 - (\gamma_d^\R(p)^*)^2\rvert =\left\lvert\log\left(\frac{(\gamma_d^\R(p)^\dagger)^2}{(\gamma_d^\R(p)^*)^2}\right)-\frac1{\gamma_d^\R(p)^\dagger} \right\rvert\\
\end{align*}
and the $p\to \infty$ limit of the right hand side is zero. We notice $\lvert \gamma_d^\R(p)^2-(\gamma_d^\R(p)^*)^2\rvert=\lvert\gamma_d^\R(p)+\gamma_d^\R(p)^* \rvert \lvert\gamma_d^\R(p)-\gamma_d^\R(p)^* \rvert$ and remark $\lvert\gamma_d^\R(p)+\gamma_d^\R(p)^* \rvert=2\sqrt{\log p}(1+o(1))$; therefore $\lvert\gamma_d^\R(p)-\gamma_d^\R(p)^* \rvert=\oo(\frac1{\sqrt{\log p}})$ and we conclude \eqref{eqn:gamma-scaling}.
\end{proof}

\begin{theorem}\label{thm:real_p_infty_d_fixed}\textbf{(Real, nonsymmetric, $p \to \infty$, d fixed, upper bound.)}
For every $\epsilon > 0$ and every $d \geq 2$, we have
\[
    \limsup_{p \to \infty} \frac{1}{p} \log \P\left( \frac{n_T(p,d)}{\sqrt{p(d-1)}} - \gamma_d^\R(p) > \frac{\epsilon}{\sqrt{\log p}}\right) < 0.
\]
while the normalized version reads
\[
    \limsup_{p\to \infty}\frac1{p}\log \P\left(n_{\widetilde T}(p,d)>\sqrt{\frac{p(d-1)}{d^{p}}}\gamma_d^\R(p)+\epsilon\sqrt{\frac{p(d-1)}{d^{p}\log p}}\right)<0.
\]
\end{theorem}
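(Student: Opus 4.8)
The plan is to follow the template of the proof of Theorem~\ref{thm:d_to_infinity_first_moment}, replacing the $p$-fixed Laplace asymptotics by their $p\to\infty$ counterparts. Write $N=p(d-1)$ and $t_p\defeq\gamma_d^\R(p)+\tfrac{\epsilon}{\sqrt{\log p}}$. First I would use that $f_T$ and $-f_T$ have the same law, together with Markov's inequality, exactly as in the proof of Theorem~\ref{thm:d_to_infinity_first_moment}, to reduce the unnormalized claim to showing
\[
    \limsup_{p\to\infty}\frac1N\log\E[\Crt_{f_T}([t_p,+\infty))] < 0,
\]
and then apply the Kac--Rice bound of Lemma~\ref{lem:exact_kac_rice} to dominate the left-hand expectation by $F(p,d)\int_{t_p}^{\infty} e^{-Nu^2/2}\,\E_{\textup{BHGOE}}[\abs{\det(W_N-u)}]\diff u$ with $W_N\sim\textup{BHGOE}(d-1,p)$.

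The heart of the argument is the Laplace-type asymptotic for this integral in the regime $p\to\infty$, $d$ fixed, which I would isolate as Lemmas~\ref{lem:laplace_d_fixed_determinants} (a bound on $\E_{\textup{BHGOE}}[\abs{\det(W_N-u)}]$) and~\ref{lem:laplace_d_fixed_laplace} (the Laplace evaluation). Their combined effect should be that, uniformly over $t$ in a window such as $[2+\delta,\,C\sqrt{\log p}]$,
\[
    \frac1N\log\!\left(F(p,d)\int_t^{\infty}\! e^{-Nu^2/2}\,\E_{\textup{BHGOE}}[\abs{\det(W_N-u)}]\diff u\right) \le \frac{\log p}{2} + \beta_\R(d) + \Omega(t) - \frac{t^2}{2} + \oo_{p\to\infty}(1),
\]
where $\tfrac1N\log F(p,d)=\tfrac{\log p}{2}+\beta_\R(d)+\oo(1)$ by Definition~\ref{definition:beta_d}, and the supremum of the exponent over $[t,+\infty)$ is attained at $t$ because $u\mapsto\Omega(u)-\tfrac{u^2}{2}$ is strictly decreasing on $(0,\infty)$ (its derivative is $-\tfrac u2$ in the bulk and $\Omega'(u)-u<0$ beyond it). I expect \emph{this} step to be the main obstacle. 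Unlike the $d\to\infty$ case, one cannot quote Theorem~4.1 of~\cite{arous2022exponential} off the shelf: the domain $[t_p,+\infty)$ drifts to infinity, the reference measure $\mu_p(0)$ (a rescaled semicircle on $[-2\sqrt{\tfrac{p-1}{p}},2\sqrt{\tfrac{p-1}{p}}]$) depends on $p$, and the random-matrix inputs must be made uniform over the growing window. Operator-norm control is already available from Corollary~\ref{cor:hgoe_bandeira}; for the rest, the plan is to re-run the arguments behind Lemmas~\ref{lem:real-lsi}--\ref{lem:cont_decay_in_u} while tracking the $u$-dependence. Concretely, for $u$ a fixed distance beyond the spectral edge, $\log\abs{\det(W_N-u)}=\sum_i\log\abs{u-\lambda_i(W_N)}$ is Lipschitz in the entries with controlled constant, hence concentrates about its mean; the mean equals $N\!\int\log\abs{u-\lambda}\,\diff\E[\hat\mu_{W_N}](\lambda)=N(\Omega_{\mu_p(0)}(u)+\oo(1))$ by a Wasserstein bound as in Lemma~\ref{lem:thm_1.2}; the low-probability event that an eigenvalue exceeds $u$ is absorbed using $\abs{\det(W_N-u)}\le(\norm{W_N}+u)^N$ with the exponential tail of $\norm{W_N}$; and replacing $\mu_p(0)$ by the standard semicircle costs only $\log\sqrt{\tfrac{p-1}{p}}+\Omega(u\sqrt{\tfrac{p}{p-1}})-\Omega(u)=\oo(1)$, uniformly on the window, since $\Omega'$ is uniformly bounded there. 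The Laplace step itself is then routine: the exponent has strictly negative derivative, so the integral concentrates near its left endpoint $t_p$, and the Gaussian factor together with the crude bound $\E\abs{\det(W_N-u)}\le(C\max(u,1))^N$ from Lemma~\ref{lem:cont_decay_in_u} kills the tail at $+\infty$. (A version of the Wegner estimate, Lemma~\ref{lem:wegner}, is also needed; here it is easier, since $t_p\ge\gamma_d^\R(p)\to\infty$ is increasingly far from the spectrum.)

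With this asymptotic in hand, the last step is to check that the exponent is strictly negative. Set $g(u)\defeq\tfrac{\log p}{2}+\beta_\R(d)+\Omega(u)-\tfrac{u^2}{2}$, so that $g(\gamma_d^\R(p))=0$ by the defining relation in Lemma~\ref{lem:real_gamma_d_existence}. Since $g''=\Omega''-1<0$ everywhere, $g$ is strictly concave; and since $\gamma_d^\R(p)=\sqrt{\log p}\,(1+\oo(1))\to\infty$ while $\Omega'(\gamma_d^\R(p))=m_{\mathrm{sc}}(\gamma_d^\R(p))\to0$, one has $g'(\gamma_d^\R(p))=\Omega'(\gamma_d^\R(p))-\gamma_d^\R(p)\le-\tfrac12\sqrt{\log p}$ for $p$ large. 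Concavity gives $g'\le g'(\gamma_d^\R(p))$ on $[\gamma_d^\R(p),\infty)$, so
\[
    g(t_p)=g(\gamma_d^\R(p))+\int_{\gamma_d^\R(p)}^{t_p}g'(u)\,\diff u\le\frac{\epsilon}{\sqrt{\log p}}\cdot\Bigl(-\tfrac12\sqrt{\log p}\Bigr)=-\frac{\epsilon}{2}
\]
for all large $p$. Combining with the Laplace bound and $t_p\to\infty$ gives $\limsup_{p\to\infty}\tfrac1N\log\E[\Crt_{f_T}([t_p,+\infty))]\le-\tfrac{\epsilon}{2}<0$, which (since $d$ is fixed, so $N$ and $p$ differ by a constant factor) is the unnormalized statement.

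For the normalized statement I would mimic the corresponding part of the proof of Theorem~\ref{thm:d_to_infinity_first_moment}. Rewrite the threshold as $\sqrt{\tfrac{p(d-1)}{d^p}}\,t_p$ and split on the event $\{\norm{T}_{\textup{HS}}^2>d^p(1-\eta_p)\}$ with $\eta_p\defeq\tfrac{\epsilon}{8\log p}$. On the complement, Lemma~\ref{lem:LauMas2000} gives probability of order $\exp(-c\,d^p/(\log p)^2)$, far smaller than $e^{-p}$ since $d\ge2$. On the event itself, $n_T(p,d)/\sqrt{p(d-1)}>t_p\sqrt{1-\eta_p}\ge\gamma_d^\R(p)+\tfrac{\epsilon}{2\sqrt{\log p}}$ for $p$ large (because $\gamma_d^\R(p)\,\eta_p\le2\sqrt{\log p}\cdot\tfrac{\epsilon}{8\log p}=\tfrac{\epsilon}{4\sqrt{\log p}}$ eventually, using $\sqrt{1-\eta_p}\ge1-\eta_p$), and the unnormalized statement with $\epsilon/2$ in place of $\epsilon$ bounds this probability. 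Adding the two estimates completes the proof.
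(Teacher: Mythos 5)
Your proposal is correct and follows essentially the same route as the paper: Markov's inequality plus the Kac--Rice bound of Lemma \ref{lem:exact_kac_rice}, the $p\to\infty$ Laplace asymptotics of Lemmas \ref{lem:laplace_d_fixed_determinants} and \ref{lem:laplace_d_fixed_laplace} applied to the shifted sequence $\gamma_d^\R(p)+\epsilon/\sqrt{\log p}$, strict negativity of the exponent from the defining relation of $\gamma_d^\R(p)$ (your tangent-line/concavity bound giving $-\epsilon/2$ is equivalent to the paper's cross-term computation giving $-\epsilon$), and Lemma \ref{lem:LauMas2000} with $\eta_p=\OO(1/\log p)$ for the normalized version. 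Your handling of the normalization, reducing directly to the unnormalized statement with $\epsilon/2$, is a slightly cleaner packaging of the same estimate.
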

Informally, this theorem says that for $d$ fixed and $p \to \infty$, we have
\[
    \max_{x^{(1)},\ldots,x^{(p)} \in \mathbb{S}^{d-1}} \abs{ \sum_{i_1,\ldots,i_p = 1}^d T_{i_1,\ldots,i_p} x^{(1)}_{i_1} \cdots x^{(p)}_{i_p} } \leq \sqrt{p(d-1)\log p}
\]
or more thoroughly
\[
    \max_{x^{(1)},\ldots,x^{(p)} \in \mathbb{S}^{d-1}} \abs{ \sum_{i_1,\ldots,i_p = 1}^d T_{i_1,\ldots,i_p} x^{(1)}_{i_1} \cdots x^{(p)}_{i_p} }\leq \sqrt{p(d-1)}\left( \sqrt{\log p} + \frac{\log \log p}{2\sqrt{\log p}} + \frac{\beta_\R(d)}{\sqrt{\log p}} \right).
\]
As in the opposite scaling limit, the proof starts with Lemma \ref{lem:exact_kac_rice}. We then continue with the following lemma.

\begin{lemma}
\label{lem:laplace_d_fixed}
Fix $d$, and let $(\gamma(p))_{p=1}^\infty$ be some sequence tending to infinity with $p$, such that
\[
    \gamma(p) = \oo(\exp(p^{1-\delta}))
\]
for some $\delta > 0$. Then
\[
    \lim_{p \to \infty} \left[ \frac{1}{p(d-1)} \log \int_{\gamma_d^\R(p)}^\infty e^{-\frac{N}{2}u^2} \E_{\textup{BHGOE}}[\abs{\det(W_N-u)}] \diff u - \left( \Omega\left(\gamma(p)\right) - \frac{(\gamma(p))^2}{2} \right) \right] = 0.
\]
\end{lemma}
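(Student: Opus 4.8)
The plan is to split the statement into a random-matrix input and a deterministic Laplace computation -- the two lemmas alluded to at the start of this subsection -- and then combine them. The random-matrix input is the uniform exponential rate
\[
    \frac{1}{N} \log \E_{\textup{BHGOE}}[\abs{\det(W_N - u)}] = \Omega(u) + \oo(1) \qquad (W_N \sim \textup{BHGOE}(d-1,p),\ N = p(d-1)),
\]
as $p \to \infty$ with $d$ fixed, where the $\oo(1)$ is uniform over $u$ in the (slowly growing, $p$-dependent) range $[\gamma(p),\infty)$; reading the lower endpoint of the integral in the statement as $\gamma(p)$ rather than $\gamma_d^\R(p)$ (to match the right-hand side), the goal is then to show $\frac{1}{N} \log \int_{\gamma(p)}^\infty e^{-\frac N2 u^2} \E_{\textup{BHGOE}}[\abs{\det(W_N - u)}] \diff u = g(\gamma(p)) + \oo(1)$, where $g(u) \defeq \Omega(u) - \frac{u^2}{2}$, so that $g(\gamma(p)) = \Omega(\gamma(p)) - \frac{\gamma(p)^2}{2}$ is precisely the quantity appearing in the lemma.

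For the determinant rate I would proceed as in the proof of Lemma~\ref{lem:d_to_infinity_first_moment_laplace} but tracking the dependence on $u$. The empirical spectral distribution of $W_N \sim \textup{BHGOE}(d-1,p)$ converges to $\mu_p(0)$, the semicircle rescaled to $[-2\sqrt{\frac{p-1}{p}},2\sqrt{\frac{p-1}{p}}]$, whose log-potential is $\log\sqrt{\frac{p-1}{p}} + \Omega\!\left(u\sqrt{\frac{p}{p-1}}\right)$; since $\Omega'(u) = \frac{u - \sqrt{(u^2-4)_+}}{2}$ satisfies $u\Omega'(u) = \OO(1)$, a first-order expansion in $\frac1p$ shows this log-potential equals $\Omega(u) + \OO(\frac1p)$ uniformly for $u \geq \gamma(p)$ once $\gamma(p) \to \infty$. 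On the good event $\{\|W_N\| \leq 2+\delta\}$ one has $\frac1N \log\abs{\det(W_N-u)} = \int \log\abs{u-\lambda} \diff\hat\mu_{W_N}(\lambda)$, and there $\lambda \mapsto \log\abs{u - \lambda}$ is Lipschitz on the relevant interval with constant at most $\frac{1}{u - 2 - \delta}$ (which itself tends to $0$ as $u \to \infty$), so the difference from $\Omega_{\mu_p(0)}(u)$ is controlled by $W_1(\hat\mu_{W_N}, \mu_p(0))$; the latter is $\leq N^{-\kappa} + \oo(1)$ with overwhelming probability by the estimate analogous to Lemma~\ref{lem:thm_1.2} (the Kronecker local law still applies despite the zero diagonal blocks) together with the log-Sobolev concentration of Lemma~\ref{lem:real-lsi}, and combined with Jensen's inequality and a good/bad-event split this transfers to the rate of $\log\E_{\textup{BHGOE}}[\abs{\det(W_N - u)}]$. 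For the complementary event I would use the deterministic bound $\abs{\det(W_N-u)} \leq (\|W_N\| + u)^N$ against the genuinely exponential operator-norm tail $\P(\|W_N\| > 2 + s) \leq e^{-cNs^2}$ coming from Corollary~\ref{cor:hgoe_bandeira} / Lemma~\ref{lem:hgoe_bandeira_combined}; a short Laplace estimate then shows this event's contribution to $\E_{\textup{BHGOE}}[\abs{\det(W_N-u)}]$ is negligible relative to $e^{N\Omega(u)}$, uniformly for $u \geq \gamma(p)$, because $cN\delta^2$ beats $N\log(\mathrm{poly}(u))$ once $\delta$ is a large enough fixed constant (for the moderate-deviation part) and $cNu^2$ beats $N\log(\mathrm{poly}(u))$ for the far tail since $u = \gamma(p) \to \infty$.

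Given the uniform determinant rate, the outer integral is controlled by the monotonicity and concavity of $g$: $g'(u) = \Omega'(u) - u$ equals $-\frac u2$ on $[0,2]$ and $\frac{-u - \sqrt{u^2-4}}{2}$ on $[2,\infty)$, hence $g' < 0$ and $g'' < 0$ on $(0,\infty)$, with $g'(u) \leq -\frac u2$, $g'(u) \geq -u$, and $g(u) \sim -\frac{u^2}{2}$ for $u$ large. For the upper bound I would write
\[
    \int_{\gamma(p)}^\infty e^{-\frac N2 u^2} \E_{\textup{BHGOE}}[\abs{\det(W_N-u)}] \diff u \leq e^{N(g(\gamma(p)) + \oo(1))} \int_{\gamma(p)}^\infty e^{N(g(u) - g(\gamma(p)))} \diff u,
\]
and bound the last integral by $\OO(\frac{1}{N\gamma(p)})$ via concavity ($g(u) - g(\gamma(p)) \leq -\frac{\gamma(p)}{2}(u - \gamma(p))$), so that $\frac1N$ of its logarithm is $\oo(1)$. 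For the lower bound I would restrict the integral to the thin window $[\gamma(p), \gamma(p) + \frac{1}{N\gamma(p)}]$, on which $g(u) \geq g(\gamma(p)) - \OO(\frac1N)$ (using $g'(u) \geq -u$), so the integral is at least $\frac{1}{N\gamma(p)} e^{N(g(\gamma(p)) + \oo(1))}$; here the prefactor contributes $\frac1N \log \frac{1}{N\gamma(p)} \to 0$, and this is exactly where the growth hypothesis $\gamma(p) = \oo(\exp(p^{1-\delta}))$ is used, since it guarantees $\log\gamma(p) = \oo(N)$. Combining the two bounds gives $\frac1N \log(\,\cdot\,) = g(\gamma(p)) + \oo(1)$, which is the assertion.

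The main obstacle is the uniform determinant rate over the unbounded, $p$-dependent range $u \geq \gamma(p)$: in the opposite scaling limit (Lemma~\ref{lem:d_to_infinity_first_moment_laplace}) the domain $D$ is a fixed compact subset of $\mc{A}$ and Theorem~4.1 of \cite{arous2022exponential} applies off the shelf, whereas here one must (a) quantify the rate at which $\Omega_{\mu_p(0)}(u) \to \Omega(u)$ jointly in $p$ and $u$, (b) re-run the Wasserstein/local-law analysis in the $p \to \infty$ (rather than $d \to \infty$) scaling, and, most delicately, (c) fuse the large-$u$ tail of the outer integral with the large-deviation event $\{\|W_N\| > 2 + \delta\}$ for the determinant, which is only affordable because the operator-norm concentration imported from \cite{bandeira2023} is exponential in $N$ rather than merely sub-exponential. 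The remaining steps -- the $\tfrac1p$-expansion of the log-potential, the Jensen transfer, and the Laplace estimates for $g$ -- are routine.
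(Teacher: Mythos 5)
Your proposal is correct and follows essentially the same route as the paper, which likewise splits the claim into a uniform-in-$u$ determinant rate over $[\gamma(p),\infty)$ (our Lemma \ref{lem:laplace_d_fixed_determinants}, proved from the same three inputs: the Kronecker local law/Wasserstein estimate, log-Sobolev concentration of linear statistics, and the operator-norm bounds of \cite{bandeira2023}) and a nonstandard Laplace estimate at a diverging base point (our Lemma \ref{lem:laplace_d_fixed_laplace}), and you correctly read the lower limit of integration as $\gamma(p)$. The only differences are cosmetic: for the determinant upper bound we use the mollified logarithm $\log_\eta$ in place of your good/bad split with the deterministic bound $\abs{\det(W_N-u)}\le(\|W_N\|+u)^N$, and for the Laplace upper bound we split at $2\gamma(p)$ with a Gaussian tail rather than invoking concavity of $\Omega(u)-u^2/2$ directly; both variants work and use the growth hypothesis on $\gamma(p)$ in the same place.
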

We split this lemma into two parts, namely Lemmas \ref{lem:laplace_d_fixed_determinants} and \ref{lem:laplace_d_fixed_laplace} below, but first assume it to prove Theorem \ref{thm:real_p_infty_d_fixed}. 
\begin{proof}[Proof of Theorem \ref{thm:real_p_infty_d_fixed}]
For every $\epsilon > 0$ and every $d \geq 2$, we claim 
\[
    \limsup_{p \to \infty} \frac{1}{p} \log \P\left( \frac{n_T(p,d)}{\sqrt{p(d-1)}} - \gamma_d^\R(p) > \frac{\epsilon}{\sqrt{\log p}}\right) < 0.
\]
Indeed, we have
\begin{align*}
    &\P\left( \frac{n_T(p,d)}{\sqrt{p(d-1)}} > \gamma_d^\R(p) + \frac{\epsilon}{\sqrt{\log p}} \right) \\
    &\leq \P(\Crt_{f_T}([\gamma_d^\R(p)+\epsilon/\sqrt{\log p},+\infty)) \geq 1) + \P(\Crt_{f_T}((-\infty,-\gamma_d^\R(p) - \epsilon/\sqrt{\log p}]) \geq 1) \\
    &= 2\P(\Crt_{f_T}([\gamma_d^\R(p)+\epsilon/\sqrt{\log p},+\infty)) \geq 1) \leq 2\E[\Crt_{f_T}([\gamma_d^\R(p)+\epsilon/\sqrt{\log p},+\infty))].
\end{align*}
Then applying Lemmas \ref{lem:exact_kac_rice} and \ref{lem:laplace_d_fixed}, not to the sequence $\gamma(p) = \gamma_d^\R(p)$ but instead to the sequence $\gamma(p) = \gamma_d^\R(p) + \epsilon/\sqrt{\log p}$, and using Lemma \ref{lem:real_gamma_d_existence} and some of the technology behind its proof to deal with the limit, we obtain
\begin{align*}
    &\limsup_{p \to \infty} \frac{1}{p(d-1)} \log \E[\Crt_{f_T}([\gamma_d^\R(p)+\epsilon/\sqrt{\log p},+\infty))] \\
    &\leq \beta_\R(d) + \limsup_{p \to \infty} \left[ \frac{\log p}{2} + \Omega(\gamma_d^\R(p) + \epsilon/\sqrt{\log p}) - \frac{(\gamma_d^\R(p)+\epsilon/\sqrt{\log p})^2}{2} \right] \leq -\epsilon,
\end{align*}
where the $-\epsilon$ comes from the large-$p$ limit of the cross term $-\frac{2\gamma_d^\R(p)\epsilon/\sqrt{\log p}}{2}$. 

The normalized case is again a consequence of the unnormalized version, but there are a few subtleties not present in the real case because of our finer asymptotics here: Previously it sufficed to use that $\frac{\|T\|_{\textup{HS}}}{d^{p/2}} > 1-\delta$ for every fixed $\delta$ with exponentially high probability, whereas now we need to show that $\frac{\|T\|_{\textup{HS}}}{d^{p/2}} > 1-\delta_p$ with some $\delta_p \to 0$. Indeed, following the proof of Theorem \ref{thm:d_to_infinity_first_moment}, if we insert this event, we find that we need to show 
\[
    \beta_\R(d) + \limsup_{p \to \infty} \left[ \frac{\log p}{2} + \Omega\left((1-\delta_p)\left(\gamma_d^\R(p) + \frac{\epsilon}{\sqrt{\log p}}\right)\right) - \frac{(1-\delta_p)^2(\gamma_d^\R(p) + \epsilon/\sqrt{\log p})^2}{2} \right] < 0.
\]
Since $\Omega$ is increasing, we can discard its $(1-\delta_p)$ factor as an upper bound, and show as above that $\limsup_{p \to \infty} [\Omega(\gamma_d^\R(p) + \epsilon/\sqrt{\log p}) - \Omega(\gamma_d^\R(p))] \leq 0$, so it remains only to show that
\[
    \limsup_{p \to \infty} \left[ -\frac{(1-\delta_p)^2(\gamma_d^\R(p)+\epsilon/\sqrt{\log p})^2}{2} + \frac{(\gamma_d^\R(p)+\epsilon/\sqrt{\log p})^2}{2} \right] \leq 0,
\]
and for this it suffices to show that $\limsup_{p \to \infty} \delta_p (\gamma_d^\R(p) + \epsilon/\sqrt{\log p})^2 \leq 0$, i.e., from \eqref{eqn:gamma-scaling} we have to be able to take some $\delta_p = \oo(1/\log p)$. But the estimate $\limsup_{p \to \infty} \frac{1}{p} \log \P(\frac{\|T\|_{\textup{HS}}}{d^{p/2}} \leq 1-\delta_p) < 0$ for such $\delta_p$ is immediate from Lemma \ref{lem:LauMas2000} (which actually allows us to take $\delta_p \to 0$ much more quickly).

\end{proof}
Lemma \ref{lem:laplace_d_fixed} will follow from the following two lemmas. 
\begin{lemma}
\label{lem:laplace_d_fixed_determinants}
Fix $d$, and let $(\gamma(p))_{p=1}^\infty$ be some sequence tending to infinity with $p$, such that $\gamma(p) = \oo(\exp(p^{1-\delta}))$ for some $\delta > 0$. Then
\[
    \lim_{p \to \infty} \left[ \frac{1}{p(d-1)} \log \int_{\gamma(p)}^\infty e^{-\frac{N}{2}u^2} \E_{\textup{BHGOE}}[\abs{\det(W_N-u)}] \diff u - \frac{1}{p(d-1)} \log \int_{\gamma(p)}^\infty e^{-\frac{N}{2}u^2 + N\Omega(u)} \diff u \right] = 0.
\]
\end{lemma}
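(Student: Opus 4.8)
The plan is to reduce the statement to a uniform two-sided estimate on the determinant expectation,
\[
    \E_{\textup{BHGOE}}[\abs{\det(W_N - u)}] = \exp\bigl( N(\Omega(u) + r_p) \bigr), \qquad \sup_{u \geq \gamma(p)} \abs{r_p} \to 0 \text{ as } p \to \infty,
\]
and then to sandwich the two integrals in the statement between $e^{\pm N r_p}\int_{\gamma(p)}^\infty e^{-Nu^2/2 + N\Omega(u)}\diff u$. The key simplification is that $\gamma(p) \to \infty$, so every $u$ in the domain of integration is large and thus far above the spectrum of $W_N$, which lies in (essentially) $[-2,2]$ with overwhelming probability by Corollary \ref{cor:hgoe_bandeira}; in this regime $\abs{\det(W_N - u)} = \prod_{i=1}^N (u - \lambda_i(W_N)) \approx u^N$, and from the explicit formula for $\Omega$ one computes $\Omega(u) = \log u + \OO(u^{-2})$ as $u \to \infty$, so $e^{N\Omega(u)}$ and $u^N$ agree up to subexponential factors, uniformly for $u \geq \gamma(p)$ once $p$ is large. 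The whole content of the lemma is thus making the comparison $\prod_i(u-\lambda_i(W_N)) \leftrightarrow u^N$ quantitative and uniform on the unbounded range $u \in [\gamma(p),\infty)$.

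For the upper bound I would use the deterministic estimate $\abs{\det(W_N - u)} \leq (\|W_N\| + u)^N \leq u^N e^{N\|W_N\|/u}$, valid for $u > 0$, and take expectations. Because $\|W_N\|$ is a Lipschitz function (with a universal constant) of the Gaussian entries of $W_N$, each of variance at most $1/N$, Gaussian concentration via the Herbst argument, exactly as invoked around \eqref{eqn:kronecker_mde_tail}, gives $\E[e^{(N/u)\|W_N\|}] \leq \exp\bigl((N/u)\E\|W_N\| + CN/u^2\bigr)$, while Corollary \ref{cor:hgoe_bandeira} bounds $\E\|W_N\| \leq 3$ for all large $p$ (with $d$ fixed). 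Hence $\E_{\textup{BHGOE}}[\abs{\det(W_N - u)}] \leq \exp\bigl(N[\log u + \E\|W_N\|/u + C/u^2]\bigr)$, and for $u \geq \gamma(p)$ the bracket is at most $\Omega(u) + 3/\gamma(p) + C'/\gamma(p)^2$, using $\log u \leq \Omega(u) + \OO(u^{-2})$; this gives the upper half of the displayed estimate with a uniform $r_p \to 0$ (all error terms being decreasing in $u$, so maximized at $u = \gamma(p)$).

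For the lower bound I would work on the event $\mc{E} = \{\|W_N\| \leq 2 + \epsilon\}$ for a fixed small $\epsilon > 0$, which by Corollary \ref{cor:hgoe_bandeira} has probability $1 - e^{-N^{1-\delta}}$. On $\mc{E}$, for $u \geq 2 + \epsilon$ each factor obeys $u - \lambda_i(W_N) \geq u - \|W_N\| \geq u - 2 - \epsilon > 0$, so $\abs{\det(W_N - u)} \geq (u - 2 - \epsilon)^N$; discarding the complement of $\mc{E}$ yields $\E_{\textup{BHGOE}}[\abs{\det(W_N - u)}] \geq (u - 2 - \epsilon)^N(1 - e^{-N^{1-\delta}})$. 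Since $\log(u - 2 - \epsilon) = \log u + \log(1 - (2+\epsilon)/u) \geq \log u - 2(2+\epsilon)/\gamma(p) \geq \Omega(u) - 2(2+\epsilon)/\gamma(p)$ for all $u \geq \gamma(p)$ with $p$ large (again using $\log u \geq \Omega(u)$ in this range), and since $\frac1N \log(1 - e^{-N^{1-\delta}}) \to 0$, this gives the lower half of the estimate, again uniformly.

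Putting $r_p$ equal to the larger of the two error terms above (which tends to $0$) gives $e^{N(\Omega(u) - r_p)} \leq \E_{\textup{BHGOE}}[\abs{\det(W_N - u)}] \leq e^{N(\Omega(u) + r_p)}$ for all $u \geq \gamma(p)$ and all large $p$. Multiplying through by $e^{-Nu^2/2}$ and integrating over $[\gamma(p), \infty)$ — the integral $\int_{\gamma(p)}^\infty e^{-Nu^2/2 + N\Omega(u)}\diff u$ being finite because $\Omega(u) \leq u^2/4 - 1/2$ for all $u$ — the two normalized logarithms in the statement differ by at most $r_p \to 0$, which is the claim. The main obstacle is exactly the uniformity in $u$ over the unbounded interval $[\gamma(p),\infty)$: one cannot localize $u$ to a compact set as in the $d\to\infty$ analysis, and one must instead exploit that every relevant $u$ is already large, so that it dominates both the spectral radius of $W_N$ and the $\OO(u^{-2})$ gap between $\log u$ and $\Omega(u)$; everything else is bookkeeping with the operator-norm bounds of \cite{bandeira2023} and Gaussian concentration.
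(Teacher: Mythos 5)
Your proof is correct, and it takes a genuinely different — and much more elementary — route than the paper's. The paper establishes the uniform two-sided estimate on $\frac1N\log\E_{\textup{BHGOE}}[\abs{\det(W_N-u)}]-\Omega(u)$ by running the full machinery of \cite{arous2022exponential}: mollified logarithms $\log_\eta$ with $\eta=N^{-\kappa/2}$, a Wegner estimate \eqref{eqn:dfixed_wegner}, concentration of Lipschitz linear eigenvalue statistics \eqref{eqn:dfixed_lipschitz_concentration}, and the Wasserstein bound \eqref{eqn:w1_nkappa}, the last of which requires re-deriving the Kronecker MDE/local law of \cite{alt2019location} in this scaling. You instead exploit that $\gamma(p)\to\infty$ places every $u$ in the domain far outside the spectrum, so the crude deterministic sandwich $(u-\|W_N\|)^N\le\abs{\det(W_N-u)}\le(u+\|W_N\|)^N$, combined with Corollary \ref{cor:hgoe_bandeira} and Gaussian concentration of $\|W_N\|$ (which is indeed $\OO(N^{-1/2})$-Lipschitz in the underlying standard Gaussians, so the Herbst step $\E[e^{(N/u)\|W_N\|}]\le e^{(N/u)\E\|W_N\|+CN/u^2}$ is legitimate), already gives $\frac1N\log\E[\abs{\det(W_N-u)}]=\log u+\OO(1/\gamma(p))$ uniformly on $[\gamma(p),\infty)$; the elementary bounds $\log u-1/u\le\Omega(u)\le\log u$ then close the loop, and the error terms are all monotone in $u$ so uniformity is automatic. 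What your route buys is a short, self-contained proof of the lemma as stated; what it gives up is that the paper's argument needs only $\gamma(p)$ gapped away from the bulk edge $2$ (as the paper remarks explicitly), whereas yours degenerates to an $\OO(1)$ error if $\gamma(p)$ stays bounded — immaterial here since $\gamma(p)\to\infty$ is hypothesized. A purely organizational caveat: the three random-matrix claims proved inside the paper's argument are reused verbatim for the complex analogue (Lemma \ref{lem:complex_laplace_d_fixed}), so adopting your proof would require supplying the corresponding operator-norm concentration for the $\textup{tBHGOE}$ there instead, which Lemma \ref{lem:est_op_norm_tBHGOE} already provides.
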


\begin{lemma}
\label{lem:laplace_d_fixed_laplace}
Fix $d$, and let $(\gamma(p))_{p=1}^\infty$ be some sequence tending to infinity with $p$, such that $\gamma(p) = \oo(\exp(p^{1-\delta}))$ for some $\delta > 0$. Then
\[
    \lim_{p \to \infty} \left[ \frac{1}{p(d-1)} \log \int_{\gamma(p)}^\infty e^{-\frac{N}{2}u^2 + N\Omega(u)} \diff u - \left( \Omega (\gamma(p)) - \frac{(\gamma(p))^2}{2} \right) \right] = 0.
\]
\end{lemma}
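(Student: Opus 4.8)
The plan is a Laplace-method estimate resting on the fact that the exponent $\phi(u)\defeq\Omega(u)-\frac{u^2}{2}$ is strictly decreasing on $[2,\infty)$, so that (writing $N=p(d-1)$, and noting the integration range lies in $[2,\infty)$ once $p$ is large enough that $\gamma(p)\ge2$) the integral $\int_{\gamma(p)}^\infty e^{N\phi(u)}\diff u$ is pinned, up to a factor polynomial in $N$ and $\gamma(p)$, to its endpoint value $e^{N\phi(\gamma(p))}$; after taking $\tfrac1N\log$, the use of $N\to\infty$ together with the growth bound on $\gamma(p)$ will kill the polynomial correction. (The integral converges at $+\infty$ because $\phi(u)\sim-u^2/4$.)

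First I would record the elementary facts about $\phi$ that come from the explicit form of $\Omega$: for $u\ge2$,
\[
    \Omega'(u)=\frac{u-\sqrt{u^2-4}}{2}=\frac{2}{u+\sqrt{u^2-4}}\in\Bigl(0,\tfrac{2}{u}\Bigr],
\]
so $\phi'(u)=\Omega'(u)-u$ satisfies $-u\le\phi'(u)\le-\tfrac{u}{2}$ on $[2,\infty)$, and $\phi''(u)=\Omega''(u)-1=\tfrac12\bigl(1-\tfrac{u}{\sqrt{u^2-4}}\bigr)-1<0$ there, so $\phi'$ is itself decreasing. For the \emph{upper bound}, since $\phi'(v)\le\phi'(\gamma(p))\le-\gamma(p)/2$ for all $v\ge\gamma(p)$, integrating gives $\phi(u)\le\phi(\gamma(p))-\tfrac{\gamma(p)}{2}(u-\gamma(p))$, hence
\[
    \int_{\gamma(p)}^\infty e^{N\phi(u)}\diff u\le e^{N\phi(\gamma(p))}\int_{\gamma(p)}^\infty e^{-N\frac{\gamma(p)}{2}(u-\gamma(p))}\diff u=e^{N\phi(\gamma(p))}\cdot\frac{2}{N\gamma(p)}.
\]
For the \emph{lower bound}, I would restrict the integral to $[\gamma(p),\gamma(p)+\tfrac{1}{2\gamma(p)}]$, on which $\lvert\phi'(u)\rvert\le u\le2\gamma(p)$, so that $\phi(u)\ge\phi(\gamma(p))-2\gamma(p)(u-\gamma(p))$ there and
\[
    \int_{\gamma(p)}^\infty e^{N\phi(u)}\diff u\ge e^{N\phi(\gamma(p))}\int_0^{1/(2\gamma(p))}e^{-2N\gamma(p)v}\diff v=e^{N\phi(\gamma(p))}\cdot\frac{1-e^{-N}}{2N\gamma(p)}.
\]

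Combining the two displays, the quantity $\frac1N\log\int_{\gamma(p)}^\infty e^{N\phi(u)}\diff u-\phi(\gamma(p))$ is sandwiched between $\frac1N\log\frac{1-e^{-N}}{2N\gamma(p)}$ and $\frac1N\log\frac{2}{N\gamma(p)}$, both of which are $\OO\bigl(\tfrac{\log N+\log\gamma(p)}{N}\bigr)$. Since $N=p(d-1)\to\infty$ we have $\tfrac{\log N}{N}\to0$, and since $\gamma(p)=\oo(\exp(p^{1-\delta}))$ gives $\log\gamma(p)\le p^{1-\delta}$ for large $p$, also $\tfrac{\log\gamma(p)}{N}\le\tfrac{p^{1-\delta}}{p(d-1)}\to0$; thus the difference tends to $0$, which is the claim once we recall $\phi(\gamma(p))=\Omega(\gamma(p))-\tfrac{\gamma(p)^2}{2}$. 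I do not anticipate a genuine obstacle: the only things requiring care are that the left endpoint really does maximize $\phi$ over $[\gamma(p),\infty)$ (immediate from $\phi'<0$ on $[2,\infty)$) and that the hypothesis $\gamma(p)=\oo(\exp(p^{1-\delta}))$ is exactly calibrated so that the polynomial prefactor $\tfrac{1}{N\gamma(p)}$ is absorbed into a $\oo(1)$ error after $\tfrac1N\log$ — this lemma is, in fact, precisely where the growth restriction on $\gamma$ gets used.
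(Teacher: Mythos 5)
Your proof is correct. It is the same underlying idea as the paper's — a Laplace-method estimate pinning $\int_{\gamma(p)}^\infty e^{N\phi(u)}\diff u$ to its left-endpoint value $e^{N\phi(\gamma(p))}$ up to a prefactor that the hypothesis $\gamma(p)=\oo(\exp(p^{1-\delta}))$ renders negligible after taking $\frac1N\log$ — but your execution is genuinely different and somewhat cleaner. For the upper bound, the paper splits the domain at $b_N=2\gamma(p)$, bounds the first piece by (sup of integrand)$\times$(length) and the second by a Gaussian tail estimate using the quadratic envelope $\mc{S}[u]\le(-\frac12+\epsilon)u^2$; you instead linearize the exponent via the uniform derivative bound $\phi'(v)\le-\gamma(p)/2$ on $[\gamma(p),\infty)$ and integrate the resulting exponential exactly, getting the explicit prefactor $\frac{2}{N\gamma(p)}$ with no splitting. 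For the lower bound, the paper restricts to an interval of width $\epsilon_N=e^{-N^{1-\delta}}$ and controls $\mc{S}[\gamma(p)+\epsilon_N]-\mc{S}[\gamma(p)]$ via the sharper envelopes $-\frac{u^2}{2}+\log u-\frac1u\le\mc{S}[u]\le-\frac{u^2}{2}+\log u$ (which is where it consumes the growth hypothesis, through $\gamma(p)\epsilon_N\to0$); you restrict to an interval of the much larger width $\frac{1}{2\gamma(p)}$ and use only $\lvert\phi'\rvert\le 2\gamma(p)$ there, so the growth hypothesis enters only through $\frac{\log\gamma(p)}{N}\to0$. Your computation of $\Omega'(u)=\frac{2}{u+\sqrt{u^2-4}}$ and the resulting bounds $-u\le\phi'(u)\le-\frac u2$, $\phi''<0$ on $(2,\infty)$ are all correct, and the final sandwich $\OO\bigl(\frac{\log N+\log\gamma(p)}{N}\bigr)$ does vanish under the stated hypothesis. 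The net effect is a more elementary argument that avoids both the domain splitting and the Gaussian tail bound, at the mild cost of having to differentiate the explicit formula for $\Omega$.
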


We comment briefly on these lemmas. The first is really about determinants of random matrices, akin to \cite{arous2022exponential}, whereas the second is really about the Laplace method. However, since $\gamma(p) \to \infty$, both applications are nonstandard; roughly speaking, typical applications show $\abs{a_N-b_N} \to 0$ for some sequences $(a_N)_{N=1}^\infty$ and $(b_N)_{N=1}^\infty$ having the same finite limit, whereas in our applications both sequences tend to infinity.

We also need one new piece of notation: Recall that we write $\mu_p(u)$ for the semicircle law rescaled to lie in $[-2\sqrt{\frac{p-1}{p}},2\sqrt{\frac{p-1}{p}}]$, translated by $u$. Now we write $\mu_\infty(u)$ for the $p \to +\infty$ limit of these measures, i.e. the usual semicircle on $[-2,2]$ translated by $u$, with density
\[
    \mu_\infty(u,x) = \frac{1}{2\pi}\sqrt{(4-(x+u)^2)_+}.
\]

\begin{proof}[Proof of Lemma \ref{lem:laplace_d_fixed_determinants}]
It suffices to show
\begin{align}
    \limsup_{p \to \infty} \frac{1}{p(d-1)} \log \sup_{u \geq \gamma(p)} \left\{ \E_{\textup{BHGOE}}[\abs{\det(W_N - u)}]e^{-N \Omega(u)} \right\} &\leq 0, \label{eqn:gammadp_upper_bound} \\
    \liminf_{p \to \infty} \frac{1}{p(d-1)} \log \inf_{u \geq \gamma(p)} \left\{ \E_{\textup{BHGOE}}[\abs{\det(W_N - u)}]e^{-N \Omega(u)} \right\} &\geq 0. \label{eqn:gammadp_lower_bound}
\end{align}
The proof adapts and extends that of \cite[Theorem 1.2, Corollary 1.9.A]{arous2022exponential}. First, we make the following random-matrix claims, recalling the notation $H_N(u) = W_N-u$:
\begin{itemize}
\item For every $\epsilon, \delta > 0$, 
\begin{equation}
\label{eqn:dfixed_wegner}
    \lim_{N \to \infty} \inf_{\abs{u} > 2+\delta} \P(H_N(u) \text{ has no eigenvalues in } [-e^{-N^\epsilon}, e^{-N^\epsilon}]) = 1.
\end{equation}
\item There exist universal $C, c > 0$ such that, whenever $f : \R \to \R$ is Lipschitz,
\begin{equation}
\label{eqn:dfixed_lipschitz_concentration}
    \sup_{u \in \R} \P\left( \abs{ \int_\R f(\lambda) (\hat{\mu}_{H_N(u)} - \E[\hat{\mu}_{H_N(u)}])(\diff \lambda) } > \delta\right) \leq C\exp\left(-\frac{cN^2\delta^2}{\|f\|_{\textup{Lip}}^2} \right).
\end{equation}
\item There exists (small) $\kappa > 0$ such that
\begin{equation}
\label{eqn:w1_nkappa}
    \sup_{u \in \R} {\rm W}_1(\E[\hat{\mu}_{H_N(u)}],\mu_\infty(u)) \leq N^{-\kappa}.
\end{equation}
\end{itemize}
Before proving these claims, we show how they imply \eqref{eqn:gammadp_upper_bound} and \eqref{eqn:gammadp_lower_bound}. 

For the upper bound \eqref{eqn:gammadp_upper_bound}, we apply \eqref{eqn:dfixed_lipschitz_concentration} with $f$ being the mollified logarithm $\log_\eta(x) = \log\abs{x+\ii \eta}$, where 
\begin{equation}
\label{eqn:choice_of_eta}
    \eta = N^{-\kappa/2}
\end{equation}
(here $\kappa$ is as in \eqref{eqn:w1_nkappa}). This function is $\frac{1}{2\eta}$-Lipschitz on $\R$. From this estimate; the equality $\E[X] = \int_0^\infty \P(X \geq t) \diff t$, valid for all nonnegative random variables $X$; and the explicit integral $\int_0^\infty \exp(-k(\log t)^2) \diff t = \sqrt{\frac{\pi}{k}} \exp(\frac{1}{4k})$, after redefining $C \leftarrow C\sqrt{\frac{\pi}{4c}}$, then $c \leftarrow \frac{1}{4c}$, we find
\[
    \sup_{u \in \R} \E[e^{N\int_\R \log_\eta(\lambda)(\hat{\mu}_{H_N(u)} - \E[\hat{\mu}_{H_N(u)}])(\diff \lambda)}] \leq \frac{C}{\eta} \exp\left[ c \left(\frac{1}{2\eta}\right)^2\right].
\]
From this, we obtain the upper bound
\begin{align*}
    &\sup_{u \geq \gamma(p)} \left\{ \frac{1}{N} \log \E[\abs{\det(H_N(u))}] - \Omega(u) \right\} \leq \sup_{u \geq \gamma(p)} \left\{ \frac{1}{N} \log \E[e^{N\int_\R \log_\eta(\lambda) \hat{\mu}_{H_N(u)}(\diff \lambda)}] - \Omega(u) \right\} \\
    &\leq \frac{\log(C/\eta)}{N} + \frac{c}{4N\eta^2} + \sup_{u \geq \gamma(p)} \left\{ \int_\R \log_\eta(\lambda) \E[\hat{\mu}_{H_N(u)}](\diff \lambda) - \Omega(u) \right\} \\
    &\leq \frac{\log(C/\eta)}{N} + \frac{c}{4N\eta^2} + \frac{1}{2\eta} \sup_{u \geq \gamma(p)} {\rm W}_1(\E[\hat{\mu}_{H_N(u)}],\mu_\infty(u)) + \sup_{u \geq \gamma_d^\R(p)} \left\{\int_\R (\log_\eta(\lambda) - \log\abs{\lambda}) \mu_\infty(u,\lambda)\right\}.
\end{align*}
From \eqref{eqn:w1_nkappa} and \eqref{eqn:choice_of_eta}, the first three terms are collectively $\oo(1)$. To estimate the last term, we do not need the full strength of $\gamma(p) \to \infty$, but only that $\gamma(p)$ is gapped away from the semicircle bulk, $\gamma(p) > 2+\epsilon$ for some fixed $\epsilon > 0$; then
\begin{multline*} 
    \sup_{u \geq \gamma(p)} \left\{ \int (\log_\eta(\lambda) - \log\abs{\lambda}) \frac{\sqrt{(4-(\lambda+u)^2)_+}}{2\pi} \diff \lambda\right\} \\
    = \sup_{u \geq \gamma(p)} \left\{ \int (\log_\eta(t-u) - \log\abs{t-u}) \frac{\sqrt{(4-t^2)_+}}{2\pi} \diff t \right\} \\
    = \sup_{u \geq \gamma(p)} \left\{ \int \log\abs{1+\ii \frac{\eta}{t-u}} \frac{\sqrt{(4-t^2)_+}}{2\pi} \diff t \right\} \\
    \leq \int \log\abs{1+\ii \frac{\eta}{t-(2+\epsilon)}} \frac{\sqrt{(4-t^2)_+}}{2\pi} \diff t,
\end{multline*}
and since $\eta = \eta_N \to 0$, this tends to zero by dominated convergence. This completes the proof of the upper bound \eqref{eqn:gammadp_upper_bound}, modulo the random-matrix claims \eqref{eqn:dfixed_wegner}, \eqref{eqn:dfixed_lipschitz_concentration}, and \eqref{eqn:w1_nkappa}.

For the lower bound \eqref{eqn:gammadp_lower_bound}, following \cite{arous2022exponential}, we introduce the parameters
\[
    t = w_b = N^{-\kappa/4}, \qquad p_b = N^{-\kappa^2/8},
\]
as well as, for some fixed small $\epsilon > 0$, the events
\begin{align*}
    \mc{E}_{\textrm{gap}}(u) &= \left\{H_N(u) \text{ has no eigenvalues in } [e^{-N^\epsilon}, e^{-N^\epsilon}]\right\}, \\ 
    \mc{E}_{\textrm{Lip}}(u) &= \left\{\left\lvert \int \log_\eta(\lambda)(\hat\mu_{H_N(u)}-\E\hat\mu_{H_N(u)})(\diff \lambda)\right\rvert\le t\right\}, \\
    \mc{E}_b(u) &= \left\{ \int_\R b(\lambda) \hat{\mu}_{H_N(u)}(\diff \lambda) \leq p_b\right\},
\end{align*}
where we fix $b = b_N : \R \to \R$ to be some smooth, even, nonnegative function that is identically one on $[-w_b,w_b]$, vanishes outside of $[-2w_b,2w_b]$, and is $\frac{1}{w_b}$-Lipschitz.

With this notation, we have
\begin{equation}
\label{eqn:dfixed_main_lower_bound}
\begin{split}
    &\inf_{u \geq \gamma(p)} \left\{ \frac{1}{N} \log \E[\abs{\det(H_N(u))}] - \Omega(u) \right\} \\
    &\geq \inf_{u \geq \gamma(p)} \left\{ \frac{1}{N} \log \E\left[e^{N \left( \int (\log\abs{\lambda} - \log_\eta(\lambda))\hat{\mu}_{H_N(u)}(\diff \lambda) + \int \log_\eta(\lambda)(\hat{\mu}_{H_N(u)} - \E[\hat{\mu}_{H_N(u)}])(\diff \lambda)\right)} \mathds{1}_{\mc{E}_{\textup{Lip}}(u)} \mathds{1}_{\mc{E}_{\textup{gap}}(u)} \right]\right\}  \\
    &\qquad \qquad + \inf_{u \geq \gamma(p)} \left\{ \int \log_\eta(\lambda) \E[\hat{\mu}_{H_N(u)}](\diff \lambda) - \Omega(u) \right\} \\
    &\geq \inf_{u \geq \gamma(p)} \left\{ \frac{1}{N} \log \E\left[e^{N \int (\log\abs{\lambda} - \log_\eta(\lambda))\hat{\mu}_{H_N(u)}(\diff \lambda)} \mathds{1}_{\mc{E}_{\textup{Lip}}(u)} \mathds{1}_{\mc{E}_{\textup{gap}}(u)} \right]\right\} \\
    & \qquad \qquad - t - \frac{1}{2\eta} \sup_{u \geq \gamma(p)} {\rm W}_1(\E[\hat{\mu}_{H_N(u)}],\mu_\infty(u)) 
\end{split}
\end{equation}
since dominated convergence actually shows that, at fixed $N$, we have
\[
    \inf_{u \geq \gamma(p)} \left\{ \int \log_\eta(\lambda) \mu_\infty(u,\diff \lambda) - \Omega(u) \right\} = \inf_{u \geq \gamma(p)} \int \log \abs{1 + \ii \frac{\eta}{t-u}} \frac{\sqrt{(4-t^2)_+}}{2\pi} \diff t = 0
\]
(the infimum happens as $u \to +\infty$). The last two terms on the right-hand side of \eqref{eqn:dfixed_main_lower_bound} are $\oo(1)$ by our choice of $\eta$ and $t$, so it remains only to consider the first term. Following the proof of \cite[Lemma 2.5]{arous2022exponential}, we lower-bound this term by
\begin{align*}
    &\inf_{u \geq \gamma(p)} \left\{ \frac{1}{N} \log \E\left[e^{N \int (\log\abs{\lambda} - \log_\eta(\lambda))\hat{\mu}_{H_N(u)}(\diff \lambda)} \mathds{1}_{\mc{E}_{\textup{Lip}}(u)} \mathds{1}_{\mc{E}_{\textup{gap}}(u)} \mathds{1}_{\mc{E}_{b}(u)} \right]\right\} \\
    &\geq -\frac{p_b}{2}\log(1+e^{2N^\epsilon}\eta^2) - \frac{\eta^2}{2w_b^2} + \inf_{u \geq \gamma(p)} \frac{1}{N} \log \P(\mc{E}_{\textup{Lip}}(u), \mc{E}_{\textup{gap}}(u), \mc{E}_{b}(u)) \\
    &= \inf_{u \geq \gamma(p)} \frac{1}{N} \log \P(\mc{E}_{\textup{Lip}}(u), \mc{E}_{\textup{gap}}(u), \mc{E}_{b}(u)) + \oo(1).
\end{align*}
Among the random-matrix claims above, \eqref{eqn:dfixed_wegner} exactly reads $\lim_{N \to \infty} \inf_{\abs{u} \geq 2+\delta} \P(\mc{E}_{\textup{gap}}(u)) = 1$, and applying \eqref{eqn:dfixed_lipschitz_concentration} to $f = \log_\eta$ gives $\inf_{u \in \R} \P(\mc{E}_{\textup{Lip}}(u)) \geq 1 - C \exp(-4c \eta^2 t^2 N^2) \to 1$. So it remains only to show $\inf_{u \geq \gamma(p)} \P(\mc{E}_b(u)) \to 1$. We have 
\begin{align*}
    \sup_{u \geq \gamma(p)} \P(\mc{E}_b(u)^c) &\leq \frac{1}{p_b} \sup_{u \geq \gamma(p)} \int b(\lambda) \E[\hat{\mu}_{H_N(u)}](\diff \lambda) \\
    &\leq \frac{1}{p_bw_b} \sup_{u \geq \gamma(p)} {\rm W}_1(\E[\hat{\mu}_{H_N(u)}],\mu_\infty(u)) + \frac{1}{p_b} \sup_{u \geq \gamma(p)} \int b(\lambda) \mu_\infty(u,\lambda) \diff \lambda. 
\end{align*}
The first term is at most $\frac{1}{p_bw_b} N^{-\kappa} = N^{\frac{\kappa}{4} + \frac{\kappa^2}{8} - \kappa}$, which is $\oo(1)$ for $\kappa$ sufficiently small; the second term is actually zero as long as $\gamma(p) > 2$, since the two functions in the integrand have disjoint support ($b$ has support on a shrinking interval around zero, while $\mu_\infty$ has support on $[-u-2,-u+2]$). This finishes the proof of the lower bound \eqref{eqn:gammadp_lower_bound}, modulo the random-matrix claims \eqref{eqn:dfixed_wegner}, \eqref{eqn:dfixed_lipschitz_concentration}, and \eqref{eqn:w1_nkappa}.

Finally we prove the random-matrix claims. The no-small-eigenvalues estimate \eqref{eqn:dfixed_wegner} is proved exactly as in the case of $p$ fixed (compare to Lemma \ref{lem:wegner}), as is the Lipschitz concentration estimate \eqref{eqn:dfixed_lipschitz_concentration} (compare to \eqref{eqn:lsi}). However, \eqref{eqn:w1_nkappa} requires some modifications of the fixed-$p$ proofs.

We will use the local law of \cite{alt2019location}, in a slightly different sense than we did for $p$ fixed and $d \to \infty$. In the notation (2.1) there, we take $L = 1$, $N = p(d-1)$, $\ell = 1$, all the $a_i$'s to be $u$, and $\widetilde{\alpha}_1 = 1$, $\widetilde{\beta}_1 = \widetilde{\gamma_1} = 0$. We identify $\C^{1 \times 1} \otimes \C^{N \times N} \cong \C^{N \times N}$. We drop the $\mu$ and $\nu$ from their notation $s^\mu_{ij}$ and $t^\nu_{ij}$, since there are only one $\mu$ and $\nu$. We compute $t_{ij} \equiv 0$ and, partitioning $\llbracket 1, N \rrbracket = \sqcup_{\ell=1}^p I_\ell$ where $I_\ell = \llbracket(\ell-1) (d-1)+1,\ell(d-1)\rrbracket$, we have
\[
    s_{ij} = \frac{1}{N} \mathbf{1}\{i \text{ and } j \text{ are not in the same } I_\ell\}.
\]
The operators $\ms{S}_i$ defined\footnote{
Remark 2.5(v) in \cite{alt2019location} explains that the Hermitization is superfluous for Hermitian matrices like ours; thus we treat the un-Hermitized versions of these operators.
}
in eq. (2.4) of \cite{alt2019location} now depend on $i$, but only in a coarse-grained way: for $i \in \llbracket 1, N \rrbracket$, we have $\ms{S}_i : \C^N \to \C$ defined on $\mathbf{r} = (r_1,\ldots,r_N)$ as 
\[
    \ms{S}_i[\mathbf{r}] = \sum_{k=1}^N s_{ik}r_k = \frac{1}{N} \sum_{k : \lfloor \frac{k}{d-1} \rfloor \neq \lfloor \frac{i}{d-1} \rfloor} r_k.
\]
Then we seek, for each $z \in \mathbb{H}$, the unique solution $\mathbf{m}(z) = (m_1(z),\ldots,m_N(z)) \in \C^N$ to the constrained problem
\begin{equation}
\label{eqn:kronecker_mde_d_fixed}
\begin{split}
    &1 + (z+\ms{S}_i[\mathbf{m}(z)])m_i(z) = 0 \text{ for each } i, \\
    & \text{subject to } \Im m_i(z) > 0 \text{ for each } i.
\end{split}
\end{equation}
The same argument as in the fixed-$p$ case shows that the solution has the particularly simple form 
\[
    m_i(z) \equiv \widetilde{m}_p(z),
\]
independently of $d$, where $\widetilde{m}_p(z)$, given explicitly by \eqref{eqn:widetilde_m_d}, is the Stieltjes transform of the measures $\mu_p$. Then following the proof of \eqref{eqn:w1}, except with this local law, gives
\[
    \sup_{u \in \R} {\rm W}_1(\E[\hat{\mu}_{H_N(u)}],\mu_p(u)) \leq N^{-\kappa}
\]
(as before, ${\rm W}_1(\E[\hat{\mu}_{H_N(u)}],\mu_p(u))$ is independent of $u$). Plus, one can easily compute
\[
    {\rm W}_1(\mu_p(0),\mu_\infty(0)) = {\rm W}_1(\mu_p(u),\mu_\infty(u)) = \OO(1/p) = \OO(1/N)
\]
since $N = p(d-1)$ and $d$ is fixed, and we can add $\sup_{u \in \R}$ for free in the same way. This finishes the proof of \eqref{eqn:w1_nkappa}, and thus of Lemma \ref{lem:laplace_d_fixed_determinants}.
\end{proof}

\begin{proof}[Proof of Lemma \ref{lem:laplace_d_fixed_laplace}]
The function $\mc{S}[u] = \Omega(u) - \frac{u^2}{2}$ is continuous, and since $\lim_{u \to \infty} \frac{\Omega(u)}{\log(u)} = 1$, we note that, for every $\epsilon > 0$, there exists $K_\epsilon$ such that for all $u \geq K_\epsilon$ we have 
\[
    -\frac{1}{2} u^2 \leq \mc{S}[u] \leq \left(-\frac{1}{2}+\epsilon\right)u^2,
\]
which we will apply with fixed small $\epsilon$ to be chosen. Also, $\mc{S}$ is decreasing for $u > 0$. Thus, for some threshold $b_N \to \infty$ to be chosen, $b_N \geq \gamma(p)$, we can split $\int_{\gamma(p)}^\infty = \int_{\gamma(p)}^{b_N} + \int_{b_N}^\infty$, bounding the integrand by its pointwise maximum on the former interval and by the upper bound for $\mc{S}$ on the latter interval, using $\log(a+b) \leq \log(2\max(a,b))$ to find
\[
    \frac{1}{N} \log \int_{\gamma(p)}^\infty e^{N\mc{S}(u)} \diff u \leq \frac{\log(2)}{N} + \max\left\{ \mc{S}(\gamma_d^\R(p)) +  \frac{\log(b_N - \gamma(p))}{N}, \frac{1}{N} \log \int_{b_N}^\infty e^{N\left(-\frac{1}{2}+\epsilon\right)u^2} \diff u \right\}.
\]
The standard Gaussian tail bound $\int_t^\infty \frac{e^{-s^2/2}}{\sqrt{2\pi}} \diff s \leq e^{-t^2/2}$ for $t \geq 1$ gives
\[
    \int_{b_N}^\infty e^{N\left(-\frac{1}{2}+\epsilon\right)u^2} \diff u = \frac{\sqrt{2\pi}}{\sqrt{N(1-2\epsilon)}} \int_{\sqrt{N(1-2\epsilon)}b_N}^\infty \frac{e^{-u^2/2}}{\sqrt{2\pi}} \diff u \leq \frac{\sqrt{2\pi}}{\sqrt{N(1-2\epsilon)}} e^{-N(\frac{1}{2}-\epsilon)b_N^2}, 
\]
so 
\[
    \frac{1}{N}\log \int_{b_N}^\infty e^{N\left(-\frac{1}{2}+\epsilon\right)u^2} \diff u \leq -\left(\frac{1}{2}-\epsilon\right)b_N^2 + \oo_{N \to \infty}(1).
\]
Now we choose $b_N = 2\gamma(p)$. On the one hand, our assumption $\gamma(p) = \oo(\exp(p^{1-\delta}))$ gives $$\lim_{N \to \infty} \frac{\log(b_N - \gamma(p))}{N} = 0.$$ On the other hand, with this choice we have $-(\frac{1}{2}-\epsilon)b_N^2 = -4(\frac{1}{2}-\epsilon)\gamma(p)^2 \leq -\frac{1}{2}\gamma(p)^2 \leq \mc{S}(\gamma(p))$, as long as we choose $\epsilon$ with $\frac{1}{2}-\epsilon > \frac{1}{8}$. Thus 
\[
    \lim_{p \to \infty} \left[ \frac{1}{p(d-1)} \log\int_{\gamma(p)}^\infty e^{N\mc{S}(u)} \diff u - \mc{S}(\gamma(p)) \right] \leq 0,
\]
which finishes the upper bound of Lemma \ref{lem:laplace_d_fixed_laplace}. For the lower bound, we use the sharper bounds
\[
    -\frac{u^2}{2}+\log(u)-\frac{1}{u} \leq \mc{S}[u] \leq -\frac{u^2}{2}+\log(u),
\] 
valid for $u \geq 1$, say. By assumption we have $\gamma(p) = \oo(\exp(N^{1-\delta}))$ for some $\delta$; with this same $\delta$, we set $\epsilon_N = \exp(-N^{1-\delta})$, and compute
\[
    \frac{1}{N} \log \int_{\gamma(p)}^\infty e^{N\mc{S}[u]} \diff u \geq \frac{1}{N} \log \int_{\gamma(p)}^{\gamma(p)+\epsilon_N} e^{N\mc{S}[u]} \diff u \geq \mc{S}[\gamma(p)] + (\mc{S}[\gamma(p)+\epsilon_N]-\mc{S}[\gamma(p)]) + \frac{\log \epsilon_N}{N}.
\]
The first term on the right-hand side is the one we want. By our choice of $\epsilon_N$, we have $(\log \epsilon_N)/N \to 0$. For the difference-of-$\mc{S}$ term, we compute
\begin{align*}
    \mc{S}[\gamma(p) + \epsilon_N] -\mc{S}[\gamma(p)] &\geq -\frac{(\gamma(p)+\epsilon_N)^2}{2} + \log(\gamma(p)+\epsilon_N) - \frac{1}{\gamma(p)+\epsilon_N} + \frac{\gamma(p)^2}{2} - \log(\gamma(p))\\
    &= -\gamma(p)\epsilon_N + \oo(1),
\end{align*}
and by assumption this tends to zero. Thus
\[
    \liminf_{p \to \infty} \left[ \frac{1}{p(d-1)} \log \int_{\gamma(p)} e^{-\frac{N}{2}u^2 + N\Omega(u)} \diff u - \mc{S}(\gamma(p)) \right] \geq 0,
\]
which finishes the lower bound of Lemma \ref{lem:laplace_d_fixed_laplace}.
\end{proof}


\section{Complex case}\label{sec:complex_case}
The basic outline of the complex case is similar to the real case explored in the previous section, but in order to access the injective norm with the Kac--Rice formula, we have to work, not on the original product of complex spheres, but on a kind of quotient space thereof. This leads to additional structure in the matrices appearing in Kac--Rice.

We now let $K=\C$ so that $T$ is a complex Gaussian random tensor of order $p$ and size $d$, that is an array $(T_{i_1,i_2,\ldots,i_p})_{i_1,\ldots,i_p=1}^d$ where $T_{i_1,i_2,\ldots,i_p}$ are independent complex standard normal random variables (\textit{i.e.} their real and imaginary part are independent and identically distributed $\mathcal{N}(0,\frac12)$ random variables). We are interested in
\[
    n_T(p,d)\defeq\max_{x^{(i)}\in \mathbb{C}^d, \lnorm x^{(i)}\rnorm=1 \forall i \in [p]}\lvert \langle T, x^{(1)}\otimes \ldots \otimes x^{(p)}\rangle\rvert,
\]
that is we want to maximize the modulus of $\langle T, x^{(1)}\otimes \ldots \otimes x^{(p)}\rangle$ over the product $(\mathbb{S}_{\mathbb{C}}^{d-1})^{p}$ of $p$ complex spheres. As was implicit in equation \eqref{eq:relation_to_distance} we have 
\begin{equation}
    n_T(p,d)=\max_{x^{(i)}\in \mathbb{C}^d, \lnorm x^{(i)}\rnorm=1 \forall i \in [p]} \mathfrak{R}\langle T, x^{(1)}\otimes \ldots \otimes x^{(p)}\rangle,
\end{equation}
so that we might be tempted to study the Gaussian process $g_T:(\mathbb{S}_\C^{d-1})^{p}\rightarrow \R$ defined as 
$$g_T(x^{(1)},\ldots, x^{(p)})=\frac1{\sqrt{p(d-1)}}\mathfrak{R}\sum_{i_1,\ldots, i_d}T_{i_1,\ldots, i_p}x^{(1)}_{i_1}\ldots x^{(p)}_{i_p}$$
to access $n_T(p,d)$. 
However, this process is not Morse on $(\mathbb{S}_\C^{d-1})^{p}$, essentially because one can rotate the phases of the $x^{(i)}$'s in a continuous way (as opposed to the real case, where one could only flip the signs discretely). Indeed, let $(x^{(1)}_M,\ldots, x^{(p)}_M)$ be a maximizer so that $g_T(x^{(1)}_M,\ldots, x^{(p)}_M)=n_T(p,d)$; then, for example, $g_T(e^{i\theta}x^{(1)}_M,e^{-i\theta}x_M^{(2)},\ldots, x^{(p)}_M)=n_T(p,d)$ for all $\theta\in \R$. Thus critical points of $g_T$ on $(\mathbb{S}_\C^{d-1})^{p}$ are not isolated, preventing the application of the Kac--Rice formula. To get around this difficulty, we reduce the domain of definition of $g_T$ so that arbitrary global choice of phases of the $x^{(i)}$ is not possible anymore. To this aim we introduce
\begin{equation*}
    \mathbb{U}^{d-1}\defeq\{x=(x_1,\ldots, x_d)\in \mathbb{S}_{\mathbb{C}}^{d-1}: x_1\in \mathbb{R}\}.
\end{equation*}
Here, $\mathbb{U}^{d-1}\simeq \mathbb{S}^{2(d-1)}_\R$. We have the following proposition.
\begin{proposition}
$n_T(d,p)$ is attained by $g_T$ on $\mathbb{S}_\C^{d-1}\times (\mathbb{U}^{d-1})^{p-1}$.
\end{proposition}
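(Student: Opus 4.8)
The plan is to exploit the fact that $\langle T,x^{(1)}\otimes\cdots\otimes x^{(p)}\rangle$ is multilinear in the $x^{(i)}$'s, so that a global phase introduced on one factor can be cancelled by opposite phases distributed among the others, leaving the value of $g_T$ unchanged but moving the point into the desired reduced domain. First I would note that $g_T$ is continuous on the compact set $(\mathbb{S}_\C^{d-1})^{p}$, so the supremum $n_T(p,d)$ is in fact attained, say at a maximizer $(x^{(1)}_M,\ldots,x^{(p)}_M)$. Since $\mathbb{S}_\C^{d-1}\times(\mathbb{U}^{d-1})^{p-1}$ is a subset of $(\mathbb{S}_\C^{d-1})^{p}$, the maximum of $g_T$ over the smaller set is trivially $\le n_T(p,d)$; the whole content is to produce a maximizer lying in the smaller set, which then forces equality.

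Next I would rotate phases. For each $i\in\{2,\ldots,p\}$, choose $\varphi_i\in\R$ so that $e^{-\ii\varphi_i}(x^{(i)}_M)_1\in\R$ (take $\varphi_i=\arg((x^{(i)}_M)_1)$ if $(x^{(i)}_M)_1\neq0$, and $\varphi_i=0$ otherwise, using that $0\in\R$). Then set $\widetilde{x}^{(i)}=e^{-\ii\varphi_i}x^{(i)}_M$ for $i\ge2$ and $\widetilde{x}^{(1)}=e^{\ii(\varphi_2+\cdots+\varphi_p)}x^{(1)}_M$. Each $\widetilde{x}^{(i)}$ still has unit $2$-norm, so $\widetilde{x}^{(1)}\in\mathbb{S}_\C^{d-1}$ and $\widetilde{x}^{(i)}\in\mathbb{U}^{d-1}$ for $i\ge2$ by construction of the phases. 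By multilinearity,
\[
    \langle T,\widetilde{x}^{(1)}\otimes\cdots\otimes\widetilde{x}^{(p)}\rangle
    = e^{\ii(\varphi_2+\cdots+\varphi_p)}e^{-\ii(\varphi_2+\cdots+\varphi_p)}\langle T,x^{(1)}_M\otimes\cdots\otimes x^{(p)}_M\rangle
    = \langle T,x^{(1)}_M\otimes\cdots\otimes x^{(p)}_M\rangle,
\]
so in particular $g_T(\widetilde{x}^{(1)},\ldots,\widetilde{x}^{(p)})=g_T(x^{(1)}_M,\ldots,x^{(p)}_M)=n_T(p,d)$. Hence $(\widetilde{x}^{(1)},\ldots,\widetilde{x}^{(p)})\in\mathbb{S}_\C^{d-1}\times(\mathbb{U}^{d-1})^{p-1}$ attains $n_T(p,d)$, which combined with the reverse inequality above proves the proposition.

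There is no genuine obstacle here; the only point requiring a moment's care is the degenerate case where some $(x^{(i)}_M)_1$ vanishes, which is harmless because $\mathbb{U}^{d-1}$ only requires the first coordinate to be real, not nonzero, and $0$ is real. (One could alternatively phrase the argument so that $(\widetilde{x}^{(i)})_1\ge0$; this is not needed, but costs nothing.) I would also remark in passing that the reduced domain $\mathbb{S}_\C^{d-1}\times(\mathbb{U}^{d-1})^{p-1}$ still contains the full phase freedom on the first factor, which is exactly why one more reduction (or a further quotient) is required before the relevant process becomes Morse; this motivates the subsequent constructions in the section.
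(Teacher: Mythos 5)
Your proof is correct and follows essentially the same route as the paper: rotate the phase of each factor $i\ge 2$ so that its first coordinate becomes real, and absorb the compensating phase into the first factor, which stays in $\mathbb{S}_\C^{d-1}$. The only (immaterial) difference is that you start from a maximizer of the real part $g_T$, where $\langle T,\cdot\rangle$ is automatically real and positive, whereas the paper starts from a maximizer of the modulus and adds one extra phase $\theta$ to make the inner product positive; your explicit handling of the case $(x^{(i)}_M)_1=0$ is a small bonus.
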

\begin{proof}
Let $(x^{(1)}_M,\ldots, x^{(p)}_M)\in (\mathbb{S}_{\mathbb{C}}^{d-1})^{p}$ be a maximizer of $\lvert \langle T, x^{(1)}\otimes\ldots \otimes x^{(p)}\rangle\rvert$, and write $\theta\defeq\arg\left( \langle T, x^{(1)}_M\otimes\ldots \otimes x^{(p)}_M\rangle\right)$. For all $i\in \{2,\ldots,p\}$, set $\theta_i=\arg((x^{(i)}_M)_1)$ and $\theta_1=\theta-\sum_{i=2}^p\theta_i$. Define $\tilde{x}_M^{(j)} \defeq e^{-i\theta_j}x_M^{(j)}$ for $j = 1, \ldots, p$. 
One can check that $\langle T, \tilde{x}_M^{(1)}\otimes \ldots \otimes \tilde{x}_M^{(p)}\rangle$ is positive, and obviously $n_T(p,d)=\lvert\langle T, \tilde{x}_M^{(1)}\otimes \ldots \otimes \tilde{x}_M^{(p)}\rangle \rvert$, while by construction $(\tilde{x}_M^{(1)}, \ldots, \tilde{x}_M^{(p)})\in \mathbb{S}_\C^{d-1}\times (\mathbb{U}^{d-1})^{p-1}$, so we may conclude.
\end{proof}


\subsection{Kac--Rice formula} \label{sec:KR-formula-complex}
 The above proposition tells us that we can determine the injective norm of a complex random tensor $T$ by focusing on the minima and maxima of the real Gaussian process $g_T$ using the Kac--Rice formula, on the condition that we evaluate it on the manifold $\mathbb{S}_{\mathbb{C}}^{d-1}\times (\mathbb{U}^{d-1})^{\times (p-1)}$. This Gaussian process is centered and has covariance
\begin{equation}
\label{eqn:complex-covariance}
    \E(g_T(x^{(1)},\ldots,x^{(p)})g_T(y^{(1)},\ldots,y^{(p)}))=\frac1{2p(d-1)}\mathfrak{R}\left(\prod_{i=1}^p\langle x^{(i)},y^{(i)}\rangle \right),
\end{equation}
 where on the right-hand side we take, for concreteness, the inner product $\ip{x,y} = \sum_{i=1}^d x_i \overline{y_i}$.
 An important note is that this process is isotropic in the sense that its covariance is invariant under the action of $U(d)\times(\mathbb{Z}_2\times PU(d))^{p-1}$, where $U(d)$ denotes the unitary group and $PU(d)$ denotes the projective unitary group $U(d)/U(1)$ of unitary matrices up to a choice of phase.  
\begin{lemma}\label{lem:K-R_Complex}
(Proved in Appendix \ref{app:KR-with-degeneracies})
Fix $p$ and $d$, denote $N=2p(d-1)+1$, and define 
$$L(p,d)\defeq\left(\sqrt{\frac{p(d-1)}{\pi}}\right)^{2p(d-1)}\frac{2^p\pi^{dp+\frac{1-p}{2}}}{\Gamma(d)\Gamma(\frac{2d-1}{2})^{p-1}}.$$
Then, whenever $D \subset \R$ is a finite union of intervals, we have
\[
    \E[\Crt_{g_T}(D)] \leq L(p,d)\int_D e^{-p(d-1) u^2}\E_{\textup{tBHGOE}}[|\det(W^\C_N-u)|] \diff u
\]
where the expectation is taken over $W^\C_N \sim \textup{tBHGOE}(d-1,p)$. 
\end{lemma}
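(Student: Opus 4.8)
The plan is to deduce Lemma~\ref{lem:K-R_Complex} from the weak Kac--Rice bound proved in Appendix~\ref{app:KR-with-degeneracies}, applied to the real Gaussian process $g_T$ on the manifold $M \defeq \mathbb{S}_\C^{d-1} \times (\mathbb{U}^{d-1})^{p-1}$, and then to reduce the resulting integral over $M$ to an evaluation at a single reference point. The degeneracy that forces us to use the appendix's version rather than a textbook Kac--Rice formula is the same as in the real case: since $\langle T, x^{(1)}\otimes\cdots\otimes x^{(p)}\rangle$ is linear in each $x^{(i)}$, the ambient second derivatives in two directions lying in the same factor vanish, so the joint law of $(g_T,\nabla g_T,\nabla^2 g_T)$ is supported on a proper subspace; moreover, in the complex case there are additional coincidences between certain Hessian entries and certain gradient components, which the appendix's argument is designed to handle while still producing a valid upper bound.

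First I would record the elementary geometric data of $M$: it is a smooth compact manifold of real dimension $(2d-1)+(p-1)(2d-2)=2p(d-1)+1=N$ (matching the size of a $\mathrm{tBHGOE}(d-1,p)$ matrix), and its volume is
\[
    \Vol(M) = \Vol(\mathbb{S}^{2d-1}_\R)\,\Vol(\mathbb{S}^{2d-2}_\R)^{p-1} = \frac{2\pi^{d}}{\Gamma(d)}\left(\frac{2\pi^{(2d-1)/2}}{\Gamma((2d-1)/2)}\right)^{p-1} = \frac{2^p\pi^{dp+(1-p)/2}}{\Gamma(d)\,\Gamma((2d-1)/2)^{p-1}},
\]
which is exactly the non-Gaussian prefactor appearing in $L(p,d)$.

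The heart of the argument is the computation, at the reference point $x^{(i)}=e_1$ for all $i$ (which lies on $M$ since $(e_1)_1=1\in\R$), of the joint Gaussian law of the field value, the Riemannian gradient in an orthonormal frame of $T_{(e_1,\dots,e_1)}M$, and the Riemannian Hessian. The steps are: (i) $g_T(e_1,\dots,e_1)=(p(d-1))^{-1/2}\mathfrak{R}(T_{1,\dots,1})\sim\mathcal{N}(0,\tfrac{1}{2p(d-1)})$, which produces the weight $e^{-p(d-1)u^2}$; (ii) build an orthonormal frame from the directions $e_m$ and $ie_m$ ($2\le m\le d$) on each of the $p$ factors, together with the leftover phase direction $ie_1$ on the first (full) sphere, and compute all first and second directional derivatives of $g_T$ in terms of entries of $T$; (iii) observe that the Riemannian Hessian equals the ambient second-derivative form on $T_{(e_1,\dots,e_1)}M$ minus the Lagrange-multiplier term, which by the degree-one homogeneity of $g_T$ in each $x^{(i)}$ equals $-u\,\mathrm{Id}$ (the multiplier for each sphere constraint being the field value $u$); (iv) match the resulting matrix with $\mathrm{tBHGOE}(d-1,p)$ --- the vanishing of same-factor ambient second derivatives gives the hollow diagonal blocks of $B$ and $C$; taking real parts of a function holomorphic in each $x^{(i)}$ gives, in the $(e_m,ie_m)$ coordinates, precisely the Cauchy--Riemann-type block pattern $\left(\begin{smallmatrix}B & C\\ C & -B\end{smallmatrix}\right)$ with $B,C$ independent $\mathrm{BHGOE}(d-1,p,\tfrac{1}{2(d-1)p})$ copies; and the derivatives pairing $ie_1$ on factor $1$ with the directions on the other factors give the extra border $\theta$, which vanishes identically in the positions indexing the rest of factor $1$ --- again by same-factor linearity --- exactly as in Definition~\ref{def:tBHGOE}. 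Carrying this out with the correct bookkeeping of which gradient components are conditioned on, and of how the Gaussian normalizations of the field and gradient combine, is where the precise constant $L(p,d)$ --- in particular the power $(\sqrt{p(d-1)/\pi})^{2p(d-1)}$ --- comes out.

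Finally, invoking the isotropy noted after \eqref{eqn:complex-covariance} (the covariance is invariant under $U(d)\times(\mathbb{Z}_2\times PU(d))^{p-1}$, which acts transitively on $M$), the Kac--Rice integrand is constant on $M$, so the integral over $M$ equals $\Vol(M)$ times the value at $(e_1,\dots,e_1)$; combining this with steps (i)--(iv) yields
\[
    \E[\Crt_{g_T}(D)] \le L(p,d)\int_D e^{-p(d-1)u^2}\,\E_{\mathrm{tBHGOE}}\bigl[\lvert\det(W^\C_N-u)\rvert\bigr]\diff u,
\]
with $W^\C_N\sim\mathrm{tBHGOE}(d-1,p)$. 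I expect the main obstacle to be step~(iv) together with the careful invocation of the appendix: reconciling the several degeneracies of the joint law (vanishing diagonal blocks, and the coincidences between Hessian border entries and gradient components) with the clean final form involving the \emph{full} $\mathrm{tBHGOE}$, and checking that the resulting bound is an over-estimate of the true Kac--Rice density rather than an under-estimate. The volume computation, the transitivity of the symmetry group, and the one-dimensional Gaussian bookkeeping are routine by comparison with the real case of Lemma~\ref{lem:exact_kac_rice}.
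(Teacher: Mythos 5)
Your proposal follows essentially the same route as the paper: the weak (upper-bound) Kac--Rice formula developed in the appendix, isotropy of $g_T$ under $U(d)\times(\mathbb{Z}_2\times PU(d))^{p-1}$ to reduce the integral to its value at the north pole times $\Vol(\mathbb{S}_\C^{d-1}\times(\mathbb{U}^{d-1})^{p-1})$, the north-pole covariance computation identifying the conditional Hessian as $\mathrm{tBHGOE}(d-1,p)-u\,\mathrm{Id}$ (with the hollow blocks, the $\bigl(\begin{smallmatrix}B&C\\ C&-B\end{smallmatrix}\bigr)$ structure, and the bordering vector $\theta$ exactly as you describe), and the volume and Gaussian normalizations assembling into $L(p,d)$. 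One small correction of emphasis: the degeneracy handled by the appendix is that the Hessian's joint law is supported on a lower-dimensional space (the vanishing diagonal blocks), while the gradient remains independent of the field and Hessian at each point, so there are no ``coincidences between Hessian border entries and gradient components''; the a.s.\ nonsingularity of $\mathrm{tBHGOE}-u$ and the absence of boundary critical points are the remaining checks needed to invoke the mollified Kac--Rice identity.
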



\subsection{Complex case: \texorpdfstring{$p$}{p} fixed, \texorpdfstring{$d \to \infty$}{d to infinity}}

In this section we denote 
\[
    H^{\C}_N(u) = W^\C_N - u,
\]
where $W^\C_N \sim \textup{tBHGOE}(d-1,p)$.

\begin{theorem}\textbf{(Complex case, $p$ fixed, $d\to \infty$, upper bound)}
Fix $p \geq 2$, and let
\[
    \alpha(p) = E_0(p)\sqrt{p}.
\]
Then for every $\epsilon > 0$ we have
\begin{align}
    \limsup_{d \to \infty} \frac{1}{d} \log \P\left(\frac{n_T(p,d)}{\sqrt{d}} > \alpha(p) + \epsilon\right) < 0, \label{eqn:complex_nonsym_unnorm} \\
    \limsup_{d \to \infty} \frac{1}{d} \log \P\left( \frac{n_{\widetilde{T}}(p,d)}{\sqrt{d}} d^{p/2} > \alpha(p) + \epsilon\right) < 0. \label{eqn:complex_nonsym_norm}
\end{align}
\end{theorem}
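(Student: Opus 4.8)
The plan is to follow the real case (Theorem~\ref{thm:d_to_infinity_first_moment}) almost verbatim, with the substitutions $\textup{BHGOE}\rightsquigarrow\textup{tBHGOE}$, Corollary~\ref{cor:hgoe_bandeira}$\rightsquigarrow$Lemma~\ref{lem:est_op_norm_tBHGOE}, $F(p,d)\rightsquigarrow L(p,d)$, and $N=p(d-1)\rightsquigarrow N=2p(d-1)+1$. As there, everything should reduce to the complex analogue of Lemma~\ref{lem:d_to_infinity_first_moment}: for every $D$ which is the closure of its interior, has positive Lebesgue measure, and lies in $\mc{A}=(-\infty,-E_0(p))\cup(E_0(p),+\infty)$, one shows
\[
    \limsup_{d\to\infty}\frac{1}{2p(d-1)}\log\E[\Crt_{g_T}(D)]\le\sup_{u\in D}\Sigma_p(u).
\]
Granting this, \eqref{eqn:complex_nonsym_unnorm} follows exactly as in the real case: with $\epsilon'=\epsilon/\sqrt{p}$, Markov's inequality and the symmetry $g_T\overset{d}{=}-g_T$ give $\P(n_T(p,d)/\sqrt{d}>\alpha(p)+\epsilon)\le 2\,\E[\Crt_{g_T}((-\infty,-E_0(p)-\epsilon'])]$, while Lemma~\ref{lem:sigma_p} gives $\Sigma_p(-E_0(p)-\epsilon')<0$ (for $p=2$, using that $\Sigma_2$ is strictly negative outside $[-\sqrt2,\sqrt2]$), so $\limsup_d\tfrac1d\log\P(\cdots)\le 2p\,\Sigma_p(-E_0(p)-\epsilon')<0$ since the prefactor $\tfrac{2p(d-1)}{d}\to2p$. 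The normalized bound \eqref{eqn:complex_nonsym_norm} should then be deduced from \eqref{eqn:complex_nonsym_unnorm} by the same argument as in the proof of Theorem~\ref{thm:d_to_infinity_first_moment}, now using the complex case of Lemma~\ref{lem:LauMas2000} to handle $\P(\|T\|_{\textup{HS}}^2/d^p\le1-\epsilon')$.

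For the displayed first-moment bound, I would start from the complex Kac--Rice estimate, Lemma~\ref{lem:K-R_Complex}, with $N=2p(d-1)+1$ and $W^\C_N\sim\textup{tBHGOE}(d-1,p)$, and split it into two pieces, mirroring Lemma~\ref{lem:d_to_infinity_first_moment_laplace}. The first is the elementary asymptotic $\lim_{d\to\infty}\tfrac{1}{2p(d-1)}\log L(p,d)=\tfrac{1+\log p}{2}$: the non-$\sqrt{\cdot}$ factor of $L(p,d)$ is precisely the volume of $\mathbb{S}^{d-1}_\C\times(\mathbb{U}^{d-1})^{p-1}\cong\mathbb{S}^{2d-1}_\R\times(\mathbb{S}^{2d-2}_\R)^{p-1}$, so this is the same cancellation of divergent $\log d$ terms via Stirling as in \eqref{eqn:kdp_large_p}. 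The second is a Laplace/determinant estimate: for $D\subset\mc{A}$ as above,
\[
    \lim_{d\to\infty}\frac{1}{2p(d-1)}\log\left(\int_D e^{-p(d-1)u^2}\,\E_{\textup{tBHGOE}}[\abs{\det(W^\C_N-u)}]\diff u\right)=\log\sqrt{\tfrac{p-1}{p}}+\sup_{u\in D}\left\{\Omega\!\left(u\sqrt{\tfrac{p}{p-1}}\right)-\frac{u^2}{2}\right\},
\]
where the Gaussian weight contributes $-\tfrac{u^2}{2}$ since $p(d-1)=\tfrac{N-1}{2}$, and the determinant contributes $\int\log\abs{\lambda-u}\,\mu_p(0,\diff\lambda)=\log\sqrt{\tfrac{p-1}{p}}+\Omega(u\sqrt{p/(p-1)})$ because the limiting spectral law of $W^\C_N$ is $\mu_p(0)$ from \eqref{eqn:def_mu_infty}. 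Adding the two pieces and using $\tfrac{1+\log p}{2}+\log\sqrt{\tfrac{p-1}{p}}=\tfrac{1+\log(p-1)}{2}$ reproduces $\sup_{u\in D}\Sigma_p(u)$.

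The Laplace/determinant estimate should be obtained by invoking Theorem~4.1 of \cite{arous2022exponential} with the measure it calls $\mu_\infty(u)$ taken to be $\mu_p(u)$, so the real content is checking its hypotheses for $H^\C_N(u)=W^\C_N-u$, i.e.\ establishing the tBHGOE analogues of Lemmas~\ref{lem:real-lsi}, \ref{lem:thm_1.2}, \ref{lem:wegner}, and \ref{lem:cont_decay_in_u}. The Lipschitz-concentration estimate is again a consequence of the log-Sobolev inequality for the (partly degenerate) Gaussian entries; the no-small-eigenvalues estimate holds on $\{\|W^\C_N\|\le2\sqrt{(p-1)/p}+\delta\}$ --- which has exponentially high probability by Lemma~\ref{lem:est_op_norm_tBHGOE} --- because $E_0(p)\ge2\sqrt{(p-1)/p}$ by \eqref{eqn:e0_lower_bound} and $D$ is gapped away from $\pm E_0(p)$; and the continuity-and-decay bound $\E[\abs{\det H^\C_N(u)}]\le(C\max(\abs{u},1))^N$ is proved as in Lemma~\ref{lem:cont_decay_in_u} using $\E\|W^\C_N\|^N\le C^N$. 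I expect the main obstacle to be the $\mathrm{W}_1$-to-limit estimate, i.e.\ identifying $\mu_p(u)$ as the limiting spectrum of $H^\C_N(u)$: the bordered-block matrix $W^\C_N$ of \eqref{eqn:tBHGOE} is farther from a Wigner matrix than the BHGOE, so I would split off the rank-$\le2$, $\OO(1)$-operator-norm border formed by $\theta$ (negligible for the limiting spectrum, for $\mathrm{W}_1$-distances, and --- since $u$ is gapped from $\supp\mu_p(u)$ --- for $\log\abs{\det}$), and treat the core block $\left(\begin{smallmatrix}B&C\\ C&-B\end{smallmatrix}\right)$ as in the proof of Lemma~\ref{lem:thm_1.2}: write it in the Kronecker form of \cite{alt2019location}, solve the associated matrix Dyson equation, and verify that the solution is a scalar multiple of the identity, independent of $d$, whose value is the Stieltjes transform of $\mu_p(0)$. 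As a sanity check, the same conclusion follows from operator-valued free probability: $B$ and $C$ are asymptotically free semicircular elements of second moment $\tfrac{p-1}{2p}$, so $\left(\begin{smallmatrix}B&C\\ C&-B\end{smallmatrix}\right)=B\otimes\left(\begin{smallmatrix}1&0\\0&-1\end{smallmatrix}\right)+C\otimes\left(\begin{smallmatrix}0&1\\1&0\end{smallmatrix}\right)$ is an $M_2(\C)$-valued semicircular element whose scalar distribution is the semicircle law of second moment $\tfrac{p-1}{p}$, namely $\mu_p(0)$. Finally, $\mathrm{W}_1(\mu_p(u),\mu_\infty(u))=\OO(1/p)$ lets one interchange $\mu_p$ and $\mu_\infty$ wherever convenient, just as in the real $p\to\infty$ analysis.
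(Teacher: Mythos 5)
Your proposal is correct and follows essentially the same route as the paper: the theorem is reduced to Lemmas \ref{lem:K-R_Complex} and \ref{lem:Complex_d_to_infinity_first_moment_laplace} exactly as in the real case, and the latter is proved by verifying the hypotheses of Theorem 4.1 of \cite{arous2022exponential} for $H_N^\C(u)$, with the ${\rm W}_1$ estimate obtained (as you suggest) by peeling off the $\theta$-border --- the paper passes to the principal minor and uses resolvent identities --- and applying the Kronecker MDE/local law of \cite{alt2019location} to the core block $\bigl(\begin{smallmatrix} B & C \\ C & -B\end{smallmatrix}\bigr)$. Your free-probability identification of the limiting law $\mu_p(0)$ is a nice independent check not in the paper, but the argument itself is the same.
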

Again, $\tilde T$ denotes the normalized version of the tensor. As earlier, this theorem is a simple consequence of the two key Lemmas \ref{lem:K-R_Complex} and \ref{lem:Complex_d_to_infinity_first_moment_laplace} (as well as Lemma \ref{lem:LauMas2000} to handle the normalization). We do not write the proof here since it is the same as in the real case.
\begin{lemma}
\label{lem:Complex_d_to_infinity_first_moment_laplace}
We have
\begin{equation}
\label{eqn:lpd_large_d}
    \lim_{d \to \infty} \frac{1}{2p(d-1)} \log L(p,d) = \frac{1+\log(p)}{2},
\end{equation}
and, for every $D \subset \mc{A}$ with positive Lebesgue measure that is the closure of its interior, 
\begin{equation}
\label{eqn:complex_d_to_infinity_laplace}
\begin{split}
    &\lim_{d \to \infty} \frac{1}{2p(d-1)} \log \left( \int_D \ e^{-p(d-1)u^2}\E_{\textup{tBHGOE}}\left[\lvert \det(W_N^\C-u)\vert \right] \diff u \right) \\
    &= \frac{\log(p-1)-\log(p)}{2} + \sup_{u \in D} \left\{ \Omega\left(u \sqrt{\frac{p}{p-1}}\right) - \frac{u^2}{2}\right\}.
\end{split}
\end{equation}
\end{lemma}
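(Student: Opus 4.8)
The plan is to run the argument of Lemma~\ref{lem:d_to_infinity_first_moment_laplace} in the complex setting, the only structural change being that the relevant random matrix is now $W_N^\C \sim \textup{tBHGOE}(d-1,p)$ rather than a BHGOE; most of the work is in re-checking that $W_N^\C$ still has $\mu_p$ as its limiting spectral measure and still satisfies the input hypotheses of \cite{arous2022exponential}. The constant asymptotics \eqref{eqn:lpd_large_d} are a routine Stirling computation on $L(p,d)$. For \eqref{eqn:complex_d_to_infinity_laplace} I would apply Theorem~4.1 of \cite{arous2022exponential} to the matrices $H_N^\C(u) = W_N^\C - u$, with the choices $m = 1$, $\mathfrak{D} = D$, $\alpha = \tfrac12$, $p = 0$ (in their notation), and limiting measure the $\mu_p(u)$ of \eqref{eqn:def_mu_infty}. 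Since $N = 2p(d-1)+1$ and we normalize by $2p(d-1)$, the matrix size exceeds the Laplace normalization by $1$, which is immaterial at the exponential scale; and the prefactor $e^{-p(d-1)u^2}$ is exactly of the form $e^{-2p(d-1)\cdot u^2/2}$, matching the choice $\alpha = \tfrac12$.

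Invoking Theorem~4.1 requires verifying its hypotheses for $\textup{tBHGOE}$, as analogues of Lemmas~\ref{lem:real-lsi}, \ref{lem:thm_1.2}, \ref{lem:wegner}, and~\ref{lem:cont_decay_in_u}. The Lipschitz-concentration bound (analogue of Lemma~\ref{lem:real-lsi}) follows from the same Herbst/log-Sobolev argument, since all entries of $W_N^\C$, including those of the border vector $\theta$, are jointly Gaussian with variance $\OO(1/N)$. The no-small-eigenvalues estimate (analogue of Lemma~\ref{lem:wegner}) is verbatim the real proof: for closed $K \subset \mc{A}$ one has $K \subset (-\infty,-E_0(p)-\delta_K] \cup [E_0(p)+\delta_K,\infty)$, and on the event $\{\|W_N^\C\|_{\textup{op}} \leq 2\sqrt{\tfrac{p-1}{p}} + \tfrac{\delta_K}{2}\}$ -- of probability tending to one by Lemma~\ref{lem:est_op_norm_tBHGOE} -- the spectrum of $H_N^\C(u)$ is uniformly gapped from zero for $u \in K$, by \eqref{eqn:e0_lower_bound}. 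The continuity-and-decay estimate (analogue of Lemma~\ref{lem:cont_decay_in_u}) follows from $\E[\abs{\det H_N^\C(u)}] \leq \E[\|H_N^\C(u)\|_{\textup{op}}^N] \leq (C\max(\|u\|,1))^N$, using the exponential tail for $\|W_N^\C\|_{\textup{op}}$ furnished by log-Sobolev together with Lemma~\ref{lem:est_op_norm_tBHGOE}. As in the real case, a few hypotheses of \cite{arous2022exponential} (e.g.\ ``enough randomness on the diagonal'') are violated here, but a close reading of the relevant proofs shows they go through once the correct local law and operator-norm bounds are substituted.

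The main obstacle is the Wasserstein input $\sup_{u} {\rm W}_1(\E[\hat{\mu}_{H_N^\C(u)}], \mu_p(u)) \leq N^{-\kappa}$ (analogue of Lemma~\ref{lem:thm_1.2}); by translation invariance it suffices to take $u = 0$. I would first peel off the border: writing $W_N^\C = \bigl(\begin{smallmatrix} M & \theta^T \\ \theta & 0 \end{smallmatrix}\bigr)$ with $M = \bigl(\begin{smallmatrix} B & C \\ C & -B \end{smallmatrix}\bigr)$, the difference $W_N^\C - (M \oplus (0))$ has rank $\leq 2$ and operator norm $\|\theta\|$ (which concentrates near $1$), so the rank inequality for empirical distribution functions together with the operator-norm bounds gives ${\rm W}_1(\E[\hat{\mu}_{W_N^\C}], \E[\hat{\mu}_{M}]) = \OO(1/N)$; it then remains to bound ${\rm W}_1(\E[\hat{\mu}_{M}], \mu_p(0))$. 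For this I would use the Kronecker local law of \cite{alt2019location}, exactly as in the proof of Lemma~\ref{lem:thm_1.2} but now also encoding the $2\times2$ ``chiral'' structure: writing $M = \sigma_z \otimes B + \sigma_x \otimes C$ with $\sigma_z = \operatorname{diag}(1,-1)$ and $\sigma_x = \bigl(\begin{smallmatrix}0&1\\1&0\end{smallmatrix}\bigr)$, and combining with the internal BHGOE block structure of $B$ and $C$, one presents $M$ as a Kronecker random matrix over a $2p$-dimensional block space. One then checks, as in \eqref{eqn:kronecker_mde_operators}--\eqref{eqn:widetilde_m_d}, that the matrix Dyson equation has the scalar solution $m_i(z) \equiv \widetilde{m}_p(z)\,\Id$ -- the crucial point being that the covariance operator sends $c\,\Id \mapsto c\tfrac{p-1}{p}\,\Id$ (which uses $\E[(M^2)_{ii}] \to \tfrac{p-1}{p}$ uniformly in $i$, the vanishing of the $B$--$C$ cross-covariances, and that each entry of $M$ has only $\OO(1)$ correlated partners of size $\OO(1/N)$), so that the limiting measure is $\mu_p(0)$ and the local law \cite[(B.5)]{alt2019location} feeds into \cite[(3.1)]{arous2022exponential} as before. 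The delicate part is checking that $M$ meets the flatness/non-degeneracy requirements of \cite{alt2019location} in spite of the zeroed diagonal blocks of $B, C$ and the indefinite coefficient $\sigma_z$ -- the same issue already handled in Lemma~\ref{lem:thm_1.2}, now over a larger block space.

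With all hypotheses verified, Theorem~4.1 of \cite{arous2022exponential} gives
\[
    \lim_{d\to\infty} \frac{1}{2p(d-1)} \log\left( \int_D e^{-p(d-1)u^2} \E_{\textup{tBHGOE}}[\abs{\det(W_N^\C - u)}] \diff u \right) = \sup_{u \in D} \left\{ \int_\R \log\abs{\lambda}\, \mu_p(u,\lambda) \diff\lambda - \frac{u^2}{2} \right\},
\]
and the same change of variables as in the real case yields $\int_\R \log\abs{\lambda}\,\mu_p(u,\lambda)\,\diff\lambda = \log\sqrt{\tfrac{p-1}{p}} + \Omega\bigl(u\sqrt{\tfrac{p}{p-1}}\bigr)$, which rearranges into the right-hand side of \eqref{eqn:complex_d_to_infinity_laplace}.
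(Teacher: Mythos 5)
Your proposal is correct and follows essentially the same route as the paper: verify the hypotheses of Theorem~4.1 of \cite{arous2022exponential} for $H_N^\C(u)$ (Lipschitz concentration via log-Sobolev/Herbst on the underlying independent Gaussian vector, the Wegner estimate from Lemma~\ref{lem:est_op_norm_tBHGOE}, the determinant decay bound, and the Wasserstein/local-law input obtained by first removing the border row and column and then treating $\sigma_z\otimes B+\sigma_x\otimes C$ as a Kronecker matrix whose MDE has the scalar solution $\widetilde m_p(z)\,\Id$). The only deviation is cosmetic: you peel off the border via the rank/interlacing inequality on empirical distribution functions to compare $W_1$ distances directly, whereas the paper compares the Stieltjes transform of $W_N^\C$ with that of its principal minor through resolvent and Ward identities; both yield the same $\OO(1/N)$ error and feed equally well into the machinery of \cite{arous2022exponential}.
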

Again as earlier, this lemma follows from \cite[Theorem 4.1]{arous2022exponential} once the good properties of $H_N^\C(u) = W^\C_N - u$ are checked, which we do in Lemmas \ref{lem:complex_Wasserstein_and_Lipschitz_traces}, \ref{lem:complex_Wegner} and \ref{lem:complex_det_bound}. We prove those lemmas below by essentially following the proof structure of Section \ref{subsec:R_case_d_to_infinity}.
We start with the analogues of Lemmas \ref{lem:real-lsi} and \ref{lem:thm_1.2} above, whose extensions to the complex case we write below.

\begin{lemma}
\label{lem:complex-lsi}
There exists $C,c>0$ such that, for every Lipschitz $f:\mathbb{R}\rightarrow \mathbb{R}$, we have 
\begin{equation}
 \sup_{u\in \mathbb{R}}\P\left( \left\vert \frac1N\Tr(f(H_N^\C(u)))-\frac1N\E(\Tr(f(H_N^\C(u))))\right\vert >\delta\right)\le C\exp\left( -c\frac{N^2\delta^2}{\rnorm f \lnorm^2_{\textrm{Lip}}}\right).
\end{equation}
\end{lemma}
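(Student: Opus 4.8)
The plan is to follow the proof of Lemma~\ref{lem:real-lsi} almost verbatim, the only point requiring care being that, unlike the real $\textup{BHGOE}$, the entries of $W_N^\C \sim \textup{tBHGOE}(d-1,p)$ are \emph{not} independent up to symmetry: in the block form \eqref{eqn:tBHGOE}, the block $-B$ is a deterministic copy (up to sign) of $B$, and the vector $\theta$ appears twice. So the upper triangle of $W_N^\C$, regarded as a vector, is a degenerate Gaussian, and one cannot quote ``its coordinates satisfy a log-Sobolev inequality'' as literally as in the real case. I would sidestep this by working with the underlying independent degrees of freedom.

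Concretely, first I would reduce to $u = 0$: since $H_N^\C(u) = W_N^\C - u\Id$, one has $\tfrac1N\Tr(f(H_N^\C(u))) = \tfrac1N\Tr(f_u(W_N^\C))$ with $f_u(x) = f(x-u)$ and $\|f_u\|_{\textup{Lip}} = \|f\|_{\textup{Lip}}$, so the supremum over $u$ is free. Next I would note that, by Definition~\ref{def:tBHGOE}, $W_N^\C$ is a \emph{fixed} linear image $W_N^\C = \Phi(g)$ of the Gaussian vector $g = (\textup{vec}(B),\textup{vec}(C),\theta)$ whose nonzero coordinates are i.i.d.\ $\mc{N}(0,\tfrac{1}{2(d-1)p})$; this vector satisfies a log-Sobolev inequality with constant of order $\tfrac1N$ (recall $N = 2p(d-1)+1$). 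Since each entry of $W_N^\C$ equals $\pm$ one coordinate of $g$ and each coordinate of $g$ appears in at most four entries of $W_N^\C$, the map $\Phi$ has operator norm $O(1)$ as a map from the Euclidean norm on $g$ to the Hilbert--Schmidt norm on matrices.

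Then I would combine the deterministic bound $|\Tr f(M)-\Tr f(M')| \le \sqrt{N}\,\|f\|_{\textup{Lip}}\,\|M-M'\|_{\textup{HS}}$ (Hoffman--Wielandt plus Cauchy--Schwarz) with the previous step to see that $g \mapsto \tfrac1N\Tr(f(\Phi(g)))$ is Lipschitz with constant $O(\|f\|_{\textup{Lip}}/\sqrt N)$, and conclude by the Herbst argument applied to the log-Sobolev inequality for $g$, exactly as in \cite[Lemma~2.3.3 \& Theorem~2.3.5]{anderson2010introduction} and in the proof of Lemma~\ref{lem:real-lsi}. The resulting rate is $\exp\bigl(-cN^2\delta^2/\|f\|_{\textup{Lip}}^2\bigr)$, matching the statement.

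I do not expect a serious obstacle here: the only thing one must not overlook is the degeneracy/correlation among the entries of $W_N^\C$, and that is dissolved the moment one passes to the independent generating vector $g$; the extra row and column of size one coming from $\theta$ (and the ``$+1$'' in $N$) change nothing at the level of orders of magnitude.
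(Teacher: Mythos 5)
Your proposal is correct and follows essentially the same route as the paper's proof: reduce to $u=0$ by absorbing the shift into $f$, pass to the independent Gaussian vector of underlying degrees of freedom (the entries of $B$, $C$, and $\theta$), observe that each such coordinate appears at most four times in $W_N^\C$ so the induced linear map is $O(1)$-Lipschitz into Hilbert--Schmidt norm, apply Hoffman--Wielandt with Cauchy--Schwarz, and conclude by the log-Sobolev/Herbst argument. No gaps.
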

\begin{proof}
The same proof as in the real case shows that it suffices to consider $u = 0$, but it is no longer the case that the vector of upper-triangular entries of $W^\C_N$ satisfies the log-Sobolev inequality, because the entries are correlated. However, considering the representation of the $\textup{tBHGOE}$ from eq. \eqref{eqn:tBHGOE} (and the notation there), the vector $V$ that concatenates the upper-triangular entries of $B$, the upper-triangular entries of $C$, and the entries of $\theta$ does satisfy the log-Sobolev inequality with constant of order $\frac{1}{N}$, so the Herbst argument described in the real case goes through if we can show that, whenever $f : \R \to \R$ is Lipschitz with Lipschitz constant $\|f\|_{\textup{Lip}}$, the map from the entries of $V$ to $\Tr(f(W_N^\C))$ is Lipschitz with Lipschitz constant order $\sqrt{N}\|f\|_{\textup{Lip}}$. This is a short exercise using Hoffman--Wielandt, since if $V$ and $V'$ are such vectors inducing $W$ and $W'$, then from Cauchy--Schwarz we have
\[
    \abs{\Tr(f(W)) - \Tr(f(W'))} \leq \|f\|_{\textup{Lip}} \sum_{i=1}^N \abs{\lambda_i(W) - \lambda_i(W')} \leq \sqrt{N}\|f\|_{\textup{Lip}} \Tr((W-W')^2)
\]
and the latter is bounded above by four times the squared Euclidean norm of $V$ (since, because of the structure of $W$, each entry of $V$ appears at most four times in $W$). This finishes the proof.
\end{proof}

\begin{lemma}\label{lem:complex_Wasserstein_and_Lipschitz_traces}
There exists $\kappa>0$ such that
\begin{equation}
    \sup_{u\in \R}W_1(\E[\hat\mu_{H_N^\C}(u)],\mu_p(u))\le N^{-\kappa}.
\end{equation}
\end{lemma}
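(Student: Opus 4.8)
The plan is to follow the proof of Lemma~\ref{lem:thm_1.2} essentially verbatim, departing from it only where the $\textup{tBHGOE}$ structure forces changes. First, since ${\rm W}_1$ is invariant under translating both of its arguments by $-u$, the quantity ${\rm W}_1(\E[\hat\mu_{H_N^\C(u)}],\mu_p(u))$ is independent of $u$, and taking $u=0$ reduces the claim to bounding ${\rm W}_1(\E[\hat\mu_{W_N^\C}],\mu_p(0))$, where $\mu_p(0)$ is the semicircle law on $[-2\sqrt{(p-1)/p},2\sqrt{(p-1)/p}]$ with Stieltjes transform $\widetilde m_p$ given in \eqref{eqn:widetilde_m_d}. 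As in the real case, I would obtain this bound by rerunning the argument behind Corollary~1.10 of \cite{arous2022exponential}, which reduces to checking conditions (3.1), (3.2), and (3.3) of Proposition~3.1 there. Conditions (3.2) and (3.3) follow, exactly as in \cite{arous2022exponential}, from an operator-norm tail bound of the form $\P(\|W_N^\C\|\ge x)\le c_1 e^{-c_2 N(x-c_3)}$; this in turn follows from the log-Sobolev inequality satisfied by the vector $V$ concatenating the upper triangles of $B$ and $C$ with the entries of $\theta$, together with the Herbst argument, exactly as in the proof of Lemma~\ref{lem:complex-lsi}, and requires only the crude bound $\sup_N\E\|W_N^\C\|<\infty$, which is supplied by Lemma~\ref{lem:est_op_norm_tBHGOE}.

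The substantive point is condition (3.1): a near-axis local law controlling $|s_N(E+\ii N^{-\gamma})-\widetilde m_p(E+\ii N^{-\gamma})|$ uniformly for $E$ in a fixed compact set, where $s_N$ denotes the Stieltjes transform of $\hat\mu_{W_N^\C}$. I would first establish this for the $2p(d-1)\times 2p(d-1)$ matrix $M=\bigl(\begin{smallmatrix}B & C\\ C & -B\end{smallmatrix}\bigr)$ by invoking the Kronecker-matrix local law of \cite{alt2019location}: writing $M=\sigma_z\otimes B+\sigma_x\otimes C$ with $\sigma_z=\bigl(\begin{smallmatrix}1&0\\0&-1\end{smallmatrix}\bigr)$ and $\sigma_x=\bigl(\begin{smallmatrix}0&1\\1&0\end{smallmatrix}\bigr)$, and expanding $B$ and $C$ over the strict upper triangle of a $p\times p$ matrix as in the proof of Lemma~\ref{lem:thm_1.2}, $M$ becomes a Kronecker random matrix with $2p\times 2p$ deterministic coefficient matrices $\sigma_z\otimes E_{ij}$, $\sigma_x\otimes E_{ij}$ (and their transposes) and i.i.d.\ $\mathcal N(0,\tfrac1{2p(d-1)})$ Gaussian blocks. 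As in the real case, one checks that the associated matrix Dyson equation admits the constant ansatz $m_k(z)\equiv\widetilde m_p(z)\,\Id_{2p}$: the self-energy operator evaluated on this ansatz returns $\widetilde m_p(z)\,\tfrac{p-1}{p}\,\Id_{2p}$, because, using $E_{ij}E_{ji}=E_{ii}$ and $\sigma_z^2=\sigma_x^2=\Id_2$, the two signs appearing in $\bigl(\begin{smallmatrix}B&C\\C&-B\end{smallmatrix}\bigr)$ disappear and the two independent families $B,C$ add together, while each of the $p$ indices occurs $p-1$ times; this collapses the Dyson equation to the scalar equation $1+\bigl(z+\tfrac{p-1}{p}\widetilde m_p(z)\bigr)\widetilde m_p(z)=0$ of \eqref{eqn:kronecker_mde_scalar}, so the ansatz is correct and $M$ has limiting Stieltjes transform $\widetilde m_p$. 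Finally, $W_N^\C$ is a rank-$2$ perturbation of the trivial embedding of $M$ into $\R^{N\times N}$ (the extra row and column built from $\theta$), so the two Stieltjes transforms differ by $O\bigl(1/(N\,\Im z)\bigr)=O(N^{\gamma-1})$, which is well inside the error tolerated by (3.1). The short passage from this local law to (3.1), and then to the conclusion, can be copied verbatim from the end of the proof of Lemma~\ref{lem:thm_1.2}, and no change of reference measure is needed since $\mu_p(0)$ is already the target here.

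The hard part will be this local-law step: one must confirm that the heavily correlated block form $\bigl(\begin{smallmatrix}B&C\\C&-B\end{smallmatrix}\bigr)$ genuinely fits the Kronecker framework of \cite{alt2019location}, that its matrix Dyson equation decouples to the same scalar equation as the $\textup{BHGOE}$, and that appending the $\theta$ row and column is harmless at the mesoscopic scale $\Im z=N^{-\gamma}$. Once this is in hand, conditions (3.2), (3.3) and the remainder of the argument are a routine transcription of the real case already treated in Lemma~\ref{lem:thm_1.2}.
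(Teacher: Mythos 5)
Your proposal is correct and follows essentially the same route as the paper: reduce to $u=0$, handle (3.2)--(3.3) via the log-Sobolev/Herbst operator-norm tail together with Lemma \ref{lem:est_op_norm_tBHGOE}, verify (3.1) by fitting $\bigl(\begin{smallmatrix}B&C\\C&-B\end{smallmatrix}\bigr)$ into the Kronecker framework of \cite{alt2019location} and checking that the MDE collapses to the scalar equation with solution $\widetilde m_p$, and then account for the $\theta$ row and column by a comparison of Stieltjes transforms at scale $O(1/(N\,\Im z))$. The only cosmetic difference is that you bound the effect of the extra row and column as a rank-two perturbation via interlacing, whereas the paper discards that row and column and compares to the minor using resolvent and Ward identities; both yield the same $O(N^{\gamma-1})$ error, well within the tolerance of (3.1).
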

\begin{proof}

The proof follows similar guidelines as the proof of Lemma \ref{lem:thm_1.2}. However, there are a few technical differences due to the specific form of $W_N^\C$ in the complex case. As before, it suffices to consider the case $u = 0$, and to prove the analogues of equations (3.1), (3.2), and (3.3) of \cite{arous2022exponential}.

In the notation of the proof of Lemma \ref{lem:complex-lsi}, the map from $V$ to $\|W\|$ is has an order-one Lipschitz constant, since $\|W-W'\| \leq \Tr((W-W')^2)$; thus the Herbst argument gives
\[
    \P(\|W^\C_N\| \geq x) \leq e^{-Nx} \E[e^{N\|W^\C_N\|}] \leq e^{N(\sup_N \E\|W^\C_N\| + C - x)}
\]
for some $C$, and Lemma \ref{lem:est_op_norm_tBHGOE} shows that $\sup_N \E\|W_N^\C\| < \infty$. As explained in the real case, this suffices for (3.2) and (3.3). 

It remains only to show that $(3.1)$ of \cite{arous2022exponential} is satisfied. There are two ingredients. First, we define $\tilde W_N^\C$ obtained as the minor of $W_N^\C$ by discarding the last row and column. We do this because $\tilde W_N^\C$, unlike $W_N^\C$, fits into the framework of ``Kronecker random matrices'' as considered in \cite{alt2019location} (from which we will import the local law, as in the proof of the real case), because one can write $\tilde W_N^\C=\eta_0\otimes \tilde B+\eta_1\otimes \tilde C$ with both $\tilde B,\tilde C\sim\textrm{BHGOE}(d-1,p,\frac1{2p(d-1)})$ independent and 
\[
    \eta_0 =\begin{pmatrix}1&0\\0&-1\end{pmatrix}, \qquad \eta_1=\begin{pmatrix} 0&1\\1&0\end{pmatrix}.
\]
We can further decompose the matrix as
\begin{equation}\label{eq:Complex_tilde_Kronecker_Matrix}
\tilde W_N^\C=\sum_{s=0}^1\sum_{1\le i < j \le p}(\eta_s\otimes \beta_{(i,j)})\otimes Y_{(s,i,j)}+(\eta_s\otimes \beta_{(i,j)})^T\otimes Y_{(s,i,j)}^T,
\end{equation}
where the $Y_{(s,i,j)}$ are i.i.d. $(d-1)\times (d-1)$ random matrices with i.i.d. $\mc{N}(0,\frac{1}{2p(d-1)})$ entries, and the $\beta_{(i,j)}$ are the strict-upper-triangular $\beta_{(i<j)} = E_{ij}$ (i.e., the $p \times p$ matrix with a one in position $(i,j)$ and zeros everywhere else). In this way, one has, for $s=0$,  $$\sum_{1\le i < j \le p}(\eta_0\otimes \beta_{(i,j)})\otimes Y_{(0,i,j)}+(\eta_0\otimes \beta_{(i,j)})^T\otimes Y_{(0,i,j)}^T =\begin{pmatrix} \tilde{B} & 0\\ 0 & - \tilde{B}\end{pmatrix},$$
and for $s=1$,
$$\sum_{1\le i < j \le p}(\eta_1\otimes \beta_{(i,j)})\otimes Y_{(1,i,j)}+(\eta_1\otimes \beta_{(i,j)})^T\otimes Y_{(1,i,j)}^T=\begin{pmatrix} 0 & \tilde{C} \\ \tilde{C} & 0\end{pmatrix}$$
justifying equation \eqref{eq:Complex_tilde_Kronecker_Matrix}. We show that the corresponding MDE has deterministic solutions $\tilde m_p(z)\mathrm{Id}$, where, as in the real case, $\widetilde{m}_p(z)$ (written explicitly in \eqref{eqn:widetilde_m_d}) is the Stieltjes transform of the rescaled semicircle law $\mu_p$. The local law coming from this MDE allows us to bound $\E[\lvert \tilde s_p(z)-\tilde m_p(z)\rvert]$, where $\tilde s_p(z)$ is the Stieltjes transform of $\tilde W_N$. The second ingredient is a short general argument using resolvent identities to show that, deterministically, the Stieltjes transform of a matrix and its minor are close. We use this to compare $\tilde s_p(z)$ to $s_p(z)$, the Stieltjes transform of $W_N^\C$. In more details:
\begin{itemize}
    \item \emph{MDE for $\tilde W_N^\C$:} As defined in \cite{alt2019location}, the operators $\ms{S}_i^{\C}:(\C^{2\times 2}\otimes \C^{p\times p})^{d-1}\rightarrow \C^{2\times 2}\otimes \C^{p\times p}$ have the form
$$\ms{S}_i^{\C}[\mathbf{r}]=\frac1{2p(d-1)}\sum_{k=1}^{d-1}\sum_{s=0}^1\sum_{1\le m < n \le p}(\eta_s\otimes E_{mn} )r_k(\eta_s\otimes E_{nm})+(\eta_s\otimes E_{nm})r_k(\eta_s\otimes E_{mn}).$$
As before, $\ms{S}_i^{\C}$ do not depend on $i$. Similarly, if each $r_i$ is a multiple of the identity $r_i=c \textrm{Id}_{2\times 2}\otimes \textrm{Id}_{p\times p}=c \textrm{Id}_{2p\times 2p}$ we have 
$$\ms{S}_i^{\C}[(c\textrm{Id},\ldots,c\textrm{Id})]=\frac{d-1}{2p(d-1)}\sum_{s=0}^1\sum_{1\le m<n\le p}c\textrm{Id}_{2\times 2}\otimes(E_{mm}+E_{nn})=c\frac{(p-1)}{p} \textrm{Id}_{2p\times 2p}.$$
As in the proof of Lemma \ref{lem:thm_1.2}, this allows us to show that $\mathbf{m}(z) = (m_1(z),\ldots,m_{d-1}(z))$, the solution to the Matrix Dyson Equation
\begin{equation}
\label{eqn:kronecker_mde_Complex}
\begin{split}
    &\Id_{2p \times 2p} + (z\Id_{2p \times 2p} + \ms{S}_i[\mathbf{m}(z)])m_i(z) = 0, \quad \text{for each } i = 1, \ldots, d-1, \\
    &\text{subject to } \Im m_i(z) > 0 \text{ as a quadratic form,}
\end{split}
\end{equation}
is given by the constant solution $m_i(z) = \widetilde{m}_p(z)\Id_{2p \times 2p}$ for $i = 1, \ldots, d-1$, where $\widetilde{m}_p(z)$ is as in \eqref{eqn:widetilde_m_d}. From there, we obtain that the measure $\mu^\C_d$ defined as having $\tilde{m}_p$ as its Stieltjes transform, is independent of $d$, and is a shifted (by $-u$) and rescaled semicircle law, namely the measure $\mu_p(u)$ as in \eqref{eqn:def_mu_infty}. \\

\item \emph{Resolvent Identities:} Suppose that $M$ is any $N \times N$ matrix with resolvent $G = G(z)$ and Stieltjes transform $s(z) = \frac{1}{N}\Tr(G(z))$, and $\tilde M$ is its $(N-1) \times (N-1)$ principal minor, with resolvent $G^{(N)}$ and Stieltjes transform $\tilde s(z) = \frac{1}{N-1} \Tr G^{(N)}$. Standard resolvent identities (see, e.g., \cite[Lemma 3.5]{BenKno2016}) give
\[
    G_{ij} = G^{(N)}_{ij} + \frac{G_{iN}G_{Nj}}{G_{NN}}
\]
for any $i, j < N$. Applying this with the Ward identity $\sum_{i=1}^N \abs{G_{iN}(z)}^2 = \frac{\Im G_{NN}(z)}{\Im z}$, we obtain
\begin{align*}
    \abs{s(z) - \tilde{s}(z)} &= \frac{1}{N}\abs{\sum_{i=1}^N G_{ii}(z) - \left(1+\frac{1}{N-1}\right) \sum_{i=1}^{N-1} G^{(N)}_{ii}(z)} \\
    &= \frac{1}{N}\abs{G_{NN}(z) + \sum_{i=1}^{N-1} (G_{ii}(z) - G^{(N)}_{ii}(z)) - \tilde{s}(z)} \\
    &= \frac{1}{N}\abs{G_{NN}(z) + \sum_{i=1}^{N-1} \frac{G_{iN}(z)G_{Ni}(z)}{G_{NN}(z)} - \tilde{s}(z)} \\
    &\leq \frac{1}{N}\abs{G_{NN}(z)} + \frac{1}{N\abs{G_{NN}(z)}} \left( \frac{\Im G_{NN}(z)}{\Im z} \right) - \frac{1}{N}\abs{G_{NN}(z)} + \frac{1}{N}\abs{\tilde s(z)} \leq \frac{2}{N\Im z},
\end{align*}
where in the last inequality we used the deterministic bound $\abs{\tilde s(z)} \leq \frac{1}{\Im z}$.
\end{itemize}
Now we bound
\[
    \abs{\E[s_p(z)] - \widetilde{m}_p(z)} \leq \E[\abs{s_p(z) - \widetilde{s}_p(z)}] + \E[\abs{\widetilde{s}_p(z) - \widetilde{m}_p(z)}],
\]
estimate the first term on the right-hand side with the resolvent argument above, and estimate the second-term on the right-hand side with the local law of \cite[(B.5)]{alt2019location}, as in the proof of Lemma \ref{lem:thm_1.2}, to show that there exist $A, \epsilon_1, \epsilon_2 > 0$ such that
\[
    \int_{-3A}^{3A} \diff E \lvert \E[s_p(E+i N^{-\epsilon_1})] -\tilde m_p(E+i N^{-\epsilon_1})\rvert\le N^{-\epsilon_2},
\]
which is (3.1) of \cite{arous2022exponential}.
\end{proof}

\begin{lemma}[Complex Wegner estimate]\label{lem:complex_Wegner}
Let $p>2$, then for every $\epsilon>0$ and every closed $K\subset \mathcal A =\mc{A}_- \sqcup \mc{A}_+ \defeq (-\infty,-E_0(p)) \sqcup (E_0(p),+\infty).$
\begin{equation}
    \lim_{N\rightarrow \infty} \inf_{u\in K}\P\left(H_N^\C(u) \text{ has no eigenvalues in } [-e^{-N^\epsilon}, e^{-N^\epsilon}]\right)=1.
\end{equation}
\end{lemma}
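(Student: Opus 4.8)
The plan is to follow the proof of the real Wegner estimate, Lemma \ref{lem:wegner}, essentially verbatim, the only substantive change being that the operator-norm input is now supplied by Lemma \ref{lem:est_op_norm_tBHGOE} (for the twisted BHGOE) in place of Corollary \ref{cor:hgoe_bandeira}. Recall that we are in the regime $p$ fixed, $d \to \infty$, so $N = 2p(d-1)+1 \to \infty$.

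First I would reduce to a deterministic spectral-gap statement. Since $K \subset \mathcal{A}$ is closed and $\mathcal{A} = \mathcal{A}_- \sqcup \mathcal{A}_+$ is open, $K$ is gapped away from $\pm E_0(p)$, i.e. there is $\delta_K > 0$ with $K \subset (-\infty, -E_0(p) - \delta_K] \cup [E_0(p) + \delta_K, +\infty)$. I then introduce the good event
\[
    \mathcal{E}_K = \left\{ \|W_N^\C\|_{\textup{op}} \leq 2\sqrt{\tfrac{p-1}{p}} + \tfrac{\delta_K}{2} \right\}.
\]
On $\mathcal{E}_K$, using $E_0(p) \geq 2\sqrt{\frac{p-1}{p}}$ from \eqref{eqn:e0_lower_bound} (sharp for $p \geq 3$, matching the hypothesis $p>2$), for $u \in K \cap \mathcal{A}_+$ one gets
\[
    \lambda_{\max}(H_N^\C(u)) = \lambda_{\max}(W_N^\C) - u \leq 2\sqrt{\tfrac{p-1}{p}} + \tfrac{\delta_K}{2} - E_0(p) - \delta_K \leq -\tfrac{\delta_K}{2},
\]
and symmetrically $\lambda_{\min}(H_N^\C(u)) \geq \delta_K/2$ for $u \in K \cap \mathcal{A}_-$. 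Thus on $\mathcal{E}_K$ the spectrum of $H_N^\C(u)$ is uniformly (over $u \in K$) gapped away from $0$ by at least $\delta_K/2$, so once $N$ is large enough that $e^{-N^\epsilon} < \delta_K/2$ (a threshold depending only on $K$ and $\epsilon$), $H_N^\C(u)$ has no eigenvalues in $[-e^{-N^\epsilon}, e^{-N^\epsilon}]$ for every $u \in K$.

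Combining these, for $N \geq N_0(p,\epsilon,K)$ one has $\inf_{u \in K} \P\big(H_N^\C(u) \text{ has no eigenvalues in } [-e^{-N^\epsilon}, e^{-N^\epsilon}]\big) \geq \P(\mathcal{E}_K)$, and Lemma \ref{lem:est_op_norm_tBHGOE}, applied with the fixed error parameter $\delta_K/2$, gives $\P(\mathcal{E}_K) \to 1$ as $N \to \infty$, completing the proof. I do not expect any genuine obstacle: the argument is deterministic apart from a single invocation of the tBHGOE operator-norm concentration bound, which is exactly of the required form (same edge $2\sqrt{(p-1)/p}$, stretched-exponentially small failure probability). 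Unlike the real case, no sign-flip symmetry is needed since $\lambda_{\max}$ and $\lambda_{\min}$ are bounded directly; the only point of care is matching the size parameter, namely that $W_N^\C \sim \textup{tBHGOE}(d-1,p)$ has $N = 2p(d-1)+1$, so that $N \to \infty$ as $d \to \infty$ and Lemma \ref{lem:est_op_norm_tBHGOE} indeed applies.
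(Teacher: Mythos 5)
Your proposal is correct and is exactly the paper's argument: the paper's proof of this lemma simply says to repeat the proof of Lemma \ref{lem:wegner} with Corollary \ref{cor:hgoe_bandeira} replaced by Lemma \ref{lem:est_op_norm_tBHGOE}, which is precisely what you carry out in detail.
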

\begin{proof}
The proof is the same as for Lemma \ref{lem:wegner}; one just uses Lemma \ref{lem:est_op_norm_tBHGOE} instead of Corollary \ref{cor:hgoe_bandeira}. 
\end{proof}
Finally, the determinant bound is obtained in the same way as in subsection \ref{subsec:R_case_d_to_infinity}, leading to the following.
\begin{lemma}\label{lem:complex_det_bound}
For each $N=2p(d-1)+1$, the map $u\mapsto H_N^{\C}(u)$ is entrywise continuous, and there exists $C>0$ such that
$\E[\lvert\det(H_N^\C(u)) \rvert]\le (C\max(\lvert u\rvert,1))^N.$
\end{lemma}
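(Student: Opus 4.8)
The plan is to follow the real-case argument of Lemma~\ref{lem:cont_decay_in_u} almost verbatim, substituting the operator-norm control for the $\textup{tBHGOE}$ that has already been assembled. Entrywise continuity is immediate: since $H_N^\C(u) = W_N^\C - u\,\Id_N$, each entry of $H_N^\C(u)$ is either a fixed entry of $W_N^\C$ (off the diagonal) or such an entry shifted by $-u$ (on the diagonal), so each entry is affine, hence continuous, in $u$.

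For the determinant bound I would start from the crude deterministic inequality
\[
    \lvert \det(H_N^\C(u)) \rvert \le \|H_N^\C(u)\|^N \le (\|W_N^\C\| + \lvert u\rvert)^N \le 2^N\bigl(\|W_N^\C\|^N + \lvert u\rvert^N\bigr),
\]
using $a+b\le 2\max(a,b)$ in the last step, so that after taking expectations it suffices to prove $\E[\|W_N^\C\|^N] \le C_0^N$ for some universal $C_0$; indeed one then gets $\E[\lvert \det(H_N^\C(u))\rvert] \le 2^N(C_0^N + \lvert u\rvert^N) \le (C\max(\lvert u\rvert,1))^N$ with $C = 2C_0 + 2$. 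To obtain this moment bound I would invoke the exponential tail estimate already derived inside the proof of Lemma~\ref{lem:complex_Wasserstein_and_Lipschitz_traces}: writing $V$ for the vector concatenating the strict-upper-triangular entries of $B$, those of $C$, and the entries of $\theta$ from \eqref{eqn:tBHGOE}, the vector $V$ satisfies a log-Sobolev inequality with constant of order $1/N$ and the map $V\mapsto \|W_N^\C\|$ has order-one Lipschitz constant, so the Herbst argument gives $\P(\|W_N^\C\|\ge x)\le e^{N(A-x)}$ with $A \defeq \sup_M\E\|W_M^\C\| + C_1$, and $\sup_M\E\|W_M^\C\|<\infty$ by Lemma~\ref{lem:est_op_norm_tBHGOE}. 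I would then integrate: $\E[\|W_N^\C\|^N] = \int_0^\infty N x^{N-1}\P(\|W_N^\C\|\ge x)\diff x$, splitting at $x = 2A$, bounding the probability by $1$ on $[0,2A]$ (contribution $(2A)^N$) and by $e^{-Nx/2}$ on $[2A,\infty)$ (contribution at most $\int_0^\infty N x^{N-1}e^{-Nx/2}\diff x = 2^N\,\Gamma(N)/N^{N-1} \le 2^N$), giving $\E[\|W_N^\C\|^N]\le (2A)^N + 2^N \le C_0^N$ and completing the proof.

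There is no genuine obstacle here: each ingredient — the log-Sobolev inequality for $V$, the Lipschitz bound on $V\mapsto\|W_N^\C\|$, and $\sup_N\E\|W_N^\C\|<\infty$ — is already in place from the proofs of Lemmas~\ref{lem:complex-lsi}, \ref{lem:complex_Wasserstein_and_Lipschitz_traces}, and \ref{lem:est_op_norm_tBHGOE}, so the argument is a short bookkeeping exercise. The only point that merits a sentence of care is the combinatorial claim, used for the Lipschitz bound, that each coordinate of $V$ appears at most four times among the entries of $W_N^\C$, which is read off directly from the block form in \eqref{eqn:tBHGOE}: a strict-upper-triangular entry of $B$ occurs in the $B$ block, in the $-B$ block, and in their two symmetric mirror positions; likewise for $C$; while an entry of $\theta$ occurs only twice, in $\theta$ and in $\theta^T$.
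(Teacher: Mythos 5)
Your proposal is correct and follows essentially the same route as the paper, which proves the real analogue (Lemma \ref{lem:cont_decay_in_u}) via the bound $\abs{\det(H_N(u))} \leq (\|W_N\|+\abs{u})^N$ together with the exponential tail estimate for $\|W_N\|$ obtained from log-Sobolev and the Herbst argument, and then simply notes that the complex case is identical once the tail bound for the $\textup{tBHGOE}$ (already established in the proof of Lemma \ref{lem:complex_Wasserstein_and_Lipschitz_traces}) is substituted. Your explicit integration of the tail to get $\E[\|W_N^\C\|^N] \leq C_0^N$ just fills in a step the paper leaves implicit.
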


As explained before, the proof of Lemma \ref{lem:Complex_d_to_infinity_first_moment_laplace} follows as in the real case from Lemmas \ref{lem:complex_Wasserstein_and_Lipschitz_traces}, \ref{lem:complex_Wegner} and \ref{lem:complex_det_bound}.


\subsection{Complex case: \texorpdfstring{$p\to \infty$}{p to infinity}, \texorpdfstring{$d$}{d} fixed}
The proofs are the same as in Section \ref{subsec:Rcase-p-infinity}, up to slight changes which lead to the following small adaptation of the upper bound from Theorem \ref{thm:real_p_infty_d_fixed} for the complex case.
\begin{theorem}\label{thm:complex_p_infty_d_fixed}\textbf{(Real, nonsymmetric, $p \to \infty$, d fixed, upper bound.)}
For every $\epsilon > 0$ and every $d \geq 2$, we have
\[
    \limsup_{p \to \infty} \frac{1}{p} \log \P\left( \frac{n_T(p,d)}{\sqrt{p(d-1)}} - \gamma_d^\C(p) > \frac{\epsilon}{\sqrt{\log p}}\right) < 0,
\]
while the normalized version reads 
\[
    \limsup_{p\to \infty}\frac1{p}\log \P\left(n_{\widetilde T}(p,d)>\sqrt{\frac{p(d-1)}{d^{p}}}\gamma_d^\C(p)+\epsilon\sqrt{\frac{p(d-1)}{d^{p}\log p}}\right)<0.
\] 
\end{theorem}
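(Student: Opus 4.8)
The plan is to repeat the argument of Section \ref{subsec:Rcase-p-infinity} essentially verbatim, making three substitutions: replace the real Kac--Rice bound Lemma \ref{lem:exact_kac_rice} by its complex counterpart Lemma \ref{lem:K-R_Complex}; replace the ensemble $\textup{BHGOE}(d-1,p)$ by $\textup{tBHGOE}(d-1,p)$; and replace the volume prefactor $F(p,d)$ by $L(p,d)$, which forces us to replace $\beta_\R(d)$ by a new constant $\beta_\C(d)$. So first I would record, by applying Stirling's formula to the explicit expression for $L(p,d)$ in Lemma \ref{lem:K-R_Complex}, the limit
$$
\beta_\C(d) \defeq \lim_{p \to \infty} \left[ \frac{1}{2p(d-1)} \log L(p,d) - \frac{\log p}{2} \right],
$$
an explicit function of $d$ (the complex analogue of \eqref{eqn:beta_d_real}; one checks $\beta_\C(d) \to \tfrac12$ as $d \to \infty$, consistently with \eqref{eqn:lpd_large_d}). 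Then $\gamma_d^\C(p)$ is defined as the unique positive solution of $\tfrac{\log p}{2} + \beta_\C(d) + \Omega(\gamma_d^\C(p)) - \tfrac{\gamma_d^\C(p)^2}{2} = 0$; its existence, uniqueness, and the expansion $\gamma_d^\C(p) = \sqrt{\log p} + \tfrac{\log \log p}{2\sqrt{\log p}} + \tfrac{\beta_\C(d)}{\sqrt{\log p}} + \oo(1/\sqrt{\log p})$ follow word-for-word from the proof of Lemma \ref{lem:real_gamma_d_existence}, since that argument used only that the additive constant exceeds $1/2$, which $\beta_\C(d)$ also does.

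Next I would carry out the main estimate. By the same Markov-plus-symmetry step as in the proof of Theorem \ref{thm:real_p_infty_d_fixed} (using that $g_T$ and $-g_T$ have the same law),
$$
\P\left( \frac{n_T(p,d)}{\sqrt{p(d-1)}} - \gamma_d^\C(p) > \frac{\epsilon}{\sqrt{\log p}} \right) \le 2\,\E[\Crt_{g_T}([\gamma_d^\C(p) + \epsilon/\sqrt{\log p}, \infty))].
$$
Applying Lemma \ref{lem:K-R_Complex} and then the complex analogue of Lemma \ref{lem:laplace_d_fixed}, which asserts that for any $\gamma(p) \to \infty$ with $\gamma(p) = \oo(\exp(p^{1-\delta}))$ one has
$$
\lim_{p \to \infty}\left[ \frac{1}{2p(d-1)} \log \int_{\gamma(p)}^\infty e^{-p(d-1)u^2}\, \E_{\textup{tBHGOE}}[\lvert\det(W_N^\C - u)\rvert] \diff u - \left( \Omega(\gamma(p)) - \frac{\gamma(p)^2}{2} \right) \right] = 0,
$$
with the choice $\gamma(p) = \gamma_d^\C(p) + \epsilon/\sqrt{\log p}$, I would obtain
$$
\limsup_{p \to \infty} \frac{1}{2p(d-1)} \log \E[\Crt_{g_T}(\cdots)] \le \beta_\C(d) + \frac{\log p}{2} + \Omega\!\left(\gamma_d^\C(p) + \tfrac{\epsilon}{\sqrt{\log p}}\right) - \frac{(\gamma_d^\C(p) + \epsilon/\sqrt{\log p})^2}{2};
$$
expanding the square produces the cross term $-\gamma_d^\C(p)\,\epsilon/\sqrt{\log p} \to -\epsilon$, and combining this with the defining equation for $\gamma_d^\C(p)$ and the monotonicity of $\Omega$ shows the right-hand side is $\le -\epsilon$, hence (after multiplying by $2(d-1)$) strictly negative, which is the unnormalized claim. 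The normalized version follows exactly as in the real case from the complex case of Lemma \ref{lem:LauMas2000}: it suffices that the unnormalized bound survive replacing $\gamma_d^\C(p)$ by $(1-\delta_p)\gamma_d^\C(p)$ for some $\delta_p \to 0$ with $\delta_p (\gamma_d^\C(p) + \epsilon/\sqrt{\log p})^2 \to 0$, i.e.\ $\delta_p = \oo(1/\log p)$, which Lemma \ref{lem:LauMas2000} amply provides.

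It remains to prove the complex analogue of Lemma \ref{lem:laplace_d_fixed}, which splits as in the real case into a determinant-asymptotics part (the complex analogue of Lemma \ref{lem:laplace_d_fixed_determinants}) and a Laplace-method part. The Laplace-method part is field-independent, so Lemma \ref{lem:laplace_d_fixed_laplace} applies verbatim. For the determinant part, the three random-matrix inputs used in the proof of Lemma \ref{lem:laplace_d_fixed_determinants} are: the no-small-eigenvalues estimate, the exponential Lipschitz concentration of linear spectral statistics, and the bound $\sup_u {\rm W}_1(\E[\hat{\mu}_{H_N^\C(u)}], \mu_\infty(u)) \le N^{-\kappa}$. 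The first two are furnished by Lemma \ref{lem:complex_Wegner} and Lemma \ref{lem:complex-lsi}, whose proofs (using Lemma \ref{lem:est_op_norm_tBHGOE} and the log-Sobolev inequality for the concatenated Gaussian vector of the entries of $B$, $C$, $\theta$) go through unchanged as $p \to \infty$ since $2\sqrt{(p-1)/p} \le 2$ uniformly. The $W_1$ bound is Lemma \ref{lem:complex_Wasserstein_and_Lipschitz_traces}, after observing (as in the real $p \to \infty$ analysis) that its Kronecker-matrix local-law comparison is to $\mu_p(u)$ and that ${\rm W}_1(\mu_p(u), \mu_\infty(u)) = \OO(1/p) = \OO(1/N)$ because $d$ is fixed; the resolvent-minor identity already present in that proof handles the extra row and column of the $\textup{tBHGOE}$. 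Finally, the operator-norm tail $\P(\|W_N^\C\| \ge x) \le c_1 e^{-c_2 N (x - c_3)}$ needed for the determinant decay bound (already stated as Lemma \ref{lem:complex_det_bound}) comes from Lemma \ref{lem:est_op_norm_tBHGOE} and the Herbst argument, exactly as in \eqref{eqn:kronecker_mde_tail}.

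The main obstacle is bookkeeping rather than a genuinely new difficulty: one must check that every estimate in Sections \ref{subsec:R_case_d_to_infinity} and \ref{subsec:Rcase-p-infinity} phrased for fixed $p$ survives the scaling $p \to \infty$, $d$ fixed, $N = 2p(d-1)+1 \to \infty$ — in particular that the exponents $\kappa, \delta$ in the local-law and concentration bounds can be chosen uniformly in $p$, and that the single ``extra'' eigenvalue coming from the $\theta$-row of the $\textup{tBHGOE}$ is immaterial at the scale $\tfrac{1}{N}\log(\cdot)$. The only step requiring real computational care is the Stirling expansion producing $\beta_\C(d)$ in closed form; everything else is a transcription of the real argument.
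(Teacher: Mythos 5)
Your proposal is correct and follows essentially the same route as the paper: the paper also defines $\beta_\C(d)$ as the limit of $\frac{1}{2p(d-1)}\log L(p,d) - \frac{\log p}{2}$, reruns the existence/asymptotics argument of Lemma \ref{lem:real_gamma_d_existence} for $\gamma_d^\C(p)$, and reduces the complex analogue of Lemma \ref{lem:laplace_d_fixed} to the same three random-matrix inputs (complex Wegner estimate, Lipschitz concentration via log-Sobolev on the concatenated entries of $B$, $C$, $\theta$, and the $W_1$ bound obtained from the Kronecker local law plus the resolvent-minor identity and ${\rm W}_1(\mu_p,\mu_\infty)=\OO(1/p)$), with the Laplace-method part and the normalization step carried over unchanged. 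Incidentally, your expansion writes the third-order term as $\beta_\C(d)/\sqrt{\log p}$, which is the correct analogue of \eqref{eqn:gamma-scaling}.
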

As in the proof of Theorem \ref{thm:d_to_infinity_first_moment}, these results follow from Markov's inequality and the complexity estimates as explained in the following lemmas (and as in the real case, one needs slightly tighter estimates on the normalization here than in the case $p$ fixed, but Lemma \ref{lem:LauMas2000} is much stronger than necessary), so we do not write the proof. The following lemmas are themselves adapted from the real case.

Lemma \ref{lem:K-R_Complex} goes through as written, but now we compute
\begin{equation}
\label{eqn:beta_d_complex}
\begin{split}
    \beta_\C(d) &\defeq \lim_{p \to \infty} \left[ \frac{1}{2p(d-1)} \log L(p,d) - \frac{\log p}{2} \right] = \frac{1}{2} \log\left(d-1\right)+\frac1{2(d-1)}\log\left(\frac{2\sqrt{\pi}}{\Gamma\left(\frac{2d-1}{2} \right)^2}\right).
\end{split}
\end{equation}
We note $\lim_{d\to \infty }\beta_{\C}(d)=\frac12$. We also obtain the following lemma, analogous to Lemma \ref{lem:real_gamma_d_existence}.
\begin{lemma}\label{lem:complex_gamma_d_existence}
Fix $d \geq 2$. For every $p$, there exists a unique $\gamma^{\C}_d(p) > 0$ satisfying
\[
    \frac{\log p}{2} + \beta_{\C}(d) + \Omega\left( \gamma^\C_d(p) \right) - \frac{\gamma_d^\C(p)^2}{2} = 0.
\]
For each $d$, 
\[
\gamma_d^\C(p)=\sqrt{\log p}+\frac{\log \log p}{2\sqrt{\log p}}+\frac{\beta_\C(d)}{\log p}+o\left(\frac1{\sqrt{\log p}}\right).
\]
\end{lemma}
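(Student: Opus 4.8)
The plan is to transcribe the proof of Lemma~\ref{lem:real_gamma_d_existence} essentially verbatim, replacing $\beta_\R(d)$ by $\beta_\C(d)$ throughout: all the analytic inputs (the log-potential $\Omega$ of the semicircle law and the small-negative-argument expansion of $\mc{W}_{-1}$) are field-independent, so the only genuinely new work is (i) the closed-form evaluation of $\beta_\C(d)$ in \eqref{eqn:beta_d_complex}, and (ii) the elementary inequality $\beta_\C(d) > \tfrac12$ for all $d\ge 2$. For (i), Lemma~\ref{lem:K-R_Complex} gives $L(p,d)$ in closed form, so the limit $\lim_{p\to\infty}\bigl[\tfrac{1}{2p(d-1)}\log L(p,d) - \tfrac{\log p}{2}\bigr]$ is obtained by expanding the power $\bigl(\tfrac{p(d-1)}{\pi}\bigr)^{p(d-1)}$, collecting the $p$-independent terms, and noting that the $\Gamma(d)$ factor contributes nothing in the limit (being divided by $2p(d-1)$); a routine manipulation, with the $d\to\infty$ consistency check $\beta_\C(d)\to\tfrac12$ (which uses Stirling on the remaining $\Gamma$-factor) matching \eqref{eqn:lpd_large_d}. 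For (ii), $\beta_\C(d)>\tfrac12$ follows from \eqref{eqn:beta_d_complex} by an elementary $\Gamma$-function estimate (e.g.\ $\beta_\C(2)=\log 2$, with monotone decay towards $\tfrac12$).

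Granting these, existence and uniqueness of $\gamma_d^\C(p)$ are immediate, exactly as in the real case: $x\mapsto\tfrac{x^2}{2}-\Omega(x)$ is even, strictly convex, and globally minimized at $x=0$ with minimum value $\tfrac12$, so $\tfrac{x^2}{2}-\Omega(x)=t$ has a unique positive root whenever $t>\tfrac12$, and one applies this with $t=\tfrac{\log p}{2}+\beta_\C(d)\ge\beta_\C(d)>\tfrac12$.

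For the asymptotics I would reuse the three auxiliary quantities of the real proof. Let $\gamma_d^\C(p)_-$, $\gamma_d^\C(p)^\dagger$, and $\gamma_d^\C(p)^\ast$ be the largest positive solutions of
\begin{align*}
  (\gamma_d^\C(p)_-)^2 &= \log p + 2\beta_\C(d),\\
  (\gamma_d^\C(p)^\dagger)^2 &= \log p + 2\beta_\C(d) + \log\bigl((\gamma_d^\C(p)^\dagger)^2\bigr) - \frac{1}{\gamma_d^\C(p)^\dagger},\\
  (\gamma_d^\C(p)^\ast)^2 &= \log p + 2\beta_\C(d) + \log\bigl((\gamma_d^\C(p)^\ast)^2\bigr),
\end{align*}
respectively. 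The pointwise ordering $f_1\ge f_2\ge f_3\ge f_4$ for large $x$ (with $f_1,\dots,f_4$ as in the real proof) yields the sandwich $\gamma_d^\C(p)_-\le\gamma_d^\C(p)^\dagger\le\gamma_d^\C(p)\le\gamma_d^\C(p)^\ast$. As in \eqref{eqn:lambert-w}, $\gamma_d^\C(p)^\ast=\sqrt{-\mc{W}_{-1}\bigl(-\tfrac1p e^{-2\beta_\C(d)}\bigr)}$, and feeding in the known asymptotics of $\mc{W}_{-1}$ (exactly as in the derivation of \eqref{eqn:gamma_d_R_*_asymptotics}) gives
\[
  \gamma_d^\C(p)^\ast=\sqrt{\log p}+\frac{\log\log p}{2\sqrt{\log p}}+\frac{\beta_\C(d)}{\sqrt{\log p}}+\oo\left(\frac{1}{\sqrt{\log p}}\right).
\]
Then $\gamma_d^\C(p)_-=\sqrt{\log p}\,(1+\oo(1))$ forces $\gamma_d^\C(p)^\dagger=\sqrt{\log p}\,(1+\oo(1))$, and combining $\bigl|(\gamma_d^\C(p)^\dagger)^2-(\gamma_d^\C(p)^\ast)^2\bigr|=\bigl|\log\bigl((\gamma_d^\C(p)^\dagger)^2/(\gamma_d^\C(p)^\ast)^2\bigr)-1/\gamma_d^\C(p)^\dagger\bigr|\to 0$ with $\gamma_d^\C(p)^\dagger+\gamma_d^\C(p)^\ast=2\sqrt{\log p}\,(1+\oo(1))$ gives $\bigl|\gamma_d^\C(p)-\gamma_d^\C(p)^\ast\bigr|=\oo(1/\sqrt{\log p})$, which is the claimed expansion (the $\beta_\C(d)$-term appearing over $\sqrt{\log p}$, exactly as in the real case \eqref{eqn:gamma-scaling}).

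I do not anticipate any genuine obstacle: every step is either a verbatim copy of the real-case argument or an elementary Stirling/$\Gamma$-function estimate. The closest thing to a ``main step'' is the closed-form evaluation of $\beta_\C(d)$ in \eqref{eqn:beta_d_complex} together with the bound $\beta_\C(d)>\tfrac12$, but these are bookkeeping rather than conceptual.
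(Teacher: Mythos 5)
Your proposal is correct and is exactly the paper's proof, which simply states that the arguments of Lemma \ref{lem:real_gamma_d_existence} carry over verbatim with $\beta_\R(d)$ replaced by $\beta_\C(d)$ (existence/uniqueness from strict convexity of $x\mapsto\frac{x^2}{2}-\Omega(x)$ together with $\beta_\C(d)>\frac12$, and the asymptotics via the same sandwich through the Lambert-$\mc{W}$ expansion). Note that the expansion you derive, with third term $\frac{\beta_\C(d)}{\sqrt{\log p}}$, is the correct one --- consistent with \eqref{eqn:gamma-scaling} and with the statement of Theorem \ref{thm:main_upper_bound} --- so the $\frac{\beta_\C(d)}{\log p}$ appearing in the printed statement of the lemma is a typo rather than a discrepancy in your argument.
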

\begin{proof}
The same arguments from Lemma \ref{lem:real_gamma_d_existence} apply here.
\end{proof} 

\begin{lemma}\label{lem:complex_laplace_d_fixed}
Fix $d$, and let $(\gamma(p))_{p=1}^\infty$ be some sequence tending to infinity with $p$, such that 
\[
    \gamma(p) = \oo(\exp(p^{1-\delta}))
\]
for some $\delta > 0$. Then
\[
    \lim_{p \to \infty} \left[ \frac{1}{2p(d-1)} \log \int_{\gamma(p)}^\infty e^{-p(d-1)u^2} \E_{\textup{~}}[\abs{\det(W_N-u)}] \diff u - \left( \Omega\left(\gamma(p)\right) - \frac{(\gamma(p))^2}{2} \right) \right] = 0.
\]

\end{lemma}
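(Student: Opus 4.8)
The plan is to follow the strategy of the real-case Lemma \ref{lem:laplace_d_fixed} \emph{verbatim}, which was split into the random-matrix estimate of Lemma \ref{lem:laplace_d_fixed_determinants} and the Laplace estimate of Lemma \ref{lem:laplace_d_fixed_laplace}. First I would record the bookkeeping: here $N = 2p(d-1)+1$, so $p(d-1) = \frac{N-1}{2}$ and the prefactor $\frac{1}{2p(d-1)} = \frac{1}{N-1}$ agrees with $\frac1N$ up to a factor $1+\OO(1/N)$, which is harmless after applying $\frac1N\log$; in particular $-p(d-1)u^2 = -\frac{N-1}{2}u^2$. Writing $\mc{S}[u] \defeq \Omega(u) - \frac{u^2}{2}$, the two steps are as follows.

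\emph{Random-matrix step.} I would show
\[
    \lim_{p\to\infty}\left[\frac{1}{2p(d-1)}\log\int_{\gamma(p)}^\infty e^{-p(d-1)u^2}\E_{\textup{tBHGOE}}[\abs{\det(W^\C_N-u)}]\diff u \;-\; \frac{1}{2p(d-1)}\log\int_{\gamma(p)}^\infty e^{2p(d-1)\mc{S}[u]}\diff u\right] = 0,
\]
by repeating the proof of Lemma \ref{lem:laplace_d_fixed_determinants} with $W_N \sim \textup{BHGOE}(d-1,p)$ replaced by $W^\C_N \sim \textup{tBHGOE}(d-1,p)$ and $H_N(u)$ by $H^\C_N(u) = W^\C_N - u$. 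That proof rests on three inputs, each essentially already available in the complex setting: (i) the no-small-eigenvalues estimate $\lim_{N\to\infty}\inf_{\abs u > 2+\delta}\P(H^\C_N(u)\text{ has no eigenvalues in }[-e^{-N^\epsilon},e^{-N^\epsilon}]) = 1$, which follows from the operator-norm bound of Lemma \ref{lem:est_op_norm_tBHGOE} exactly as in Lemma \ref{lem:complex_Wegner} (since $\|W^\C_N\| \to 2\sqrt{(p-1)/p} \le 2$, the spectrum of $H^\C_N(u)$ is gapped from $0$ for all $\abs u > 2+\delta$ on a high-probability event); (ii) the Lipschitz-functional concentration estimate, which is Lemma \ref{lem:complex-lsi}; and (iii) the Wasserstein bound $\sup_{u\in\R}{\rm W}_1(\E[\hat\mu_{H^\C_N(u)}],\mu_\infty(u)) \le N^{-\kappa}$ for some $\kappa>0$, obtained by combining $\sup_u {\rm W}_1(\E[\hat\mu_{H^\C_N(u)}],\mu_p(u)) \le N^{-\kappa'}$ from Lemma \ref{lem:complex_Wasserstein_and_Lipschitz_traces} with the elementary bound ${\rm W}_1(\mu_p(u),\mu_\infty(u)) = \OO(1/p) = \OO(1/N)$, valid because $d$ is fixed so $p = \Theta(N)$. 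Given these, the remainder of the proof of Lemma \ref{lem:laplace_d_fixed_determinants} — the mollified logarithm $\log_\eta$ with $\eta = N^{-\kappa/2}$, the Herbst argument, and the auxiliary events $\mc{E}_{\textup{gap}}(u)$, $\mc{E}_{\textup{Lip}}(u)$, $\mc{E}_b(u)$ for the lower bound — carries over unchanged. The only strictly new points are that the extra row and column of the $\textup{tBHGOE}$ form a rank-one piece, which does not move the limiting empirical measure, and that the odd dimension $N = 2p(d-1)+1$ is absorbed into the $\frac1N\log$.

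\emph{Laplace step.} Since $-p(d-1)u^2 + 2p(d-1)\Omega(u) = 2p(d-1)\,\mc{S}[u]$, it then remains to prove
\[
    \lim_{p\to\infty}\left[\frac{1}{2p(d-1)}\log\int_{\gamma(p)}^\infty e^{2p(d-1)\mc{S}[u]}\diff u - \mc{S}[\gamma(p)]\right] = 0,
\]
which is exactly Lemma \ref{lem:laplace_d_fixed_laplace} with effective dimension $2p(d-1)$ in place of $p(d-1)$. The proof there — using $\lim_{u\to\infty}\Omega(u)/\log u = 1$ and the sandwich $-\frac{u^2}{2}+\log u - \frac1u \le \mc{S}[u] \le -\frac{u^2}{2}+\log u$, the split $\int_{\gamma(p)}^\infty = \int_{\gamma(p)}^{2\gamma(p)} + \int_{2\gamma(p)}^\infty$ for the upper bound and the window $\int_{\gamma(p)}^{\gamma(p)+\epsilon_N}$ with $\epsilon_N = \exp(-N^{1-\delta})$ for the lower bound, all of which only invoke the hypothesis $\gamma(p) = \oo(\exp(p^{1-\delta}))$ — is insensitive to this relabeling. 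Chaining the two displayed limits gives the lemma.

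I do not expect a genuine obstacle here: the statement is a routine combination of the complex-case random-matrix inputs (Lemmas \ref{lem:complex-lsi}, \ref{lem:complex_Wasserstein_and_Lipschitz_traces}, \ref{lem:est_op_norm_tBHGOE}) with the real-case Laplace computation. If any point needs care it is input (iii) above, namely upgrading the Wasserstein control uniformly in $u$ from $\mu_p(u)$ to its $p\to\infty$ limit $\mu_\infty(u)$; but this costs only $\OO(1/p)$ because $d$ is held fixed, and is therefore dominated by the $N^{-\kappa}$ already supplied by Lemma \ref{lem:complex_Wasserstein_and_Lipschitz_traces}.
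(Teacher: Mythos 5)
Your proposal is correct and follows essentially the same route as the paper: reduce to the real-case pair Lemma \ref{lem:laplace_d_fixed_determinants} / Lemma \ref{lem:laplace_d_fixed_laplace}, supply the three random-matrix inputs for the $\textup{tBHGOE}$, and absorb the bookkeeping $N = 2p(d-1)+1$ into the $\frac1N\log$. One citation is slightly off, however: for input (iii) you cannot take $\sup_u {\rm W}_1(\E[\hat\mu_{H^\C_N(u)}],\mu_p(u)) \le N^{-\kappa}$ directly from Lemma \ref{lem:complex_Wasserstein_and_Lipschitz_traces}, because its proof invokes the Kronecker local law of \cite{alt2019location} in the regime where the $(d-1)\times(d-1)$ blocks grow, i.e.\ $d\to\infty$ with $p$ fixed. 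In the present regime ($d$ fixed, $p\to\infty$) the paper instead re-derives this estimate by setting up the matrix Dyson equation in the scalar form used in the real-case proof of Lemma \ref{lem:laplace_d_fixed_determinants}, applied to the minor $\widetilde W^\C_N$, and then transfers to $W^\C_N$ via the resolvent/minor identities recorded in the proof of Lemma \ref{lem:complex_Wasserstein_and_Lipschitz_traces}. This is exactly the adaptation already needed in the real case, where \eqref{eqn:w1_nkappa} was re-proved rather than quoted from Lemma \ref{lem:thm_1.2}; once it is made, the rest of your argument goes through as written.
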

This is just the complex version of Lemma \ref{lem:laplace_d_fixed}, whose proof was split into Lemmas \ref{lem:laplace_d_fixed_determinants} and \ref{lem:laplace_d_fixed_laplace}. The proof of Lemma \ref{lem:laplace_d_fixed_determinants} really relied on the three random matrix claims, while the rest, although quite intricate, did not use the specifics of the random matrix under consideration. Similarly, the proof of Lemma \ref{lem:laplace_d_fixed_laplace} is really about Laplace's method, and can be repeated almost exactly in this complex case. (Notice that the factors of two work out, since $N = p(d-1)$ in the real case but $N = 2p(d-1)+1$ in the complex case.) For these reasons, we hope to convince the reader that the following is a sufficient proof of Lemma \ref{lem:complex_laplace_d_fixed}.
\begin{proof}[Proof of Lemma \ref{lem:complex_laplace_d_fixed}]
We recall the three random matrix claims:
\begin{itemize}
    \item For every $\epsilon, \delta > 0$, 
\begin{equation}
\label{eqn:dfixed_wegner_complex}
    \lim_{N \to \infty} \inf_{\abs{u} > 2+\delta} \P(H_N^\C(u) \text{ has no eigenvalues in } [-e^{-N^\epsilon}, e^{-N^\epsilon}]) = 1.
\end{equation}
\item There exist universal $C, c > 0$ such that, whenever $f : \R \to \R$ is Lipschitz,
\begin{equation}
\label{eqn:dfixed_lipschitz_concentration_complex}
    \sup_{u \in \R} \P\left( \abs{ \int_\R f(\lambda) (\hat{\mu}_{H_N^\C(u)} - \E[\hat{\mu}_{H_N^\C(u)}])(\diff \lambda) } > \delta\right) \leq C\exp\left(-\frac{cN^2\delta^2}{\|f\|_{\textup{Lip}}^2} \right).
\end{equation}
\item There exists (small) $\kappa > 0$ such that
\begin{equation}
\label{eqn:w1_nkappa_complex}
    \sup_{u \in \R} {\rm W}_1(\E[\hat{\mu}_{H_N^\C(u)}],\mu_\infty(u)) \leq N^{-\kappa}.
\end{equation}
\end{itemize}
The first claim \eqref{eqn:dfixed_wegner_complex} is 
proved as in Lemma \ref{lem:complex_Wegner}. The concentration of Lipschitz traces \eqref{eqn:dfixed_lipschitz_concentration_complex} actually already appeared as Lemma \ref{lem:complex-lsi}. Finally, the bound \eqref{eqn:w1_nkappa_complex} on the Wasserstein distance comes from:
\begin{itemize}
    \item \emph{Checking (3.2) and (3.3) of Proposition 3.1 of \cite{arous2022exponential}}: As in the opposite scaling limit, this is a consequence of Lemma \ref{lem:est_op_norm_tBHGOE} and Herbst argument.
    \item \emph{MDE for large $p$:} We use the MDE for the matrix $\tilde{W}_N^{\C}$, where we define the MDE as in the proof of Lemma \ref{lem:laplace_d_fixed_determinants}. We can therefore again use the local law \cite[(B.5)]{alt2019location} which controls $\int_{-3A}^{3A}\E(\lvert \tilde{s}_N(z)-\tilde{m}_p(z)\rvert)\diff E$.
    \item \emph{Resolvent identities:} We want to bound $\lvert s_N(z) - \tilde{s}_N(z)\rvert$. To this aim we can use the exact same resolvent identities we used in the proof of Lemma \ref{lem:complex_Wasserstein_and_Lipschitz_traces}, from which we obtain $\lvert s_N(z) - \tilde{s}_N(z)\rvert\le \frac{2}{N\Im z}$.
\end{itemize}
As in the opposite scaling limit, we show $\sup_{u \in \R} {\rm W}_1(\E[\hat{\mu}_{H_N^\C(u)}],\mu_p(u)) \leq N^{-\kappa}$ by combining the second two points. Together with the remark that $\sup_{u \in \R} {\rm W}_1(\mu_p, \mu_\infty)=\OO(1/p)$, this shows \eqref{eqn:w1_nkappa_complex}. As previously explained, the rest of the proof is just a copy of the arguments leading to Lemma \ref{lem:laplace_d_fixed_determinants} and \ref{lem:laplace_d_fixed_laplace}.
\end{proof}

\appendix


\section{Kac--Rice with degeneracies}\label{app:KR-with-degeneracies}

Because of the zero blocks, our Hessians are degenerate in the sense that the joint law of their upper-triangular entries is supported on a lower-dimensional space. Thus the textbook versions of the Kac--Rice formula do not apply, and we need to adapt them. Adaptations for degeneracies have previously appeared in the literature, for example in \cite{FanMeiMon2021}, although the degeneracies are slightly different: in \cite{FanMeiMon2021} the gradient and Hessian are each non-degenerate but are too correlated to have a joint density, whereas in our case the Hessian itself is degenerate but is still independent of the gradient at a point.


\subsection{Real case}

We will frequently use that the process is isotropic, and take our favorite representative point to be the $p$-tuple north pole
\[
    \mathbf{n} \defeq \overbrace{(1,\underbrace{0,\ldots,0}_{d-1},1,\underbrace{0,\ldots,0}_{d-1},\ldots,1,\underbrace{0,\ldots,0}_{d-1})}^{p \text{ times}}.
\]
We start by computing the joint distribution of the components of the covariant gradient associated to the Levi-Civita connection, which we call Riemannian gradient and Hessian at a point.

\begin{lemma}
\label{lem:real-covariance-computation}
For $x \in (\mathbb{S}^{d-1})^{p}$, let $(f_{(a,i)}(x))_{a \leq p, 2\le i \leq d}$ be the Riemannian gradient of $f_T$ at $x$ (i.e., the derivative in the $i$th component of the $p$th factor), and let $(f_{(a,i),(b,j)}(x))_{a,b \leq p, 2\le i, j \leq d}$ be the Riemannian Hessian of $f_T$ at $x$. Then $f_T(x)$, $(f_{(a,i)}(x))_{a \leq p, 2\le i \leq d}$, and $(f_{(a,i),(b,j)}(x))_{a, b \leq p,2\le  i, j \leq d}$ are centered, jointly Gaussian variables with covariance structure
\begin{align*}
	\E[f_T(x)^2] =& \, \frac{1}{N}, \\
	\E[f_T(x) f_{(a,i)} (x)] = \E[f_{(a,i)} (x) f_{(b,j),(c,k)}(x)] =& \, 0, \\
	\E[f_{(a,i)}(x) f_{(b,j)}(x)] =& \, \frac{1}{N}\delta_{ab}\delta_{ij}, \\
	\E[f_T(x)f_{(a,i),(b,j)}(x)] =& \, - \frac{1}{N}\delta_{ab}\delta_{ij}, \\
	\E[f_{(a,i),(b,j)}(x)f_{(c,k),(d,\ell)}(x)] =& \, \frac{1}{N}\bigg( \delta_{ab}\delta_{cd}\delta_{ij}\delta_{k\ell} + \delta_{ac}\delta_{bd}\delta_{ik}\delta_{j\ell} + \delta_{ad}\delta_{bc}\delta_{i\ell}\delta_{jk} \\
	&- \delta_{a=b=c=d}(\delta_{ik}\delta_{j\ell} + \delta_{i\ell}\delta_{jk}) \bigg),
\end{align*}
as well as the conditional distributions
\begin{align*}
	\E[f_{(a,i),(b,j)}(x) \vert f_T(x) = u] =& \, -\delta_{ab}\delta_{ij}u, \\
	\Cov[f_{(a,i),(b,j)}(x), f_{(c,k),(d,\ell)}(x) \vert f_T(x) = u] =& \, \frac{1}{N} \left( \delta_{ac}\delta_{bd}\delta_{ik}\delta_{j\ell} + \delta_{ad}\delta_{bc}\delta_{i\ell}\delta_{jk} \right. \\
 & \left.- \delta_{a=b=c=d}(\delta_{ik}\delta_{j\ell}+\delta_{i\ell}\delta_{jk}) \right)
\end{align*}
In other words, conditionally on $\{f_T(x) = u\}$, the random matrix $(f_{(a,i),(b,j)}(x))_{a, b \leq p, 2\le i,j \leq d}$ has the same distribution as
\[
    \textup{BHGOE}(d-1,p) - u\Id
\]
(recall Definition \ref{def:BHGOE}).
\end{lemma}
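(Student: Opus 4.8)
The plan is to use isotropy to reduce the computation to a single point, differentiate $f_T$ explicitly in an adapted chart, and then read off every covariance from the elementary identity $\E[T_I T_J] = \mathbf 1\{I = J\}$ for the i.i.d.\ (nonsymmetric) Gaussian tensor $T$.

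First I would reduce to the pole. The covariance of $f_T$ is invariant under $\OO(d)^p$, which acts transitively on $(\mathbb S^{d-1})^p$, and the Riemannian gradient and Hessian are built intrinsically from $f_T$ and the round metric; hence their joint law at an arbitrary point equals their joint law at $\mathbf n$, so it suffices to work there. Near the pole in the $a$-th factor I use the chart $x^{(a)} = (\sqrt{1-\|y^{(a)}\|^2},\, y^{(a)})$ with $y^{(a)} \in \R^{d-1}$. The induced metric in this chart is $g_{ij}(y) = \delta_{ij} + y_i y_j/(1-\|y\|^2)$, so $g(0) = \Id$ and $\partial g(0) = 0$; therefore at $\mathbf n$ the coordinate frame is orthonormal and all Christoffel symbols vanish, which is exactly what lets the Riemannian gradient and Hessian at $\mathbf n$ be computed as the plain coordinate gradient and Hessian of $f_T$. (Alternatively one could extract these from mixed partials of the covariance kernel as in the Adler--Taylor formalism, but the direct computation is cleaner here.)

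Next I would differentiate explicitly. At $\mathbf n$ one has $x^{(c)}_k = \mathbf 1\{k=1\}$, $\partial_{y^{(c)}_i} x^{(c)}_k|_0 = \mathbf 1\{k=i\}$ for $k \ge 2$ and $0$ for $k = 1$, and the only nonvanishing second coordinate derivative is $\partial_{y^{(c)}_i}\partial_{y^{(c)}_j} x^{(c)}_1|_0 = -\delta_{ij}$. Differentiating the monomials $x^{(1)}_{i_1}\cdots x^{(p)}_{i_p}$ appearing in $f_T$ and evaluating at $\mathbf n$ gives, with $N = p(d-1)$: $f_T(\mathbf n) = N^{-1/2} T_{(1,\ldots,1)}$; $f_{(a,i)}(\mathbf n) = N^{-1/2} T_I$ where $I$ equals $(1,\ldots,1)$ except $I_a = i$; for $a \ne b$, $f_{(a,i),(b,j)}(\mathbf n) = N^{-1/2} T_J$ where $J$ equals $(1,\ldots,1)$ except $J_a = i$, $J_b = j$; and for $a = b$, $f_{(a,i),(a,j)}(\mathbf n) = -N^{-1/2}\delta_{ij} T_{(1,\ldots,1)} = -\delta_{ij}\, f_T(\mathbf n)$. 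All of these are linear in the Gaussian entries of $T$, hence jointly centred Gaussian. Since $T$ is nonsymmetric with i.i.d.\ $\mathcal N(0,1)$ entries, $\E[T_I T_J] = \mathbf 1\{I = J\}$, and each stated covariance then follows by asking when two of the index patterns above coincide; in particular $f_T(\mathbf n)$ is seen to be uncorrelated (hence independent) of the gradient, and the gradient of the Hessian. The only delicate one is $\E[f_{(a,i),(b,j)} f_{(c,k),(d,\ell)}]$: comparing the unordered pairs $\{(a,i),(b,j)\}$ and $\{(c,k),(d,\ell)\}$ produces the three ``naive'' pairings $\delta_{ab}\delta_{cd}\delta_{ij}\delta_{k\ell}$, $\delta_{ac}\delta_{bd}\delta_{ik}\delta_{j\ell}$, $\delta_{ad}\delta_{bc}\delta_{i\ell}\delta_{jk}$, and one checks that in the fully degenerate case $a=b=c=d$ these overcount and must be corrected by $-(\delta_{ik}\delta_{j\ell}+\delta_{i\ell}\delta_{jk})$, since there all four second derivatives are proportional to the single variable $T_{(1,\ldots,1)}$.

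Finally, the conditional statements are the standard Gaussian conditioning formulas applied to the covariances just obtained: because $f_T(\mathbf n)$ is independent of the gradient, conditioning on $\{f_T(\mathbf n) = u\}$ only affects the Hessian, shifting its mean by $-u\,\delta_{ab}\delta_{ij}$ and subtracting exactly the rank-one term $N^{-1}\delta_{ab}\delta_{cd}\delta_{ij}\delta_{k\ell}$ from its covariance. Matching the resulting conditional law entry by entry against Definition~\ref{def:BHGOE} identifies it with $\textup{BHGOE}(d-1,p) - u\Id$: on the diagonal blocks ($a=b$) the conditional covariance vanishes and the conditional mean is $-u\Id$, while off the diagonal blocks the entries are i.i.d.\ $\mathcal N(0,1/N)$ up to symmetry. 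The one genuinely non-routine point is the reduction in the first step --- justifying that the Riemannian Hessian at $\mathbf n$ collapses to the coordinate Hessian (i.e.\ that the chart is normal enough there) while correctly propagating the unit-sphere constraints, which is precisely what produces the $-\delta_{ij}u$ shift and the hollow diagonal blocks; the rest is bookkeeping, the heaviest piece being the four-index Hessian--Hessian covariance.
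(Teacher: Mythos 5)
Your proposal is correct, and it follows the same overall skeleton as the paper's proof (isotropy to reduce to the north pole, the graph chart $x^{(a)}=(\sqrt{1-\|y^{(a)}\|^2},y^{(a)})$, the observation that Riemannian derivatives at the pole coincide with coordinate derivatives, then standard Gaussian conditioning). The one genuine difference is in how the covariances are extracted: the paper writes the covariance kernel $C_T(x,y)$ of $\overline f = f_T\circ\Psi^{-1}$ in the chart and obtains every entry of the covariance structure as a mixed partial derivative of $C_T$ at $(0,0)$, whereas you differentiate the field itself, expressing $f_T(\mathbf n)$, $f_{(a,i)}(\mathbf n)$ and $f_{(a,i),(b,j)}(\mathbf n)$ as explicit linear combinations of the i.i.d.\ entries $T_I$ and reading off covariances from $\E[T_IT_J]=\mathbf 1\{I=J\}$. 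Your route is the more elementary one here and buys some extra structural information for free --- e.g.\ the pointwise identity $f_{(a,i),(a,j)}(\mathbf n)=-\delta_{ij}f_T(\mathbf n)$, which makes the hollow diagonal blocks and the conditional mean $-u\,\delta_{ab}\delta_{ij}$ transparent rather than emerging only at the level of second moments; the kernel-differentiation route is the one that generalizes to Gaussian fields that are not polynomial in independent coordinates. You also supply slightly more justification than the paper does for the step that the coordinate Hessian equals the Riemannian one at the pole (via $g(0)=\Id$, $\partial g(0)=0$), which the paper simply asserts. All your index bookkeeping, including the overcounting correction $-\delta_{a=b=c=d}(\delta_{ik}\delta_{j\ell}+\delta_{i\ell}\delta_{jk})$ and the rank-one subtraction $N^{-1}\delta_{ab}\delta_{cd}\delta_{ij}\delta_{k\ell}$ in the conditional covariance, checks out.
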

\begin{proof}[Proof of Lemma \ref{lem:real-covariance-computation}]
Since the process is isotropic, it suffices to take $x = \mathbf{n}$. We define $\Psi : (\mathbb{S}^{d-1})^p \to (\R^{d-1})^p$ by
\[
	\Psi(x^{(1)}_1,\ldots,x^{(1)}_d,x^{(2)}_1, \ldots, x^{(2)}_d, \ldots, x^{(p)}_1, \ldots, x^{(p)}_d) = (x^{(1)}_2,\ldots,x^{(1)}_{d},x^{(2)}_2, \ldots, x^{(2)}_{d}, \ldots, x^{(p)}_2, \ldots, x^{(p)}_{d}),
\]
which is a chart in some neighborhood $U$ of $\mathbf{n}$. Then
\[
	\overline{f} = f_T \circ \Psi^{-1} 
\]
is a centered Gaussian process on $\Psi(U)$ with covariance
\[
	C_T(x,y) = \frac{1}{N} \prod_{j=1}^p \left\{ \sum_{i=2}^{d} x^{(j)}_i y^{(j)}_i + \sqrt{\left(1-\sum_{i=2}^{d} (x^{(j)}_i)^2\right) \left(1-\sum_{i=2}^{d} (y^{(j)}_i)^2\right)} \right\}.
\]
We can compute formulas for the covariance structure of $\overline{f}(0)$, $\overline{f}_{(a,i)}(0)$, and $\overline{f}_{(a,i),(b,j)}(0)$ using the identity
\[
	\Cov\left(\frac{\partial^k \overline{f}(x)}{\partial x_{i_1} \ldots \partial x_{i_k}}, \frac{\partial^\ell \overline{f}(y)}{\partial y_{j_1} \ldots \partial y_{j_\ell}} \right) = \frac{\partial^{k+\ell} C_T(x,y)}{\partial x_{i_1} \ldots \partial x_{i_k} \partial y_{j_1} \ldots \partial y_{j_\ell}},
\]
which immediately transfer to the stated formulas for the original field because the covariant derivatives for the Levi-Civita connection of $f_T$ agrees with the Euclidean derivatives of $\overline{f}$ at $0$. The Gaussian conditioning is standard.
\end{proof}
Let $U\subset (\mathbb{S}^{d-1})^p$ be an open set, $f:(\mathbb{S}^{d-1})^p\rightarrow \R$ and $D\subset \R$ open. We denote
$$\Crt_f(D,U)\defeq\{x\in U:\nabla f(x)=0, f(x)\in D\}.$$
This naturally extends the notation used in Section \ref{sec:KR-formula-real} for the number of critical points on the full $(\mathbb{S}^{d-1})^p$ to a restriction over an open set of $(\mathbb{S}^{d-1})^p$.
\begin{lemma}
\label{lem:real-Kac--Rice-mollified}
Let $(U,\phi)$ be a chart for $(\mathbb{S}^{d-1})^p$, and write $S = \phi(U) \subset \R^N$. Let $f : (\mathbb{S}^{d-1})^p \to \R$. Let $D \subset \R$ be open. Assume the following:
\begin{enumerate}
\item $f \circ \phi^{-1}$, $\nabla f \circ \phi^{-1}$, and $\nabla^2 f \circ \phi^{-1}$ are continuous on $\overline{S}$, the closure of $S$.
\item There are no points $s \in \overline{S}$ such that both $(\nabla f \circ \phi^{-1})(s) = 0$ and either $(f \circ \phi^{-1})(s) \in \partial D$ or $\det (\nabla^2 f \circ \phi^{-1})(s) = 0$.
\item There are no points $s \in \partial S$ such that $(\nabla f \circ \phi^{-1})(s) = 0$.
\end{enumerate}
Then we have 
\[
    \Crt_f(D,U) = \lim_{\epsilon \to 0} \int_U \diff\mathrm{Vol}(x) \delta_\epsilon(\nabla f(x)) \mathbbm{1}_{f(x) \in D} \abs{\det \nabla^2f(x)}.
\]
\end{lemma}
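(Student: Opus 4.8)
The plan is to transport the statement from the manifold to the chart and recognize it as a standard ``Kac--Rice as a limit of mollified integrals'' statement for a deterministic function on an open subset $S \subset \R^N$. Write $g = f \circ \phi^{-1} : S \to \R$; by assumption (1), $g$, $\nabla g$, $\nabla^2 g$ extend continuously to $\overline{S}$. Since $\phi$ is a diffeomorphism, the critical points of $f$ in $U$ are in bijection with the zeros of $\nabla g$ in $S$, and the values agree: $f(x) \in D \iff g(\phi(x)) \in D$. Also the Riemannian gradient $\nabla f$ vanishes exactly where the chart-gradient $\nabla g$ vanishes (the two differ by an invertible linear map, the metric tensor in the chart), and at such a point the Riemannian Hessian and the chart-Hessian differ by conjugation/multiplication by invertible factors, so $\det \nabla^2 f(x) = 0 \iff \det \nabla^2 g(\phi(x)) = 0$ and the nonvanishing is preserved. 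After the change of variables $x = \phi^{-1}(s)$, using that $\delta_\epsilon(\nabla f(x)) \abs{\det \nabla^2 f(x)}\diff\mathrm{Vol}(x)$ pulls back (Jacobian factors from the metric and from $\delta_\epsilon$ precisely cancelling, as in the usual proof of coarea-type identities) to $\delta_\epsilon(\nabla g(s)) \abs{\det \nabla^2 g(s)}\diff s$, it suffices to prove: if $g \in C^2(\overline{S})$, $D \subset \R$ open, and (i) $\nabla g$ has no zeros on $\partial S$, (ii) no $s \in \overline{S}$ has $\nabla g(s) = 0$ together with $g(s) \in \partial D$ or $\det\nabla^2 g(s) = 0$, then
\[
    \#\{s \in S : \nabla g(s) = 0,\ g(s) \in D\} = \lim_{\epsilon \to 0} \int_S \delta_\epsilon(\nabla g(s))\, \mathbbm{1}_{g(s) \in D}\, \abs{\det \nabla^2 g(s)}\, \diff s.
\]

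To prove this deterministic statement, first observe that the hypotheses force the zero set $Z = \{\nabla g = 0\}$ to be finite: it is closed in $\overline{S}$ (hence compact), disjoint from $\partial S$ by (iii), and at each of its points $\nabla^2 g$ is invertible by (ii), so by the inverse function theorem each such point is isolated. Enumerate $Z \cap S = \{s_1, \ldots, s_m\}$, and let $Z_D = \{s_k : g(s_k) \in D\}$ be those in the relevant level set; by (ii) no $s_k$ has $g(s_k) \in \partial D$, so each $s_k$ is either in the open set $D$ or in the open set $\R \setminus \overline{D}$, and the indicator $\mathbbm{1}_{g \in D}$ is locally constant near each $s_k$. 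Choose disjoint small balls $B_k$ around each $s_k$, small enough that on $B_k$: $\nabla g$ is a diffeomorphism onto its image (a neighborhood of $0$), $\mathbbm{1}_{g \in D}$ is constant equal to $\mathbbm{1}\{s_k \in Z_D\}$, and $\det\nabla^2 g$ does not vanish (so has constant sign). On the complement $S \setminus \bigcup_k B_k$, which has compact closure in $\overline{S}$ avoiding $Z$, we have $\abs{\nabla g} \geq c > 0$, so $\delta_\epsilon(\nabla g) \to 0$ uniformly there as $\epsilon \to 0$ (since $\delta_\epsilon$ is supported in a shrinking ball), and that part of the integral vanishes in the limit. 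On each $B_k$ with $s_k \in Z_D$, change variables $y = \nabla g(s)$, under which $\diff y = \abs{\det\nabla^2 g(s)}\diff s$ exactly, giving $\int_{\nabla g(B_k)} \delta_\epsilon(y)\diff y \to 1$ as $\epsilon \to 0$; for $s_k \notin Z_D$ the integral over $B_k$ is zero for $\epsilon$ small. Summing, the limit is $\#Z_D$, which is the claimed count.

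The main obstacle, and the place where care is genuinely needed, is the boundary behaviour: we must make sure that the mollified integral does not ``leak'' mass near $\partial S$. This is exactly what hypotheses (1) and (3) are engineered to handle — continuity of $\nabla g$ up to $\overline{S}$ together with non-vanishing on $\partial S$ gives, by compactness of $\overline{S}$, a uniform lower bound $\abs{\nabla g} \geq c > 0$ on a neighborhood of $\partial S$ intersected with $S$, so once $\mathrm{supp}(\delta_\epsilon) \subset B(0,c)$ the integrand is identically zero there. A secondary subtlety is verifying that the pullback of $\delta_\epsilon(\nabla f)\diff\mathrm{Vol}$ really is $\delta_\epsilon(\nabla g)\diff s$ up to the correct Jacobian: here one uses that $\delta_\epsilon$ is a mollifier of a fixed function (so it rescales correctly) and that the chart map is smooth with smooth inverse, so all the metric-dependent factors — one from $\diff\mathrm{Vol}$, one from relating $\nabla f$ to $\nabla g$ inside the argument of $\delta_\epsilon$, and one from relating $\det\nabla^2 f$ to $\det\nabla^2 g$ — are continuous and cancel; since we only take the $\epsilon \to 0$ limit and the answer is an integer count insensitive to these positive continuous factors, one could alternatively just argue directly in the chart without tracking the constants. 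This is standard (cf. the textbook treatments \cite{adler2007random, AzaWsc2009}), so we keep this part brief.
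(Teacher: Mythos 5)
Your argument is correct and is essentially the paper's proof unpacked: the paper's entire proof is ``this follows from applying \cite[Theorem 11.2.3]{adler2007random} in charts,'' and what you prove by hand---pulling back to the chart and then running the standard mollified-delta localization around the finitely many nondegenerate critical points, with hypotheses (2)--(3) ruling out degenerate critical points, critical points on $\partial D$-level sets, and boundary leakage---is precisely the content of that cited deterministic theorem. The only implicit assumption to keep in mind is that $\overline{S}$ is compact (so that the critical set is finite and $\abs{\nabla g}$ is bounded below away from the small balls), which is also implicit in the cited theorem and in how the lemma is used downstream.
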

\begin{proof}
This follows from applying \cite[Theorem 11.2.3]{adler2007random} in charts.
\end{proof}

\begin{lemma}
\label{lem:BHGOE-singular}
Let $W \sim \textup{BHGOE}(d,p)$. For each $u \in \R$, we have $\P(\det(W - u\Id) = 0) = 0$.
\end{lemma}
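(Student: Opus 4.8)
The plan is to note that $\det(W-u\Id)$ is a polynomial in the ``free'' entries of $W$, whose joint law has a density with respect to Lebesgue measure, and then to show this polynomial is not identically zero. Concretely, collect the entries of the strictly-upper blocks $(i,j)$ with $1\le i<j\le p$ (each of size $d\times d$) into a vector $\mathbf{x}\in\R^{\binom{p}{2}d^2}$; by Definition \ref{def:BHGOE} these are i.i.d.\ $\mathcal{N}(0,\frac{1}{pd})$, so the law of $\mathbf{x}$ is absolutely continuous with respect to Lebesgue measure on $\R^{\binom{p}{2}d^2}$, and the matrix $W=W(\mathbf{x})$ is recovered from $\mathbf{x}$ via the symmetry constraint and the vanishing of the diagonal blocks. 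Every entry of $W(\mathbf{x})-u\Id$ is affine in $\mathbf{x}$, so $P_u(\mathbf{x})\defeq\det(W(\mathbf{x})-u\Id)$ is a polynomial in $\mathbf{x}$. If $P_u\not\equiv 0$, then $\{P_u=0\}$ is a proper algebraic subvariety of $\R^{\binom{p}{2}d^2}$, hence Lebesgue-null (a standard fact), hence a $\P$-null event; this is the claim.

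It therefore remains only to exhibit a single block-hollow real-symmetric $pd\times pd$ matrix $W_0$ with $u\notin\mathrm{Spec}(W_0)$ (we take $p\ge 2$, which is the relevant range; for $p=1$ the matrix vanishes identically). Let $J_p$ denote the all-ones $p\times p$ matrix, so that $J_p-\Id_p$ is real-symmetric with vanishing diagonal and has eigenvalues $p-1$ (simple) and $-1$ (with multiplicity $p-1$); in particular $J_p-\Id_p\neq 0$ is invertible. Since $c(J_p-\Id_p)$ has spectrum $\{c(p-1),-c\}$, the two equations $c(p-1)=u$ and $-c=u$ each exclude at most one value of $c$, so we may choose $c\neq 0$ with $u\notin\{c(p-1),-c\}$ (for instance $c=1$ when $u=0$). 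Now set $W_0\defeq c(J_p-\Id_p)\otimes\Id_d$. Its $(i,j)$ block equals $c(J_p-\Id_p)_{ij}\Id_d$, so its diagonal blocks vanish and $W_0$ is a legitimate value of $W(\mathbf{x})$; moreover $\mathrm{Spec}(W_0)=\mathrm{Spec}(c(J_p-\Id_p))$ as a set, so $u\notin\mathrm{Spec}(W_0)$. Thus $P_u$ evaluated at the corresponding $\mathbf{x}_0$ is nonzero, so $P_u\not\equiv 0$, completing the proof.

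The only step that requires any thought is producing the witness $W_0$: the hollow-block constraint forces $\mathrm{Tr}(W)=0$, so one cannot simply take a positive-definite matrix, but tensoring the (connected, and for $p\ge 2$ invertible) graph Laplacian-like matrix $J_p-\Id_p$ with $\Id_d$ yields a block-hollow matrix whose spectrum can be rescaled away from any single prescribed value $u$. Everything else is the routine ``a non-vanishing polynomial has measure-zero zero set'' argument together with the absolute continuity of the Gaussian law of the free entries.
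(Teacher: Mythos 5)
Your proof is correct and follows essentially the same route as the paper's: reduce to showing the determinant polynomial in the free Gaussian entries is not identically zero, then exhibit an explicit block-hollow witness built from $J_p-\Id_p$ tensored with $\Id_d$. The only (cosmetic) difference is that you rescale by $c$ to move the spectrum off $u$ uniformly, whereas the paper uses the unscaled matrix and patches the single exceptional case $u=-1$ by hand; your version is marginally cleaner.
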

\begin{proof}
Consider the space of all upper-triangular entries of $\textup{BHGOE}(d,p)$ which are not structurally zero, which can be thought of as $\R^k$ with $k = \frac{(p-1)(p-2)d^2}{2}$. On the one hand, the deterministic map from this space to the determinant of the corresponding $W-u$ is some polynomial, so its zero set has zero Lebesgue measure unless this polynomial is the zero polynomial. On the other hand, the law of BHGOE on this space is absolutely continuous with respect to Lebesgue measure, so it assigns probability zero to any set of Lebesgue measure zero. Thus it suffices to show that the determinant of $W-u$ is not the zero polynomial on this space, and to do this it suffices to exhibit any point in $\R^k$ at which this polynomial does not vanish. One such example consists of the matrix all of whose off-diagonal blocks are $\Id_{d \times d}$, since this matrix, minus $u\Id_{pd \times pd}$, can be seen to have linearly independent columns as long as $u \neq -1$ (for $u = -1$, we can just, e.g., make the $(1,2)$ and the $(2,1)$ blocks both $2\Id_{d \times d}$ instead of $\Id_{d \times d}$). 
\end{proof}

\begin{lemma}
\label{lem:real-Kac--Rice-almost-surely}
For any chart $(U,\phi)$ on $(\mathbb{S}^{d-1})^p$, the assumptions of Lemma \ref{lem:real-Kac--Rice-mollified} hold with probability one for $f_T$.
\end{lemma}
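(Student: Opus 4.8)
The plan is to verify the three assumptions of Lemma~\ref{lem:real-Kac--Rice-mollified} in turn, observing that assumption (1) holds for every realization of $T$, while (2) and (3) are the genuinely probabilistic statements. Each of the probabilistic statements will follow from a Bulinskaya-type argument: a random map from an $n$-dimensional parameter set into $\R^m$ with $m>n$ has, with probability one, no zeros, provided it is a.s.\ $C^1$ and the random vectors it produces have densities near the origin that are bounded uniformly over the parameter. The one place where this scheme does not apply off the shelf is the degenerate-Hessian part of (2), and that is where Lemma~\ref{lem:BHGOE-singular} is used.

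For assumption (1): since $f_T$ is a polynomial in the ambient coordinates of $\R^{dp}$ restricted to the smooth compact submanifold $(\mathbb{S}^{d-1})^p$, it is $C^\infty$; for the coordinate (or stereographic) charts used in the paper, $\phi^{-1}$ is $C^2$ up to $\overline S$ (and when $S=\R^N$ there is no boundary to consider), so $f\circ\phi^{-1}$, $\nabla f\circ\phi^{-1}$ and $\nabla^2 f\circ\phi^{-1}$ extend continuously to $\overline S$. For assumption (3) and the first half of (2), the required density bounds are immediate from Lemma~\ref{lem:real-covariance-computation}: in an orthonormal frame the Riemannian gradient $\nabla f_T(x)$ is a non-degenerate centered Gaussian vector in $\R^N$ with covariance $\tfrac1N\Id$, independent of the Gaussian $f_T(x)$, with the same law at every $x$ by isotropy. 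Thus, noting that $\partial S$ is a compact $C^1$ submanifold of dimension $N-1$ (or empty), applying the Bulinskaya lemma to $\nabla f_T\circ\phi^{-1}$ on $\partial S$ ($n=N-1<N=m$) shows a.s.\ no critical point lies on $\partial S$, which is (3). For the first half of (2), apply the Bulinskaya lemma to $x\mapsto(\nabla f_T(x),\,f_T(x)-c)$ on $U$ ($n=N<N+1=m$) using the bounded joint density of $(\nabla f_T(x),f_T(x))$, and take a union bound over the (at most countably many) interval endpoints $c\in\partial D$.

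The main obstacle is the second half of (2): ruling out, a.s., critical points of $f_T$ at which the Hessian is singular. Running the same scheme with $x\mapsto(\nabla f_T(x),\,\det\nabla^2 f_T(x))$ on $U$ would require $\det\nabla^2 f_T(x)$ to have a density near $0$ that is bounded uniformly in $x$; but $\nabla^2 f_T(x)$ is a \emph{degenerate} Gaussian (block-hollow structure), so $\det\nabla^2 f_T(x)$ is a polynomial in degenerate Gaussians and its density need not be bounded near $0$, so the naive bound fails. This is exactly where Lemma~\ref{lem:BHGOE-singular} enters. By Lemma~\ref{lem:real-covariance-computation}, $\nabla^2 f_T(x)$ is independent of $\nabla f_T(x)$, and conditionally on $\{f_T(x)=u,\ \nabla f_T(x)=0\}$ it is distributed as $\textup{BHGOE}(d-1,p)-u\Id$; hence Lemma~\ref{lem:BHGOE-singular} gives $\P\big(\det\nabla^2 f_T(x)=0\mid \nabla f_T(x)=0\big)=0$ for every fixed $x$, and by isotropy this and the integrability bound $\sup_x\E\big[\,\abs{\det\nabla^2 f_T(x)}\mid \nabla f_T(x)=0\,\big]<\infty$ are uniform in $x$. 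Together with the bounded density of $\nabla f_T(x)$ at $0$, this is precisely the hypothesis of the standard ``no degenerate critical points'' statement (the degenerate-Hessian form of the Bulinskaya lemma; see, e.g., \cite{AzaWsc2009} or the discussion surrounding \cite[Theorem~11.2.3]{adler2007random}), whose conclusion is that a.s.\ there is no $x\in\overline U$ with $\nabla f_T(x)=0$ and $\det\nabla^2 f_T(x)=0$. Intersecting the finitely many probability-one events obtained for (1), (2) and (3) completes the proof.
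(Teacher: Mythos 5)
Your proposal is correct and follows essentially the same route as the paper: continuity is immediate, the boundary and $\partial D$ conditions follow from a Bulinskaya-type dimension count using the bounded density of the (isotropic, $\frac1N\Id$-covariance) gradient, and the crux — critical points with singular Hessian — is resolved exactly as in the paper by combining the gradient--Hessian independence with Lemma \ref{lem:BHGOE-singular} applied to the conditional law $\textup{BHGOE}(d-1,p)-u\Id$. The only difference is presentational: the paper writes out the underlying covering arguments explicitly (following \cite{FanMeiMon2021}, using Lipschitz bounds on the event that the tensor entries are bounded by $M$ and then letting $M\to\infty$), whereas you invoke the standard Bulinskaya lemma and its degenerate-Hessian variant as black boxes, which is legitimate since isotropy supplies the required uniformity in the base point.
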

\begin{proof}[Proof of Lemma \ref{lem:real-Kac--Rice-almost-surely}]
Continuity is clear. The remainder of the proof goes as in \cite[Lemma A.3]{FanMeiMon2021}; since the full details are given there, we just sketch the argument here.

First we show that there are no critical points on the boundary. Since $f_T$ is a polynomial, on the event $\mc{E}_M$ that the underlying tensor $T$ has all entries at most $M$, the gradient is Lipschitz with some Lipschitz constant $L = L(M,d,p)$. Since the boundary $\partial S$ has Lebesgue measure zero, in particular Lebesgue outer measure zero in $N$ dimensions, for any $\epsilon > 0$ it can be covered by a countable collection of balls $B_i$, with respective centers $s_i$ and radii $r_i$, such that $\sum_{i=1}^\infty r_i^N \leq \epsilon$. On the event $\mc{E}_M$, if there is some critical point on the boundary, it is contained in some $B_i$, and therefore the gradient at the pullback of $s_i$ is at most $Lr_i$ in magnitude. Since the gradient at each point is a centered Gaussian with covariance matrix $1/N\Id$, this probability is at most $r_i^N$ up to constants (which depend on $L$ but not $r$); since $\sum_i r_i^N \leq \epsilon$, we can then apply the union bound over the balls to show that the probability that there is any critical point on the boundary is at most $\epsilon$ times some constant not depending on $\epsilon$. Taking $\epsilon \to 0$, we find that the probability that there is any critical point on the boundary is bounded by $\P(\mc{E}_M^c)$. Taking $M \to \infty$ finishes the proof.

Next, we check that there are no critical points at which the Hessian has determinant zero. We cover the surface of $S$ with balls $\{B(s_i,r)\}_{i \in \mathcal{I}}$ with common radius $r$ and varying centers $s_i$. By a volume argument, we can take $\abs{\mathcal{I}}$ to be order $1/r^N$ (up to constants not depending on $r$). On the same event $\mc{E}_M$ as before, that all entries of the tensor are at most $M$ in absolute value, the gradient and the determinant of the Hessian are both Lipschitz, with some Lipschitz constant $L = L(M,d,p)$. Thus, by union-bounding over the balls, 
\begin{align*}
    &\P(\mc{E}_M \cap \{\exists \, s \in S : (\nabla f_T \circ \phi^{-1})(s) = 0, \det (\nabla^2 f_T \circ \phi^{-1})(s) = 0\}) \\
    &\leq \sum_{i \in \mathcal{I}} \P(\mc{E}_M, \|\nabla f_T \circ \phi^{-1}(s_i)\| \leq Lr, \abs{\det(\nabla^2 f \circ \phi^{-1})(s_i)} \leq Lr) \\
    &\lesssim \frac{1}{r^N} \P(\|\nabla f_T \circ \phi^{-1}(s_i)\| \leq Lr, \abs{\det(\nabla^2 f \circ \phi^{-1})(s_i)} \leq Lr) \\
    &= \frac{1}{r^N} \P(\|\nabla f_T \circ \phi^{-1}(s_i)\| \leq Lr) \P( \abs{\det(\nabla^2 f \circ \phi^{-1})(s_i)} \leq Lr)
\end{align*}
where we discarded the event $\mc{E}_M$ in the upper bound and then used that the gradient and Hessian are independent at each point. As before, $\P(\|\nabla f_T \circ \phi^{-1}(s_i)\| \leq Lr)$ scales like $r^N$ up to constants not depending on $r$, so the right-hand side is order $\P(\abs{\det(\nabla^2 f \circ \phi^{-1})(s_i)} \leq Lr)$. When $r \to 0$, the right-hand side thus tends to some constant times $\P(\abs{\det (\nabla^2 f \circ \phi^{-1})(s_i)} = 0)$. By conditioning on the field value and applying Lemma \ref{lem:BHGOE-singular}, we see that this vanishes. Taking $M \to \infty$ finishes the proof.

The argument for no critical points at which the field takes values in $\partial D$ is analogous, since the gradient is also independent of the field value at a point, and the probability of the field taking some exact value is still zero (and $\partial D$ is a finite set since $D$ is a finite union of intervals). This finishes the proof.
\end{proof}

\begin{lemma}
\label{lem:real-Kac--Rice-before-rmt}
We have 
\[
    \E[\Crt_{f_T}(D)] \leq \text{Vol}((\mathbb{S}^{d-1})^p) \E[ |\det \nabla^2 f_T(\mathbf{n}) | \mathbbm{1}_{f_T(\mathbf{n}) \in D}] \phi_{\nabla f_T(\mathbf{n})}(0)
\]
\end{lemma}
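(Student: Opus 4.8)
\emph{Proposal.} The plan is to glue the chartwise mollified Kac--Rice formula of Lemma~\ref{lem:real-Kac--Rice-mollified} (valid almost surely by Lemma~\ref{lem:real-Kac--Rice-almost-surely}) over the manifold by a partition of unity, then use isotropy to collapse the resulting volume integral to the basepoint $\mathbf{n}$, and finally read off the constant from the covariance structure in Lemma~\ref{lem:real-covariance-computation}. Concretely, fix a finite atlas $\{(U_\alpha,\phi_\alpha)\}_{\alpha=1}^m$ of $(\mathbb{S}^{d-1})^p$ in which each $\phi_\alpha$ extends to a chart on a neighborhood of $\overline{U_\alpha}$, together with a smooth partition of unity $\{\chi_\alpha\}_{\alpha=1}^m$ subordinate to it, so that $\sum_\alpha\chi_\alpha\equiv 1$ and hence, deterministically, $\Crt_{f_T}(D)=\sum_{\alpha}\sum_{x\,:\,\nabla f_T(x)=0,\ f_T(x)\in D}\chi_\alpha(x)$. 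The argument behind Lemmas~\ref{lem:real-Kac--Rice-mollified}--\ref{lem:real-Kac--Rice-almost-surely} goes through unchanged with the continuous bounded weight $\chi_\alpha$ inserted (it affects neither the application of \cite[Theorem 11.2.3]{adler2007random} in charts, nor the a.s.\ absence of critical points on $\partial U_\alpha$, on $\{f_T\in\partial D\}$, or with degenerate Hessian, this last via Lemma~\ref{lem:BHGOE-singular}), so almost surely
\[
    \sum_{x\,:\,\nabla f_T(x)=0,\ f_T(x)\in D}\chi_\alpha(x)=\lim_{\epsilon\to 0}\int_{U_\alpha}\chi_\alpha(x)\,\delta_\epsilon(\nabla f_T(x))\,\mathbbm{1}_{f_T(x)\in D}\,\abs{\det\nabla^2 f_T(x)}\,\diff\mathrm{Vol}(x).
\]

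Next I would take expectations. Since the integrands are nonnegative and the limit exists almost surely, Fatou's lemma gives $\E[\,\cdot\,]\le\liminf_{\epsilon\to 0}\E[\,\cdot\,]$ for each $\alpha$, and Tonelli lets me pull the expectation inside the volume integral. Now isotropy enters: the covariance $\frac{1}{p(d-1)}\prod_i\langle x^{(i)},y^{(i)}\rangle$ is invariant under the diagonal action of $O(d)^p$, which acts transitively on $(\mathbb{S}^{d-1})^p$ by isometries preserving the Riemannian volume and sending $\mathbf{n}$ to any prescribed $x$; hence the quantity $I_\epsilon\defeq\E[\delta_\epsilon(\nabla f_T(\mathbf{n}))\mathbbm{1}_{f_T(\mathbf{n})\in D}\abs{\det\nabla^2 f_T(\mathbf{n})}]$ equals $\E[\delta_\epsilon(\nabla f_T(x))\mathbbm{1}_{f_T(x)\in D}\abs{\det\nabla^2 f_T(x)}]$ for every $x$, so the $\alpha$-th volume integral reduces to $I_\epsilon\int_{U_\alpha}\chi_\alpha\,\diff\mathrm{Vol}$. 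Summing over $\alpha$ and using $\sum_\alpha\chi_\alpha\equiv 1$ produces exactly $\E[\Crt_{f_T}(D)]\le\big(\liminf_{\epsilon\to 0}I_\epsilon\big)\,\mathrm{Vol}\big((\mathbb{S}^{d-1})^p\big)$; here the partition of unity (rather than a crude subadditivity over overlapping charts) is what recovers the sharp volume factor.

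Finally I would identify $\lim_{\epsilon\to 0}I_\epsilon$. By Lemma~\ref{lem:real-covariance-computation}, $\nabla f_T(\mathbf{n})$ is uncorrelated with both $f_T(\mathbf{n})$ and all entries of $\nabla^2 f_T(\mathbf{n})$; being jointly Gaussian, $\nabla f_T(\mathbf{n})$ is therefore \emph{independent} of the pair $(f_T(\mathbf{n}),\nabla^2 f_T(\mathbf{n}))$. Writing $g=\mathbbm{1}_{f_T(\mathbf{n})\in D}\abs{\det\nabla^2 f_T(\mathbf{n})}$, this factorizes $I_\epsilon=\E[\delta_\epsilon(\nabla f_T(\mathbf{n}))]\cdot\E[g]$, with $\E[g]<\infty$ since $\abs{\det\nabla^2 f_T(\mathbf{n})}$ is the absolute value of a polynomial in jointly Gaussian variables; and as $\delta_\epsilon$ is a mollifier while $\nabla f_T(\mathbf{n})$ has the continuous density $\phi_{\nabla f_T(\mathbf{n})}$ (in fact $\mathcal{N}(0,\tfrac{1}{N}\Id_N)$), we get $\E[\delta_\epsilon(\nabla f_T(\mathbf{n}))]=(\delta_\epsilon*\phi_{\nabla f_T(\mathbf{n})})(0)\to\phi_{\nabla f_T(\mathbf{n})}(0)$. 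This yields $\lim_{\epsilon\to0}I_\epsilon=\phi_{\nabla f_T(\mathbf{n})}(0)\,\E[\abs{\det\nabla^2 f_T(\mathbf{n})}\mathbbm{1}_{f_T(\mathbf{n})\in D}]$ and hence the stated bound. I expect the only delicate point to be the very first step — that the weighted mollified Kac--Rice identity truly holds almost surely in each chart in spite of the degeneracy (zero diagonal blocks) of the Hessian — but this is exactly what Lemmas~\ref{lem:real-Kac--Rice-mollified} and~\ref{lem:real-Kac--Rice-almost-surely} have already secured, so no new analysis is required; once it is granted, the decoupling of the gradient from the field and Hessian makes the $\epsilon\to 0$ limit entirely routine.
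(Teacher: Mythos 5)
Your proposal is correct, and its core — the chartwise mollified Kac--Rice identity of Lemma \ref{lem:real-Kac--Rice-mollified} secured almost surely by Lemma \ref{lem:real-Kac--Rice-almost-surely}, Fatou plus Tonelli, isotropy to collapse the integrand to $\mathbf{n}$, Gaussian independence of the gradient from the pair (field, Hessian), and convergence of $\delta_\epsilon * \phi_{\nabla f_T(\mathbf{n})}$ at $0$ — coincides with the paper's proof. The one place you genuinely diverge is in how the sharp volume factor is recovered. You insert a partition of unity $\{\chi_\alpha\}$ and invoke a \emph{weighted} version of the mollified identity, so that $\sum_\alpha \int \chi_\alpha \,\diff\mathrm{Vol} = \mathrm{Vol}((\mathbb{S}^{d-1})^p)$ falls out exactly; this is clean, but note that Lemma \ref{lem:real-Kac--Rice-mollified} as stated is unweighted, so you are implicitly relying on an (easy, standard, but not-written-down) extension in which a continuous bounded mark $\chi_\alpha(x)$ rides along through the localization argument of \cite[Theorem 11.2.3]{adler2007random}. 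The paper instead uses only the unweighted chartwise formula, bounds $\E[\Crt_{f_T}(D)] \leq \sum_i \E[\Crt_{f_T}(D,U_i)]$ by monotonicity, and then optimizes over a sequence of atlases with $\sum_i \mathrm{Vol}(U_i) \to \mathrm{Vol}((\mathbb{S}^{d-1})^p)$; this avoids the weighted identity at the cost of needing such near-partitioning atlases (which exist explicitly for a product of spheres). Both routes yield the same constant, and since only an upper bound is claimed, either is adequate. One cosmetic point: the relevant symmetry is the componentwise action of the product group $O(d)^p$ on $(\mathbb{S}^{d-1})^p$, not a ``diagonal'' action — the latter would not be transitive — but your use of it is otherwise correct.
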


\begin{proof}[Proof of Lemma \ref{lem:real-Kac--Rice-before-rmt}]
For any finite atlas $(U_i,\phi_i)_{i \in I}$, by combining Lemmas \ref{lem:real-Kac--Rice-mollified} and \ref{lem:real-Kac--Rice-almost-surely} with Fatou's lemma and Fubini's theorem (since the integrand is nonnegative), we find
\begin{align*}
    \E[\Crt_{f_T}(D,U_i)] &= \E\left[ \lim_{\epsilon\rightarrow 0}\int_{U_i}\diff\mathrm{Vol}(x)\delta_\epsilon(\nabla f_T(x))\mathbbm{1}_{f_T(x)\in D}\left\lvert \det \nabla^2f_T(x) \right\rvert \right] \\
    &\leq \liminf_{\epsilon\rightarrow 0} \E\left[ \int_{U_i}\diff\mathrm{Vol}(x)\delta_\epsilon(\nabla f_T(x))\mathbbm{1}_{f_T(x)\in D}\left\lvert \det \nabla^2f_T(x) \right\rvert \right] \\
    &= \liminf_{\epsilon\rightarrow 0} \int_{U_i}\diff\mathrm{Vol}(x)  \E\left[\delta_\epsilon(\nabla f_T(x))\mathbbm{1}_{f_T(x)\in D}\left\lvert \det \nabla^2f_T(x) \right\rvert \right]
\end{align*}
for each $i$. Since the process is isotropic, the inner expectation does not depend on $x \in U_i$, so we can pull it out (and replace $x$ with any point on the sphere, not necessarily in $U_i$, say the north pole $\mathbf{n}$ for concreteness) and just pick up a volume factor $\text{Vol}(U_i)$. Isotropy additionally implies that, at each point, the gradient is independent of the Hessian and the field, so the expectation factors:
\begin{align*}
    \E[\Crt_{f_T}(D,U_i)] &\leq \text{Vol}(U_i) \E[ |\det \nabla^2 f_T(\mathbf{n}) | \mathbbm{1}_{f_T(x) \in D}] \left( \liminf_{\epsilon \rightarrow 0} \E[\delta_\epsilon(\nabla f_T(\mathbf{n})] \right) \\
    &= \text{Vol}(U_i) \E[ |\det \nabla^2 f_T(\mathbf{n}) | \mathbbm{1}_{f_T(x) \in D}] \phi_{\nabla f_T(\mathbf{n})}(0),
\end{align*}
where the last equality is a short exercise using that $\nabla f_T(\mathbf{n})$ is a vector of $N$ i.i.d. Gaussians. Since the number of critical points is nonnegative and monotonic on nested sets, for any finite atlas $(U_i,\phi_i)_{i \in I}$ we find
\[
    \E[\Crt_{f_T}(D)] \leq \sum_{i \in I} \E[\Crt_{f_T}(D,U_i)] \leq \left( \sum_{i \in I} \text{Vol}(U_i) \right) \E[ |\det \nabla^2 f_T(\mathbf{n}) | \mathbbm{1}_{f_T(\mathbb{n}) \in D}] \phi_{\nabla f_T(\mathbf{n})}(0).
\]
By choosing a sequence of atlases such that $\sum_{i \in I} \text{Vol}(U_i)$ tends to $\text{Vol}((\mathbb{S}^{d-1})^p)$ (which can even be done explicitly since the limit is a product of spheres), we obtain the result.
\end{proof}

\begin{proof}[Proof of Lemma \ref{lem:exact_kac_rice}]
This essentially just combines Lemmas \ref{lem:real-Kac--Rice-before-rmt} and \ref{lem:real-covariance-computation}: The latter gives 
\[
    \phi_{\nabla f_T(\mathbf{n})}(0) = \frac{1}{(2\pi)^{N/2}\left(\frac{1}{N}\right)^{N/2}},
\]
and of course $\text{Vol}((\mathbb{S}^{d-1})^p) = \left(\frac{2\pi^{d/2}}{\Gamma(d/2)}\right)^p$. Then by conditioning on $f_T(\mathbf{n})$ in the remaining expectation we find
\[
    \E[\Crt_{f_T}(D)] \leq \left(\frac{2\pi^{d/2}}{\Gamma(d/2)}\right)^p \frac{1}{(2\pi)^{N/2}\left(\frac{1}{N}\right)^{N/2}} \sqrt{\frac{N}{2\pi}} \int_{D} e^{-N\frac{u^2}{2}} \E_{\textup{BHGOE}}[\abs{\det(W_N - u)}] \diff u
\]
which is the result.
\end{proof}


\subsection{Complex case}

This is the result of simple but heavy computations. We apply Kac--Rice formula to the random function $g_T:\mathbb{S}_{\mathbb{C}}^{d-1}\times (\mathbb{U}^{d-1})^{\times (p-1)}\rightarrow \mathbb{R}$. To this aim one has to pick a real chart, which we do by expressing everything in terms of the real and imaginary parts of the vectors $x^{(i)}$. More explicitly, we consider the $p$-tuple north pole
\[
    \mathbf{n} \defeq (1,\underbrace{0,\ldots,0}_{2d-1},\overbrace{1,\underbrace{0,\ldots,0}_{2d-2},1,\underbrace{0,\ldots,0}_{2d-2},\ldots,1,\underbrace{0,\ldots,0}_{2d-2}}^{p-1 \text{ times}}).
\]

We consider the map $\Psi:\mathbb{S}_{\mathbb{C}}^{d-1}\times (\mathbb{U}^{d-1})^{\times (p-1)}\rightarrow \mathbb{R}^{2p(d-1)+1}$ defined by
\begin{equation}
\label{eqn:def_psi_complex_KR}
    \Psi=(\psi_1, \psi_2,\ldots,\psi_p), \quad \psi_1:\mathbb{S}^{2d-1}\rightarrow \mathbb{R}^{2d-1} \textrm{ and } \psi_{i\ge 2}:\mathbb{S}^{2d-2}\rightarrow \mathbb{R}^{2d-2},
\end{equation}
with 
\begin{align}
\psi_1\left( \begin{pmatrix}x^{(1)}_{1,R} \\ x^{(1)}_{1,I}\end{pmatrix},\begin{pmatrix}x^{(1)}_{2,R}\\x^{(1)}_{2,I}\end{pmatrix},\ldots, \begin{pmatrix}x^{(1)}_{d,R}\\x^{(1)}_{d,I}\end{pmatrix}\right)&=(x^{(1)}_{1,I},(x^{(1)}_{2,R},x^{(1)}_{2,I}),\ldots, (x^{(1)}_{d,R},x^{(1)}_{d,I}))\\
\psi_{i\ge 2}\left(\left(x^{(i)}_{1,R},\begin{pmatrix}x^{(i)}_{2,R}\\x^{(i)}_{2,I}\end{pmatrix}\ldots, \begin{pmatrix}x^{(i)}_{d,R}\\x^{(i)}_{d,I}\end{pmatrix}\right) \right)&=((x^{(i)}_{2,R},x^{(i)}_{2,I}),\ldots, (x^{(i)}_{d,R},x^{(i)}_{d,I}))
\end{align}
where $x^{(k)}_{j,R}$ denotes the real part of the $j^{th}$ component of $x^{(k)}$ and $x^{(k)}_{j,I}$ its imaginary part. This is a chart in some neighborhood $U$ of $\mathbf{n}$, with image $\Psi(U) \subset \R^{2p(d-1)+1} = \R^N$. As suggested by the above notation, we index points in $\Psi(U)$ as 
\begin{align*}
    x &= (x^{(1)},\ldots,x^{(p)}) \\
    &= (x^{(1)}_{1,I},(x^{(1)}_{2,R},x^{(1)}_{2,I}),\ldots, (x^{(1)}_{d,R},x^{(1)}_{d,I}),(x^{(2)}_{2,R},x^{(2)}_{2,I}),\ldots, (x^{(2)}_{d,R},x^{(2)}_{d,I}),\ldots,(x^{(p)}_{2,R},x^{(p)}_{2,I}),\ldots, (x^{(p)}_{d,R},x^{(p)}_{d,I})).
\end{align*}

In the following we will need the set
\[
    \mc{G} = \left\{(1,i,Q) : 1 \leq i \leq d, Q \in \begin{cases} \{I\} & \text{if } i = 1, \\ \{I,R\} & \text{if } i \geq 2 \end{cases}\right\} \cup \{(k,i,Q) : 2 \leq k \leq p, 2 \leq i \leq d, Q \in \{I,R\}\},
\]
of effective coordinates on $U$ (i.e., of $(k,i,Q)$ such that $x^{(k)}_{i,Q}$ appears in the above representation), which has cardinality $1 + 2(d-1) + (p-1)(d-1)2 = 2p(d-1)+1 = N$. 
\begin{lemma}
\label{lem:complex-covariance-computation}
For $x \in \mathbb{S}_\C^{d-1} \times (\mathbb{U}^{d-1})^{\times (p-1)}$, let $(g_{(k,i,Q)}(x))_{(k,i,Q) \in \mc{G}}$ be the Riemannian gradient of $g_T$ at $x$, and let $(g_{(k,i,Q),(k',i',Q')}(x))_{(k,i,Q),(k',i',Q') \in \mc{G}}$ be the Riemannian Hessian of $g_T$ at $x$. Then $g_T(x)$, $(g_{(k,i,Q)}(x))_{(k,i,Q) \in \mc{G}}$, and $(g_{(k,i,Q),(k',i',Q')}(x))_{(k,i,Q),(k',i',Q') \in \mc{G}}$ are centered, jointly Gaussian variables with covariance structure
\begin{align*}
    &\E[g_T(x)^2] = \frac{1}{2p(d-1)} = \frac{1}{N-1}, \\
    &\E[g_T(x)g_{(k,i,Q)}(x)] = \E[g_{(k,i,Q)}(x) g_{(k',i',Q'),(k'',i'',Q'')}(x)] = 0, \\
    &\E[g_{(k,i,Q)}(x)g_{(k',i',Q')}(x)] = \frac{\delta_{(k,i,Q),(k',i',Q')}}{2p(d-1)} = \frac{\delta_{(k,i,Q),(k',i',Q')}}{N-1} \\
    &\E[g_T(x)g_{(k,i,Q),(k',i',Q')}(x)] = -\frac{\delta_{(k,i,Q),(k',i',Q')}}{2p(d-1)} = -\frac{\delta_{(k,i,Q),(k',i',Q')}}{N-1} \\
    &\E[g_{(k,i,Q),(k',i',Q')}(x)g_{(k'',i'',Q''),(k''',i''',Q''')}(x)] \\
    &= \begin{cases} \frac{1}{N-1}\delta_{(i,Q),(i',Q')} \delta_{(i'',Q''),(i''',Q''')} & \text{if } k=k'=k''=k''' \\
    &\text{ or } k=k' \neq k'' = k''', \\
    \frac{1}{N-1} \delta_{i=i''}\delta_{i'=i'''}[\delta_{Q=Q''}\delta_{Q'=Q'''} - \delta_{Q=Q' \neq Q''=Q'''} + \delta_{Q=Q''' \neq Q'=Q''}] & \text{if } k = k'' \neq k' = k''', \\ \frac{1}{N-1}\delta_{i=i'''}\delta_{i'=i''}[\delta_{Q=Q'''}\delta_{Q'=Q''} - \delta_{Q=Q'\neq Q''=Q'''} + \delta_{Q=Q'' \neq Q'=Q'''}] & \text{if } k = k''' \neq k' = k'', \\ 0 & \text{otherwise.} \end{cases}
\end{align*}
In particular, we find the conditional distribution
\begin{align*}
    &\E[g_{(k,i,Q),(k',i',Q')}(x) | g_T(x) = u] = -\delta_{(k,i,Q),(k',i',Q')}u \\
    &\Cov[g_{(k,i,Q),(k',i',Q')}(x), g_{(k'',i'',Q''),(k''',i''',Q''')}(x) | g_T(x) = u] \\
    &= \begin{cases} \frac{1}{N-1} \delta_{i=i''}\delta_{i'=i'''}[\delta_{Q=Q''}\delta_{Q'=Q'''} - \delta_{Q=Q' \neq Q''=Q'''} + \delta_{Q=Q''' \neq Q'=Q''}] & \text{if } k = k'' \neq k' = k''', \\ \frac{1}{N-1}\delta_{i=i'''}\delta_{i'=i''}[\delta_{Q=Q'''}\delta_{Q'=Q''} - \delta_{Q=Q'\neq Q''=Q'''} + \delta_{Q=Q'' \neq Q'=Q'''}] & \text{if } k = k''' \neq k' = k'', \\ 0 & \text{otherwise.} \end{cases}
\end{align*}
In other words, conditionally on $\{g_T(x) = u\}$, the random matrix $(g_{(k,i,Q),(k',i',Q')}(x))_{(k,i,Q),(k',i',Q') \in \mc{G}}$ has the same distribution as
\[
    \textup{tBHGOE}(d-1,p) - u\Id.
\]
(recall Definition \ref{def:tBHGOE}).

\end{lemma}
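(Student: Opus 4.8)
The plan is to mimic the strategy of Lemma~\ref{lem:real-covariance-computation}: work in the explicit real chart $\Psi$ around the north pole $\mathbf{n}$, pull $g_T$ back to a centered Gaussian field $\overline{g} = g_T\circ\Psi^{-1}$ on $\Psi(U)\subset\R^N$, write down its covariance kernel $C_T(x,y)$ from \eqref{eqn:complex-covariance} after substituting the chart parametrization, and then extract all the stated moments by differentiating $C_T$ in the appropriate coordinates via the identity $\Cov\bigl(\partial^{k}_{x}\overline g,\partial^{\ell}_{y}\overline g\bigr) = \partial^{k}_{x}\partial^{\ell}_{y}C_T$, evaluated at $(x,y)=(0,0)$. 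By isotropy (invariance of the covariance under $U(d)\times(\mathbb{Z}_2\times PU(d))^{p-1}$) it suffices to do this at the single point $\mathbf{n}$, and the covariant derivatives for the Levi--Civita connection agree with the Euclidean derivatives of $\overline g$ at the origin of the chart, so the chart computation transfers verbatim to the Riemannian gradient and Hessian. The Gaussian conditioning formulas (conditioning on $\{g_T(x)=u\}$) are then the standard Schur-complement formulas for jointly Gaussian vectors, exactly as in the real case.

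Concretely, first I would write the pulled-back kernel. For the first factor, parametrizing $x^{(1)}\in\mathbb{S}^{d-1}_\C$ by the real chart coordinates $(x^{(1)}_{1,I},x^{(1)}_{2,R},x^{(1)}_{2,I},\ldots)$ with $x^{(1)}_{1,R}=\sqrt{1-(x^{(1)}_{1,I})^2-\sum_{i\ge2}|x^{(1)}_i|^2}$, and similarly for the factors $x^{(i)}\in\mathbb{U}^{d-1}$ ($i\ge2$) where now $x^{(i)}_{1,I}=0$ is frozen and $x^{(i)}_{1,R}=\sqrt{1-\sum_{j\ge2}|x^{(i)}_j|^2}$, one gets
\[
    C_T(x,y) = \frac{1}{N-1}\,\mathfrak{R}\prod_{k=1}^p \Bigl(\text{(reconstructed Hermitian inner product }\langle x^{(k)},y^{(k)}\rangle\text{ in chart coordinates)}\Bigr).
\]
Then I would carry out the differentiation in three batches: (i) zeroth/first derivatives, giving $\E[g_T^2]=\tfrac{1}{N-1}$, the vanishing of $\E[g_T g_{(k,i,Q)}]$ and of the gradient--Hessian cross-terms (these vanish because they involve an odd number of derivatives hitting a single square-root factor evaluated at $0$, equivalently by the $\mathbb{Z}_2$/phase symmetry); (ii) the gradient--gradient block, which is diagonal with entry $\tfrac{1}{N-1}$ — here one must be careful that the frozen coordinate $x^{(1)}_{1,I}$ still carries a nonzero variance (it is a genuine tangent direction on $\mathbb{S}_\C^{d-1}$) while there is no $x^{(i)}_{1,Q}$ direction for $i\ge2$, which is exactly what produces the zero rows/columns of $\theta$ in Definition~\ref{def:tBHGOE}; (iii) the Hessian--Hessian block, the real work, split by how the two pairs of species-indices $(k,k')$ and $(k'',k''')$ coincide. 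The case $k=k'$ (two derivatives in the same factor) uses $\partial^2_{x^{(k)}_{i,Q}}x^{(k)}_{1,R}\big|_0 = -1$ and produces the ``$-u$ on the diagonal'' and the $B,C$ structure; the cases $k=k''\neq k'=k'''$ and $k=k'''\neq k'=k''$ produce the off-diagonal $B$ and $C$ blocks, with the signs $+\delta\delta - \delta\delta + \delta\delta$ reflecting the real part of a product of two complex inner products (i.e. $\mathfrak{R}(ab)=a_Rb_R - a_Ib_I$); all other index patterns vanish since each factor is differentiated at most once and evaluates to $1$ or $0$. Matching these against \eqref{eqn:tBHGOE} and the covariance of $\theta$ in Definition~\ref{def:tBHGOE} identifies the conditional law as $\textup{tBHGOE}(d-1,p)-u\Id$.

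The main obstacle I expect is purely bookkeeping: correctly tracking the three sectors of coordinates (the single lone coordinate $x^{(1)}_{1,I}$; the $2(d-1)$ coordinates of the first factor; the $2(d-1)$ coordinates of each of the remaining $p-1$ factors) through second derivatives of a product of $p$ square-root-involving kernels, and verifying that the resulting block pattern is exactly the twisted form $\bigl(\begin{smallmatrix}B&C\\C&-B\end{smallmatrix}\bigr)$ with the extra row/column $\theta$ — in particular checking that the ``twist'' (the $-B$ block and the off-diagonal $C$ sharing with $B$) arises from taking the real part of products of complex inner products, and that the asymmetry between the first factor (full sphere $\mathbb{S}_\C^{d-1}$) and the later factors (quotiented $\mathbb{U}^{d-1}$) is precisely encoded by which entries of $\theta$ are zeroed. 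Once the covariance entries are in hand, the conditional mean and covariance are immediate from the general Gaussian conditioning identity, and the final identification with $\textup{tBHGOE}(d-1,p)-u\Id$ is a direct comparison with Definition~\ref{def:tBHGOE}. As in Lemma~\ref{lem:real-covariance-computation}, I would present the kernel and the differentiation identity, then simply state the resulting moments, since the intermediate derivatives are routine.
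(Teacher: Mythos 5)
Your proposal follows essentially the same route as the paper's proof: reduce to the north pole by isotropy, pull back through the chart $\Psi$ to get the explicit kernel $C_T(x,y)=\frac{1}{N-1}\mathfrak{R}\prod_k\langle\Psi^{-1}x^{(k)},\Psi^{-1}y^{(k)}\rangle$ with the square-root parametrizations, extract all moments by differentiating at the origin, apply standard Gaussian conditioning, and identify the block structure (including the lone coordinate $x^{(1)}_{1,I}$ producing the $\theta$ row/column and the real-part-of-product mechanism producing the $\bigl(\begin{smallmatrix}B&C\\C&-B\end{smallmatrix}\bigr)$ twist) with Definition~\ref{def:tBHGOE}. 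This matches the paper's argument, which is presented in exactly this way and likewise leaves the intermediate derivatives as routine computation.
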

\begin{proof}
Since the process is isotropic, it suffices to take $x = \mathbf{n}$.
Then we consider
\[
    \overline{g} = g_T \circ \Psi^{-1},
\]
which is a centered, real-valued Gaussian process on $\Psi(U)$ whose covariance is somewhat involved: We write it as
\[
    C_T(x,y) = \frac{1}{2p(d-1)} \mathfrak{R} \left( \prod_{i=1}^N \ip{\Psi^{-1} x^{(i)},\Psi^{-1} y^{(i)}} \right),
\]
where we compute
\begin{multline}
\label{eqn:Psi^{-1}x^1}
    \langle \Psi^{-1} x^{(1)}, \Psi^{-1} y^{(1)}\rangle =\sqrt{1-(x^{(1)}_{1,I})^2-\sum_{j=2}^d (x^{(1)}_{j,R})^2+(x^{(1)}_{j,I})^2}\sqrt{1-(y^{(1)}_{1,I})^2-\sum_{j=2}^d (y^{(1)}_{j,R})^2+(y^{(1)}_{j,I})^2}\\
    +x_{1,I}^{(1)}y_{1,I}^{(1)}+\ii\left(x_{1,I}^{(1)}\sqrt{1-(y^{(1)}_{1,I})^2-\sum_{j=2}^d (y^{(1)}_{j,R})^2+(y^{(1)}_{j,I})^2}-y_{1,I}^{(1)}\sqrt{1-(x^{(1)}_{1,I})^2-\sum_{j=2}^d (x^{(1)}_{j,R})^2+(x^{(1)}_{j,I})^2}\right) \\
    +\sum_{j=2}^d(x_{j,R}^{(1)}y_{j,R}^{(1)}+x_{j,I}^{(1)}y_{j,I}^{(1)})+\ii\sum_{j=2}^d(x_{j,I}^{(1)}y_{j,R}^{(1)}-x_{j,R}^{(1)}y_{j,I}^{(1)})
\end{multline}
and for $i \geq 2$ 
\begin{multline}
\label{eqn:Psi^{-1}x^i}
    \langle \Psi^{-1} x^{(i)}, \Psi^{-1} y^{(i)} \rangle = \sqrt{1-\sum_{j=2}^d (x^{(i)}_{j,R})^2+(x^{(i)}_{j,I})^2}\sqrt{1-\sum_{j=2}^d (y^{(i)}_{j,R})^2+(y^{(i)}_{j,I})^2} \\ +\sum_{j=2}^d(x_{j,R}^{(i)}y_{j,R}^{(i)}+x_{j,I}^{(i)}y_{j,I}^{(i)})+\ii\sum_{j=2}^d(x_{j,I}^{(i)}y_{j,R}^{(i)}-x_{j,R}^{(i)}y_{j,I}^{(i)}).
\end{multline}
As in the real case, we now compute formulas for the covariance structure of $\overline{g}(0)$, $\overline{g}_{(k,i,Q)}(0)$, and $\overline{g}_{(k,i,Q),(k',i',Q')}(0)$ by taking derivatives of this formula, and the Riemannian derivatives of $g_T$ agree with the Euclidean derivatives of $\overline{g}$ at $0$. 

To see that the distribution is the same as the $\textup{tBHGOE}$, one should order the coordinates in such a way that the conditional Hessian minus its mean has the form
\begin{equation}
   H_N = \begin{pNiceMatrix}
      H_N^{(R,R)} &  H_N^{(R,I)} &   \Block{2-1}{v^T}\\
      H_N^{(I,R)} & H_N^{(I,I)} \\
      \hline
      \Block{1-2}{v} & &   \Block{1-1}{0} \\
      
      \CodeAfter
      \tikz \draw (1-|3) -- (4-|3) ;
    \end{pNiceMatrix},
\end{equation}
where $H_N^{(R,R)}$ stores the correlation between real parts and real parts, and so on. The additional row/column store the additional degree of freedom $x^{(1)}_{1,I}$ which has no corresponding $x^{(1)}_{1,R}$ (so that the block corresponding to $x^{(1)}$ is split, with the additional degree of freedom appearing in a different part of the matrix than the usual degrees of freedom).
\end{proof}

The analogue of Lemma \ref{lem:real-Kac--Rice-mollified}, whose proof also follows directly from \cite[Theorem 11.2.3]{adler2007random}, and with transparent notations, is the following.

\begin{lemma}
\label{lem:complex-Kac--Rice-mollified}
Let $(U,\phi)$ be a chart for $\mathbb{S}_\C^{d-1} \times (\mathbb{U}^{d-1})^{\times (p-1)}$, and write $S = \phi(U) \subset \R^N$. Let $g : \mathbb{S}_\C^{d-1} \times (\mathbb{U}^{d-1})^{\times (p-1)} \to \R$. Let $D \subset \R$ be open. Assume the following:
\begin{enumerate}
\item $f \circ \phi^{-1}$, $\nabla f \circ \phi^{-1}$, and $\nabla^2 f \circ \phi^{-1}$ are continuous on $\overline{S}$, the closure of $S$.
\item There are no points $s \in \overline{S}$ such that both $(\nabla f \circ \phi^{-1})(s) = 0$ and either $(f \circ \phi^{-1})(s) \in \partial D$ or $\det (\nabla^2 f \circ \phi^{-1})(s) = 0$.
\item There are no points $s \in \partial S$ such that $(\nabla f \circ \phi^{-1})(s) = 0$.
\end{enumerate}
Then we have 
\[
    \Crt_g(D,U) = \lim_{\epsilon \to 0} \int_U \diff\mathrm{Vol}(x) \delta_\epsilon(\nabla g(x)) \mathbbm{1}_{g(x) \in D} \abs{\det \nabla^2g(x)}.
\]
\end{lemma}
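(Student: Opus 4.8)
\emph{Proof proposal.} The plan is to argue exactly as in the proof of Lemma \ref{lem:real-Kac--Rice-mollified}: transport everything to Euclidean space through the chart $(U,\phi)$, invoke the Kac--Rice counting formula \cite[Theorem 11.2.3]{adler2007random} for the pulled-back function, and then transfer the resulting identity back to the manifold. Write $\overline{g} = g \circ \phi^{-1}$, a $C^2$ function on the open set $S = \phi(U) \subset \R^N$.

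First I would observe that, since $\phi$ is a diffeomorphism from $U$ onto $S$, a point $x \in U$ satisfies $\nabla g(x) = 0$ (Riemannian gradient) and $g(x) \in D$ if and only if $s = \phi(x)$ satisfies $\nabla \overline{g}(s) = 0$ (Euclidean gradient) and $\overline{g}(s) \in D$ --- the two notions of critical point coincide because in local coordinates the Riemannian gradient is $(g^{ab}\partial_b \overline{g})_a$, which vanishes if and only if every $\partial_b \overline{g}$ does. Hence $\Crt_g(D,U)$ equals the number of Euclidean critical points of $\overline{g}$ in $S$ taking values in $D$.

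Next I would check that the three standing hypotheses are precisely the non-degeneracy conditions required by \cite[Theorem 11.2.3]{adler2007random} for $\overline{g}$ on $S$: hypothesis (1) gives $\overline{g} \in C^2(\overline{S})$; hypothesis (2) says that at every critical point of $\overline{g}$ in $\overline{S}$ the Hessian $\nabla^2\overline{g}$ is nonsingular and the value of $\overline{g}$ avoids $\partial D$; and hypothesis (3) excludes critical points on $\partial S$. Under these conditions that theorem yields
\[
    \#\{s \in S : \nabla \overline{g}(s) = 0,\ \overline{g}(s) \in D\} = \lim_{\epsilon \to 0} \int_S \delta_\epsilon(\nabla \overline{g}(s))\, \mathbbm{1}_{\overline{g}(s) \in D}\, \abs{\det \nabla^2 \overline{g}(s)}\, \diff s .
\]

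Finally I would push this identity back to $U$ under $\phi^{-1}$. The change of variables replaces $\diff s$ by $\diff\mathrm{Vol}(x)$ up to the metric density $\sqrt{\det g}$, and --- exactly as the phrase ``the covariant derivatives for the Levi--Civita connection agree with the Euclidean derivatives at critical points'' is used in the real case --- one checks that in the $\epsilon \to 0$ limit, where the integrand concentrates on the critical set, the factor $\sqrt{\det g}$ is compensated by the transformation rules for $\delta_\epsilon(\nabla g)$ and for $\abs{\det \nabla^2 g}$, since on the critical set the Levi--Civita Hessian equals the coordinate Hessian. This produces the asserted formula. The only part that is not pure bookkeeping --- and hence the main (but minor) obstacle --- is this last coordinate-compatibility check in the mollified limit; everything else is a direct citation of \cite{adler2007random} together with the three assumptions, and is word-for-word the argument already carried out for Lemma \ref{lem:real-Kac--Rice-mollified}. $\qed$
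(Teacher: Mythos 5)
Your proposal is correct and matches the paper's argument, which for this lemma is literally the one-line citation "this follows from applying \cite[Theorem 11.2.3]{adler2007random} in charts" (stated for the real analogue and then reused verbatim for the complex case). The extra coordinate-compatibility check you flag at the end is a genuine but standard point already packaged into the Adler--Taylor formulation, so nothing further is needed.
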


\begin{lemma}
\label{lem:tBHGOE-singular}
Let $W \sim \textup{tBHGOE}(d,p)$. For each $u \in \R$, we have $\P(\det(W - u\Id) = 0) = 0$.
\end{lemma}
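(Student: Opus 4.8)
The plan is to mimic the proof of Lemma~\ref{lem:BHGOE-singular} at the structural level. Fix $u \in \R$ and collect into a single vector $\mathbf{w} \in \R^k$ all the non-structurally-zero parameters of $W \sim \textup{tBHGOE}(d,p)$: the entries of the $\binom{p}{2}$ strict-upper-triangular $d\times d$ blocks of $B$, the same for $C$, and the $2d(p-1)$ non-zero entries of $\theta$. By Definition~\ref{def:tBHGOE} the law of $\mathbf{w}$ is a non-degenerate centered Gaussian on $\R^k$, hence mutually absolutely continuous with Lebesgue measure, so it assigns probability zero to every Lebesgue-null set. On the other hand $\mathbf{w}\mapsto\det(W - u\Id)$ is a polynomial, so its zero set is either all of $\R^k$ or a proper algebraic subvariety of Lebesgue measure zero. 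It therefore suffices to exhibit one $\mathbf{w}$ at which $\det(W-u\Id)\neq 0$.

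For $u\neq 0$ I would take $C=0$ and $\theta=0$, so that $W-u\Id$ is block-diagonal with diagonal blocks $B - u\Id$, $-B-u\Id$, and $-u$, whence $\det(W-u\Id) = (-u)(-1)^{dp}\det(B-u\Id)\det(B+u\Id)$. As in the proof of Lemma~\ref{lem:BHGOE-singular}, both $\det(B-u\Id)$ and $\det(B+u\Id)$ are non-zero polynomials in the free entries of $B$, so picking $B$ outside their two zero sets --- together with $u\neq 0$ --- makes the above determinant non-zero, which gives the required $\mathbf{w}$.

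The genuinely new point is $u=0$, where the choice above degenerates through the bottom-right corner and the extra row and column carrying $\theta$ must be used. Here I would keep $C=0$ but support $\theta$ only on its first half $\theta_1 := \theta_{[1:dp]}$; then the ``$-B$'' block decouples and, for invertible $B$, a Schur-complement computation gives $\det W = (-1)^{dp+1}(\det B)^2\,\theta_1 B^{-1}\theta_1^{T}$, where $\theta_1\in\R^{dp}$ ranges over vectors whose first $d$ coordinates vanish. Taking explicitly $B = (J_p - \Id_{p\times p})\otimes\Id_{d\times d}$ with $J_p$ the all-ones $p\times p$ matrix --- its $\textup{BHGOE}$ structure is manifest, all off-diagonal blocks being $\Id_{d\times d}$ and all diagonal blocks vanishing --- one has $\det B = \bigl((p-1)(-1)^{p-1}\bigr)^{d}\neq 0$ and $B^{-1} = \bigl(\tfrac{1}{p-1}J_p - \Id_{p\times p}\bigr)\otimes\Id_{d\times d}$; choosing $\theta_1$ to be the first standard basis vector inside its second $d$-block and zero elsewhere then yields $\theta_1 B^{-1}\theta_1^{T} = \tfrac{2-p}{p-1}$, non-zero precisely when $p\geq 3$, producing the desired $\mathbf{w}$.

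The main obstacle is thus exactly this $u=0$ construction, and it carries a real caveat: for $p=2$ the statement fails, since the support graph of $\textup{tBHGOE}(d,2)$ is bipartite with parts of sizes $2d+1$ and $2d$, forcing $\mathrm{rank}\,W\leq 4d = N-1$, so $W$ is always singular. This costs nothing downstream: the complex Kac--Rice analysis is run for $p\geq 3$ (cf.\ Lemma~\ref{lem:complex_Wegner}), the case $p=2$ being classical (Remark~\ref{rem:p=2-rmt}), and the lemma is applied by integrating $\P(\det(W-u\Id)=0)$ against the law of the field value $u$, where a single exceptional point of $u$ contributes nothing. Away from this point the argument is a direct transcription of Lemma~\ref{lem:BHGOE-singular}.
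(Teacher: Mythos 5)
Your proposal is correct, and at the top level it follows the same strategy as the paper's proof: the determinant is a polynomial in the non-structurally-zero parameters, whose law is a non-degenerate Gaussian, so it suffices to exhibit one witness point where the determinant is non-zero. Where you differ is in the witness. The paper uses a single explicit matrix (all off-diagonal blocks of $B$ and $C$ equal to $\Id_{d\times d}$, $\theta$ built from multiples of the all-ones vector) claimed to be non-singular for every $u$; you instead decouple the problem, setting $C=0$ and $\theta=0$ for $u\neq 0$ so that the determinant factors through two applications of Lemma \ref{lem:BHGOE-singular}, and handling $u=0$ by a Schur-complement computation with an explicit invertible $B=(J_p-\Id_p)\otimes\Id_d$ and a single non-zero coordinate of $\theta$. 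Your reduction is arguably cleaner since it reuses the real-case lemma rather than asking the reader to verify linear independence of $2dp+1$ columns by inspection, and both your computations ($\det W=(-1)^{dp+1}(\det B)^2\,\theta_1 B^{-1}\theta_1^T$ and $\theta_1 B^{-1}\theta_1^T=\frac{2-p}{p-1}$) check out.

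Your observation about $p=2$ is not merely a caveat of your construction: it is a correct identification of a failure of the lemma as stated (and hence of the paper's witness) at $u=0$. For $p=2$ the support graph of $\textup{tBHGOE}(d,2)$ is indeed bipartite with parts of sizes $2d+1$ and $2d$ (the species-$1$ coordinates of both copies together with the extra index on one side, the species-$2$ coordinates on the other), so up to permutation $W=\bigl(\begin{smallmatrix}0&X\\X^T&0\end{smallmatrix}\bigr)$ with $X$ of size $(2d+1)\times 2d$ and $\operatorname{rank} W\leq 4d<N$; thus $\det W=0$ deterministically, and no choice of witness can rescue the statement at $u=0$. You are also right that this is harmless downstream: in Lemma \ref{lem:complex-Kac--Rice-almost-surely} the lemma is only invoked after conditioning on the field value, whose law is absolutely continuous, so validity for Lebesgue-a.e.\ $u$ suffices. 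The clean fix is to restate the lemma for $p\geq 3$ and all $u$, or for all $p$ and $u\neq 0$.
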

\begin{proof}
As in the real case, we consider the space of all underlying independent entries in the matrix -- that is, in terms of the notation of \eqref{eqn:tBHGOE}, the space of upper-triangular entries of $B$ which are not structurally zero, upper-triangular entries of $C$ which are not structurally zero, and entries of $\theta$ which are not structurally zero, which can be thought of as some $\R^k$. The same argument as in the real case shows that it suffices to exhibit a single point in this space at which the determinant of the corresponding $W-u$ is nonzero. One such example is to take all the off-diagonal blocks of $B$ and $C$ to be the identity $\Id_{d \times d}$, and to take the two halves of $\theta$ to be $(0_d, J_d, 2J_d, \ldots, (p-1)J_d)$, where $J_d$ is the length-$d$ vector of all ones, because the corresponding matrix -- given pictorially in the case $p = 3$ below -- has linearly independent columns for any $u$ value.
\begin{equation}
   \begin{pNiceArray}{ccc|ccc|c}
        -u\Id & \Id & \Id & 0 & \Id & \Id & 0 \\
        \Id & -u\Id & \Id & \Id & 0 & \Id & J \\
        \Id & \Id & -u\Id & \Id & \Id & 0 & 2J \\ \hline
        0 & \Id & \Id & -u\Id & -\Id & -\Id & 0 \\
        \Id & 0 & \Id & -\Id & -u\Id & -\Id & J \\
        \Id & \Id & 0 & -\Id & -\Id & -u\Id & 2J \\ \hline
        0 & J & 2J & 0 & J & 2J & 0
    \end{pNiceArray},
\end{equation}
\end{proof}

The analogue of Lemma \ref{lem:real-Kac--Rice-almost-surely} is the following. 
\begin{lemma}
\label{lem:complex-Kac--Rice-almost-surely}
For any chart $(U,\phi)$ on $(\mathbb{S}_\C^{d-1}) \times (\mathbb{U}^{d-1})^{\times (p-1)}$, the assumptions of Lemma \ref{lem:complex-Kac--Rice-mollified} hold with probability one for $g_T$.
\end{lemma}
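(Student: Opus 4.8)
The plan is to follow the proof of Lemma \ref{lem:real-Kac--Rice-almost-surely} almost verbatim, substituting Lemma \ref{lem:tBHGOE-singular} for Lemma \ref{lem:BHGOE-singular} and the covariance computation of Lemma \ref{lem:complex-covariance-computation} for that of Lemma \ref{lem:real-covariance-computation}. Concretely, I would assemble four ingredients: (i) in any chart $(U,\phi)$ whose closure $\overline{S} = \overline{\phi(U)}$ avoids the coordinate singularities of $\Psi$ (which one arranges by choosing a suitable finite atlas of graph-type charts), $g_T \circ \phi^{-1}$ is real-analytic, being a polynomial in the ambient coordinates composed with the analytic chart inverse given by \eqref{eqn:Psi^{-1}x^1}--\eqref{eqn:Psi^{-1}x^i}, hence $C^2$ on $\overline S$, which settles the continuity assumption (1); (ii) on the event $\mc{E}_M = \{\max_{i_1,\ldots,i_p}\abs{T_{i_1,\ldots,i_p}} \leq M\}$, the gradient $\nabla(g_T\circ\phi^{-1})$ and the scalar $\det\nabla^2(g_T\circ\phi^{-1})$ are Lipschitz on the relatively compact set $\overline S$ with some constant $L = L(M,d,p)$; (iii) by Lemma \ref{lem:complex-covariance-computation}, at every point $x$ the gradient is a vector of $N$ i.i.d. $\mc{N}(0,\tfrac{1}{N-1})$ variables, independent of the pair $(g_T(x),\nabla^2 g_T(x))$; and (iv) conditionally on $\{g_T(x)=u\}$ the Hessian has the law of $\textup{tBHGOE}(d-1,p) - u\Id$, which by Lemma \ref{lem:tBHGOE-singular} is almost surely nonsingular for every fixed $u$.

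With these in hand I would dispatch the three assumptions of Lemma \ref{lem:complex-Kac--Rice-mollified} in turn. For assumption (3) (no critical point on $\partial S$): since $\partial S$ has zero $N$-dimensional Lebesgue measure, for each $\epsilon>0$ cover it by countably many balls $B(s_i,r_i)$ with $\sum_i r_i^N \leq \epsilon$; on $\mc{E}_M$, a critical point in $B(s_i,r_i)$ forces $\|\nabla(g_T\circ\phi^{-1})(s_i)\| \leq L r_i$, an event of probability $\lesssim r_i^N$ by (iii) and the explicit Gaussian density, so a union bound gives $\P(\mc{E}_M \cap \{\exists\text{ critical point on }\partial S\}) \lesssim \epsilon$, and letting $\epsilon \to 0$ then $M \to \infty$ finishes it. For the degenerate-critical-point half of assumption (2): cover $\overline S$ by $\asymp r^{-N}$ balls of common radius $r$; on $\mc{E}_M$, a degenerate critical point in a ball forces both $\|\nabla(g_T\circ\phi^{-1})(s_i)\| \leq Lr$ and $\abs{\det\nabla^2(g_T\circ\phi^{-1})(s_i)} \leq Lr$, and by the independence in (iii) this probability factors, the first factor being $\asymp r^N$; multiplying by the number of balls leaves a bound $\asymp \P(\abs{\det\nabla^2 g_T(s_i)} \leq Lr)$, which tends as $r\to 0$ to $\P(\abs{\det\nabla^2 g_T(s_i)}=0)$, and conditioning on $g_T(s_i)$ and invoking (iv) shows this is zero. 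For the remaining half of assumption (2): $D$ being a finite union of intervals, $\partial D$ is finite and $g_T(x)$ has a density, so $\P(g_T(x)\in\partial D)=0$, and rerunning the same covering argument—now using that the gradient is independent of $g_T(x)$—gives total probability zero.

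I do not expect a genuine obstacle; the argument is a routine transcription of the real case. The only points meriting care are bookkeeping: one must pick the atlas so that the square-root terms in \eqref{eqn:Psi^{-1}x^1}--\eqref{eqn:Psi^{-1}x^i} stay bounded away from their branch points on each $\overline S$, so that (i) and (ii) hold; and one must keep straight the lone coordinate $x^{(1)}_{1,I}$, which makes $N = 2p(d-1)+1$ odd and lives in a different block of the Hessian than the paired real/imaginary coordinates. Lemma \ref{lem:complex-covariance-computation} already records that, despite this asymmetry, the gradient is nonetheless a clean i.i.d. Gaussian vector and the conditional Hessian is exactly $\textup{tBHGOE}(d-1,p)-u\Id$, so both (iii) and (iv) go through unchanged.
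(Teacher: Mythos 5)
Your proposal is correct and follows essentially the same route as the paper: the paper's proof of this lemma simply declares it a repetition of the real case (Lemma \ref{lem:real-Kac--Rice-almost-surely}), with Lemma \ref{lem:tBHGOE-singular} replacing Lemma \ref{lem:BHGOE-singular} for the nonsingularity of the conditional Hessian, which is exactly the substitution you make. Your additional care about the chart singularities and the lone coordinate $x^{(1)}_{1,I}$ is consistent with what Lemma \ref{lem:complex-covariance-computation} already provides.
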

\begin{proof}
This is essentially a repetition of the proof of Lemma \ref{lem:real-Kac--Rice-almost-surely}; the only difference is that we need a new proof that the conditional Hessian is almost surely nonsingular, but for this we replace Lemma \ref{lem:BHGOE-singular} with Lemma \ref{lem:tBHGOE-singular}.
\end{proof}

\begin{remark}
\label{rem:correct-quotient}
We observe that Lemma \ref{lem:complex-Kac--Rice-almost-surely} is only correct because we restricted to the correct submanifold of $(\mathbb{S}^{d-1}_\C)^p$ (as previously mentioned, on the full manifold, the set of maximizers is a submanifold rather than a discrete point set, so the number of critical points would be infinite and the result would be wrong). As a consistency check, one can also see this informally in our proof: Suppose, for example, that we were working on $(\mathbb{S}^{d-1}_\C)^{\times 2} \times (\mathbb{U}^{d-1})^{\times (p-2)}$ rather than $(\mathbb{S}^{d-1}_\C) \times (\mathbb{U}^{d-1})^{\times (p-1)}$. Define the obvious analogue of the map $\Psi$ from \eqref{eqn:def_psi_complex_KR}, and consider the analogue of Lemma \ref{lem:complex-covariance-computation}; now $\ip{\Psi^{-1} x^{(i)}, \Psi^{-1} x^{(i)}}$ ``looks like'' \eqref{eqn:Psi^{-1}x^1} for $i = 1, 2$ and ``looks like'' \eqref{eqn:Psi^{-1}x^i} for $i \geq 3$, instead of for $i = 1$ and $i \geq 2$, respectively, previously. By taking derivatives, one can check, for example, that $\Corr(\bar{g}_{(1,1,I),(3,19,R)}(0),\bar{g}_{(2,1,I),(3,19,R)}(0)) = \frac1{2p(d-1)}$ (coming essentially from two imaginary parts that multiply to be real, namely the terms looking like $\ii x^{(1)}_{1,I} \sqrt{\cdots}$ and $-\ii y^{(2)}_{1,I} \sqrt{\cdots}$; notice this is only possible if $\ip{\Psi^{-1} x^{(i)}, \Psi^{-1} y^{(i)}}$ ``looks like'' \eqref{eqn:Psi^{-1}x^1} for at least two $i$ values). Notice that the corresponding conditional Hessian looks like $(\begin{smallmatrix} B & C \\ C & -B \end{smallmatrix})$, except that there are now two additional rows and columns instead of one. The argument we just gave shows that, with probability one, the extraneous two rows are identical, and thus the determinant vanishes with probability one.
\end{remark}

We omit the proof of the following lemma, since it is almost exactly the same proof as that of the real analogue, Lemma \ref{lem:real-Kac--Rice-before-rmt}.

\begin{lemma}
\label{lem:complex-Kac--Rice-before-rmt}
We have 
\[
    \E[\Crt_{g_T}(D)] \leq \text{Vol}(\mathbb{S}_\C^{d-1} \times (\mathbb{U}^{d-1})^{\times (p-1)}) \E[ |\det \nabla^2 g_T(\mathbf{n}) | \mathbbm{1}_{g_T(\mathbf{n}) \in D}] \phi_{\nabla g_T(\mathbf{n})}(0)
\]
\end{lemma}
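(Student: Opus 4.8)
The plan is to transcribe the proof of the real analogue, Lemma~\ref{lem:real-Kac--Rice-before-rmt}, replacing each real ingredient by its complex counterpart. Concretely, I would fix a finite atlas $(U_i,\phi_i)_{i\in I}$ of the manifold $M \defeq \mathbb{S}_\C^{d-1}\times(\mathbb{U}^{d-1})^{\times(p-1)}$. On each chart, Lemmas~\ref{lem:complex-Kac--Rice-mollified} and~\ref{lem:complex-Kac--Rice-almost-surely} together give that, almost surely,
\[
    \Crt_{g_T}(D,U_i) = \lim_{\epsilon\to0}\int_{U_i}\diff\mathrm{Vol}(x)\,\delta_\epsilon(\nabla g_T(x))\,\mathbbm{1}_{g_T(x)\in D}\,\abs{\det\nabla^2 g_T(x)}.
\]
Taking expectations, applying Fatou's lemma (the integrand is nonnegative), and then Tonelli's theorem to exchange the expectation with the volume integral, I obtain an upper bound for $\E[\Crt_{g_T}(D,U_i)]$ in terms of $\liminf_{\epsilon\to0}\int_{U_i}\diff\mathrm{Vol}(x)\,\E[\delta_\epsilon(\nabla g_T(x))\mathbbm{1}_{g_T(x)\in D}\abs{\det\nabla^2 g_T(x)}]$.

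The next step is to exploit isotropy. The covariance \eqref{eqn:complex-covariance} of $g_T$ is invariant under $U(d)\times(\mathbb{Z}_2\times PU(d))^{p-1}$, and this group acts transitively on $M$, so the inner expectation is independent of $x\in U_i$; evaluating it at the north pole $\mathbf{n}$ and pulling it out of the integral produces a factor $\mathrm{Vol}(U_i)$. By Lemma~\ref{lem:complex-covariance-computation}, at any point the Riemannian gradient is independent of the pair consisting of the field value and the Riemannian Hessian, so the expectation factors; and since $\nabla g_T(\mathbf{n})$ is a centered Gaussian vector with i.i.d.\ entries of variance $\frac1{N-1}$, one gets $\lim_{\epsilon\to0}\E[\delta_\epsilon(\nabla g_T(\mathbf{n}))]=\phi_{\nabla g_T(\mathbf{n})}(0)$. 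Hence
\[
    \E[\Crt_{g_T}(D,U_i)] \leq \mathrm{Vol}(U_i)\,\E\!\left[\abs{\det\nabla^2 g_T(\mathbf{n})}\,\mathbbm{1}_{g_T(\mathbf{n})\in D}\right]\,\phi_{\nabla g_T(\mathbf{n})}(0).
\]

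Finally, since $\Crt_{g_T}(D)\le\sum_{i\in I}\Crt_{g_T}(D,U_i)$, summing over the atlas and then passing to a sequence of atlases whose chart volumes sum to a quantity approaching $\mathrm{Vol}(M)$ (which can be arranged explicitly, $M$ being a product of a sphere with copies of $\mathbb{U}^{d-1}\simeq\mathbb{S}^{2(d-1)}_\R$) yields the stated inequality. The only point that is not a literal copy of the real argument is the transitivity of $U(d)\times(\mathbb{Z}_2\times PU(d))^{p-1}$ on the quotient-type manifold $M$ --- one must check that the phase reduction defining the $\mathbb{U}^{d-1}$ factors is compatible with the symmetry used to make the integrand constant and to place the base point at $\mathbf{n}$ --- but this is routine, and I expect it to be the main (mild) obstacle; everything else goes through word for word.
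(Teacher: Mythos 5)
Your proposal is correct and is essentially the paper's own argument: the paper explicitly omits the proof of this lemma on the grounds that it is "almost exactly the same" as that of Lemma \ref{lem:real-Kac--Rice-before-rmt}, and your transcription (Fatou/Tonelli on each chart, isotropy under $U(d)\times(\mathbb{Z}_2\times PU(d))^{p-1}$ to pull out the integrand at $\mathbf{n}$, independence of the gradient from the field and Hessian, and the exhausting-atlas volume argument) is exactly that adaptation. The transitivity point you flag is indeed the only thing to check, and it is already asserted when the paper introduces the covariance \eqref{eqn:complex-covariance}.
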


\begin{proof}[Proof of Lemma \ref{lem:K-R_Complex}]
This is basically analogous to the proof of Lemma \ref{lem:exact_kac_rice}; we use Lemma \ref{lem:complex-covariance-computation} to show 
\[
    \phi_{\nabla g_T(\mathbf{n})}(0) = \frac{1}{(2\pi)^{N/2} \left(\frac{1}{N-1}\right)^{N/2}},
\]
compute $\text{Vol}(\mathbb{S}_\C^{d-1} \times (\mathbb{U}^{d-1})^{\times (p-1)}) = \frac{2\pi^d}{\Gamma(d)} \cdot \left( \frac{2\pi^{d-\frac{1}{2}}}{\Gamma(\frac{2d-1}{2})} \right)^{p-1}$, and condition on $g_T(\mathbf{n})$.
\end{proof}

\bibliographystyle{alpha}
\bibliography{inj_norm_biblio_v1}

\end{document}